\newtheorem{theorem}{Theorem}[section]
\newtheorem{proposition}[theorem]{Proposition}
\newtheorem{corollary}[theorem]{Corollary}
\newtheorem{lemma}[theorem]{Lemma}
\newtheorem{question}[theorem]{Question}
\newtheorem{definition}[theorem]{Definition}
\theoremstyle{definition}
\newtheorem{remark}[theorem]{Remark}
\newtheorem{example}[theorem]{Example}
\newtheorem{notation}[theorem]{Notation}
\numberwithin{equation}{section}
\newcommand{\Z}{\mathbb{Z}}
\newcommand{\D}{\mathbb{D}}
\newcommand{\R}{\mathbb{R}}
\newcommand{\C}{\mathbb{C}}
\newcommand{\V}{\mathbb{V}}
\newcommand{\A}{\mathcal{A}}
\DeclareMathOperator{\Br}{Br}
\DeclareMathOperator{\Aug}{Aug}
\DeclareMathOperator{\im}{im}
\newcommand{\La}{\Lambda}
\newcommand{\w}{\mathfrak{w}}
\newcommand{\frakr}{\mathfrak{R}}
\newcommand{\hooklongrightarrow}{\lhook\joinrel\longrightarrow}
\DeclareMathOperator{\Ob}{Ob}
\DeclareMathOperator{\id}{id}
\DeclareMathOperator{\MCS}{MCS}
\DeclareMathOperator{\GL}{GL}
\DeclareMathOperator{\diag}{diag}
\DeclareRobustCommand{\ateb}{\text{\reflectbox{$\beta$}}}
\DeclareRobustCommand{\atleD}{\text{\reflectbox{$\Delta$}}}
\DeclareRobustCommand{\ammag}{\text{\reflectbox{$\gamma$}}}
\DeclareRobustCommand{\ohr}{\text{\reflectbox{$\rho$}}}
\DeclareRobustCommand{\atebud}{\text{\reflectbox{\rotatebox[origin=c]{180}{$\ateb$}}}}
\DeclareRobustCommand{\ammagud}{\text{\reflectbox{\rotatebox[origin=c]{180}{$\ammag$}}}}
\DeclareRobustCommand{\Deltaud}{\text{\reflectbox{\rotatebox[origin=c]{180}{$\Delta$}}}}
\DeclareRobustCommand{\Lambdaud}{\text{\reflectbox{\rotatebox[origin=c]{180}{$\Lambda$}}}}
\DeclareRobustCommand{\ohrud}{\text{\reflectbox{\rotatebox[origin=c]{180}{$\ohr$}}}}
\begin{document}

\title{Decompositions of augmentation varieties via weaves and rulings}

\author[J. Asplund]{Johan Asplund}
\address{Department of Mathematics, Stony Brook University, Stony Brook, NY 11794}
\email{johan.asplund@stonybrook.edu}
\author[O. Capovilla-Searle]{Orsola Capovilla-Searle}
\address{Department of Mathematics, Oregon State University, Corvallis, OR 97331-4605} \email{capovilo@oregonstate.edu}
\author[J. Hughes]{James Hughes}
\address{Department of Mathematics, Duke University, Durham, NC 27708}
\email{jhughes@math.duke.edu}
\author[C. Leverson]{Caitlin Leverson}
\address{Mathematics Program, Bard College, Annandale-on-Hudson, NY 12504} \email{cleverson@bard.edu}
\author[W. Li]{Wenyuan Li}
\address{Department of Mathematics, University of Southern California, Los Angeles, CA 90089} \email{wenyuan.li@usc.edu}
\author[A. Wu]{Angela Wu}
\address{Department of Mathematics and Statistics, Bucknell University, Lewisburg, PA 17837} \email{a.wu@bucknell.edu}

\begin{abstract}
The braid variety of a positive braid and the augmentation variety of a Legendrian link both admit decompositions coming from weaves and rulings, respectively. We prove that these decompositions agree under an isomorphism between the braid variety and the augmentation variety. We also prove that both decompositions coincide with a Deodhar decomposition and another decomposition coming from the microlocal theory of sheaves. Our proof relies on a detailed comparison between weaves and Morse complex sequences. Among other things, we show that the cluster variables of the maximal cluster torus of the augmentation variety can be computed from the Legendrian via Morse complex sequences.
\end{abstract}

\maketitle
\thispagestyle{empty}

\tableofcontents

\section{Introduction}

\subsection{Context} Legendrian submanifolds and their Lagrangian fillings are important objects in contact and symplectic topology. Symplectic field theory provides invaluable tools for furthering classification efforts, building on Gromov's theory of pseudoholomorphic curves \cite{Gromov,EGH}. One such tool is the Chekanov--Eliashberg differential graded algebra (dga) $\A(\La)$ of a Legendrian submanifold $\La$ \cite{ChekanovDGA}. In particular, by studying augmentations of the Chekanov--Eliashberg dga, i.e., dga maps $\epsilon\colon \A(\La)\to \C$, one can distinguish Legendrian submanifolds and Lagrangian fillings of such \cite{ChekanovDGA, EkholmHondaKalman, YuPan}, or even obstruct the existence of such fillings \cite{DimitroglouRizell_2016,GaoRutherford}.

Augmentations are intimately tied to normal rulings, a combinatorial tool independently developed by Fuchs \cite{Fuchs03} and Chekanov--Pushkar{\cprime} \cite{ChekanovRuling,ChekanovPushkar}, that in turn is related to Morse theory and generating families. Studying the space of augmentations, known as the augmentation variety $\Aug(\La)$, allows for a more detailed relationship between these objects that comprises an essential ingredient in our story. The study of generating families through Morse theory leads to the definition of a Morse complex sequence (MCS) \cite{Henry}. Building on this work, Henry and Rutherford showed that the collection of all normal rulings of $\La$ yields a decomposition of the augmentation variety $\Aug(\La)$ into pieces of the form $(\C^\ast)^t \times \C^c$ constructed via MCSs \cite{HenryRutherford15}. 

Recently, additional tools have emerged to study Legendrian links $\La(\beta)$ associated to positive braids $\beta$ and their exact Lagrangian fillings. One line of inquiry, pursued in the series of papers \cite{CGGS1, CGGS2, CGGLSS}, studies braid varieties $X(\beta)$, a class of algebraic varieties coming from representation theory and cluster algebras that are isomorphic to augmentation varieties $\Aug(\Lambda(\beta))$. Braid varieties are studied via weaves, which are certain graphs encoding exact Lagrangian cobordisms \cite{CasalsZaslow20}, and are used in constructing and identifying cluster structures associated to braid varieties \cite{CGGLSS}. These cluster structures are one of the most effective approaches currently known for systematically studying and distinguishing exact Lagrangian fillings of Legendrian links \cite{CasalsGao22, GaoShenWeng, ABL22, Hughes23, CSHW23, CasalsGao24}. In addition, weaves can be used to define multiple decompositions of the braid variety. 

The decompositions for both augmentation varieties and braid varieties provide important information such as the point count over finite fields, which is also related to the homology and mixed Hodge structure of the varieties \cite{Mellit,Su,LamSpeyerII} and the geometric $P=W$ conjecture \cite{SuDual}. The information of point counts enables recovery of particular terms of the HOMFLY-PT polynomial of the smooth knot type of a Legendrian knot \cite{HenryRutherford15,LeversonRutherford,GalashinLam}. (Relatedly, the homology and mixed Hodge structure is also used to recover the HOMFLY-PT homology of the smooth knot type \cite{STZ_ConstrSheaves,Trinh}.)

\subsection{Main results}\label{sec:main_result_intro}

Let $\beta$ be a positive braid on $n$ strands and let $\Delta$ denote the half twist that is the positive braid lift of the permutation $w_0$, the longest word in $S_n$ in the Bruhat order. Suppose that the Demazure product $\delta(\beta)$ of $\beta$ is equal to $w_0$ (see \cref{def:demazure_prod}). One can associate the following algebraic varieties to $\beta$:
\begin{itemize}
    \item The augmentation variety $\Aug(\La(\beta\Delta))$ of the Legendrian $(-1)$-closure of $\beta\Delta$ (equipped with one marked point per strand of the braid); see \cref{sec:augmentation_variety}.
    \item The type-A braid variety $X(\beta)$ where $G = \GL(n,\C)$; see \cref{sec:braid_variety}.
    \item The type-A braid-Richardson variety $R^\circ_{\pi,\beta}$ where $G = \GL(n,\C)$; see \cref{sec:deodhar_decomp}.
    \item The framed moduli space of microlocal rank $1$ sheaves $\mathcal M_1^\textit{fr}(\La(\beta\Delta))$ (with one marked point per strand of the braid); see \cref{sec:braid_sheaf}.
\end{itemize}
Some of these algebraic varieties are known to be isomorphic. Namely, \cite{CGGSBS} proved $X(\beta) \cong R^\circ_{w_0,\beta}$, \cite{CasalsLi,CasalsWeng} proved $X(\beta) \cong \mathcal M_1^\mathit{fr}(\La(\beta\Delta))$, and, in the special case $\beta = \Delta\gamma$, \cite{CGGS1} proved $X(\beta) \cong \Aug(\La(\beta\Delta))$. Furthermore, these algebraic varieties are known to be equal to disjoint unions of pieces of the form $(\C^\ast)^t \times \C^c$:
\begin{description}
    \item[Ruling decomposition] Henry--Rutherford \cite{HenryRutherford15} proved that
    \[
    \Aug(\La(\beta\Delta)) = \bigsqcup_\rho\, (\C^\ast)^{s(\rho)} \times \C^{r(\rho)-\binom n2},
    \]
    where the disjoint union is taken over all normal rulings of a fixed front diagram of $\La(\beta\Delta)$ and the quantities $s(\rho)$ and $r(\rho)$ denote the number of switches and $0$-graded returns of $\rho$, respectively.
    \item[Weave decomposition] Casals--Gorsky--Gorsky--Simental \cite{CGGS1} showed that the braid variety of $\beta$ admits a decomposition via weaves
    \[
    X(\beta) = \bigsqcup_{\w}\, (\C^\ast)^{t(\w)} \times \C^{c(\w)},
    \]
    where the disjoint union is taken over certain finite sets of simplifying weaves and the quantities $t(\w)$ and $c(\w)$ denote the number of trivalent vertices and cups of the weave, respectively.
    \item[Deodhar decomposition] The Deodhar decomposition was originally defined for open Richardson varieties and later extended to braid varieties in \cite{GalashinLamTrinhWilliams} as a crucial component of the construction of a cluster structure on the braid variety \cite{GLSS}. This decomposition is defined in terms of distinguished sequences of permutations and has pieces of the form $(\C^\ast)^t \times \C^c$; see \cref{sec:deodhar_decomp} for details.
    \item[Sheaf decomposition] Shende--Treumann--Zaslow \cite{STZ_ConstrSheaves} showed that the moduli stack of microlocal rank $1$ sheaves for a Legendrian link $\Lambda(\beta\Delta)$ in $\R^3$ admits a decomposition as a disjoint union of pieces of the form $(\C^\ast)^t \times \C^c$, also defined in terms of normal rulings of a fixed front of $\Lambda(\beta\Delta)$. Such decompositions can also be defined on the framed moduli space $\mathcal M_1^\mathit{fr}(\La(\beta\Delta))$; see \cref{sec:rulings_weaves_sheaves}.
\end{description}
Our main result extends the isomorphism $X(\beta) \cong \Aug(\La(\beta\Delta))$ to all braids $\beta$ with Demazure product equal to $w_0$, and proves that all of the above decompositions coincide under these isomorphisms.
\begin{theorem}\label{thm:intro_main}
    Let $\beta \in \Br_n^+$ such that $\delta(\beta) = w_0$. There are isomorphisms
    \[
    \begin{tikzcd}[row sep=scriptsize,column sep=2cm]
        \Aug(\La(\beta\Delta)) & X(\beta) \lar{\cong}[swap]{\text{\cref{thm:-1=braid vty_new}}} \rar{\text{\cref{prop:sheaves_braid_var}}}[swap]{\cong} \dar{\text{\cref{prop:braid_richardson}}}[swap]{\cong} & \mathcal M_1^\textit{fr}(\La(\beta\Delta)) \\
        & R^\circ_{w_0,\beta} &
    \end{tikzcd}
    \]
    under which all the following decompositions coincide:
    \begin{itemize}
        \item The ruling decomposition of $\Aug(\La(\beta\Delta))$.
        \item The weave decomposition of $X(\beta)$ given by right simplifying weaves corresponding to the normal rulings of $\La(\beta\Delta)$.
        \item The Deodhar decomposition of $R^\circ_{w_0,\beta}$ given by (right inductive) distinguished sequences.
        \item The sheaf decomposition of $\mathcal M_1^\textit{fr}(\La(\beta\Delta))$.
    \end{itemize}
\end{theorem}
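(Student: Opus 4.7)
The plan is to prove the three isomorphisms of the diagram first and then establish pairwise coincidence of the four decompositions. Two of the three arrows emanating from $X(\beta)$ are essentially in the literature: $X(\beta) \cong R^\circ_{w_0,\beta}$ is \cite{CGGSBS} and $X(\beta) \cong \mathcal M_1^\textit{fr}(\La(\beta\Delta))$ follows from \cite{CasalsLi,CasalsWeng}. The new isomorphism $X(\beta) \cong \Aug(\La(\beta\Delta))$ generalizes \cite{CGGS1} from $\beta = \Delta\gamma$ to all $\beta$ with $\delta(\beta)=w_0$; I would prove this by presenting both varieties as moduli of the same linear-algebraic data (sequences of flags decorated by the generators of $\beta$, with transversality dictated by the braid word), and checking that the defining equations of $\Aug(\La(\beta\Delta))$ coming from the Chekanov--Eliashberg dga with one marked point per strand coincide with those defining $X(\beta)$. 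The Demazure hypothesis $\delta(\beta)=w_0$ ensures that both moduli problems are nonempty and that $n$ marked points suffice to rigidify them in the same way.

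The heart of the theorem is the matching of pieces, and my strategy is to channel everything through a detailed dictionary between right simplifying weaves $\w$ and Morse complex sequences (MCSs) compatible with a normal ruling $\rho$, as advertised in the abstract. I would construct an explicit correspondence sending a right simplifying weave $\w$ on the front of $\La(\beta\Delta)$ to an MCS with underlying ruling $\rho$, where trivalent vertices and cups of $\w$ dictate the handleslide pattern and the ruling is read off from the right-hand boundary of $\w$. This should yield a bijection between right simplifying weaves and ruling-compatible MCSs with $s(\rho)=t(\w)$ and $r(\rho)-\binom n2 = c(\w)$, and should allow one to check that the parameterization $(\C^\ast)^{t(\w)}\times \C^{c(\w)}$ of a weave piece matches, under $X(\beta)\cong\Aug(\La(\beta\Delta))$, the Henry--Rutherford stratum $(\C^\ast)^{s(\rho)}\times \C^{r(\rho)-\binom n2}$. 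I expect this identification to work because the cluster variables of the weave piece are computed by the same handleslide holonomies used to assemble the augmentation from the MCS in \cite{HenryRutherford15}.

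Once the ruling and weave decompositions are matched, the remaining two coincidences follow by transitivity and short additional arguments. For the Deodhar decomposition of $R^\circ_{w_0,\beta}$, right inductive distinguished subwords of $\beta$ are naturally in bijection with right simplifying weaves via a greedy construction turning each reduced-subword choice into a weave, and a direct coordinate comparison identifies the Deodhar piece with the weave piece under $X(\beta)\cong R^\circ_{w_0,\beta}$, along the lines of \cite{GalashinLamTrinhWilliams,GLSS}. For the sheaf decomposition, the indexing by rulings is already built in by \cite{STZ_ConstrSheaves}, so it suffices to verify that a microlocal rank-$1$ sheaf in the $\rho$-stratum corresponds, under $X(\beta)\cong\mathcal M_1^\textit{fr}(\La(\beta\Delta))$, to an augmentation in the Henry--Rutherford $\rho$-stratum; this reduces to a comparison of microlocal monodromy with handleslide data.

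The main obstacle is the MCS--weave dictionary in the second paragraph. The delicate point is not the numerical equality $s(\rho)+r(\rho)-\binom n2 = t(\w)+c(\w)$ but the variety-level identification: the explicit parameterization of a weave piece by cluster variables must translate, on the nose, into the handleslide holonomies of the MCS, so that one obtains an isomorphism $(\C^\ast)^{s(\rho)}\times \C^{r(\rho)-\binom n2} \cong (\C^\ast)^{t(\w)}\times \C^{c(\w)}$ of algebraic varieties and not merely a bijection of sets. I expect this to require a careful move-by-move analysis of how trivalent, hexagonal, and cup moves on weaves act on MCSs, together with an induction on the complexity of the weave; once this is in place, the rest of the theorem follows by modest diagram chasing around the displayed square.
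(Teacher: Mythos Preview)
Your overall strategy is correct and closely mirrors the paper's: the central tool is indeed a move-by-move dictionary between weave vertices (trivalent, hexavalent, cup) and operations on Morse complex sequences, and you correctly identify the variety-level identification of pieces as the delicate step. A few points of divergence are worth noting. First, your statement that ``the ruling is read off from the right-hand boundary of $\w$'' is wrong: the bottom boundary of a right simplifying weave $\w\colon\beta\to\Delta$ is just $\Delta$ and carries no ruling information; the ruling is read from the \emph{top} of the weave, by recording which incoming strands of $\beta$ are northeast inputs to trivalent vertices (switches), right inputs to cups (departures), or neither (returns). Second, for the isomorphism $X(\beta)\cong\Aug(\La(\beta\Delta))$ the paper does not go via flags (that is the sheaf description); it instead passes through an intermediate Lagrangian projection, the pigtail closure, and computes the CE differential explicitly as a matrix equation $I+B_\beta(z)D(t)B_\Delta(w)=0$, from which the upper-triangularity condition defining $X(\beta)$ drops out directly. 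Third, the paper packages the MCS--weave dictionary categorically: it introduces a braid category $\mathfrak{B}_n$ (objects positive braids, morphisms sequences of braid moves), builds a functor $\mathfrak{M}\colon\mathfrak{B}_n\to\mathfrak{C}$ via ``trivial monodromy MCS sequences,'' and proves $\mathfrak{M}=\mathfrak{X}\circ\mathfrak{A}$ where $\mathfrak{A}\colon\mathfrak{B}_n\to\mathfrak{W}_n$ is the obvious equivalence; this makes the parameterization comparison a matter of checking that the local monodromy relations for MCS moves match those for weave vertices. The cluster variables are then a \emph{consequence} of this framework rather than the mechanism of proof. Finally, for the sheaf comparison the paper does not compare microlocal monodromy to handleslide data directly; it shows sheaf $=$ weave by tracking how the canonical filtration on a sheaf extends across each slice of the weave, and then invokes ruling $=$ weave.
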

More precisely, our main result (\cref{thm:intro_main}) consists of the following comparisons of decompositions.

\begin{theorem}[{\cref{thm:hr_and_weave_decomps}}]\label{thm:intro_hr_weave}
Let $\beta \in \Br_n^+$ such that $\delta(\beta) = w_0$. For every normal ruling $\rho$ of $\La(\beta\Delta)$, there exists a right simplifying weave $\mathfrak{w}_{\rho}$ such that the ruling decomposition of $\Aug(\La(\beta\Delta))$ coincides with the weave decomposition of $X(\beta)$ given by this collection of right simplifying weaves $\mathfrak{w}_{\rho}$ under the isomorphism from \cref{thm:-1=braid vty_new}.
\end{theorem}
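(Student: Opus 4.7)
The plan is to use Morse complex sequences (MCSs) as the combinatorial bridge between rulings and weaves, following the paper's announced detailed comparison of the two structures. On one side, Henry--Rutherford \cite{HenryRutherford15} parameterize the ruling stratum of $\Aug(\La(\beta\Delta))$ indexed by $\rho$ in terms of MCSs compatible with $\rho$; on the other, each right simplifying weave $\mathfrak{w}$ carries explicit $(\C^\ast)^{t(\mathfrak{w})} \times \C^{c(\mathfrak{w})}$ coordinates on its stratum in $X(\beta)$. The task is to build $\mathfrak{w}_\rho$ from $\rho$ and match the two coordinate systems through the isomorphism of \cref{thm:-1=braid vty_new}.

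First, I would construct $\mathfrak{w}_\rho$ directly from $\rho$. Reading the front of $\La(\beta\Delta)$ from left to right, each crossing of $\beta\Delta$ is labeled as a switch, a return, or a departure of $\rho$. Place a trivalent vertex at each switch and the appropriate local piece at each return; departures contribute no cup or trivalent vertex. The half-twist $\Delta$ at the right is resolved into the canonical weave realizing the identity on the Demazure product $w_0$, which contributes precisely $\binom n2$ cups. By design $\mathfrak{w}_\rho$ is right simplifying, $t(\mathfrak{w}_\rho) = s(\rho)$, and $c(\mathfrak{w}_\rho) = r(\rho) - \binom n2$, so the two strata have matching dimensions.

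Second, to upgrade the dimension match to an identification of subvarieties, I would construct an explicit bijection between MCSs compatible with $\rho$ and decorations of $\mathfrak{w}_\rho$. Locally, a handleslide in an MCS at a switch (resp.\ a $0$-graded return) is matched with the $\C^\ast$-coordinate attached to the corresponding trivalent vertex (resp.\ the $\C$-coordinate at the cup) of $\mathfrak{w}_\rho$, while departures and non-$0$-graded returns impose the rigid handleslide data fixed by $\rho$. Pushing this coordinate bijection forward along \cref{thm:-1=braid vty_new} identifies the ruling stratum in $\Aug(\La(\beta\Delta))$ with the weave stratum in $X(\beta)$ as subvarieties, not merely as abstract pieces of the same shape.

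The hard part will be verifying that this local dictionary between MCS handleslides and weave decorations assembles consistently into an isomorphism of strata under \cref{thm:-1=braid vty_new}. Each ruling feature determines a local weave move whose coordinate must be tracked through the explicit form of the isomorphism; the subtle points include correctly accounting for the contribution of $\Delta$ (which is invisible to the ruling decomposition beyond the shift $-\binom n2$), verifying that $\mathfrak{w}_\rho$ is actually right simplifying so that the weave stratum formula of \cite{CGGS1} applies, and maintaining compatibility at crossings where multiple strands interact near a switch or return. Once the local matching is in place, the global identification of strata is expected to follow by induction along the braid word.
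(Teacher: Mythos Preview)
Your high-level strategy---use MCSs as the bridge, build $\mathfrak{w}_\rho$ from $\rho$, match coordinates---is the paper's approach, but the construction of $\mathfrak{w}_\rho$ you describe is wrong, and the error propagates. The weave $\mathfrak{w}_\rho$ is a right simplifying weave $\beta \to \Delta$ built on $\beta$ alone; the half-twist in the word $\beta\Delta$ is the \emph{target} of the weave, not a source of cups. In the paper's construction (\cref{prop:normal_ruling_inductive}, \cref{lma:rulings_right_induct_weaves}) one reads the crossings of $\beta$ left to right: switches give trivalent vertices, \emph{departures} give cups, and \emph{returns} give the vertical strands that survive to the bottom word $\Delta$. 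You have departures and returns reversed. Your dimension claim $c(\mathfrak{w}_\rho) = r(\rho) - \binom n2$ is numerically correct only because the length constraint $\ell(\beta) - t - 2c = \ell(\Delta)$ forces the number of departures in $\beta$ to equal $r(\rho) - \binom n2$, not because returns or $\Delta$ produce the cups.

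The second gap is the coordinate matching. You propose identifying the handleslide at a switch (resp.\ return) directly with the $\C^\ast$-coordinate at the trivalent vertex (resp.\ the $\C$-coordinate at the cup). Even after fixing the departure/return swap, this cannot work: the SR-form MCS carries its free $\C$-parameters at \emph{returns}, while the weave's $\C$-coordinates sit at cups, which correspond to \emph{departures}---these are different crossings of $\beta$. The paper's bijection $f_{\mathfrak{r}}$ (\cref{thm:corr_decomps}) is not a local identity; it is built from \emph{trivial monodromy MCS sequences} that track how handleslide marks evolve through the MCS braid moves encoded by the weave, and then compares the result to the SR-form via the A-to-SR-form algorithm. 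Already for $\beta = \sigma_1^3$ (\cref{ex:hopf}) one computes $f_{\mathfrak{r}_\rho}(y_1,y_2) = (y_2, -y_1 y_2^{-2})$, so the matching is genuinely nonlocal and the induction along the braid word does not reduce to checking independent local pieces.
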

\begin{theorem}[{\cref{thm:deodhar=weave}}]\label{thm:intro_Deodhar=Weave}
Let $\beta \in \Br_n^+$ such that $\delta(\beta) = w_0$. For every distinguished sequence $\mathfrak{v}$ of $w_0$ there exists a right simplifying weave $\mathfrak{w}_{\mathfrak{v}}$ such that the Deodhar decomposition of $R^\circ_{w_0,\beta}$ agrees with the weave decomposition of $X(\beta)$ given by this collection of right simplifying weaves $\mathfrak{w}_{\mathfrak{v}}$ under the isomorphism $R^\circ_{w_0,\beta} \cong X(\beta)$ from \cref{prop:braid_richardson}.
\end{theorem}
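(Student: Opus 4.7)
The plan is to construct an explicit bijection $\mathfrak{v} \mapsto \w_\mathfrak{v}$ from distinguished sequences of $w_0$ in $\beta$ to a collection of right simplifying weaves, and then to verify that the Deodhar piece of $R^\circ_{w_0,\beta}$ indexed by $\mathfrak{v}$ is sent to the weave piece indexed by $\w_\mathfrak{v}$ under the isomorphism of \cref{prop:braid_richardson}. Writing $\beta = \sigma_{i_1}\cdots \sigma_{i_\ell}$, a distinguished sequence $\mathfrak{v} = (v_0 = e, v_1, \dots, v_\ell = w_0)$ partitions the crossing indices $\{1,\dots,\ell\}$ into three subsets $J_+, J_0, J_-$ according to whether the $k$th step is a length-increasing use, a skip, or a length-decreasing use (the last being forced by the distinguished axiom). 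I would build $\w_\mathfrak{v}$ crossing by crossing, reading $\beta$ from right to left so that the resulting weave is right simplifying.

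The local dictionary is natural and is the central ingredient. At a crossing with index in $J_+$, leave the braid crossing untouched; at an index in $J_0$, attach a trivalent vertex of color $i_k$; at an index in $J_-$, attach a cup of color $i_k$. Concatenating these local gadgets from right to left and capping off with a fixed simplifying weave for $\Delta$ produces a weave whose top boundary is $\beta$ and which terminates in the empty braid, i.e.\ a right simplifying weave. Reading off the counts gives $t(\w_\mathfrak{v}) = |J_0|$ and $c(\w_\mathfrak{v}) = |J_-|$, so the dimensions of the two pieces agree a priori. That the construction is well-defined — i.e.\ that the prescribed gadget can be attached at each step — is precisely the content of the distinguished condition: whenever $v_{k-1}s_{i_k} < v_{k-1}$, one is forced to have $k \in J_-$, and the cup gadget is exactly what is needed to realize this descent locally in the weave. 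Running the construction backwards, by reading a right simplifying weave from right to left and recording which gadget appears at each crossing, recovers a distinguished sequence and yields the bijectivity claim.

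The core of the proof, and the expected main obstacle, is to check that the two parametrizations of the corresponding pieces agree under the isomorphism $R^\circ_{w_0,\beta} \cong X(\beta)$. On the Deodhar side the piece is parametrized by Chevalley-type elementary unipotent factors $x_{i_k}(t_k)$ for $k \in J_0$ and $y_{i_k}(z_k)$ for $k \in J_-$ inserted into a matrix-product expression in $\GL(n,\C)$, while on the weave side it is parametrized by local coordinates at each trivalent vertex and cup of $\w_\mathfrak{v}$. I would argue by induction on the length of $\beta$, peeling off the rightmost crossing and matching a single elementary Chevalley factor with the local coordinate contribution of a single weave gadget in each of the three cases $J_+, J_0, J_-$. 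The essential input is the explicit matrix-factorization description of the isomorphism $R^\circ_{w_0,\beta} \cong X(\beta)$ supplied by \cref{prop:braid_richardson}, which attaches a well-defined elementary factor to each letter of $\beta$. The delicate part is the bookkeeping of signs, framing conventions, and weave coordinate normalizations; none of it is conceptually new once the right-to-left dictionary above is fixed, but it has to be carried out letter by letter for the three local models to match exactly.
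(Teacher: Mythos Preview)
Your core dictionary --- crossings in $J_+$ contribute a bare strand, $J_0$ get trivalent vertices, $J_-$ get cups --- is correct and is exactly the paper's construction in \cref{lma:right_inductive_distinguished_sequences}. However, two confusions would prevent the argument from going through as written.

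First, right simplifying weaves (\cref{dfn:inductive_simplifying_weave}) are built by reading $\beta$ from \emph{left to right}, not right to left: at step $k$ one appends $\sigma_{i_k}$ on the right and then, after braid moves, possibly adds a trivalent or cup on the two rightmost strands. Your right-to-left reading does not produce a right simplifying weave in the paper's sense. Relatedly, the target of the weave is $\Delta = \beta(\delta(\beta))$, not the empty braid; there is no ``capping off with a fixed simplifying weave for $\Delta$''. These are not cosmetic issues: the inductive well-definedness you invoke (that the distinguished condition guarantees the gadget can be attached) is precisely the left-to-right statement that $\delta(\beta^{\leq k-1}\sigma_{i_k})$ either equals $\delta(\beta^{\leq k-1})s_{i_k}$ or $\delta(\beta^{\leq k-1})$, and this is what forces the choice at step $k$.

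Second, your proposed verification via Chevalley matrix factors is a different route from the paper's. The paper works entirely in the flag description of $X(\beta)$ from \cref{prop:braid_variety}: a point is a sequence of flags $(F_1,\dots,F_{r+1})$, and one checks inductively that the relative-position condition $F_k \xrightarrow{v_k} B_+$ defining the Deodhar piece $D(v_0,\dots,v_r)$ matches the open or closed condition imposed by the trivalent or cup at step $k$ in the weave. This sidesteps all coordinate bookkeeping. Your matrix-factor approach could in principle be made to work, but note that in cases $J_0$ and $J_-$ the construction first applies hexavalent and tetravalent braid moves to bring the correct color to the rightmost position before attaching the trivalent or cup; a letter-by-letter Chevalley comparison would have to track those moves explicitly, which the flag language absorbs automatically.
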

\begin{theorem}[{\cref{thm:deodhar=hr_decomp}}]
Let $\beta \in \Br_n^+$ such that $\delta(\beta) = w_0$. For every normal ruling $\rho$ of $\La(\beta\Delta)$ there exists a distinguished sequence $\mathfrak{v}_{\rho}$ of $w_0$ such that the ruling decomposition of $\Aug(\La(\beta\Delta))$ coincides with the Deodhar decomposition of $R^\circ_{w_0,\beta}$ determined by the distinguished sequences $\mathfrak{v}_{\rho}$ under the isomorphisms from \cref{thm:-1=braid vty_new,prop:braid_richardson}.
\end{theorem}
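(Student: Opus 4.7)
The plan is to derive this statement by composing \cref{thm:intro_hr_weave,thm:intro_Deodhar=Weave}. Both preceding theorems equate their respective decomposition with the weave decomposition of $X(\beta)$, albeit a priori for different collections of right simplifying weaves. My strategy is to show that for every normal ruling $\rho$, the weave $\mathfrak{w}_\rho$ produced by \cref{thm:intro_hr_weave} coincides with (or at least indexes the same open piece of $X(\beta)$ as) a weave $\mathfrak{w}_{\mathfrak{v}}$ arising from some distinguished sequence $\mathfrak{v}$ via \cref{thm:intro_Deodhar=Weave}; this $\mathfrak{v}$ will be the desired $\mathfrak{v}_\rho$.

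Concretely, I would first recall the explicit construction of $\mathfrak{w}_\rho$ from $\rho$, in which switches and $0$-graded returns of the ruling are translated into trivalent vertices and cups of the weave. From this weave one can read off a sequence of simple reflections by traversing $\mathfrak{w}_\rho$ from top to bottom, with each crossing of $\beta$ contributing either an ``absorbed'' (distinguished) or ``kept'' (non-distinguished) letter according to whether the associated strands lie on a ruling disk boundary or are joined by a switch/cup. The key combinatorial claim is that the sequence produced in this way is a (right inductive) distinguished sequence of $w_0$ in the sense of \cite{GalashinLamTrinhWilliams}, and that the weave the construction of \cref{thm:intro_Deodhar=Weave} assigns to it is isotopic to $\mathfrak{w}_\rho$.

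The main obstacle will be this combinatorial identification: making sure that the two independently defined weaves---one assembled from ruling data, the other from a distinguished sequence---match on the nose, and, crucially, that the switch/return dictionary of rulings (which controls the $(\C^\ast)^{s(\rho)} \times \C^{r(\rho)-\binom n2}$ factors of the ruling piece) corresponds exactly to the distinguished/non-distinguished slot dictionary of $\mathfrak{v}_\rho$ (which controls the $(\C^\ast)^{t} \times \C^c$ factors of the Deodhar piece). The dimension count is forced by \cref{thm:intro_hr_weave,thm:intro_Deodhar=Weave}, so the content is really a bijective matching of combinatorial features along the braid word.

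Once the map $\rho \mapsto \mathfrak{v}_\rho$ is defined and the weave-level identification $\mathfrak{w}_\rho = \mathfrak{w}_{\mathfrak{v}_\rho}$ is in hand, the conclusion follows by chasing the diagram of \cref{thm:intro_main}: the image of the ruling piece of $\rho$ under $\Aug(\La(\beta\Delta)) \cong X(\beta)$ is the weave piece of $\mathfrak{w}_\rho$ by \cref{thm:intro_hr_weave}, and its further image under $X(\beta) \cong R^\circ_{w_0,\beta}$ is the Deodhar piece of $\mathfrak{v}_\rho$ by \cref{thm:intro_Deodhar=Weave}. Taking disjoint unions over all normal rulings then yields the claimed coincidence of the ruling decomposition of $\Aug(\La(\beta\Delta))$ with the Deodhar decomposition of $R^\circ_{w_0,\beta}$.
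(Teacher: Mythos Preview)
Your approach is essentially the same as the paper's: both the ruling decomposition and the Deodhar decomposition are matched via the intermediate weave decomposition of $X(\beta)$, and the result follows once one identifies the right simplifying weave coming from a ruling with one coming from a distinguished sequence.

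Two small corrections. First, in your dictionary it is \emph{departures} (not returns) of $\rho$ that become cups in $\mathfrak{w}_\rho$; switches become trivalent vertices and returns become the vertical-edge steps. Second, the ``main obstacle'' you anticipate---matching the two weaves on the nose---is lighter than you suggest. The paper shows (\cref{lma:rulings_right_induct_weaves,lma:right_inductive_distinguished_sequences}) that both normal rulings and distinguished sequences are in bijection with the set of \emph{inductive equivalence classes} of right simplifying weaves $\beta\to\Delta$, and by \cref{lma:decomp_tuple_indep_equiv} the induced piece of $X(\beta)$ depends only on this equivalence class. So you never need isotopy or an on-the-nose match: the composite bijection $\rho\mapsto[\mathfrak{w}_\rho]\mapsto\mathfrak{v}_\rho$ already forces the weave pieces, and hence the Deodhar and ruling pieces, to agree. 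With that in hand your diagram chase is exactly the paper's argument.
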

\begin{theorem}[{\cref{thm:ruling_sheaf=ruling_aug}}]\label{thm:hr_stz}
    Let $\beta \in \Br_n^+$ such that $\delta(\beta) = w_0$. The ruling decomposition of $\Aug(\La(\beta\Delta))$ coincides with the sheaf decomposition of $\mathcal M_1^\textit{fr}(\La(\beta\Delta))$ under the isomorphism from \cref{thm:-1=braid vty_new,prop:sheaves_braid_var}.
\end{theorem}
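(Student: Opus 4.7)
The plan is to factor the isomorphism $\Aug(\La(\beta\Delta)) \cong \mathcal M_1^\textit{fr}(\La(\beta\Delta))$ through $X(\beta)$ and reduce the statement to a weave-versus-sheaf stratum comparison on the braid variety side. By \cref{thm:intro_hr_weave}, the ruling stratum of $\Aug(\La(\beta\Delta))$ indexed by a normal ruling $\rho$ corresponds, under the isomorphism of \cref{thm:-1=braid vty_new}, to the weave stratum of $X(\beta)$ indexed by a chosen right simplifying weave $\mathfrak{w}_\rho$. It therefore suffices to show that, under the isomorphism $X(\beta) \cong \mathcal M_1^\textit{fr}(\La(\beta\Delta))$ of \cref{prop:sheaves_braid_var}, the weave stratum of $\mathfrak{w}_\rho$ is carried to the STZ sheaf stratum of $\rho$.

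To carry this out, I would first recall the STZ parametrization: the sheaf stratum associated to $\rho$ consists of microlocal rank $1$ sheaves whose stalk data at each crossing of the front is of switch-, return-, or departure-type according to $\rho$. I would then recall that the isomorphism $X(\beta) \cong \mathcal M_1^\textit{fr}(\La(\beta\Delta))$ of Casals--Li and Casals--Weng is constructed via a weave-to-sheaf procedure: a weave filling produces coordinates on $X(\beta)$ matching parameters of a microlocal rank $1$ sheaf, with trivalent vertices and cups of the weave controlling the sheaf's local stalk configuration.

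The main step is to show that the right simplifying weave $\mathfrak{w}_\rho$, by construction, produces at each crossing exactly the local stalk configuration dictated by $\rho$: a trivalent vertex of $\mathfrak{w}_\rho$ corresponds to a switch of $\rho$, a cup corresponds to a return, and the absence of either corresponds to a departure. Patching these local identifications together yields a stratum-by-stratum bijection between the weave decomposition of $X(\beta)$ given by $\{\mathfrak{w}_\rho\}$ and the STZ sheaf decomposition of $\mathcal M_1^\textit{fr}(\La(\beta\Delta))$ given by $\{\rho\}$, and this bijection intertwines the isomorphism of \cref{prop:sheaves_braid_var}.

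The main obstacle is the local-model dictionary: explicitly verifying that the microlocal stalk data produced by the weave-to-sheaf functor at a trivalent vertex (respectively a cup, respectively a smooth region) matches the STZ model for a switch (respectively a return, respectively a departure). This requires working in local coordinates and carefully comparing the construction underlying \cref{prop:sheaves_braid_var} with the STZ setup on each local patch. Once this local compatibility is in place, the global identification of strata follows by assembling the local matches along the front, and the theorem becomes a direct consequence of \cref{thm:intro_hr_weave}.
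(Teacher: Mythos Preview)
Your overall strategy---factoring through $X(\beta)$, invoking \cref{thm:intro_hr_weave} to identify ruling and weave strata on the augmentation side, and then reducing to a weave-versus-sheaf comparison---is exactly the route the paper takes: the result is deduced from \cref{thm:corr_decomps} together with \cref{thm:ruling_sheaf=weave_sheaf}.

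However, there are two issues. First, your local dictionary is backwards: in the construction of $\mathfrak{w}_\rho$ (see \cref{dfn:ruling_category} and the proof of \cref{lma:rulings_right_induct_weaves}), trivalent vertices correspond to switches, cups correspond to \emph{departures}, and the ``absence'' case (a vertical weave edge) corresponds to a \emph{return}. Swapping these would cause the stratum-by-stratum match to fail.

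Second, and more substantively, your proposed verification via ``local stalk configurations at crossings'' is not how the STZ ruling stratum is defined, and it is not clear it can be made to work in that form. The sheaf stratum $\mathcal M_1^\textit{fr}(D,T)^\rho$ is defined through a global filtration $R_i\mathcal F(U)=\mathcal F(U)\cap\C^i$ whose associated graded pieces are supported on the ruling disks (see \cref{thm:ruling_sheaf_decomp}), not by independent local conditions at each crossing. The paper's proof of \cref{thm:ruling_sheaf=weave_sheaf} accordingly works with this filtration: one pushes $R_\bullet$ onto a sheaf $\mathcal F_\w\in\mathcal M_1^\textit{fr}(L(\w_\rho),T)$, then inducts over the horizontal slices $\beta_1,\dots,\beta_q$ of the weave, showing that as one passes through each trivalent, cup, hexavalent, or distant-crossing slice, the induced ruling on the boundary link changes exactly as dictated by $\rho$. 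This slice-by-slice propagation of the filtration is the missing mechanism in your sketch; once it is in place, the factorization $\mathcal M_1^\textit{fr}(L(\w_\rho),T)\hookrightarrow\mathcal M_1^\textit{fr}(D,T)^\rho\hookrightarrow\mathcal M_1^\textit{fr}(D,T)$ follows, and bijectivity of the first map comes from both sides giving decompositions of the same variety.
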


\subsection{Outline of the proof of~\cref{thm:intro_hr_weave}.}

The construction of the weave decomposition of the braid variety $X(\beta)$ uses moduli spaces of so-called \emph{trivial monodromy algebraic weaves} (see \cref{sec:weave_decomp} for details). The starting point of the proof of \cref{thm:intro_hr_weave} is to provide a contact-geometric analog of trivial monodromy weaves using Morse complex sequences.

Roughly, given a trivial monodromy weave $\w \colon \beta \to \beta'$, the contact-geometric analog consists of a sequence of braids $(\beta=\beta_1,\ldots,\beta_q=\beta')$ and an associated sequence of Morse complex sequences related by so-called MCS braid moves with trivial monodromy. See \cref{fig:intro_weave_mcs} for a visual summary of this correspondence, and \cref{sec:decomp_braid_varieties_mcs} for a detailed discussion.
\begin{figure}[!htb]
    \centering
    \includegraphics{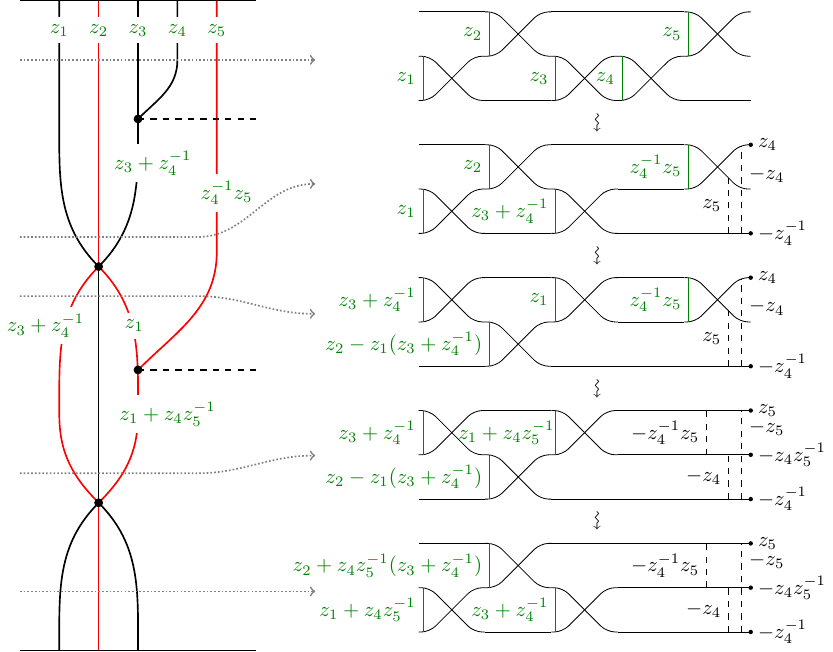}
    \caption{The correspondence between trivial monodromy weaves and Morse complex sequences. The dashed handleslide marks and marked points (on the right) are related to the horizontal dashed segments in the weave (on the left).}
    \label{fig:intro_weave_mcs}
\end{figure}

The idea to prove \cref{thm:intro_hr_weave} is to associate to each normal ruling $\rho$ of $\La(\beta\Delta)$ a right simplifying weave $\w_\rho$. This gives a bijection between the set of pieces of the two decompositions
\[
\phi_\rho \colon (\C^\ast)^{s(\rho)} \times \C^{r(\rho)-\binom n2} \hooklongrightarrow \Aug(\La(\beta\Delta)) \quad \text{and} \quad \phi_{\w_\rho} \colon (\C^\ast)^{t(\w_\rho)} \times \C^{c(\w_\rho)} \hooklongrightarrow X(\beta).
\]
Comparing these pieces via the above correspondence between trivial monodromy weaves and sequences of Morse complex sequences, we show that their images agree under the isomorphism $\alpha \colon X(\beta) \overset{\cong}{\to} \Aug(\La(\beta\Delta))$.

\begin{remark}
    The collection of right simplifying weaves that we construct is not unique. However, any such collection corresponding to the normal rulings obtained by the map $\rho \mapsto \w_\rho$ induces the same weave decomposition. We expect that the collection of right simplifying weaves are unique up to weave equivalence (see \cref{rmk:non_unique_weave}).
\end{remark}

To prove the main result, we first describe the correspondence between trivial monodromy weaves and Morse complex sequences on a categorical level. For any positive integer $n$, we consider the weave category $\mathfrak{W}_n$ whose objects are the set $\Br_n^+$ of positive braids on $n$ strands, and whose morphisms are combinatorial diagrams called algebraic weaves (see \cref{sec:weave_decomp} for details). Letting $\mathfrak{C}$ denote the category of algebraic correspondences whose objects are algebraic varieties, there is a functor
\[
\mathfrak{X} \colon \mathfrak{W}_n \longrightarrow \mathfrak{C}
\]
where $\mathfrak{X}(\beta) \coloneqq X(\beta)$ is the braid variety. For any weave $\w \colon \beta \to \beta'$, there is an algebraic correspondence
\[
X(\beta) \longleftarrow \mathfrak{X}(\w) \longrightarrow X(\beta'),
\]
where $\mathfrak{X}(\w)$ is the moduli space of trivial monodromy algebraic weaves on $\w$. This algebraic correspondence has the property that for any weave $\w \colon \beta \to \Delta$ with $c$ cups and $t$ trivalent vertices, there is an injective map $\mathfrak{X}(\w) \colon \C^c \times (\C^\ast)^t \hookrightarrow X(\beta)$. We then obtain decompositions of $X(\beta)$ by considering certain tuples of weaves $(\w_1,\ldots,\w_k)$ and their associated maps $\mathfrak{X}(\w_i)$; see \cref{sec:weave_decomp}.

For any positive integer $n$, we define a category $\mathfrak{B}_n$ whose set of objects is $\Br_n^+$ and whose morphisms are sequences of braids such that any two consecutive braids are related via the moves $\sigma_i\sigma_{i+1}\sigma_i \to \sigma_{i+1}\sigma_i\sigma_{i+1}$, $\sigma_i \sigma_j \to \sigma_j \sigma_i$ ($|i - j|\geq 2$), the trivalent move $\sigma_i^2 \to \sigma_i$, the cup move $\sigma_i^2 \to 1$, and the cap move $1 \to \sigma_i^2$; see \cref{sec:mcs_cat}. 

We define a functor $\mathfrak M \colon \mathfrak{B}_n \to \mathfrak{C}$ via Morse complex sequences associated to a sequence of braids related by braid moves; we call them trivial monodromy MCS sequences. This allows us to recast the weave decomposition of $X(\beta)$ in terms of Morse complex sequences and Legendrian front diagrams through trivial monodromy MCS sequences. 

\begin{theorem}[{\cref{thm:functors_from_mcs}}]\label{thm:intro_functors_comm}
    Let $n\in \Z_{\geq 1}$. There are functors $\mathfrak{A} \colon \mathfrak{B}_n \to \mathfrak{W}_n$ and ${\mathfrak M \colon \mathfrak{B}_n \to \mathfrak{C}}$ such that the following diagram commutes:
    \[
    \begin{tikzcd}[sep=scriptsize]
        \mathfrak{B}_n \ar[dr,swap,"\mathfrak M"] \ar[rr,"\mathfrak{A}"] && \mathfrak{W}_n \ar[dl,"\mathfrak{X}"] \\
        & \mathfrak{C}&
    \end{tikzcd}.
    \]
\end{theorem}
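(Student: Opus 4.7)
The plan is to construct $\mathfrak{A}$ and $\mathfrak{M}$ by defining them on generators of $\mathfrak{B}_n$ and then extending by concatenation. By construction, a morphism in $\mathfrak{B}_n$ is a finite sequence of elementary braid moves (braid relation, far-commutation, trivalent, cup, cap), composition is concatenation, and identities are empty sequences. Hence $\mathfrak{B}_n$ is the free category on these generators, and any assignment of the elementary moves to morphisms of $\mathfrak{W}_n$ (respectively, of $\mathfrak{C}$) extends uniquely to a functor. Consequently the theorem reduces to two tasks: (i) specifying these assignments on generators, and (ii) checking that $\mathfrak{X}(\mathfrak{A}(m)) = \mathfrak{M}(m)$ as algebraic correspondences for each elementary move $m$.

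For $\mathfrak{A}$, I would send each elementary move to the obvious elementary weave piece: braid relations map to hexagonal vertices, far-commutations to disjoint edges, trivalent moves to trivalent vertices, and cup/cap moves to cup/cap weaves. For $\mathfrak{M}$, I would identify $\mathfrak{M}(\beta)$ with the moduli of Morse complex sequences on the front diagram of $\La(\beta\Delta)$, which by the Henry--Rutherford correspondence is $X(\beta)$; each elementary braid move produces a canonical MCS braid move on the front, and $\mathfrak{M}(m)$ is the algebraic correspondence whose points parametrize pairs of MCSs related by this move with trivial monodromy, as depicted in \cref{fig:intro_weave_mcs}. Both assignments respect concatenation by construction, so functoriality on composites is immediate once the generators are fixed; on identities, both send $\beta$ to (the identity correspondence on) $X(\beta)$.

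The main content is then a case-by-case verification $\mathfrak{X}(\mathfrak{A}(m)) = \mathfrak{M}(m)$ for each of the five elementary moves. For the invertible moves (braid relation and far-commutation), both correspondences are graphs of isomorphisms $X(\beta) \overset{\cong}{\to} X(\beta')$, and the check reduces to comparing how handle-slide coordinates transform under a Reidemeister-III or disjoint-commutation move on the front; this is a direct computation in local charts. For the trivalent move, both sides give a $\C^\ast$-bundle over $X(\sigma_i\beta')$, the fiber coordinate being a handle-slide coefficient on the MCS side and the cluster-type variable attached to the local square face on the weave side, and these coordinates match by an explicit local identification. The cup and cap moves each give $\C$-bundles recording the coefficient of a newly created degree-$0$ generator of the Chekanov--Eliashberg dga, which is matched with a handle-slide decoration inserted near the new cusp.

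I expect the hardest step to be the cup/cap case: one has to track how the moduli of MCSs on the front changes when a pair of cusps is created and reconcile the extra parameter with the $\C$-fiber of the trivial-monodromy cup weave, which requires a genuine local-model calculation exhibiting preferred coordinates on both sides and matching them. The other cases are mild adaptations of computations already implicit in the description of $\mathfrak{X}$ on elementary weaves in \cref{sec:weave_decomp} and in the standard Henry--Rutherford MCS calculus. Once the generator-level equality is established, functoriality of $\mathfrak{X}$ together with concatenation-invariance of $\mathfrak{A}$ and $\mathfrak{M}$ yields commutativity of the triangle on arbitrary morphisms of $\mathfrak{B}_n$ by induction on the length of the defining sequence.
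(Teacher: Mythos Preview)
Your overall strategy---define $\mathfrak{A}$ and $\mathfrak{M}$ on elementary moves, extend by concatenation since $\mathfrak{B}_n$ is free, then verify $\mathfrak{X}\circ\mathfrak{A}=\mathfrak{M}$ on generators---is sound and close in spirit to the paper's argument. The paper phrases the verification slightly differently: rather than inducting on the length of the move sequence, it directly identifies the ambient spaces $\mathbb V^{\mathfrak{m}} = \mathbb V^{\mathfrak{A}(\mathfrak{m})}$ (handleslide marks $\leftrightarrow$ weave-segment variables, dashed handleslide marks and marked points $\leftrightarrow$ dashed-segment variables) and then checks that the defining monodromy equations coincide, via a case-by-case comparison of \cref{lma:weave_monodromy_vertex} with \cref{lma:monodromy_trivalent_etc_id} and \cref{cor:mcs_matrix}. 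This is the same local analysis you propose, just packaged globally.

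There is, however, a concrete misunderstanding in your proposal. The cup and cap moves here are \emph{not} moves that create or destroy cusps in the Legendrian front. A cup move sends $\gamma_L\sigma_i^2\gamma_R\to\gamma_L\gamma_R$; in the front of $\Lambda(\beta\Delta)$ this simply deletes two adjacent crossings of the braid (see \cref{fig:braid_cup,fig:mcs_cup}). There are no new cusps, no new degree-$1$ generators, and the $\C$-fiber records the handleslide variable $x$ at the left crossing (with the right one forced to $0$ by trivial monodromy; see \cref{fig:mcs_cup_monoless}). Your description of tracking ``how the moduli of MCSs on the front changes when a pair of cusps is created'' is aimed at the wrong local model.

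A second point you gloss over is the role of the dashed handleslide marks and marked points created by an MCS trivalent or cup move, which must then be pushed to the far right via further MCS moves (\cref{dfn:assoc_a-form_mcs_sequence}). On the weave side these correspond precisely to the dashed rays emanating from trivalent vertices and cups, and the trivial-monodromy conditions for the pushing moves match the monodromy conditions at the resulting \emph{virtual vertices} (\cref{lma:weave_monodromy_vertex}(5) versus \cref{cor:mcs_matrix}). This bookkeeping is essential to the identification $\mathbb V^{\mathfrak{m}}=\mathbb V^{\mathfrak{A}(\mathfrak{m})}$ and is where most of the actual content lies; your generator-level sketch does not mention it.
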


Next, we consider only the morphisms in the braid category $\mathfrak{B}_n$ that arise from an underlying normal ruling of the Legendrian $(-1)$-closure of $\beta\Delta$ for any given braid $\beta \in \Br_n^+$. This yields a wide subcategory that we denote by $\mathfrak{R}_n \subset \mathfrak{B}_n$. By comparing the ruling decomposition and the weave decomposition, we associate to each normal ruling $\rho$ a right simplifying weave $\w_{\rho}$. The construction of $\w_\rho$ is done in such a way that the switches of $\rho$ correspond to the trivalent vertices of $\w_\rho$, and departures of $\rho$ in $\beta$ correspond to the cups in $\w_\rho$; for example see \cref{fig:ruling_weave_intro}. The preimage $\mathfrak{A}^{-1}(\w_\rho)$ is a morphism $\mathfrak{r}_\rho$ in the ruling category $\mathfrak{R}_n$ that we call a right inductive morphism associated with $\rho$.

\begin{figure}[!htb]
    \centering
    \includegraphics{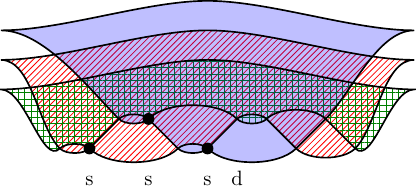}
    \hspace*{2.5cm}
    \raisebox{3ex}{\includegraphics{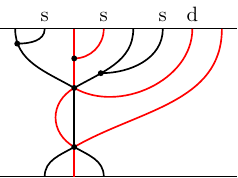}}
    \caption{Left: A normal ruling $\rho$ of the Legendrian $(-1)$-closure of $(\sigma_1^2\sigma_2^2)^2\Delta$. Right: The corresponding right simplifying weave $\w_\rho$. The labels ``s'' and ``d'' denote switches and departures, respectively.}
    \label{fig:ruling_weave_intro}
\end{figure}

Associated to the normal ruling $\rho$ of $\La(\beta\Delta)$, Henry--Rutherford \cite{HenryRutherford15} defined an injective map 
\[
\phi_\rho \colon \C^{r(\rho)-\binom n2} \times (\C^\ast)^{s(\rho)} \hooklongrightarrow \Aug(\La(\beta\Delta)),
\]
and we compare this injective map with the injective map $\phi_{\mathfrak{r}_\rho}$ induced by $\mathfrak M(\mathfrak{r}_\rho)$ in the following theorem.

\begin{theorem}[{\cref{thm:corr_decomps}}]\label{thm:main_technical}
   Let $\mathfrak{r}_\rho\colon \beta\to \Delta$ be a right inductive morphism in $\mathfrak{R}_n$ associated with the normal ruling $\rho$ of $\La(\beta\Delta)$, and assume it has $c$ cup moves and $t$ trivalent moves. There exists a bijective map $f_{\mathfrak{r}_\rho} \colon \C^{c} \times (\C^\ast)^{t} \to \C^{r(\rho)-\binom n2} \times (\C^\ast)^{s(\rho)}$ determined by $\mathfrak{r}_\rho$ such that the following diagram commutes:
    \[
    \begin{tikzcd}[sep=scriptsize]
        \C^{c} \times (\C^\ast)^{t} \rar{\phi_{\mathfrak{r}_\rho}} \dar[swap]{f_{\mathfrak{r}_\rho}} & X(\beta) \dar{\alpha}[swap]{\cong} \\
        \C^{r(\rho)-\binom n2} \times (\C^\ast)^{s(\rho)} \rar{\phi_{\rho}} & \Aug(\La(\beta\Delta))
    \end{tikzcd}.
    \]
    In particular, under the isomorphism $\alpha$, the images of $\phi_\rho$ and $\phi_{\mathfrak{r}_\rho}$ are equal.
\end{theorem}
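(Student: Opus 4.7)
The overall plan is to exploit the fact that both the Henry--Rutherford map $\phi_\rho$ and the map $\phi_{\mathfrak{r}_\rho}$ induced from $\mathfrak M$ are built out of Morse complex sequences. By \cref{thm:intro_functors_comm}, $\mathfrak M(\mathfrak{r}_\rho)$ assembles a trivial monodromy MCS sequence along the braid sequence underlying $\mathfrak{r}_\rho$, and composing with the tautological identification $\mathfrak{A}(\mathfrak{r}_\rho)=\w_\rho$ produces a point of $\mathfrak{X}(\w_\rho)\subset X(\beta)$. On the other hand, Henry--Rutherford's map $\phi_\rho$ is defined by placing handleslide marks and marked-point data on a fixed front diagram of $\La(\beta\Delta)$ in positions determined by the switches and $0$-graded returns of $\rho$, and then reading off the resulting augmentation. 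Since by construction the right simplifying weave $\w_\rho$ has one trivalent vertex for each switch of $\rho$ and one cup for each departure of $\rho$ in $\beta$, the two sources of parameters should be in canonical bijection; the first step of the proof is to make this bijection explicit.

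Concretely, I would define $f_{\mathfrak{r}_\rho}$ coordinate by coordinate as follows. Every trivalent move in $\mathfrak{r}_\rho$ corresponds to a unique switch of $\rho$ (by the construction of $\w_\rho$), and the $(\C^\ast)$-coordinate attached to that trivalent move by the trivial monodromy MCS data is matched with the handleslide coefficient placed at the corresponding switch in the Henry--Rutherford MCS. Similarly, each cup move in $\mathfrak{r}_\rho$ corresponds to either a departure of $\rho$ in $\beta$ (contributing to $r(\rho)-\binom n2$) or a cup absorbed by $\Delta$; the $\C$-coordinate attached to the cup move matches the handleslide/return data in the Henry--Rutherford MCS at the corresponding location. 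A direct count using the definition of a right inductive morphism, together with the identity $c=r(\rho)-\binom n2$ and $t=s(\rho)$, shows that $f_{\mathfrak{r}_\rho}$ so defined is a bijection of algebraic tori times affine spaces (indeed a product of $\GL_1$-changes of variables on each $(\C^\ast)$-factor and affine-linear changes on each $\C$-factor).

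With $f_{\mathfrak{r}_\rho}$ in hand, commutativity of the square reduces to showing that for any choice of parameters, the two resulting augmentations agree. Here I would pass the output of $\phi_{\mathfrak{r}_\rho}$ through the isomorphism $\alpha\colon X(\beta)\overset{\cong}{\to}\Aug(\La(\beta\Delta))$ by unwinding its definition via the MCS-to-augmentation assignment: each braid move in $\mathfrak{r}_\rho$ induces a change of MCS, and following these changes backward from the terminal object $\Delta$ to $\beta$ produces an MCS on the front of $\La(\beta\Delta)$ whose associated augmentation is $\alpha(\phi_{\mathfrak{r}_\rho}(\mathbf x))$. Under the coordinate identification $f_{\mathfrak{r}_\rho}$, this MCS is precisely the one Henry--Rutherford assign to $\rho$ and to the corresponding parameter tuple. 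Since in the Henry--Rutherford setup the MCS determines the augmentation via the same readout procedure (once one fixes the chosen front), the two compositions $\alpha\circ\phi_{\mathfrak{r}_\rho}$ and $\phi_\rho\circ f_{\mathfrak{r}_\rho}$ coincide pointwise, and the final statement that the images agree follows from the surjectivity of $f_{\mathfrak{r}_\rho}$.

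The main obstacle will be the careful bookkeeping needed to show that backward propagation of MCSs through the sequence of braid moves in $\mathfrak{r}_\rho$ yields exactly the Henry--Rutherford MCS rather than a gauge-equivalent one. This requires proving that the handleslide positions produced by undoing each trivalent, cup, braid, or commutation move land at precisely the crossings dictated by the switches and $0$-graded returns of $\rho$, and that the trivial-monodromy condition matches the normality condition of $\rho$ at each eye. Concretely, one must verify that the local models used in defining $\mathfrak M$ on each generating move of $\mathfrak{B}_n$ transform MCS data in the same way that the analogous local moves of \cite{HenryRutherford15} do. Once these local comparisons are established, the global statement follows by induction along the sequence of moves comprising $\mathfrak{r}_\rho$.
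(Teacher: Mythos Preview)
Your high-level strategy—compare both maps through Morse complex sequences—is exactly what the paper does, but your proposed definition of $f_{\mathfrak{r}_\rho}$ has a genuine gap.

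You propose to define $f_{\mathfrak{r}_\rho}$ coordinate by coordinate, matching each trivalent parameter to the handleslide at the corresponding switch and each cup parameter to a return, and you claim the result is a product of $\GL_1$-changes and affine-linear changes on the individual factors. This is incorrect on two counts. First, the combinatorics are mismatched: cup moves in $\mathfrak{r}_\rho$ correspond to \emph{departures} of $\rho$ in $\beta$, not to returns, while the $\C$-factors in the Henry--Rutherford parametrization are indexed by \emph{returns}. There is no natural bijection between cup moves and returns; the identity $c=r(\rho)-\binom{n}{2}$ is a dimension count, not a coordinate matching. Second, even on the $(\C^\ast)$-factors the map is not diagonal: in the paper's \cref{ex:hopf} (the case $\beta=\sigma_1^3$), the map is $f_{\mathfrak{r}_\rho}(y_1,y_2)=(y_2,-y_1y_2^{-2})$, which mixes the trivalent parameters and is certainly not a product of $\GL_1$-changes.

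What makes the argument work in the paper is that $f_{\mathfrak{r}}$ is \emph{not} defined coordinate by coordinate. Instead one defines an intermediate map $\psi_{\mathfrak{r}}\colon \C^c\times(\C^\ast)^t\to \widehat{\MCS}{}^\rho(\La(\beta\Delta))$ by running the trivial-monodromy MCS sequence backward using a modified \emph{MCS-SR trivalent move} (which leaves the new handleslide next to the crossing rather than pushing it to the right), so that the output is directly an SR-form MCS for $\rho$ rather than an A-form MCS. One then sets $f_{\mathfrak{r}}\coloneqq \eta_\rho^{-1}\circ\psi_{\mathfrak{r}}$. The bijectivity of $\psi_{\mathfrak{r}}$ is checked by running the SR-form MCS sequence forward (\cref{lma:chart_to_mcs_inverse}), and commutativity of the square is a diagram chase: both $\phi_{\mathfrak{r}}$ and $\phi_\rho$ factor through $\widehat{\MCS}{}^\rho$, the left triangle commutes by construction of $f_{\mathfrak{r}}$, and the right triangle commutes because the passage $\widehat{\MCS}{}^\rho\to X(\beta)$ via monodromy and the passage $\widehat{\MCS}{}^\rho\to\Aug$ via the MCS-to-augmentation map are intertwined by $\alpha$. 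Your sketch never introduces the SR-form sequence and therefore never bridges the gap between the A-form MCS produced by backward propagation and the SR-form MCS that $\phi_\rho$ actually uses; this is the step that does the real work.
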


\begin{remark}
    In \cref{thm:main_technical} we consider normal rulings of the $(-1)$-closure of $\beta\Delta$. In the case of rainbow closures, i.e., when $\beta = \Delta\gamma$ for some $\gamma \in \Br^+_n$, the bijective map $f_{\mathfrak{r}_\rho}$ is in fact explicitly computable using the combinatorics of MCS sequences; see \cref{ex:hopf}.
\end{remark}

\subsection{Further results}

\subsubsection{Cluster variables via Morse complex sequences}
A cluster structure on the braid variety $X(\beta)$ was defined by Casals--Gorsky--Gorsky--Le--Shen--Simental \cite{CGGLSS} and independently by Galashin--Lam--Sherman-Bennett--Speyer \cite{GLSS}, in particular confirming a conjecture of Leclerc for Richardson varieties \cite{Leclerc}. In the former construction, a Demazure weave $\w \colon \beta \to \Delta$ gives a toric chart $(\C^\ast)^t\hookrightarrow X(\beta)$, and the set of Lusztig cycles indexed by the trivalent vertices of $\w$ (corresponding to a special choice of generating set for $H_1(L(\w))$ where $L(\w)$ is the Legendrian surface defined by the weave $\w$) defines the toric coordinates known as cluster coordinates or cluster variables.

We expand the toolbox for contact and symplectic applications of cluster structures by realizing cluster variables coming from right inductive weaves in terms of MCSs. In particular, we show that the formal variables in the SR-form MCS associated to a normal ruling, called handleslide marks, have a cluster theoretic interpretation (we call them formal variables since we do not assign particular complex numbers to these variables).

\begin{theorem}[{\cref{thm:cluster_variables_mcs}}]\label{thm:cluster_variables_mcs_intro}
    Let $\beta \in \Br_n^+$, and let $\rho_0$ be the maximally switched normal ruling of $\La(\beta\Delta)$. There exists an algorithm to compute the cluster variables associated with a right inductive morphism $\mathfrak{r}_{\rho_0} \colon \beta \to \Delta$ from the formal framed SR-form MCS associated with $\rho_0$, together with the combinatorics of the Lusztig cycles determined by $\mathfrak{r}_{\rho_0}$.
\end{theorem}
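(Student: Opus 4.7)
The plan is to give an explicit algorithm that reads each cluster variable as a Laurent monomial in the handleslide marks of the formal SR-form MCS of $\rho_0$, with exponents determined by the Lusztig cycle combinatorics, and to prove correctness using \cref{thm:main_technical}.

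First, I would invoke \cref{thm:intro_functors_comm} to identify the right inductive weave $\w_{\rho_0} = \mathfrak{A}(\mathfrak{r}_{\rho_0})$ with the trivial monodromy MCS sequence $\mathfrak{M}(\mathfrak{r}_{\rho_0})$. Since $\rho_0$ is the maximally switched ruling, $r(\rho_0)=\binom{n}{2}$ and $\w_{\rho_0}$ has no cups, so $\phi_{\rho_0}$ and $\phi_{\mathfrak{r}_{\rho_0}}$ both parameterize pure tori of the same dimension $s(\rho_0)=t(\w_{\rho_0})$. The bijection $f_{\mathfrak{r}_{\rho_0}}$ in \cref{thm:main_technical} is therefore a monomial map whose coordinates on the target are the formal handleslide-mark variables $z_1,\dots,z_{s(\rho_0)}$ attached to the switches of $\rho_0$. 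Under the correspondence between switches and trivalent vertices built into the construction of $\w_\rho$ (see \cref{fig:ruling_weave_intro}), each handleslide variable $z_i$ is naturally paired with a trivalent vertex $v_i$ of $\w_{\rho_0}$, and hence with the Lusztig cycle $\gamma_{v_i} \in H_1(L(\w_{\rho_0}))$.

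Next I would make the cluster chart $\phi_{\mathfrak{r}_{\rho_0}}$ explicit on both sides. On the weave side, the toric coordinates on the source of $\phi_{\mathfrak{r}_{\rho_0}}$ are the flag variables indexed by the trivalent vertices of $\w_{\rho_0}$, and the cluster variable associated with a Lusztig cycle $\gamma_v$ is a Laurent monomial in those flag variables with exponents given by the intersection pairing of $\gamma_v$ with the standard basis of $H_1(L(\w_{\rho_0}))$, following \cite{CGGLSS}. On the MCS side, the trivial monodromy condition pins down how a handleslide mark propagates through each MCS braid move in $\mathfrak{M}(\mathfrak{r}_{\rho_0})$ (braid, commutation, trivalent), yielding an explicit Laurent change of variables between the switch-handleslide variables $z_i$ and the trivalent flag variables. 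Composing these two explicit Laurent maps produces the algorithm: given a trivalent vertex $v$, read off the Lusztig cycle $\gamma_v$ to extract the exponent vector in the flag variables, then substitute the MCS propagation formulas to obtain the corresponding Laurent monomial in the handleslide marks $z_i$.

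Finally, correctness follows from the commutative diagram of \cref{thm:main_technical}: pulling back a cluster variable along $\alpha\circ \phi_{\mathfrak{r}_{\rho_0}} = \phi_{\rho_0}\circ f_{\mathfrak{r}_{\rho_0}}$ yields exactly the Laurent monomial output by the algorithm, since by construction $f_{\mathfrak{r}_{\rho_0}}$ \emph{is} the composition of the MCS propagation with the identification of flag variables and handleslide marks. The main obstacle is the local case check: one must verify that under each elementary move in $\mathfrak{B}_n$ (braid, commutation, trivalent, cup, cap), the propagation of handleslide marks across the corresponding MCS braid move agrees with the transport of Lusztig cycles across the matching weave move given by the functor $\mathfrak{A}$. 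Once this compatibility is confirmed in each of the finitely many local models, the algorithm is correct by induction on the length of $\mathfrak{r}_{\rho_0}$.
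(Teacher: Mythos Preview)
Your proposal has a genuine structural gap: you route the argument through the unframed bijection $f_{\mathfrak{r}_{\rho_0}}$ of \cref{thm:main_technical}, but cluster variables in \cite{CGGLSS} are defined via the \emph{framed} weave (see \cref{ssec:framed-weave} and \cref{dfn:s-variable_weave}), and the paper's proof never touches $f_{\mathfrak{r}_{\rho_0}}$ at all. Concretely, the paper works with the formal \emph{framed} A-form and SR-form MCSs, proves in \cref{lma:s-variables_sr-form} that the $s$-variables of \cref{dfn:s-variable_weave} are exactly the handleslide marks immediately to the left of each switch in the formal framed SR-form MCS, and then invokes the inductive formula $A_t = s_t \cdot \prod_{t'\text{ covers }t} A_{t'}^{\gamma_{t'}(r,s)}$ of \cref{thm:cluster_inductive}/\cref{dfn:cluster_var_mcs}. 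The key lemma is proved by directly comparing the A-to-SR-form algorithm (\cref{rmk:A_SR_algorithm}) with the construction of a right inductive morphism (\cref{prop:normal_ruling_inductive}): in both, one creates a dashed handleslide $-y^{-1}$ at a switch and pushes it rightward, and the $s$-variable at the corresponding trivalent is the label $y$ left behind.

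Two specific errors in your execution: (i) the cluster variable $A_v$ is not a Laurent monomial in the ``flag variables'' with exponents given by an intersection pairing, as you write; the actual CGGLSS formula is the inductive product above, where the exponents are the Lusztig-cycle weights $\gamma_{v'}(e_{nw})$ at the northwest incoming edge, and the base variables are the $s_v$, not the torus-chart coordinates; (ii) you assert without proof that $f_{\mathfrak{r}_{\rho_0}}$ is a monomial map, but the MCS-SR trivalent move with trivial monodromy (\cref{fig:mcs-sr_trivalent_monoless}) involves $a = x + y^{-1}$, so there is no reason for this to hold before passing to the framed normalization. Even granting both, pulling back a cluster variable along $\phi_{\rho_0}\circ f_{\mathfrak{r}_{\rho_0}}$ expresses it in the torus-chart coordinates, which is not the same as expressing it in the SR-form handleslide marks; the latter are themselves rational functions in the braid-variety coordinates $z_i$ (see \cref{ex:s-variables_mcs}, where $s_4$ is a nontrivial polynomial). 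The paper's route avoids all of this by identifying $s$-variables with SR-form handleslide marks directly and then reading off the Lusztig-cycle exponents.
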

\begin{remark}
   The $s$-variables used in the definition of cluster variables (\cref{dfn:s-variable_weave}) are precisely the handleslide marks of the switches in the formal framed SR-form MCS associated with the maximally switched normal ruling which can be obtained by applying the A-to-SR-form algorithm from \cite{HenryRutherford15} to the formal (framed) A-form MCS associated to $\La(\beta\Delta)$, with respect to $\rho_0$; see \cref{sec:cluster_via_mcs} for details. Our algorithm is a translation of the algorithm from~\cite{CGGLSS} to the MCS setting.
\end{remark}

\subsubsection{Decompositions via cycle deletions}

Given a Demazure weave $\w \colon \beta \to \Delta$, one can sometimes generate a weave decomposition of $X(\beta)$ using a process we call cycle deletion; see \cref{sec:cycle_del_weaves} for details. The process is a combinatorial incarnation of $1$-surgery performed on the surface $L(\w)$ at Lagrangian disks with boundary on (the Lagrangian projection of) $L(\w)$, representing Lusztig cycles. Via our functor $\mathfrak{A}$ in \cref{thm:intro_functors_comm}, we give a complete description of Lusztig cycles and cycle deletion in terms of Morse complex sequences in \cref{sec:lusztig_cycles_braid_cat}.

Cycle deletion is not defined at arbitrary Lusztig cycles, but only at so-called \textsf Y-trees. We show in \cref{sec:lusztig_cycles_braid_cat} that cycle deletions at \textsf Y-trees define $2$-morphisms in the braid and weave categories $\mathfrak{B}_n$ and $\mathfrak{W}_n$, respectively. We also obtain the following result.

\begin{proposition}[{\cref{prop:2-functor_mcs_weave}}]
    Let $n\in \Z_{\geq 1}$. There is a $2$-functor $\mathfrak{A} \colon \mathfrak{B}_n \to \mathfrak{W}_n$ whose underlying $1$-functor is the one from \cref{thm:intro_functors_comm}.
\end{proposition}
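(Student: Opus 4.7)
The plan is to upgrade the 1-functor $\mathfrak{A}\colon \mathfrak{B}_n \to \mathfrak{W}_n$ from \cref{thm:intro_functors_comm} by defining its action on 2-morphisms, which in both categories are generated by cycle deletions at \textsf Y-trees, and then verifying the 2-functor axioms. Since the 1-functor is already in hand, the entire content lies in (a) producing a canonical correspondence on 2-cells and (b) showing this correspondence respects vertical and horizontal composition as well as identities.

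First I would recall from \cref{sec:lusztig_cycles_braid_cat} the precise combinatorial description of Lusztig cycles and \textsf Y-trees on both sides of $\mathfrak{A}$. On the weave side, a \textsf Y-tree in $\w$ is a subweave that encodes an embedded Lagrangian disk in $L(\w)$, and cycle deletion modifies $\w$ by $1$-surgery along this disk. On the braid/MCS side, a \textsf Y-tree is realized as a distinguished configuration of handleslide marks within a trivial monodromy MCS sequence, and cycle deletion is the corresponding modification of that sequence. The preceding sections of the paper establish, for each elementary braid move, an explicit correspondence between handleslide configurations contributing to a Lusztig cycle in the MCS sequence and hexagonal/bivalent/trivalent pieces of $\mathfrak{A}(\mathfrak{b})$ that contribute to the same cycle of $L(\mathfrak{A}(\mathfrak{b}))$. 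Using this, given a 2-morphism $\Sigma\colon \mathfrak{b} \Rightarrow \mathfrak{b}'$ in $\mathfrak{B}_n$ represented by a cycle deletion at a \textsf Y-tree $Y$, I define $\mathfrak{A}(\Sigma)$ to be the cycle deletion in $\mathfrak{W}_n$ at the \textsf Y-tree of $\mathfrak{A}(\mathfrak{b})$ corresponding to $Y$ under the Lusztig cycle dictionary. Well-definedness reduces to checking that two MCS-level \textsf Y-trees that represent the same cycle deletion (up to the equivalence relation defining 2-morphisms in $\mathfrak{B}_n$) map to weave-level \textsf Y-trees related by the analogous equivalence in $\mathfrak{W}_n$; this is a local check, move-by-move, using the elementary correspondences developed in \cref{sec:decomp_braid_varieties_mcs}.

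Next I would verify the 2-functor axioms. Identities are immediate because the trivial 2-morphism on $\mathfrak{b}$ corresponds to performing no deletion, which matches the trivial 2-morphism on $\mathfrak{A}(\mathfrak{b})$. Vertical composition follows by induction on the number of cycle deletions: a composite $\Sigma_2\circ \Sigma_1$ is the successive deletion of two \textsf Y-trees (the second viewed inside the intermediate sequence), and under $\mathfrak{A}$ the two corresponding weave-level \textsf Y-trees can be deleted in the same order, producing $\mathfrak{A}(\Sigma_2)\circ \mathfrak{A}(\Sigma_1)$. Horizontal composition is handled by noting that concatenation of braid-move sequences is sent by the 1-functor $\mathfrak{A}$ to concatenation of weaves, and \textsf Y-trees in disjoint factors remain disjoint after concatenation, so the deletions on the two sides may be performed independently. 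Finally, compatibility with the $1$-functoriality of $\mathfrak{A}$ is automatic because $\mathfrak{A}$ on 2-morphisms is built on top of the already-defined $\mathfrak{A}$ on 1-morphisms.

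The main obstacle will be the well-definedness step, i.e., checking that all elementary rewrites that define the 2-morphism relations in $\mathfrak{B}_n$ correspond, under the Lusztig-cycle dictionary of \cref{sec:lusztig_cycles_braid_cat}, to the analogous elementary rewrites in $\mathfrak{W}_n$. This requires a finite but somewhat tedious case analysis over the moves $\sigma_i\sigma_{i+1}\sigma_i\to \sigma_{i+1}\sigma_i\sigma_{i+1}$, commutations $\sigma_i\sigma_j\to \sigma_j\sigma_i$ with $|i-j|\geq 2$, trivalent moves $\sigma_i^2\to \sigma_i$, and cup/cap moves, confirming for each that a \textsf Y-tree of handleslide marks in the input MCS sequence is sent to the \textsf Y-tree on the corresponding local piece of the weave, and that performing $1$-surgery on one side matches the MCS modification on the other. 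Once this local check is in place, the rest of the argument is formal.
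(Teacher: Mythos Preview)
Your approach is essentially the same as the paper's: establish a local dictionary between \textsf Y-trees on the braid side and on the weave side, then check that the respective cycle-deletion operations agree under this dictionary. The paper's proof is just two sentences, because once the local models (\cref{fig:local_models_geom_cycle_del_mcs,fig:local_models_geom_cycle_del_weave}) are set up, the correspondence is immediate.

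You are, however, overcomplicating the well-definedness and axiom-checking steps, and this stems from a slight misreading of the setup. In the paper the 2-morphisms in $\mathfrak{B}_n$ (and likewise in $\mathfrak{W}_n$) are defined as the transitive closure of cycle deletion, i.e., the 2-cells form a \emph{poset}: there is at most one 2-morphism between any pair of 1-morphisms. Hence there is no ``equivalence relation defining 2-morphisms'' to worry about, and identities, vertical composition, and horizontal composition are automatic once you know that a single cycle deletion on the braid side corresponds to a single cycle deletion on the weave side. Also, a \textsf Y-tree of a morphism $\mathfrak{m}$ in $\mathfrak{B}_n$ is by definition (\cref{dfn:geometric_cycle_mcs}) a sequence of \emph{subwords} of the braids in the braid sequence, not a configuration of handleslide marks; the MCS picture plays no role in the definition of the 2-categorical structure. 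With these corrections, your case-by-case plan collapses exactly to the paper's one-line comparison of local models.
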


An interesting question is: for which braids can one start with an inductive weave and repeatedly perform cycle deletion to generate a weave decomposition of the braid variety? A braid for which this is possible is called cycle decomposable.

\begin{proposition}[{\cref{prop:torus_n2_decomp}}]
    For every $k\geq 1$, the braid $\sigma_1^{k+1} \in \Br_2^+$  is cycle decomposable.
\end{proposition}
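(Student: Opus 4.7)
The plan is to proceed by induction on $k$. As the starting inductive weave, take $\w_k \colon \sigma_1^{k+1} \to \sigma_1 = \Delta$ obtained by stacking $k$ trivalent vertices, each merging two adjacent $\sigma_1$ letters; this weave has $t=k$ trivalent vertices and no cups, and contributes the top piece $(\C^\ast)^k$ of the weave decomposition of $X(\sigma_1^{k+1})$. The remaining pieces have the form $(\C^\ast)^{k-2c}\times \C^c$ with $0\le c\le \lfloor k/2\rfloor$, and the aim is to realize each of them by iterated cycle deletions on $\w_k$. The base case $k=1$ is immediate, since $\w_1$ already gives all of $X(\sigma_1^2) \cong (\C^\ast)^1$ on its own.

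For the inductive step, I would first use the description of Lusztig cycles in terms of Morse complex sequences from \cref{sec:lusztig_cycles_braid_cat}, together with the $2$-functor $\mathfrak{A}$ of \cref{prop:2-functor_mcs_weave}, to identify a family of ``bigon'' cycles in $\w_k$: one between every consecutive pair of stacked trivalent vertices. A direct local MCS computation then verifies that each such bigon is a \textsf Y-tree, so cycle deletion is defined at each. Cycle deletion at one of these bigons should replace that pair of trivalent vertices by a single cup, producing a weave which, up to weave equivalence, is a cup concatenated with the stacked weave for $\sigma_1^{k-1}$. By the inductive hypothesis $\sigma_1^{k-1}$ is cycle decomposable, so further cycle deletions on the smaller weave realize all pieces contributed by weaves containing at least one cup. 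Varying which bigon is deleted first and iterating produces a family of weaves whose induced pieces exhaust the weave decomposition; the combinatorics of such deletion sequences should match the indexing by normal rulings of $\La(\sigma_1^{k+2})$ provided by \cref{thm:intro_hr_weave}.

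The main obstacle will be verifying that the \textsf Y-tree property is preserved under iterated cycle deletions — so that the process can continue at every stage — and simultaneously that the resulting family of weaves exhausts the decomposition. The first point should follow from explicit local MCS calculations, exploiting the fact that Lusztig cycles in a $2$-strand weave are constrained to be short loops spanning only a few trivalent vertices and so have very limited possible interactions with the newly created cup region. Ensuring completeness requires a bookkeeping argument matching cycle-deletion sequences to the indexing set of the weave decomposition; this should follow from a dimension count combined with injectivity of the piece inclusion maps, but the bookkeeping needs care to confirm that no piece is missed or double-counted.
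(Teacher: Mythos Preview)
Your starting weave and the cycles you identify match the paper exactly: the paper takes the stacked right inductive morphism $\mathfrak{r}_\rho \colon \sigma_1^{k+1} \to \sigma_1$ given by $k$ consecutive trivalent moves and lists as its \textsf Y-trees precisely the single-edge cycles between consecutive trivalents. The paper then argues directly (with no induction) that iterated cycle deletions on this morphism produce every right simplifying weave $\sigma_1^{k+1} \to \sigma_1$, and that these form a decomposing tuple.

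Your inductive step, however, contains a concrete error. You claim that deleting \emph{any} bigon yields, up to weave equivalence, a cup concatenated with the stacked weave for $\sigma_1^{k-1}$. This is only true for the topmost bigon. Deleting the bigon between the $i$-th and $(i+1)$-st trivalents for $i>1$ produces a weave with $i-1$ trivalents above the cup and $k-i-1$ below; for instance with $k=4$ the three possible single deletions give the three distinct right simplifying weaves encoded by the step sequences $CSTT$, $TCST$, $TTCS$, only the first of which has the form you describe. These are not equivalent (they contribute distinct pieces of the decomposition), so your reduction to $\sigma_1^{k-1}$ fails for the middle bigons. Relatedly, the inductive hypothesis concerns deletions on weaves for $\sigma_1^{k-1}$, whereas you need to continue deleting inside a weave for $\sigma_1^{k+1}$; these are not the same statement, so the induction does not transport as written.

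The paper avoids all of this by observing that for two strands there are no hexavalent or tetravalent vertices, so every remaining edge between two adjacent trivalents in any intermediate weave is automatically a short \textsf I-cycle, and every right simplifying weave for $\sigma_1^{k+1}$ is just a string of cups and trivalents. Both of your anticipated obstacles (\textsf Y-tree preservation and completeness bookkeeping) are therefore vacuous here, and no dimension count or matching with \cref{thm:intro_hr_weave} is needed.
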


The question of which braids are cycle decomposable is subtle. Firstly, the set of Lusztig cycles (and \textsf Y-trees) is not invariant under weave equivalence. Secondly, a necessary condition for a braid to be cycle decomposable is that it admits an inductive weave such that all of its Lusztig cycles are \textsf Y-trees. Given a braid, it is not clear whether such an inductive weave exists; even if it exists it may be difficult to find it. See \cref{sec:decompositions_cycle_del} for further discussion regarding these questions.

Even in cases where we are able to decompose $X(\beta)$ via cycle deletion, we show in \cref{ex:deletion_neq_ruling} that the resulting decomposition does not always agree with the decomposition given by right simplifying weaves that corresponds to the Deodhar or ruling decomposition. In fact, even the codimension 1 pieces do not always agree.

\begin{remark}
For braids of the form $\beta = \Delta\gamma$ for $\gamma \in \Br_n^+$, a result of Casals--Weng \cite[Section 4.9]{CasalsWeng} showed that the zero loci of the cluster variables correspond to the codimension 1 pieces by cycle deletions from a right inductive weave. For general $\beta$, Galashin--Lam--Sherman-Bennett--Speyer \cite[Section 7.2]{GLSS} showed that the zero loci of the cluster variables correspond to the codimension 1 pieces in the Deodhar decomposition. Since the cluster structures \cite{CasalsWeng}, \cite{CGGLSS}, and \cite{GLSS} agree (see \cite[Theorem 1.1]{CGGSBS} for the comparison of the second and third ones in general, and \cite[Theorem 1.2]{CasalsLi} for the comparison between the first and second ones in the Bott--Samelson case), we expect that for double Bott--Samelson cells, the codimension 1 pieces by cycle deletions from a right inductive weave agree with the pieces in the Deodhar decomposition, but for general braid varieties they do not agree (in particular, for general braid varieties, the cluster variables cannot be defined using cycle deletions as in \cite{CasalsWeng}).
\end{remark}

\subsubsection{Microlocal sheaves and Morse complex 2-families}
    The functor $\mathfrak{A}$ in \cref{thm:intro_functors_comm}, in fact, provides an explicit comparison between weaves and Morse complex 2-families (MC2F) introduced by Rutherford--Sullivan \cite{rutherford2018generating}.
    
    If $\mathfrak{m}$ is a morphism in $\mathfrak{B}_n$, we denote by $\Lambda(\mathfrak{m})$ the Legendrian lift of the exact immersed Lagrangian cobordism in $T^\ast \D^2$ induced by the MCS braid moves in $\mathfrak{m}$; see \cref{sec:weaves_aug_sheaves} for details. Given such a morphism, it turns out that the algebraic variety $\mathfrak M(\mathfrak{m})$ in \cref{thm:intro_functors_comm} is isomorphic to the space of equivalence classes of MC2Fs. This allows us to prove the following ``representations are sheaves'' result. Note that augmentations are one-dimensional representations.
    \begin{theorem}[{\cref{thm:sheaves_mc2f}}]\label{thm:sheaves_mc2f_intro}
        Let $\mathfrak{m} \colon \beta \to\beta'$ be a morphism in $\mathfrak{B}_n$. There is a bijection from the equivalence classes of simple sheaves on $\D^2 \times \R$ with singular support on $\Lambda(\mathfrak{m})$ to the equivalence classes of MC2Fs on the front projection of $\Lambda(\mathfrak{m})$.
    \end{theorem}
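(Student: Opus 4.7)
The plan is to work locally on a stratification of $\D^2$ determined by the sequence of MCS braid moves encoded in $\mathfrak{m}$, apply the known correspondence between MCSs and simple microlocal rank $1$ sheaves on each stratum, and then glue via sheaf descent.

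Writing $\mathfrak{m} = (\beta_1 \to \cdots \to \beta_q)$, I would first partition $\D^2$ into open product regions, on each of which the front of $\Lambda(\mathfrak{m})$ is isotopic to the front of $\Lambda(\beta_i)$ times an interval, separated by codimension-one walls each supporting a single local model of an MCS braid move. On a product region, the Henry--Rutherford and Shende--Treumann--Zaslow correspondence gives a bijection between equivalence classes of simple microlocal rank $1$ sheaves on $\R^2$ with singular support on $\Lambda(\beta_i)$ and equivalence classes of MCSs on the front of $\Lambda(\beta_i)$. By definition, the restriction of an MC2F to such a product region is exactly an MCS that is constant in the product direction, matching the sheaf datum on each stratum.

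Across each codimension-one wall I would verify the local correspondence by explicit computation in a standard neighborhood of the corresponding front singularity. For the braid relation and commuting moves, the local model is a smooth Lagrangian cobordism and the transition is encoded by a change-of-frame in the MCS that matches the quasi-isomorphism of sheaves dictated by the microlocalization functor. For the trivalent, cup, and cap moves, the front has a cusp edge or swallowtail, and one identifies simple sheaves in a neighborhood with triples consisting of the two adjacent MCSs and a handleslide or boundary datum, which is precisely the data attached to the wall by the MC2F axioms of Rutherford--Sullivan. Many of these local computations can be extracted from the construction of the functor $\mathfrak{A}$ in \cref{thm:intro_functors_comm}, which already identifies MCS braid moves with the corresponding local weave moves on the sheaf side.

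Finally, I would assemble the local bijections using the sheaf-of-categories structure on $\mathit{Sh}_{\Lambda(\mathfrak{m})}(\D^2 \times \R)$ together with the fact that an MC2F is determined by its restrictions to a neighborhood of each stratum and the prescribed compatibilities across walls. The equivalence relations on both sides, quasi-isomorphism of simple sheaves on the one hand and the move-generated equivalence of MC2Fs on the other, are generated by the same local data (conjugation by lower-triangular invertible transition matrices, together with handleslide cancellations), so the bijection descends to equivalence classes. The main obstacle will be a rigorous treatment of the trivalent and cup/cap walls, where the singular support ceases to be smoothly embedded and one must compute simple sheaf equivalence classes directly in the swallowtail and cusp local models; I expect the matching to follow from the same local analyses that underlie the proof of \cref{thm:intro_functors_comm}.
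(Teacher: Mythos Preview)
Your approach is in the right spirit but differs from the paper's in a way that creates real extra work. The paper does not glue a local sheaf--MCS correspondence across strata. Instead it factors the bijection through the algebraic moduli space $\mathfrak{M}(\mathfrak{m}) = \mathfrak{X}(\mathfrak{A}(\mathfrak{m}))$ of trivial monodromy A-form MCS sequences, using two facts already established earlier: (i) $\mathfrak{X}(\w) \cong \mathcal{M}_1^{\textit{fr}}(L(\w),T)$ as varieties (\cref{prop:sheaf-on-weave}, essentially Casals--Zaslow), and (ii) every point of $\mathfrak{M}(\mathfrak{m})$ yields an MC2F on the front of $\Lambda(\mathfrak{m})$ (\cref{prop:monodromyless_MCS_sequences_MC2F}). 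So one direction is the composite sheaf $\to$ algebraic weave $\to$ A-form MCS sequence $\to$ MC2F, and there is nothing to glue.

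The part of your plan that is underdetermined is the inverse map. You propose to assemble local bijections via sheaf-of-categories descent, but the MC2F side is not presented as a sheaf of categories, and equivalence of MC2Fs is a global relation generated by Rutherford--Sullivan moves. The paper's key observation here is concrete and combinatorial: restrict an arbitrary MC2F to each slice $\beta_j$ to get an MCS sequence, and then use that MCS equivalences on each slice \emph{commute} with the MCS braid moves between slices (because both are monodromy-preserving, \cref{lma:monodromy_comp_prod,lma:mcs_braid_matrix}). This lets one put every slice into A-form without destroying the trivial monodromy across braid moves, producing a point of $\mathfrak{M}(\mathfrak{m})$ and hence a sheaf. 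Your descent argument would eventually have to prove exactly this commutation statement to control what happens to equivalence classes across walls; the paper isolates it as the single nontrivial step and avoids any local analysis at swallowtails or cusps altogether.
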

    \begin{remark}
        The above result proves that representations are sheaves over $\Z/2\Z$ since the equivalence between MC2Fs and augmentations of the Legendrian cellular dga, and the equivalence between the Legendrian cellular dga and the Chekanov--Eliashberg dga are proved over $\Z/2\Z$ \cite{rutherfordsullivan2019,rutherford2018generating}. In fact, this is the inverse to the map constructed in \cite{rutherford2021sheaves}. When we consider sheaves over a general field, we expect that our result implies augmentations are sheaves for Legendrian weaves (our definition of MC2F includes marked points and gives rise to sheaves with non-trivial microlocal monodromy along the Legendrian, in contrast to the construction in \cite{rutherford2021sheaves}, which does not include marked points and only gives sheaves with trivial microlocal monodromy instead of all simple sheaves \cite[Section 6.4]{rutherford2021sheaves}).
    \end{remark}
    \begin{remark}
        The general principle that ``representations of the Chekanov--Eliashberg dga are sheaves,'' first posited by Shende--Treumann--Zaslow~\cite{STZ_ConstrSheaves} was proven for augmentations of Legendrian knots~\cite{NRSSZ}. For more general Legendrians, see \cite[Theorem 1.4]{ganatra2024microlocal} combined with \cite[Theorem 1.13]{ganatra2022sectorial} or \cite[Theorem 1.1]{chantraine2017geometric}, and further combined with \cite[Theorem 5.8]{bourgeois2012effect}, \cite[Theorem 83]{ekholm2017duality}, and \cite[Theorems 1.1 and 1.2]{asplund2020chekanov}. In this context, the contribution of our proof of \cref{thm:sheaves_mc2f} is a direct and combinatorial comparison between augmentations and microlocal sheaves for a particular class of Legendrian surfaces.
        Moreover, as an additional feature of our comparison, our approach shows that the augmentations-sheaves correspondence is explicitly compatible with the sheaf and ruling decomposition of the augmentation variety. 
    \end{remark}
 
\subsection{Organization}
    In \cref{sec:background} we discuss background on Legendrian links, normal rulings, and weaves. In \cref{sec:decomp_braid_varieties_mcs} we give a definition of Morse complex sequences. We define weave decomposition and Deodhar decompositions on the braid variety and the ruling decomposition on the augmentation variety. We review how to compute cluster variables of the maximal cluster torus associated to right inductive weaves. In \cref{sec:mcs_and_alg_weaves} we define the braid category and recast the weave decomposition on the braid variety in terms of Morse complex sequences. We prove in \cref{sec:normal_rulings_cat} that the ruling decomposition of the augmentation variety and the weave decomposition of the braid variety of $(-1)$-closures of $\beta\Delta$ agree. Finally, in \cref{sec:weave_ruling_decomp_cycle_deletion} we show that the cluster variables of the maximal cluster torus can be computed via Morse complex sequences. We discuss cycle deletion both from the point of view of weaves and Morse complex sequences, how it sometimes generates a decomposition of a braid variety, and open questions related to cycle deletion.

\subsection*{Acknowledgments}
This project was initiated at American Institute of Mathematics (AIM) during the workshop ``Cluster algebras and braid varieties'' in January 2023. We are grateful to AIM and the workshop organizers Roger Casals, Mikhail Gorsky, Melissa Sherman-Bennett, and Jos\'{e} Simental. We thank Youngjin Bae, Marco Castronovo, and Melissa Sherman-Bennett for useful discussions at the initial stages of this project, and also Roger Casals, Zijun Li, Lenny Ng, and Jos\'e Simental for helpful conversations. We also thank Roger Casals, Lenhard Ng, Dan Rutherford, Melissa Sherman-Bennett, and Jos\'e Simental for comments on an earlier draft.

JA was partially supported by the Knut and Alice Wallenberg Foundation. OCS was supported by NSF grant DMS-2103188. WL was partially supported by the AMS-Simons Travel Grant.

\section{Background on braids, rulings and weaves}\label{sec:background}

In this section we define the geometric and algebraic objects that we work with. We also prove that braid varieties are augmentation varieties.

\subsection{Braids}
     The set of $n$-stranded braids forms a group under concatenation; it called the braid group and is denoted by $\Br_n$.
    
     \begin{notation}
        Let $\sigma_1,\ldots,\sigma_{n-1}$ denote the Artin generators of the braid group $\Br_n$ where the strands are labeled from $1$ to $n$ from bottom to top. Let $\Br_n^+$ be the submonoid of the braid group generated by non-negative powers of the Artin generators. We also let
        \[
            \Delta_n \coloneqq (\sigma_1)(\sigma_2 \sigma_1) \cdots (\sigma_{n-1}\cdots \sigma_1),
        \]
        denote the \emph{half twist on $n$ strands}. We usually denote the half twist by $\Delta$.
    \end{notation}
     For any element $w \in S_n$ of the symmetric group, we can choose a presentation of $w$ as words of elementary permutations $s_i \in S_n$, and lift it to a positive braid $\beta(w)$ by lifting $s_i$ to the braid $\sigma_i$. We define the \emph{length of $w \in S_n$}, denoted by $\ell(w) \in \Z$, to be the minimal length positive braid lift $\beta(w) \in \Br_n^+$.
    
    \begin{definition}[Demazure product]\label{def:demazure_prod}
        The \emph{Demazure product} is the map $\delta \colon \Br_n^+ \to S_n$ defined by
        \[
            \delta(\beta \sigma_i) = \begin{cases}
                \delta(\beta) s_i, & \text{if }\ell(\delta(\beta)s_i) = \ell(\delta(\beta)) + 1, \\
                \delta(\beta), & \text{if }\ell(\delta(\beta)s_i) = \ell(\delta(\beta)) - 1 .\\
            \end{cases}
        \]
         The Demazure product of the half twist is denoted by $w_0 \coloneqq \delta(\Delta)$.
    \end{definition}

\subsection{Legendrian links}
Let $M$ be an $m$-dimensional manifold. The cotangent bundle of $M$ equipped with the one-form $\lambda \coloneqq \sum_{i=1}^m y_i dx_i$ is an exact symplectic manifold, where $(x_1,\ldots,x_m)$ are coordinates on $M$ and $(y_1,\ldots,y_m)$ are coordinates in the fiber directions of the cotangent bundle. The \emph{$1$-jet space of $M$} is the contact manifold $J^1M \coloneqq T^\ast M \times \R_z$ equipped with the contact structure $\xi \coloneqq \ker(dz-\lambda)$.

A smooth embedding $\Lambda\subset J^1 M$ of a collection of copies of $S^{m-1}$ is called a \emph{Legendrian link} if $T_x \Lambda \subset \xi_x$ for all $x \in \Lambda$. A Legendrian link with only a single component is called a \emph{Legendrian knot}. Two Legendrian links are \emph{Legendrian isotopic} if they are smoothly isotopic through Legendrian links.

The two projections
\[
    \pi_F \colon J^1 M \longrightarrow M \times \R, \qquad \pi_L \colon J^1 M \longrightarrow T^\ast M
\]
are called the \emph{front projection}, and the \emph{Lagrangian projection} respectively. The image of a Legendrian link $\Lambda$ in $\pi_F$ is called a \emph{front diagram} for $\Lambda$. In the case where $M = \R$, $\pi_F$ and $\pi_L$ are respectively the projections to the $xz$-plane and the $xy$-plane. By the Legendrian condition, $\pi_F(\Lambda)$ is an immersed curve with no vertical tangencies and cusp singularities that are locally modeled on $y^2 = \pm x^3$ near the origin, and all the double points are such that the strand with the smallest slope is the overstrand. Conversely any such curve lifts to a unique Legendrian in $\R^3$. In contrast, $\pi_L(\Lambda)$ is an immersed curve with zero oriented area. There is a standard way of passing from the front projection to the Lagrangian projection of a Legendrian link $\Lambda \subset \R^3$, called Ng's resolution; see \cite[Section 2.1]{ng2003computable}.

\begin{definition}[$(-1)$-closure of a braid]
    Let $\beta \in \Br_n^+$. The \emph{$(-1)$-closure of $\beta$} is the Legendrian lift $\Lambda(\beta) \subset \R^3$ of the oriented front diagram depicted in \cref{fig:closure}.
\end{definition}

\begin{figure}[!htb]
    \includegraphics{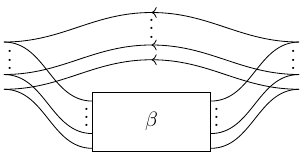}
    \caption{The front projection of the $(-1)$-closure of $\beta$ (in plat position).}
    \label{fig:closure}
\end{figure}

In this paper, we are mainly concerned with Legendrian links $\Lambda$ that are the $(-1)$-closure of $\beta\Delta$, where $\beta \in \Br_n^+$.

\begin{definition}[Nearly plat]
    Let $D$ be the front diagram of a Legendrian link $\Lambda \subset \R^3$.
     We say the front diagram $D$ is in \emph{nearly plat position} if every cusp (both left and right cusps) and every crossing has a distinct $x$-coordinate.
\end{definition}

\subsection{Normal rulings}\label{sec:rulings}
In this section we briefly review the definition of a (graded) normal ruling, independently introduced by Fuchs \cite{Fuchs03} and Chekanov--Pushkar{\cprime} \cite{ChekanovRuling,ChekanovPushkar}.

\begin{definition}[Maslov potential]\label{dfn:maslov}
    A \emph{Maslov potential} on a front diagram $D$ is a locally constant function 
    \[
    \mu \colon \pi_F^{-1}(D \setminus \{\text{cusps}\}) \longrightarrow \Z
    \]
    that increases by one when we pass upwards through a cusp of $D$, and decreases by one when we pass downwards through a cusp of $D$.
\end{definition}

From now on, unless explicitly stated, fronts are equipped with a Maslov potential.

\begin{definition}[Grading of crossing]\label{dfn:grading_crossing}
    Let $\mu$ be a Maslov potential on a front diagram $D$. The \emph{grading} of a crossing $c$ in $D$ is defined as the difference $\mu(s_+)-\mu(s_-)$ where $s_+$ and $s_-$ are the over strand and under strand at $c$, respectively.
\end{definition}

\begin{definition}[Normal ruling]\label{dfn:normal_ruling}
    A \emph{normal ruling} of the front diagram $D$ of a Legendrian link $\Lambda \subset \R^3$ is a decomposition of $D$ into pairs of paths (called \emph{ruling paths}) where the two paths begin at the same left cusp and end at the same right cusp, and the pair of paths co-bound a topological disk (called a \emph{ruling disk}). The ruling paths are required to be smooth, except at cusps and certain crossings called \emph{switches}, and satisfy:
    \begin{enumerate}
        \item Any two ruling paths only intersect at crossings or cusps.
        \item Near a switch, the ruling disks must look like a diagram in \cref{fig:ruling_switches} and the crossing must have degree $0$.
    \end{enumerate}
    There are three types of crossings in a normal ruling:
    \begin{description}
        \item[Switches] The crossing has grading zero and the ruling disks are either nested or disjoint on both sides of the crossing; see \cref{fig:ruling_switches}.
        \item[Departures] To the left of the crossing the ruling disks are nested or disjoint and to the right of the crossing the ruling disks are interlaced; see \cref{fig:ruling_deps}.
        \item[Returns] To the left of the crossing the ruling disks are interlaced and to the right of the crossing the ruling disks are nested or disjoint; see \cref{fig:ruling_rets}.
    \end{description}
\end{definition}

\begin{figure}[!htb]
    \begin{subfigure}{\textwidth}
        \centering
        \includegraphics[width=.8\textwidth]{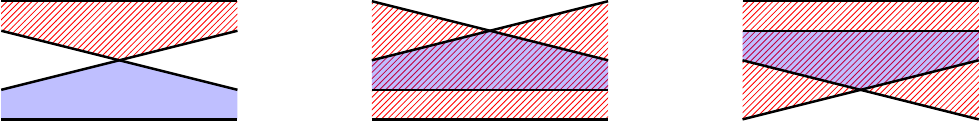}
        \caption{}\label{fig:ruling_switches}
    \end{subfigure}
    
    \begin{subfigure}{\textwidth}
        \centering
        \includegraphics[width=.8\textwidth]{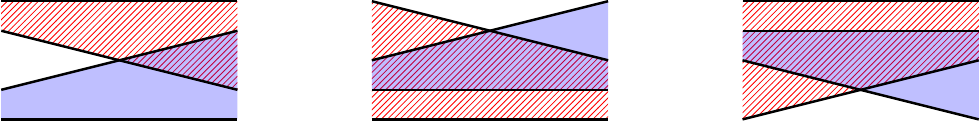}
        \caption{}\label{fig:ruling_deps}
    \end{subfigure}
    
    \begin{subfigure}{\textwidth}
        \centering
        \includegraphics[width=.8\textwidth]{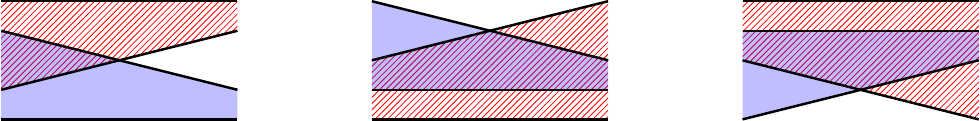}
        \caption{}\label{fig:ruling_rets}
    \end{subfigure}
    \caption{All the possible configurations of a normal ruling near a crossing. The top row gives all possible configurations near switches, the second row configurations near departures, and the third row near returns.}
    \label{fig:ruling}
\end{figure}
\begin{remark}
     We only consider $0$-graded normal rulings and call them ``normal rulings.'' We also warn the reader that while normal rulings propagate under Legendrian isotopies, the rules of propagation are, in general, delicate. This is the main reason why many arguments involving rulings later on are verified by hand in a special setting. 
\end{remark}
\begin{notation}
    Let $D$ be the front diagram of a Legendrian link $\Lambda \subset \R^3$. We denote the set of all normal rulings of $D$ by $\frakr(D)$. Given a normal ruling $\rho\in\frakr(D)$, let
    \begin{itemize}
        \item $s(\rho)$ be the number of crossings that are switches in $\rho$,
        \item $r(\rho)$ be the number of crossings that are returns in $\rho$, and
        \item $d(\rho)$ be the number of crossings that are departures in $\rho$.
    \end{itemize}
\end{notation}

\begin{definition}[Maximal normal ruling]
    Let $D$ be the front diagram of a Legendrian link $\Lambda \subset \R^3$. We say that $\rho \in \frakr(D)$ is a \emph{maximal normal ruling} if $s(\rho) \geq s(\rho')$ for all normal rulings $\rho' \in \frakr(D)$.
\end{definition} 

Arbitrary Legendrians may admit multiple maximal normal rulings, but not in our setting.

\begin{proposition}\label{prop:max_ruling}
    Let $\beta\in \Br_n^+$ and let $D$ be the front diagram of the $(-1)$-closure of $\beta\Delta$ in $\R^3$. There is a unique maximal normal ruling that we denote by $\rho_0 \in \frakr(D)$ with the property that $\rho_0$ has no departure at a crossing of $D$ corresponding to an Artin generator of $\beta$.
\end{proposition}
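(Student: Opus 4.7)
The plan is to construct $\rho_0$ explicitly and then verify both maximality and uniqueness separately. Starting from the right end of the diagram, the companion pairing coming out of the right cusps is canonical, since the right cusps themselves pair adjacent strands into companions. I would propagate this pairing leftward through the half twist $\Delta$ by declaring every crossing in $\Delta$ to be a switch. A direct check shows that, with the Maslov potential fixed on the $(-1)$-closure, every crossing of $\Delta$ has grading $0$ and sits in a nested or disjoint configuration on both sides, so this is a legitimate partial ruling on $\Delta$. This procedure fixes a definite companion pairing at the interface between $\beta$ and $\Delta$. I would then propagate this pairing further leftward through $\beta$ one crossing at a time, imposing the rule ``no departures in $\beta$'': at each crossing of $\beta$ declare it a switch whenever the configuration and grading permit, and otherwise declare it a return. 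The hypothesis $\delta(\beta) = w_0$ is precisely what guarantees that this procedure terminates at the left cusps without obstruction, since the Demazure product condition is exactly what ensures enough ``room'' in $\beta$ for the canonical pairing to pass through without needing to become interlaced.

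For maximality, I would use a local exchange argument. Suppose $\rho$ is any normal ruling with at least one departure in $\beta$. Since the companion pairing must match the left-cusp pairing on the far left of the diagram, every departure in $\beta$ must be balanced by a corresponding return somewhere to its left in $\beta$. By swapping the roles at such a departure-return pair, one produces a new ruling $\rho'$ with $s(\rho') \geq s(\rho)$ and strictly fewer departures in $\beta$. Iterating terminates at a ruling with no departures in $\beta$, which by the construction above is uniquely determined. This simultaneously yields $s(\rho_0) \geq s(\rho)$ for every $\rho \in \frakr(D)$ and uniqueness of the maximizer, since any other maximal ruling would have to coincide with the no-departure ruling produced by the construction.

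The main obstacles I anticipate are twofold. First, I would need to verify carefully that propagating the all-switches ruling through $\Delta$ is consistent with the Maslov potential on $\La(\beta\Delta)$: concretely, that every crossing of $\Delta$ has grading zero in the normalization used on the $(-1)$-closure, and that the nested/disjoint configurations remain compatible across consecutive crossings of $\Delta$. Second, the exchange argument in the maximality step is subtle because local modifications of a ruling can have nonlocal effects on the companion pairing. One must check that after each swap the resulting collection of ruling paths still satisfies the nested/disjoint/interlaced conditions at every crossing, not only at the two being modified. The Demazure product condition $\delta(\beta) = w_0$ is what ultimately controls this and guarantees termination of the exchange process. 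I expect this global consistency check to be the most technical part of the proof.
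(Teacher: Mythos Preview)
Your construction breaks at the very first step: in the $(-1)$-closure of $\beta\Delta$, the crossings of $\Delta$ are \emph{never} switches. In any normal ruling of this front, every crossing corresponding to an Artin generator of $\Delta$ is forced to be a \emph{departure}. This is not a convention or a choice; it follows from the way the right cusp structure interacts with the half twist, and the paper uses exactly this observation to perform Reidemeister~2 moves that cancel all of the $\Delta$ crossings against the right cusps while preserving the ruling. So your plan to ``propagate leftward through $\Delta$ by declaring every crossing in $\Delta$ to be a switch'' cannot produce a normal ruling at all. You also import the hypothesis $\delta(\beta)=w_0$, which the proposition does not assume.

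The paper's actual argument runs in the opposite direction from yours and is more rigid. After R2-removing the $\Delta$ crossings, one works left to right through $\beta$: the closure crossings to the left of $\beta$ are forced departures (any two strands from distinct left cusps meet exactly once there), so the leftmost crossing $c_1$ of $\beta$ is a forced return; then inductively, each $c_p$ is a forced return if it is the second meeting of the two incident ruling disks, and otherwise is declared a switch. There is no freedom at any stage, which gives both existence and uniqueness of the maximal ruling simultaneously and shows directly that it has no departures in $\beta$. Your separate ``local exchange'' argument for maximality is a different route, but it is not correct as stated: swapping a departure and a return does not by itself increase the switch count, and the assertion that every $\beta$-departure is balanced by a return to its left is unjustified and, without the forced-departure structure on $\Delta$ and the left closure crossings, need not hold.
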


\begin{proof}
    There is a one-to-one correspondence between normal rulings before and after applying a number of Reidemeister 2-moves, canceling the crossings corresponding to Artin generators of $\Delta$; see \cref{fig:r2_normal_rulings}. Indeed, for any normal ruling of $D$, every crossing of $D$ corresponding to Artin generators of $\Delta$ must automatically be a departure, and the ruling disks persist under the Reidemeister 2-moves. After applying these Reidemeister 2-moves, we proceed by induction.

    \begin{figure}[!htb]
        \centering
        \includegraphics{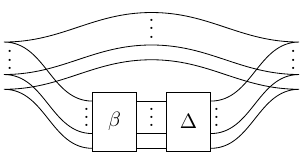}
        \hspace{5mm}
        \raisebox{8ex}{$\longrightarrow$}
        \hspace{5mm}
        \includegraphics{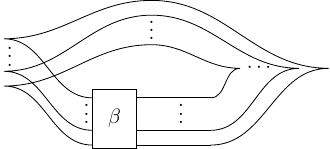}
        \caption{Before and after a number of Reidemeister 2-moves.}
        \label{fig:r2_normal_rulings}
    \end{figure}
    Assume $\beta = \sigma_{i_1} \cdots \sigma_{i_m}$ and label the crossings in the front diagram of $\beta$ from left to right by $c_1, \dots, c_m$. Consider any two strands in $D$ which go from a left cusp to a right cusp and are incident to $\beta\Delta$. They cross exactly once to the left of $\beta\Delta$ in $D$ and thus, in any normal ruling of $D$, this crossing must be a departure. If we consider $c_1$ (the leftmost crossing in $\beta\Delta$), it must then be the second intersection of the two ruling disks and so, in any normal ruling of $D$, the crossing must be a return. Assuming that $c_1 \cdots c_{p-1}$ are determined, the next crossing $c_p$ corresponds to the intersection of two ruling disks. If it is the second time the two ruling disks intersect (counted left to right), $c_p$ is forced to be a return. Otherwise, we define $c_p$ to be a switch. This is uniquely maximal.
\end{proof}

\subsection{Augmentation varieties}\label{sec:augmentation_variety}
    One method of finding invariants of Legendrian links is through the Chekanov--Eliashberg differential graded algebra (dga). The Chekanov--Eliashberg dga was first defined for Legendrian links in $\R^3$ over $\Z/2\Z$ by Chekanov \cite{ChekanovDGA} and also independently developed in generality by Eliashberg \cite{eliashberg1998invariants}; it fits in the larger framework of symplectic field theory developed by Eliashberg--Givental--Hofer \cite{EGH}. Many other versions and flavors of the Chekanov--Eliashberg dga have been introduced and studied since then \cite{ekholm2005the, ENS, ekholm2007legendrian, ekholm2017duality, an2020a, asplund2020chekanov}.
    
    \begin{notation}\label{notn:chekanov_eliashberg}
        Let $\beta \in \Br_n^+$ and let $D$ be the front diagram of the $(-1)$-closure of $\beta\Delta$ in $\R^3$ equipped with a Maslov potential $\mu$ and decorated with marked points $t_1, \ldots, t_n$. Let $(\A(D), \partial)$ denote the Chekanov--Eliashberg dga of $D$ with coefficients in $\Z[t_1^{\pm 1},\ldots,t_n^{\pm 1}]$. If $\Lambda \subset \R^3$ is a Legendrian link, we also use the notation $(\A(\Lambda), \partial)$ or $\A(\La)$ to denote the Chekanov--Eliashberg dga of the front projection of $\Lambda$. 
    \end{notation}
    \begin{remark}
        We do not give the full definition of $(\A(D), \partial)$, but we give a description of the generators of $\A(D)$ (for a full description, see e.g.\@ \cite{EtnyreNg}). They are in bijection with the crossings of $D$, right cusps of $D$, and the marked points $t_1, \ldots, t_n$. The grading of the generators in $\A(D)$ corresponding to crossings of $D$ are given as in \cref{dfn:grading_crossing}. The generators corresponding to the right cusps of $D$ have degree $1$, and the grading of each marked point $t_i^{\pm 1}$ is defined to be zero (because the rotation number of $D$ is equal to zero).
    \end{remark}

    The stable tame isomorphism class of $\A(D)$ is a powerful Legendrian isotopy invariant of $\Lambda$ whose precise definition we do not discuss. We encourage the reader to consult the survey by Etnyre--Ng \cite{EtnyreNg} and references therein. Our interest in $\A(D)$ mainly regards its augmentations.

    \begin{definition}[Augmentation]\label{dfn:augmentation}
        Let $D$ be the front diagram of a Legendrian link $\Lambda \subset \R^3$. A \emph{$\C$-valued augmentation} is a unital dga map $\epsilon\colon (\A(D),\partial)\to(\C,0)$, where $(\C,0)$ is the dga with zero differential and one generator in degree $0$.
    \end{definition}

    \begin{definition}\label{def:dg-homotopy}
        Let $D$ be the front diagram of a Legendrian link $\Lambda \subset \R^3$. Given two augmentations $\epsilon_1,\epsilon_2 \colon (\A(D), \partial) \to (\C,0)$, a \emph{dga homotopy} between $\epsilon_1$ and $\epsilon_2$ is a $\C$-linear map $\eta \colon (\A(D), \partial) \to (\C,0)$ of degree $1$ such that
        \begin{enumerate}
            \item $\epsilon_1(x)-\epsilon_2(x)=(\eta\circ\partial)(x)$ for any $x\in \A(D)$ and
            \item $\eta(xy)=\eta(x)\epsilon_2(y)+(-1)^{|x|}\epsilon_1(x)\eta(y)$ for all $x,y\in \A(D).$
        \end{enumerate}
    \end{definition}
    \begin{definition}[Augmentation variety]\label{def:aug_variety}
        Let $D$ be the front diagram of a Legendrian link $\Lambda \subset \R^3$. Denote the degree $0$ generators of $\A(D)$ by $a_1,\ldots,a_N$. The \emph{naive augmentation variety} is
        \[
        \V(D) \coloneqq \{((\epsilon(t_i))_{i=1}^n,(\epsilon(a_i))_{i=1}^N) \in (\C^\ast)^n \times \C^{N} \mid \epsilon \text{ is a $\C$-valued augmentation of }\A(D)\}.
        \]
        The \emph{augmentation variety} of $D$ is the algebraic variety (scheme)
        \[
            \Aug(D) \coloneqq \begin{cases}
                \varnothing & \text{if } \V(D) = \varnothing,\\
                \V(D)/{\simeq} &\text{otherwise,}
            \end{cases}
        \]
        where
        \[
        (\epsilon(t_1),\dots,\epsilon(t_c),\epsilon(a_1),\ldots,\epsilon(a_N)) \simeq (\phi(t_1),\dots,\phi(t_c),\phi(a_1),\ldots,\phi(a_N))
        \]
        if and only if $\epsilon$ and $\phi$ are dga homotopic.
    \end{definition}
    \begin{remark}\label{rmk:aug_naive_equal}
        If $\A(D)$ does not have any generators of negative degree, it is clear from the definition that two augmentations $\varepsilon$ and $\phi$ are dga homotopic if and only if $\varepsilon=\phi$, in which case $\Aug(D) = \V(D)$. In general, we can view $\Aug(D)$ as a functor $\operatorname{CRing}^{\textit{op}} \to \operatorname{Grpd},\, R \mapsto \V(D; R)/{\simeq}$ (where $\V(D; R)$ consists of all $R$-valued augmentations) and this defines a stack.
    \end{remark}

\begin{lemma}\label{thm:bijection_equiv}
    Let $\beta \in \Br_n^+$ and let $\{w_{ij}\}_{1 \leq i < j \leq n}$ denote the generators of $\A(\La(\Delta\beta))$ that correspond to the crossings of $\Delta$. The map
    \begin{align}\label{eq:equiv_factor_new}
        \V(\La(\Delta\beta)) &\longrightarrow \Aug(\La(\Delta\beta)) \times \C^{\binom n2} \\
        \epsilon &\longmapsto ([\epsilon],\{\epsilon(w_{ij})\}_{1 \leq i < j \leq n}) \nonumber
    \end{align}
    is an isomorphism. 
\end{lemma}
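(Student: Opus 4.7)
The plan is to prove the map is bijective by producing an explicit inverse that is algebraic in both directions. Writing $\pi\colon\V(\La(\Delta\beta))\to\Aug(\La(\Delta\beta))$ for the quotient by dga homotopy, it suffices to show that each fiber $\pi^{-1}([\epsilon])$ is parameterized bijectively by the tuple of values $\{\epsilon'(w_{ij})\}_{1\leq i<j\leq n}\in\C^{\binom n2}$ as $\epsilon'$ ranges over $\pi^{-1}([\epsilon])$, and that the resulting section is polynomial.

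The central technical input I would want is a dual description of the $w_{ij}$ inside $\A(\La(\Delta\beta))$: for each crossing of $\Delta$, there should be a corresponding degree $-1$ generator $v_{ij}$ with $\partial v_{ij} = w_{ij} + R_{ij}$, where $R_{ij}$ lies in the subalgebra generated by the other generators together with the $w_{k\ell}$ whose positions in $\Delta$ are strictly later than that of $w_{ij}$ in a total order $\preceq$ (for instance, the left-to-right order on crossings of $\Delta$). The triangular form would reflect the fact that the relevant rigid disks are localized near the $\Delta$ subword and correspond to nested cancellations of the $\Delta$-crossings against the adjacent left cusps, the algebraic shadow of an obvious sequence of Legendrian Reidemeister~2 moves removing $\Delta$ from the front.

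Granted this structural input, the proof splits into two steps. For surjectivity, given $\epsilon\in\V$ and a target $\{c_{ij}\}$, I would construct a dga homotopy $\eta$ from $\epsilon$ to some $\epsilon'\in\V$ with $\epsilon'(w_{ij})=c_{ij}$ by iteratively prescribing $\eta(v_{ij})$ in the order $\preceq$ so that \cref{def:dg-homotopy}(1) forces $\eta(\partial v_{ij}) = c_{ij}-\epsilon(w_{ij})$ at each step; the Leibniz rule \cref{def:dg-homotopy}(2) extends $\eta$ uniquely to all of $\A$. For injectivity, suppose $\eta$ witnesses $\epsilon_1\sim \epsilon_2$ with $\epsilon_1(w_{ij})=\epsilon_2(w_{ij})$ for all $(i,j)$; then $\eta(\partial v_{ij})=0$, and the triangularity of $\partial v_{ij}$ yields $\eta(v_{ij})=0$ inductively, so $\eta$ vanishes on all degree $-1$ generators and hence on all of $\A$ by Leibniz, giving $\epsilon_1=\epsilon_2$.

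The main obstacle is establishing the dual generators $v_{ij}$ and the triangular form of their differentials. This is a local moduli-of-disks computation near the $\Delta$ subword in the Lagrangian projection (obtained via Ng's resolution), where the key combinatorial miracle is the nested bigon cancellation between $\Delta$ and the adjacent left cusps. Once this structural fact is in place, the rest is a triangular linear-algebra argument and the polynomiality of the inverse is immediate from the inductive construction of $\eta$, completing the isomorphism of affine varieties.
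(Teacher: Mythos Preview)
Your overall strategy---using the degree $-1$ generators to parametrize dga homotopies, together with a triangular differential to solve for them---is exactly the paper's approach. However, your technical setup has the differential pointing the wrong way. In the Chekanov--Eliashberg dga the differential \emph{lowers} degree by one, so for a degree $-1$ generator $v_{ij}$ the element $\partial v_{ij}$ lives in degree $-2$; an identity $\partial v_{ij}=w_{ij}+R_{ij}$ with $|w_{ij}|=0$ is impossible. The correct structural fact (which the paper reads off directly from the front/Lagrangian projection near $\Delta$) is
\[
\partial w_{ij}=\alpha_{ij}\pm\sum_{i<k<j}\alpha_{kj}\,w_{ik},
\]
where the $\alpha_{ij}$ are the degree $-1$ generators coming from the crossings to the left of $\Delta\beta$. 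The triangularity is in the $\alpha$'s, not in the $w$'s. Your injectivity paragraph inherits the same error: $\eta(\partial v_{ij})$ vanishes automatically for degree reasons and carries no information; the content must come from applying $\eta\circ\partial$ to the $w_{ij}$, not to the degree $-1$ generators.

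With the corrected formula your argument goes through, and in fact simplifies. Since there are no generators in degree $<-1$, a homotopy $\eta$ is determined by the numbers $\eta(\alpha_{ij})\in\C$. Applying the homotopy relation to $w_{ij}$ gives
\[
\epsilon(w_{ij})-\epsilon'(w_{ij})=\eta(\alpha_{ij})\pm\sum_{i<k<j}\eta(\alpha_{kj}w_{ik}).
\]
The paper normalizes the representative by requiring $\epsilon'(w_{ij})=0$ for all $i<j$; then each summand on the right vanishes by the Leibniz rule (since $\epsilon'(w_{ik})=0$ and $\epsilon(\alpha_{kj})=0$), leaving $\eta(\alpha_{ij})=\epsilon(w_{ij})$. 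Thus every class has a unique representative with all $w_{ij}$ sent to zero, and the map in the lemma is the resulting splitting---no inductive order $\preceq$ is actually needed once this normalization is chosen.
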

\begin{proof}
    Label the generators of $\A(\La(\Delta\beta))$ as follows: the crossings of $\beta\Delta$ are the degree $0$ generators, label them by $\{w_{ij}\}_{1 \leq i < j \leq n}\cup \{x_i\}_{1\leq i\leq r}$; the crossings to the left of $\Delta\beta$ are the degree $-1$ generators, label them by $\alpha_{ij}$, where $\alpha_{ij}$ is the crossing between the $i$-th and $j$-th cusp if the cusps are labeled $1, \ldots, n$ from bottom to top; the crossings to the right of $\Delta\beta$ are the degree $1$ generators, label them by $\kappa_{ij}$, where $\kappa_{ij}$ is the crossing between the $i$-th and $j$-th cusp. Note that there are $\binom n2$ degree $-1$ generators and $\binom n2$ degree $1$ generators.

    In $\A(\La(\Delta\beta))$, the differential of any degree $0$ generator $x$ is given by
    \begin{equation}\label{eq:diff_homotopy_1}
        \partial x = \begin{cases}
        \alpha_{ij} \pm \sum_{i < k < j} \alpha_{kj}w_{ik}, & x = w_{ij}, \\
        0, &\text{otherwise}.
    \end{cases}
    \end{equation}
    Because there are no generators of $\A(\La(\Delta\beta))$ with degree less than $-1$, if $\epsilon_1,\epsilon_2 \colon \A(\La(\Delta\beta)) \to \C$ are two augmentations, note that a dga homotopy between $\epsilon_1$ and $\epsilon_2$ is completely determined by the values on the generators $\alpha_{ij}$.

    We now show that for any augmentation $\epsilon \colon \A(\La(\Delta\beta)) \to \C$, there is a unique dga homotopy to the augmentation $\epsilon' \colon \A(\La(\Delta\beta)) \to \C$ satisfying
    \[
    \epsilon'(x) = \begin{cases}
        0, & x = w_{ij}, \\
        \epsilon(x), &\text{otherwise}.
    \end{cases}
    \]
    By \eqref{eq:diff_homotopy_1}, it follows that if $\eta$ is a dga homotopy between $\epsilon$ and $\epsilon'$, then
    \[
    \epsilon(x) = \begin{cases}
        \eta(\alpha_{ij}), & x = w_{ij}, \\
        \epsilon'(x), & \text{otherwise},
    \end{cases}
    \]
    which immediately shows that $\eta(\alpha_{ij})$ is uniquely determined by $\epsilon(w_{ij})$ for all $1 \leq i < j \leq n$, and hence $\eta$ is uniquely determined. It follows that the map
    \[
    \epsilon \longmapsto ([\epsilon],(\epsilon(w_{ij}))_{i,j})
    \]
    is an isomorphism.
\end{proof}

\begin{notation}\label{notn:backwards} If $\beta \in \Br_n^+$ is given by $\beta = \sigma_{i_1} \cdots \sigma_{i_m}$, 
    \begin{enumerate}
        \item we denote the reversed braid word $\ateb = \sigma_{i_m} \cdots \sigma_{i_1}$ by $\ateb \in \Br_n^+$ and
        \item we denote the ``upside-down'' braid word $\atebud = \sigma_{(n+1)-i_m} \cdots \sigma_{(n+1)-i_1}$ by $\atebud \in \Br_n^+$.
    \end{enumerate}
\end{notation}

    \begin{figure}[!htb]
        \centering
        \includegraphics{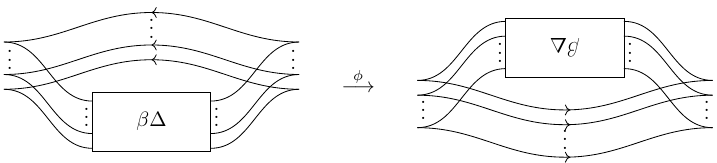}
        \caption{Transformation of the front of $\La(\beta\Delta)$ under the contactomorphism $\phi$ to the front diagram $(\Deltaud\atebud)\Lambdaud$.}
        \label{fig:front_transform}
    \end{figure}
\begin{lemma}\label{lem:reflect_iso}
    Let $\beta \in \Br_n^+$. There is an isomorphism
    \begin{equation}\label{eq:a-form_reflect_variety}
        \V(\La(\beta\Delta)) \cong \Aug(\beta\Delta) \times \C^{\binom n2}.
    \end{equation}
\end{lemma}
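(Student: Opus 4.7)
The plan is to reduce the statement to \cref{thm:bijection_equiv}, which handles the analogous situation when the half twist appears on the \emph{left} of $\beta$. The reduction proceeds via the ambient $180^\circ$ rotation of the front plane indicated in \cref{fig:front_transform}.

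First, I would introduce the contactomorphism $\phi\colon \R^3 \to \R^3$ induced by rotating the front $xz$-plane by $180^\circ$ (together with the compatible transformation in the fiber direction so that the standard contact structure on $\R^3$ is preserved). Inspection of \cref{fig:front_transform} shows that $\phi$ carries the marked front diagram of $\La(\beta\Delta)$ to the marked front diagram of $\La(\Deltaud\atebud)$ in the sense of \cref{notn:backwards}; in particular, $\Deltaud$ is again a positive braid lift of the longest word $w_0$ with exactly $\binom n2$ crossings. Because $\phi$ is a contactomorphism, it induces an isomorphism of Chekanov--Eliashberg dgas that preserves the splitting of degree $0$ generators into the $\binom n2$ crossings of the half twist and the remaining crossings of the braid. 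Consequently,
\begin{equation*}
    \V(\La(\beta\Delta))\cong \V(\La(\Deltaud\atebud)) \qquad\text{and}\qquad \Aug(\La(\beta\Delta))\cong \Aug(\La(\Deltaud\atebud)).
\end{equation*}

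Next, I would apply \cref{thm:bijection_equiv} with $\atebud\in\Br_n^+$ playing the role of $\beta$ and $\Deltaud$ playing the role of $\Delta$, yielding
\begin{equation*}
    \V(\La(\Deltaud\atebud)) \cong \Aug(\La(\Deltaud\atebud)) \times \C^{\binom{n}{2}}.
\end{equation*}
Composing with the isomorphisms from the first step then gives the desired isomorphism $\V(\La(\beta\Delta))\cong \Aug(\La(\beta\Delta))\times \C^{\binom{n}{2}}$.

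The main technical point I expect to require care is the first step: one must verify that the contactomorphism $\phi$ actually realizes the transformation depicted in \cref{fig:front_transform} and that the induced dga isomorphism preserves the structural data used in \cref{thm:bijection_equiv}, namely the partition of degree $0$ generators into ``half-twist crossings'' $\{w_{ij}\}$ and the remaining crossings, together with the marked points $t_1,\dots,t_n$ (which get carried to marked points on the transformed front). Once this compatibility is established, the rest of the argument is a formal composition of the two isomorphisms.
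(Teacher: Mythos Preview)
Your proposal is correct and follows essentially the same approach as the paper: apply the $180^\circ$ rotation contactomorphism $\phi(x,y,z)=(-x,y,-z)$ to move the half twist to the left, then invoke (the proof of) \cref{thm:bijection_equiv}, and compose. The paper is slightly more explicit about one intermediate step you gloss over---it identifies the Lagrangian projection of the rotated front with $\La_{\text{Lag}}(\atleD\ateb)$ via the teardrop isotopy of \cref{fig:teardrop_isotopy_1}, which is what makes the differential computation from the proof of \cref{thm:bijection_equiv} directly applicable---and it uses the notation $(\Deltaud\atebud)\Lambdaud$ rather than $\La(\Deltaud\atebud)$ for the rotated front, but these are exactly the verifications you flag as the ``main technical point.''
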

\begin{proof}
    First, we consider a Legendrian isotopy obtained by moving the teardrops in Ng's resolution of the $(-1)$-closure of $\beta\Delta$ from the right to the left as shown in \cref{fig:teardrop_isotopy_1}, followed by the contactomorphism $\phi(x,y,z) = (-x,y,-z)$. We end up with a Legendrian that is the Lagrangian $(-1)$-closure of $\atleD\ateb$, denoted by $\La_{\text{Lag}}(\atleD\ateb)$.
    \begin{figure}[!htb]
        \centering
        \includegraphics{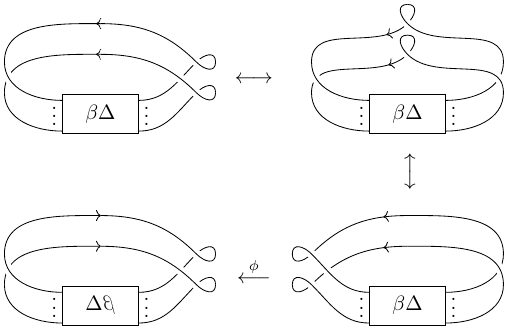}
        \caption{A Legendrian isotopy followed by the contactomorphism $\phi$ relating $\La(\beta\Delta)$ and $\La_{\text{Lag}}(\atleD\ateb)$.}
        \label{fig:teardrop_isotopy_1}
    \end{figure}
    The generators of $\A(\La(\beta\Delta))$ of degree $\pm 1$ correspond to crossings that are not in the braid word $\beta\Delta$. Under this transformation, they map to the degree $\pm 1$ generators of $\A(\La_{\text{Lag}}(\atleD\ateb))$. In the case where $\beta = \Delta\gamma$, it is simple to describe the bijection on the degree $0$ generators; in particular, the crossing $z_i$ maps to the crossing $z_{n-i}$.

    The contactomorphism of $\R^3$ defined by $\phi(x,y,z) = (-x,y,-z)$ transforms the front diagram of $\La(\beta\Delta)$ as depicted in \cref{fig:front_transform}. The result is the Legendrian submanifold that we denote by $(\Deltaud\atebud)\Lambdaud$.
    
    Therefore we have that 
    \[
        \V((\Deltaud\atebud)\Lambdaud) \cong \V(\La(\beta\Delta)) \quad\text{and}\quad \Aug((\Deltaud\atebud)\Lambdaud) \cong \Aug(\La(\beta\Delta)).
    \]
    Since the Lagrangian projection of $(\Deltaud\atebud)\Lambdaud$ is $\La_{\text{Lag}}(\atleD\ateb)$ as depicted in \cref{fig:teardrop_isotopy_1}, the proof of \cref{thm:bijection_equiv} now shows that there is an isomorphism
    \[
        \V((\Deltaud\atebud)\Lambdaud) \cong \Aug((\Deltaud\atebud)\Lambdaud) \times \C^{\binom n2},
    \]
    and hence we get the composition of isomorphisms
    \begin{equation*}
    \V(\La(\beta\Delta)) \cong \V((\Deltaud\atebud)\Lambdaud) \cong \Aug((\Deltaud\atebud)\Lambdaud) \times \C^{\binom n2} \cong \Aug(\La(\beta\Delta)) \times \C^{\binom n2}. \qedhere
    \end{equation*}
\end{proof}
\begin{remark}\label{rmk:nonexplicit_iso}
    In contrast to \cref{thm:bijection_equiv}, we do not have an explicit description of the isomorphism \eqref{eq:a-form_reflect_variety} for general $\beta$. If $\beta = \Delta\gamma$ it coincides with \eqref{eq:equiv_factor_new}, because in this case the isomorphism $\A(\La(\Delta\gamma\Delta)) \cong \A((\Deltaud\ammagud\Deltaud)\Lambdaud)$ is the identity on the degree $\pm 1$ generators, and acts by $z_i \mapsto z_{n-i}$ on the degree $0$ generators.
\end{remark}

It is useful to understand how normal rulings of $\La(\beta\Delta)$ and $(\Deltaud\atebud)\Lambdaud$ are related.

 \begin{lemma}\label{lma:rulings_upside_down}
    Let $\beta \in \Br_n^+$. There is a bijection
    \[
    \mathfrak{R}(\La(\beta\Delta)) \overset{\cong}{\longrightarrow} \mathfrak{R}((\Deltaud\atebud)\Lambdaud),
    \]
    sending a normal ruling $\rho \in \mathfrak{R}(\La(\beta\Delta))$ given by a sequence $(x_1,\ldots,x_m)$ of returns, switches and departures to the normal ruling $\ohrud \in \mathfrak{R}((\Deltaud\atebud)\Lambdaud)$ that is obtained by reversing the sequence $(x_1,\ldots,x_m)$ followed by changing all returns to departures, and vice versa.
\end{lemma}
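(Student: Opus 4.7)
The plan is to derive the bijection from the geometric transformation $\La(\beta\Delta) \to (\Deltaud\atebud)\Lambdaud$ that was used in the proof of \cref{lem:reflect_iso}: namely, a Legendrian isotopy moving the teardrops of $\Delta$ around the closure, followed by the contactomorphism $\phi(x,y,z) = (-x,y,-z)$. This transformation identifies the crossings of $\beta\Delta$ in $\La(\beta\Delta)$ with the crossings of $\Deltaud\atebud$ in $(\Deltaud\atebud)\Lambdaud$. Since $\phi$ acts on the front projection by $(x,z) \mapsto (-x,-z)$ (a $180^\circ$ rotation), and the teardrop isotopy does not alter the relative positions of the crossings of $\beta\Delta$, this identification \emph{reverses} their left-to-right order.

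First I would verify that normal rulings propagate naturally through this transformation. The isotopy moving teardrops is supported away from the crossings of $\beta\Delta$, so the ruling disks near those crossings are carried rigidly, while the contactomorphism $\phi$ is a global diffeomorphism sending ruling disks to ruling disks. Maslov potentials, and hence crossing gradings, are preserved under both operations. Consequently, any $\rho \in \mathfrak{R}(\La(\beta\Delta))$ is carried to a well-defined candidate normal ruling $\ohrud \in \mathfrak{R}((\Deltaud\atebud)\Lambdaud)$.

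The core step is to analyze how the type of each crossing transforms. The topological configurations ``nested,'' ``disjoint,'' and ``interlaced'' of a pair of ruling disks near a crossing are each invariant under $180^\circ$ rotation of the plane, but this rotation swaps the roles of ``left of the crossing'' and ``right of the crossing.'' Consequently, a switch (nested/disjoint on both sides) remains a switch, a departure (nested/disjoint on the left, interlaced on the right) becomes a return, and a return becomes a departure. Combined with the order-reversal, this shows that if $\rho$ is encoded by the sequence $(x_1,\ldots,x_m)$, then $\ohrud$ is encoded by the sequence obtained from $(x_m,\ldots,x_1)$ by swapping returns and departures, matching the formula in the statement. Since the recipe is manifestly an involution, it yields the desired bijection.

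The main obstacle is justifying that the Legendrian isotopy of teardrops genuinely induces a natural bijection on rulings, since (as flagged after \cref{dfn:normal_ruling}) propagation of rulings under general Legendrian isotopies is delicate. I would handle this by observing that the relevant isotopy does not cross any strand of $\beta$ and can therefore be decomposed into a sequence of elementary moves affecting only the teardrops and the closure strands; for each such move, the propagation of ruling disks reduces to a finite local check that can be verified directly from \cref{fig:ruling}.
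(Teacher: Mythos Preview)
Your core analysis is correct and matches the paper's: the contactomorphism $\phi$ acts on the front by the $180^\circ$ rotation $(x,z)\mapsto(-x,-z)$, which carries ruling disks to ruling disks, preserves the nested/disjoint/interlaced configurations, and swaps the left and right sides of each crossing, so switches stay switches while returns and departures are exchanged.

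However, you have introduced an unnecessary complication. In the proof of \cref{lem:reflect_iso} the teardrop isotopy is used to reach the \emph{Lagrangian} closure $\La_{\text{Lag}}(\atleD\ateb)$; by contrast, the front diagram $(\Deltaud\atebud)\Lambdaud$ is \emph{defined} to be the image of the front of $\La(\beta\Delta)$ under $\phi$ alone (see \cref{fig:front_transform}). No Legendrian isotopy is involved in passing between the two fronts in this lemma. Consequently the ``main obstacle'' you raise at the end---justifying that rulings propagate through the teardrop isotopy---is an artifact of your setup and simply does not arise. The paper's proof is accordingly one sentence: $\phi$ gives a bijection on ruling disks, preserves normality, and swaps returns with departures. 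Once you drop the isotopy step, your argument is the paper's argument.
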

\begin{proof}
    The contactomorphism $\phi$ yields a bijection between the ruling disks, and preserves the normality condition; see \cref{fig:front_transform}. By construction, the contactomorphism $\phi$ swaps the returns and departures.
\end{proof}

\subsection{Weaves}\label{sec:weaves}

A weave is a combinatorial diagram used to encode Legendrian surfaces in $\R^5$ with the standard contact structure, and were introduced by Casals--Zaslow \cite{CasalsZaslow20}. Roughly, it is a graph $\w$ in the unit disk $\D^2$ that encodes a Legendrian surface $L(\w) \subset \R^5$ by describing the singularities of its front projection in $\R^3$. The Lagrangian projection of $L(\w)$ to $\R^4$ with the standard symplectic structure is an exact (immersed) Lagrangian surface with Legendrian boundary.

\begin{definition}[$n$-weave]
An \emph{$n$-weave} $\w$ on the unit disk $\D^2$ is a set of $n-1$ embedded graphs $\{G_i\}_{i=1}^{n-1}$ with only trivalent vertices (see \cref{fig:weave-vertex1}) such that $G_i$ and $G_{i+1}$ intersect only at hexagonal vertices (see \cref{fig:weave-vertex2}), and $G_i$ and $G_j$ intersect only at smooth points on each graph when $|i-j|\geq 2$. 
\end{definition}

\begin{figure}[!htb]
    \centering
    \begin{subfigure}{0.45\textwidth}
        \centering
        \includegraphics[scale=0.75]{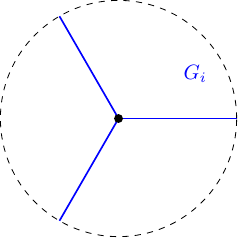}
        \caption{A trivalent vertex.}\label{fig:weave-vertex1}
    \end{subfigure}
    \begin{subfigure}{0.45\textwidth}
        \centering
        \includegraphics[scale=0.75]{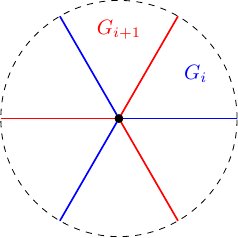}
        \caption{A hexagonal vertex.}\label{fig:weave-vertex2}
    \end{subfigure}
    \caption{The two types of vertices in a weave.}
\end{figure}

Given an $n$-weave $\w \subset \D^2$, we consider a singular surface in $\D^2 \times \R$ whose transverse self-intersection set is $\w$ and its projection onto $\D^2$ is an $n$-fold cover branched over the trivalent vertices of $\w$. There are local models for the Legendrian surface $L(\w)$ whose front projection is encoded by an $n$-weave, i.e., local models for trivalent and hexavalent vertices; see \cite[Definition 2.7]{CasalsZaslow20}. Note that in loc. cit., the Legendrian surface is referred to as a ``Legendrian weave'' and the corresponding graph is an ``$n$-graph.'' We have instead chosen to adopt the terminology of \cite{CGGS1,CGGS2,CGGLSS} and refer to the graph itself as a weave in order to better reflect our focus on braid varieties and their decompositions.

The restriction of the $n$-weave $\w$ to $\partial \D^2$ (denoted by $\w|_{\partial \D^2}$) is a collection of disjoint points, each one corresponding to $G_i$ for some $i\in \{1,\ldots,n-1\}$. We associate a positive braid $\beta(\w) \in \Br_n^+$ (that is well-defined up to cyclic permutation) to the restricted $n$-weave $\w|_{\partial \D^2}$ by traversing $\partial \D^2$ in the counterclockwise direction and replacing each point belonging to $G_i$ with the Artin generator $\sigma_i\in \Br_n^+$. The positive braid $\beta(\w)$ defines a Legendrian link $\Lambda_\partial(\w) \subset J^1S^1$ by taking the Legendrian lift of the closure of $\beta(\w)$ in $S^1 \times \R$; this is the $(-1)$-closure of $\beta(\w)$ in $J^1S^1$.

\begin{notation}
    Let $\w$ be an $n$-weave such that $\beta(\w) = \beta_1 \beta_2$ where the braid word $\beta_1 \subset \beta(\w)$ lies in $\partial \D^2_+ \coloneqq \partial \D^2 \cap \{z > 0\} \subset \partial \D^2$ and the braid word $\beta_2 \subset \beta(\w)$ lies in $\partial \D^2_- \coloneqq \partial \D^2 \cap \{z > 0\} \subset \partial \D^2$. We say that $\w$ is a \emph{weave from $\beta_1$ to $\beta_2$}, and write $\w \colon \beta_1 \to \beta_2$. We use a diffeomorphism $\Psi \colon \D^2 \setminus \{\pm 1\} \overset{\cong}{\to} (0,1) \times [0,1]$ that identifies $\partial \D^2_+$ with $(0,1) \times \{0\}$ and $\partial \D^2_-$ with $(0,1) \times \{1\}$. We call $\w|_{(0,1) \times [0,\epsilon)}$ for some small $\epsilon > 0$ the \emph{top} of the weave, and $\w|_{(0,1) \times (1-\epsilon,1]}$ the \emph{bottom} of the weave.
\end{notation}

Let $\beta_1,\beta_2,\beta_3, \beta_4 \in \Br_n^+$, and let $\w_1 \colon \beta_1 \to \beta_2$ and $\w_2 \colon \beta_3 \to \beta_4$ be two weaves. The \emph{horizontal composition of $\w_1$ and $\w_2$} is the weave $\w_1 \sqcup \w_2$ shown in \cref{fig:horizontal_comp}. For $\beta_2=\beta_3$, the \emph{vertical composition of $\w_1$ and $\w_2$} is the weave $\w_2 \circ \w_1$ shown in \cref{fig:vertical_comp}.

\begin{figure}[!htb]
    \centering
    \includegraphics[scale=0.9]{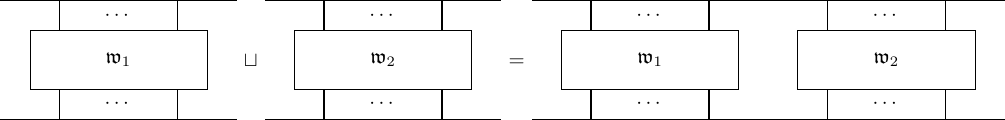}
    \caption{Horizontal composition of weaves.}
    \label{fig:horizontal_comp}
\end{figure}

\begin{figure}[!htb]
    \centering
    \includegraphics[scale=0.9]{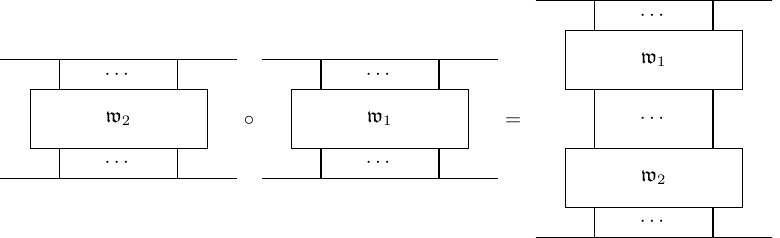}
    \caption{Vertical composition of weaves.}
    \label{fig:vertical_comp}
\end{figure}

Following \cite[Section 4]{CGGS1}, we now define sliced weaves, Demazure weaves, and simplifying weaves.

\begin{definition}[Sliced weave {\cite[Definition 4.2]{CGGS1}}]\label{dfn:sliced_weave}
    For a positive braid $\beta \in \Br_n^+$, a \emph{sliced weave} $\w$ is a weave $\beta \to \beta'$ that is defined as the vertical composition of weaves of the following six local models. The weave that:
    \begin{itemize}
        \item is trivial; \cref{fig:demazure1}.
        \item merges two adjacent strands into a trivalent vertex; \cref{fig:demazure2}.
        \item merges three adjacent strands into a hexagonal vertex; \cref{fig:demazure3} for the example with strands $G_iG_{i+1}G_i$. The case for $G_{i+1}G_iG_{i+1}$ is analogous.
        \item merges two adjacent strands into a \emph{cup}; \cref{fig:demazure4}.
        \item merges two adjacent strands into a \emph{cap}; \cref{fig:demazure5}.
        \item merges two distant strands (meaning $|i - j| > 1$) into a \emph{tetravalent vertex}; \cref{fig:demazure6}.
    \end{itemize}
\end{definition}
\begin{figure}[!htb]
    \centering
    \begin{subfigure}{0.32\textwidth}
        \centering
        \includegraphics{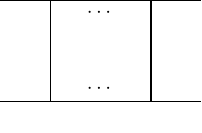}
        \caption{}\label{fig:demazure1}
    \end{subfigure}
    \begin{subfigure}{0.32\textwidth}
        \centering
        \includegraphics{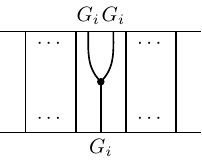}
        \caption{}\label{fig:demazure2}
    \end{subfigure}
    \begin{subfigure}{0.32\textwidth}
        \centering
        \includegraphics{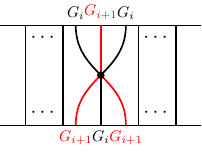}
        \caption{}\label{fig:demazure3}
    \end{subfigure}
    \begin{subfigure}{0.32\textwidth}
        \centering
        \includegraphics{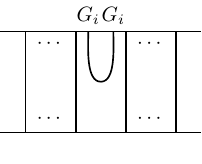}
        \caption{}\label{fig:demazure4}
    \end{subfigure}
    \begin{subfigure}{0.32\textwidth}
        \centering
        \includegraphics{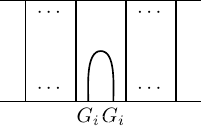}
        \caption{}\label{fig:demazure5}
    \end{subfigure}
    \begin{subfigure}{0.32\textwidth}
        \centering
        \includegraphics{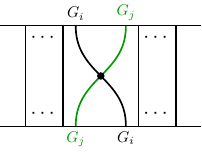}
        \caption{}\label{fig:demazure6}
    \end{subfigure}
    \caption{Local models for sliced weaves.}
\end{figure}

\begin{definition}[Demazure and simplifying weaves]
    Let $\beta \in \Br_n^+$.
    \begin{enumerate}
        \item A \emph{simplifying weave} is a sliced weave $\beta \to \delta(\beta)$ without any caps.
        \item A \emph{Demazure weave} is a simplifying weave $\beta \to \delta(\beta)$ without any cups.
    \end{enumerate}
\end{definition}

\begin{definition}
    We say that we perform a \emph{braid move} on a weave if we take the vertical composition with any one of the three local models depicted in \cref{fig:demazure1,fig:demazure3,fig:demazure6}.
\end{definition}

The Demazure product $\delta$ (\cref{def:demazure_prod}) allows us to construct a useful set of weaves.

\begin{notation}
    Let $\beta=\sigma_{i_1} \cdots \sigma_{i_r} \in \Br_n^+$. For $k\in \{1,\ldots,r\}$ we define $\beta^{\leq k}\coloneqq \sigma_{i_1}\dots \sigma_{i_k}$.     
\end{notation}

\begin{definition}[Right simplifying weave]\label{dfn:inductive_simplifying_weave}
    Suppose $\beta=\sigma_{i_1} \cdots \sigma_{i_r} \in \Br_n^+$. A \emph{right simplifying weave} is a weave $\w\colon \beta\to \delta(\beta)$ that is constructed by inductively defining braids $\beta_k'$ and weaves $\w(\beta^{\leq k}):\beta^{\leq k}\to\beta_k'$ (where $\beta_k'$ is not necessarily $\delta(\beta^{\leq k})$) as follows:
    \begin{itemize}
        \item Set $\beta_0'$ to be the empty word and $\w(\beta^{\leq 0})$ to be the empty weave.
        \item If $\delta(\beta_j'\sigma_{i_{j+1}})=\delta(\beta_j')s_{i_{j+1}}$, then we set $\beta_{j+1}'=\beta_j'\sigma_{i_{j+1}}$ and $\w(\beta^{\leq j}\sigma_{i_{j+1}})$ is obtained by the horizontal composition of $\w(\beta^{\leq j})$ and the trivial weave on a single strand belonging to $G_{i_{j+1}}$; see \cref{fig:inductive1}. 
        \item If $\delta(\beta_j'\sigma_{i_{j+1}}) = \delta(\beta_{j}')$, then apply braid moves to $\w(\beta^{\leq j})$ such that its rightmost strand belongs to $G_{i_{j+1}}$, and define $\w(\beta^{\leq j}\sigma_{i_{j+1}})$ to be the horizontal composition of $\w(\beta^{\leq j})$ and the trivial weave on a single strand belonging to $G_{i_{j+1}}$, followed by vertical composition with either of the following two local pictures:
            \begin{itemize}
                \item a weave that is trivial except that there is a trivalent vertex joining the two rightmost strands; see \cref{fig:inductive2}. In this case, we set $\beta_{j+1}'=\beta_j'$.
                \item a weave that is trivial except that there is a cup joining the two rightmost strands; see \cref{fig:inductive3}. In this case, we set $\beta_{j+1}'\sigma_{i_{j+1}}=\beta_j'$. 
    \end{itemize}
    \end{itemize}
\end{definition}
Note that the $\beta_k'$ depend on $\w$ and that when there is a choice of whether to introduce a trivalent vertex or a cup, these will in general produce distinct right simplifying weaves and different $\beta_k'$ braids will appear in the construction process. These choices may be constrained by the requirement that $\w$ is a weave from $\beta$ to $\delta(\beta)$.
\begin{definition}[Right inductive weave {\cite[Definition 4.5]{CGGLSS}}]\label{dfn:inductive_weave}
A \emph{right (left) inductive weave} $\w(\beta) \colon \beta \to \delta(\beta)$ is a right (left) inductive simplifying weave with no cups. 
\end{definition}

\begin{figure}[!htb]
    \centering
    \begin{subfigure}{0.3\textwidth}
        \centering
        \includegraphics[scale=0.9]{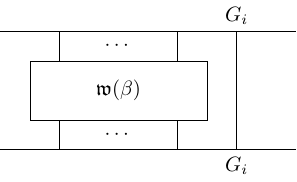}
        \caption{}\label{fig:inductive1}
    \end{subfigure}
    \begin{subfigure}{0.3\textwidth}
        \centering
        \includegraphics[scale=0.9]{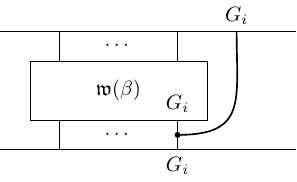}
        \caption{}\label{fig:inductive2}
    \end{subfigure}
    \begin{subfigure}{0.3\textwidth}
        \centering
        \includegraphics[scale=0.9]{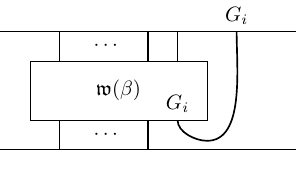}
        \caption{}\label{fig:inductive3}
    \end{subfigure}
    \caption{Defining local models for a right simplifying weave.}
\end{figure}

We now introduce an equivalence relation on the set of right simplifying weaves. By construction any right simplifying weave $\w$ is given by $\w_r \circ \cdots \circ \w_1$, where each $\w_i$ contains exactly one of the local models depicted in \cref{fig:inductive1,fig:inductive2,fig:inductive3}. There is a natural projection map $p$ from the set of right simplifying weaves to finite sequences $(x_1,\ldots,x_r)$ where each $x_i\in \{A,B,C\}$ indicates which local model appears in $\w_i$.

\begin{definition}[Inductive equivalence]\label{dfn:inductive_equiv}
    Let $\beta\in \Br_n^+$. We say that two right simplifying weaves $\w$ and $\w'$ from $\beta$ to the same braid $\delta(\beta)$ are \emph{inductive equivalent}, denoted by $\w \approx \w'$, if and only if $p(\w) = p(\w')$.
\end{definition}

\begin{remark}
    Two inductive equivalent right simplifying weaves are represented by the same sequence of trivalent vertices and cups, but may contain different sequences of hexavalent and tetravalent vertices.
\end{remark}

 \begin{question}\label{qst:inductive_equiv_implies_weave_equiv}
 Given any two right simplifying weaves that are inductive equivalent, must they then be weave equivalent?
\end{question}

 Inductive weaves with at most one trivalent vertex are weave equivalent~\cite[Theorem 4.17]{CGGS1}. See~\cite[Section 4.1]{CGGS1} for a definition of weave equivalence.

\subsection{Braid varieties}\label{sec:braid_variety}

In this section we define the braid variety $X(\beta)$ following Casals--Gorsky--Gorsky--Le--Shen--Simental \cite{CGGS1,CGGLSS}. We follow the conventions in \cite[Section 2.1]{CGGS1} (those in \cite[Section 3.3]{CGGLSS} are different); an equivalent definition of braid varieties is given in \cite[Section 6.3]{GLSS}. We focus on $A$-type reductive groups: $G = \GL(n, \C)$ and $B$ is the Borel subgroup of upper triangular matrices. We implicitly fix an identification of the Weyl group $W = S_n$ with the group of permutation matrices in $G = \GL(n,\C)$.

For any $i \in \{1,\dots,n-1\}$, define the braid matrix $B_i(z) \in G$ by
\begin{equation}\label{eq:braid_matrix}
    B_i(z) \coloneqq I_{i-1} \oplus B(z) \oplus I_{n-i-1}, \quad \text{where }\, B(z) \coloneqq \begin{pmatrix}0 & 1 \\ 1 & z\end{pmatrix},
\end{equation}
meaning that $B_i(z)$ is the $n\times n$ identity matrix with the $2\times 2$ matrix $B(z)$ inserted as a block with its upper left corner at position $(i,i)$. These matrices are closely related to the Bruhat decomposition of the general linear group $G = \GL(n, \C)$; see \cref{sec:braid_sheaf}.

\begin{notation}\label{notn:braid_matrix_word}
    Let $\beta = \sigma_{i_1}\cdots \sigma_{i_r} \in \Br_n^+$, and $z = (z_1,\ldots,z_r) \in \C^r$. We use the notation
    \[
        B_{\beta}(z) \coloneqq B_{i_r}(z_{r}) \cdots B_{i_1}(z_{1}).
    \]
\end{notation}

\begin{definition}[Braid variety {\cite[Section 3.3]{CGGLSS}}]\label{dfn:braid_var}
    Let $\beta = \sigma_{i_1}\cdots\sigma_{i_r} \in \Br_n^+$ and $\pi \in S_n$. Then the \emph{braid variety of $\beta$ and $\pi$} is defined as
    \[
    X(\beta, \pi) \coloneqq \{(z_1, \ldots, z_r) \in \C^r \mid B_{\beta}(z)\pi \in B\}.
    \] 
    In particular, we write $X(\beta) \coloneqq X(\beta, \delta(\beta))$.
\end{definition}
\begin{remark}
    By \cite[Theorem 3.7]{CeballosLabbeStump} or \cite[Lemma 3.4]{CGGLSS}, we know that if $\delta(\beta \sigma_i) = \delta(\beta) s_i$, then $X(\beta \sigma_i) = X(\beta)$. Hence, we focus on $\beta$ such that $\delta(\beta)=w_0$.
\end{remark}

\subsection{Braid varieties are augmentation varieties}
We show that the augmentation variety of the $(-1)$-closure of $\beta\Delta$ is isomorphic to the braid variety of $\beta \in \Br_n^+$ with $\delta(\beta) = w_0$. This is done via an intermediary Lagrangian projection of $\beta\Delta$ called the pigtail closure denoted by $\La_{\mathrm{pig}}(\beta \Delta)$; see \cref{fig:pigtail_closure}.

\begin{figure}[!htb]
    \centering
    \includegraphics{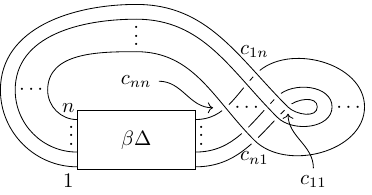}
    \caption{The Lagrangian projection of the Legendrian pigtail closure $\La_{\mathrm{pig}}(\beta\Delta)$.}
    \label{fig:pigtail_closure}
\end{figure}

\begin{proposition}\label{thm:augmentation_pig=braid variety}
    Let $\beta \in \Br_n^+$ be such that $\delta(\beta) = w_0$. There is an isomorphism 
    \[
    X(\beta) \cong \Aug(\Lambda_{\mathrm{pig}}(\beta\Delta))
    \]
    such that the braid matrix variables are mapped to the corresponding degree $0$ generators of $\A(\La_{\mathrm{pig}}(\beta\Delta))$.
\end{proposition}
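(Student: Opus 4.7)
The plan is to compute the Chekanov--Eliashberg dga $\A(\La_{\mathrm{pig}}(\beta\Delta))$ explicitly and show that its augmentation equations coincide with the matrix equation $B_\beta(z)w_0 \in B$ defining the braid variety $X(\beta)$. The pigtail closure is chosen precisely so that the differentials at the right cusps admit a clean encoding via the braid matrices $B_i(z)$ of \eqref{eq:braid_matrix}.

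\textbf{Step 1: Generators and degrees.} First I would identify the generators of $\A(\La_{\mathrm{pig}}(\beta\Delta))$ via Ng's resolution of the front depicted in \cref{fig:pigtail_closure}. The degree $0$ generators are: the variables $z_1,\dots,z_r$ corresponding to the crossings of $\beta$, the variables $w_{ij}$ (for $1\leq i<j\leq n$) corresponding to the crossings of $\Delta$, and additional variables $p_{ij}$ coming from the pigtail crossings. The only generators of positive degree are the $n$ right cusps, each in degree $1$. In particular, there are no negative-degree generators, so \cref{rmk:aug_naive_equal} gives $\Aug(\La_{\mathrm{pig}}(\beta\Delta))=\V(\La_{\mathrm{pig}}(\beta\Delta))$.

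\textbf{Step 2: Braid matrix encoding of the cusp differentials.} Next, I would use the standard correspondence between disk counts in Ng's resolution and products of braid matrices. Reading across the braid $\beta\Delta$, each crossing $\sigma_i$ with coordinate $t$ contributes a matrix $B_i(t)$, so the word $\beta\Delta$ contributes the product $B_\Delta(w)\,B_\beta(z)$ in the notation of \cref{notn:braid_matrix_word}. The pigtail region, being a fixed braid-like tangle with its own crossings $p_{ij}$, contributes a matrix $P(p)$ whose structure I would read off from the explicit diagram. The key computation is that the differentials of the $n$ degree-$1$ cusp generators are precisely the matrix entries, off the upper triangular part, of
\[
M(z,w,p) \;=\; B_\beta(z)\,B_\Delta(w)\,P(p)\,w_0^{-1},
\]
so that the set of augmentations is cut out by the condition $M(z,w,p)\in B$.

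\textbf{Step 3: Elimination and comparison with $X(\beta)$.} I then need to show that the pigtail and $\Delta$-crossing variables are uniquely determined by the $z_i$. Because $\Delta$ lifts $w_0$ and the pigtail is designed to be a ``mirror'' to $\Delta$, the factor $B_\Delta(w)P(p)w_0^{-1}$ can be shown, by an induction on $n$ and a direct matrix calculation, to take the form of an upper triangular matrix whose above-diagonal entries are independent polynomial functions of $w$ and $p$; equivalently, after a triangular (hence invertible) change of variables, one can set this factor to the identity. Under this substitution the equations $M\in B$ reduce to $B_\beta(z) w_0 \in B$, which is exactly the definition of $X(\beta)$ (using $\delta(\beta)=w_0$). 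The map $X(\beta)\to\V(\La_{\mathrm{pig}}(\beta\Delta))$ then sends $(z_1,\dots,z_r)$ to the augmentation with $\epsilon(z_i)=z_i$ and with the $\epsilon(w_{ij}),\epsilon(p_{kl})$ determined by the triangular elimination, and this is an algebraic isomorphism with the asserted behavior on braid matrix variables.

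\textbf{Main obstacle.} The principal technical work is Step 2: the explicit disk count verifying that the cusp differentials assemble into the single matrix relation $B_\beta(z)B_\Delta(w)P(p)w_0^{-1}\in B$, together with the sign bookkeeping required so that the permutation $w_0$ appears correctly. A secondary obstacle is verifying in Step 3 that the triangular elimination is actually invertible; this is a careful but direct check using the specific tangle that defines $\La_{\mathrm{pig}}$, and it is essentially what makes the pigtail closure (rather than, say, the plat closure of \cref{lem:reflect_iso}) give an \emph{explicit} isomorphism on the braid matrix variables in contrast to \cref{rmk:nonexplicit_iso}.
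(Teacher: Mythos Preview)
Your overall strategy---compute the dga of the pigtail closure, show its augmentation equations reduce to $B_\beta(z)w_0\in B$, and eliminate the remaining variables---is exactly the paper's approach. However, your Step~1 misidentifies the structure of the generators, and this error propagates through Steps~2 and~3.

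The pigtail closure in \cref{fig:pigtail_closure} is a \emph{Lagrangian} projection, so there are no cusps at all: every generator is a crossing. The crossings in the pigtail region are not degree~$0$ generators $p_{ij}$ as you write, but rather the degree~$1$ generators $c_{ij}$ for $1\le i,j\le n$; there are $n^2$ of them, assembled into a matrix $C=(c_{ij})$. There is no separate set of $n$ right-cusp generators. Consequently your matrix $P(p)$ does not exist, and the variables you plan to eliminate in Step~3 are simply not there. You have also omitted the marked points $t_1,\dots,t_n$ (one per strand between $\beta$ and $\Delta$), which contribute the diagonal matrix $D(t)=\diag(t_1,\dots,t_n)$ and an ambient $(\C^\ast)^n$-factor to the naive augmentation variety.

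The correct computation, citing \cite[Proposition~5.2]{CasalsNg}, is that the entire differential is packaged as the single matrix equation
\[
\partial C \;=\; I_n + B_\beta(z)\,D(t)\,B_\Delta(w),
\]
so $\V(\La_{\mathrm{pig}}(\beta\Delta))$ is cut out by $B_\beta(z)D(t)B_\Delta(w)=-I_n$. The permutation $w_0$ does not appear explicitly in the differential; rather, one uses the factorization $B_\Delta(w)=L(u)\,w_0$ with $L(u)$ lower unitriangular (from \cite[Example~2.2]{CGGS1}) to rewrite the equation as $w_0B_\beta(z)=-L(u)^{-1}D(t)^{-1}$, which is lower triangular. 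Since the $u_{ji}$ are algebraically independent polynomials in the $w_{ij}$, the variables $(w,t)$ are uniquely recovered from $z$, yielding the isomorphism with $X(\beta)$. Your Step~3 intuition about triangular elimination is correct in spirit, but it applies to $(w,t)$ rather than to your phantom variables $(w,p)$.
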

\begin{proof}
     Let $\beta = \sigma_{i_1} \sigma_{i_2} \cdots \sigma_{i_r}$. Recall from \cref{notn:chekanov_eliashberg} that the Chekanov--Eliashberg dga of $\Lambda_{\mathrm{pig}}(\beta\Delta)$ is denoted by $\A(\Lambda_{\mathrm{pig}}(\beta\Delta))$. First, note that the degree 0 generators of $\A(\Lambda_{\mathrm{pig}}(\beta\Delta))$ correspond to the crossings of $\beta\Delta$, which we denote by $\{z_i\}_{1\leq i\leq r} \cup \{w_{ij}\}_{1\leq i<j \leq n}$. Second, note that the degree 1 generators of $\A(\Lambda_{\mathrm{pig}}(\beta\Delta))$ correspond to the crossings labeled by $\{c_{ij}\}_{1\leq i,j\leq n}$ in \cref{fig:pigtail_closure}. We place one marked point on each strand in between the braids $\beta$ and $\Delta$ and denote the corresponding generators of the Chekanov--Eliashberg dga $\{t_i\}_{1\leq i\leq n}$. Define $D(t) \coloneqq \diag(t_1,\dots,t_n)$. By \cite[Proposition 5.2]{CasalsNg}, the non-trivial differentials in the Chekanov--Eliashberg dga are given by 
     \[
     \partial C = I_n + B_\beta(z) D(t) B_\Delta(w),
     \]
     where $C = (c_{ij})_{i,j}$. Therefore, the augmentation variety of $\Lambda_{\mathrm{pig}}(\beta\Delta)$ is
     \[
     \V(\La_{\mathrm{pig}}(\beta\Delta)) = \{ (z, w, t) \in \C^r \times \C^{\binom n2} \times (\C^\ast)^n \mid I_n + B_\beta(z) D(t) B_\Delta(w) = 0 \},
     \]
     and since $\A(\Lambda_{\mathrm{pig}}(\beta\Delta))$ does not have any generators of negative degree, by \cref{rmk:aug_naive_equal}, $\V(\Lambda_{\mathrm{pig}}(\beta\Delta)) = \Aug(\Lambda_{\mathrm{pig}}(\beta\Delta))$. By \cite[Example 2.2]{CGGS1}, we can find algebraically independent polynomials of $\{w_{ij}\}_{1\leq i<j\leq n}$, which we denote by $\{u_{ji}\}_{1\leq i<j\leq n}$, such that
     \begin{align*}
         B_\Delta(w) = L(u) w_0 = \begin{pmatrix}
             1 & 0 & \dots & 0 \\ u_{21} & 1 & \dots & 0 \\
             \vdots & \vdots & \ddots & \vdots \\ u_{n1} & u_{n2} & \dots & 1
         \end{pmatrix}
         \begin{pmatrix}
             0 & \dots & 0 & 1 \\ 0 & \dots & 1 & 0 \\
             \vdots & \iddots & \vdots & \vdots \\ 1 & \dots & 0 & 0
         \end{pmatrix},
     \end{align*}
     where
     \[L(u)=\begin{pmatrix}
             1 & 0 & \dots & 0 \\ u_{21} & 1 & \dots & 0 \\
             \vdots & \vdots & \ddots & \vdots \\ u_{n1} & u_{n2} & \dots & 1
         \end{pmatrix}.\]
     Therefore, the augmentation variety $\Aug(\Lambda_{\mathrm{pig}}(\beta\Delta)) = \V(\Lambda_{\mathrm{pig}}(\beta\Delta))$ is defined by the equation
     \begin{align*}
         w_0B_\beta(z) = -L(u)^{-1} D(t)^{-1} = -\begin{pmatrix}
             1 & 0 & \dots & 0 \\ u_{21} & 1 & \dots & 0 \\
             \vdots & \vdots & \ddots & \vdots \\ u_{n1} & u_{n2} & \dots & 1
         \end{pmatrix}^{-1}
         \begin{pmatrix}
             t_1 & 0 & \dots & 0 \\ 0 & t_2 & \dots & 0 \\
             \vdots & \vdots & \ddots & \vdots \\ 0 & 0 & \dots & t_n
         \end{pmatrix}^{-1}
     \end{align*}
     and hence, in the augmentation variety, $w_0B_\beta(z)$ is always lower triangular. Equivalently, $B_\beta(z)w_0$ is always upper triangular.

     Conversely, for any $z \in \C^r$ such that $B_\beta(z)w_0$ is upper triangular, we know that the variables $\{u_{ji}\}_{1\leq i<j\leq n}$ and $\{t_i\}_{1\leq i\leq n}$ are uniquely determined. Thus, by \cite[Lemma 3.2]{CGGLSS}, the variables $\{w_{ij}\}_{1\leq i<j\leq n}$ are also uniquely determined. This shows that
     \[
     \Aug(\Lambda_{\mathrm{pig}}(\beta \Delta)) = \V(\Lambda_{\mathrm{pig}}(\beta\Delta)) = \{z \in \C^r \mid B_\beta(z)w_0 \text{ is upper triangular} \} \cong X(\beta),
     \]
     which is the sought isomorphism.
\end{proof}

\begin{lemma}\label{lem:isomorphism_naive_aug} 
    Let $\beta \in \Br_n^+$ be such that $\delta(\beta) = w_0$. There is a natural injection
    \[
    \V(\La_\mathrm{pig}(\beta\Delta))\hooklongrightarrow \V(\La(\beta\Delta)).
    \]
\end{lemma}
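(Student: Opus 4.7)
The plan is to exhibit the injection as a set-theoretic inclusion of subvarieties of a common ambient space. The Chekanov--Eliashberg dgas $\A(\La_\mathrm{pig}(\beta\Delta))$ and $\A(\La(\beta\Delta))$ share the same degree zero generators $\{z_i\}_{1 \le i \le r} \cup \{w_{ij}\}_{1 \le i < j \le n}$, corresponding to the crossings of $\beta\Delta$, together with the same marked points $\{t_i\}_{1 \le i \le n}$, one per strand between $\beta$ and $\Delta$. Consequently both $\V(\La_\mathrm{pig}(\beta\Delta))$ and $\V(\La(\beta\Delta))$ are subvarieties of the common ambient space of triples $(z,w,t) \in \C^r \times \C^{\binom{n}{2}} \times (\C^\ast)^n$, cut out by the vanishing of the appropriate differentials of degree one generators. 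I would take the natural map to be the identity on this ambient space; injectivity is then automatic.

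The content of the lemma is then to verify the inclusion: every triple $(z,w,t)$ satisfying the pigtail augmentation condition also satisfies the $(-1)$-closure augmentation condition. By the computation in \cref{thm:augmentation_pig=braid variety}, the pigtail condition is the single matrix equation $I_n + B_\beta(z) D(t) B_\Delta(w) = 0$. The $(-1)$-closure conditions, by contrast, come from the differentials of the degree one right cusp generators of $\A(\La(\beta\Delta))$. I would compute these differentials directly from the front diagram of $\La(\beta\Delta)$ in nearly plat position: each one is a specific entry of a matrix built from $B_\beta(z)$, $B_\Delta(w)$, $D(t)$, and the plat-closure pairing matrix, and so vanishes as a direct consequence of the single pigtail matrix equation.

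The main obstacle is the explicit identification of the right cusp differentials in $\A(\La(\beta\Delta))$ and the bookkeeping showing that each one is implied by $I_n + B_\beta(z) D(t) B_\Delta(w) = 0$. To organize this calculation, I would apply Ng's resolution to pass from the front projection of $\La(\beta\Delta)$ to a Lagrangian projection and then track a sequence of Reidemeister moves connecting it to $\La_\mathrm{pig}(\beta\Delta)$. Since such moves induce stable tame isomorphisms in which the additional generators introduced by the pigtails sit in degree $\pm 1$, they do not alter the values of the degree zero generators or the marked points; the required implication between augmentation conditions can therefore be checked locally near each cusp, yielding the sought inclusion.
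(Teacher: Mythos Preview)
Your approach is essentially the same as the paper's: both identify the injection as an inclusion of subvarieties of the common ambient space $\C^r \times \C^{\binom{n}{2}} \times (\C^\ast)^n$, using the Legendrian isotopy (via Ng's resolution and Reidemeister moves, citing \cite{CasalsNg}) to match up the degree zero generators. The paper carries out explicitly the computation you describe as ``the main obstacle'': it shows that $\V(\La(\beta\Delta))$ is cut out by the condition that $I_n + B_\beta(z)D(t)B_\Delta(w)$ is \emph{strictly lower triangular} (the on-or-above-diagonal entries compute the differentials of the degree one generators), whereupon the inclusion is immediate since the pigtail condition requires the entire matrix to vanish.
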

\begin{proof}
    By \cite[Figure 8]{CasalsNg}, there is a Legendrian isotopy from $\La_\mathrm{pig}(\beta\Delta)$ to $\La(\beta\Delta)$ inducing a dga map $\A(\La_\mathrm{pig}(\beta\Delta))\to \A(\La(\beta\Delta))$ that we now describe. First, we label the generators of $\A(\La_\mathrm{pig}(\beta\Delta))$ as in the proof of \cref{thm:augmentation_pig=braid variety}.
    
    Next, we label the generators of $\A(\La(\beta\Delta))$: the crossings of $\beta\Delta$ are the degree $0$ generators and are labeled $\{x_i\}_{1\leq i\leq r}\cup \{w_{ij}\}_{1\leq i<j\leq n}$; the crossings to the left of $\beta\Delta$ are degree $-1$ generators labeled $\{\alpha_{ij}\}_{1\leq i,j\leq n}$, where $\alpha_{ij}$ is the crossing between the $i$-th and $j$-th cusp if the cusps are labeled $1, \ldots, n$ from bottom to top; the crossings to the right of $\beta\Delta$ are degree $1$ generators labeled $\kappa_{ij}$, where $\kappa_{ij}$ is the crossing between the $i$-th and $j$-th cusp. Note that there are $\binom n2$ degree $-1$ generators and $\binom n2$ degree $1$ generators. The isotopy from $\La_\mathrm{pig}(\beta\Delta)$ to $\La(\beta\Delta)$ includes Reidemeister 2-moves that add and remove pairs of generators and Reidemeister 3-moves whose induced dga map is trivial. The induced dga map $\A(\La_\mathrm{pig}(\beta\Delta))\to \A(\La(\beta\Delta))$ is defined as follows on generators:
    \[
    z_i\longmapsto x_i, \quad
    w_{ij}\longmapsto w_{ij}, \quad
    c_{ij}\longmapsto \begin{cases}
        \kappa_{ij}, & i\geq j,\\
        0, & i < j.
    \end{cases}
    \]
    Following a similar argument to that of \cref{thm:augmentation_pig=braid variety}, we see that 
    \[
    \V(\Lambda(\beta\Delta)) = \{ (x, w, t) \in \C^r \times \C^{\binom n2} \times (\C^\ast)^n \mid I_n + B_\beta(z) D(t)  B_{\Delta}(w)\text{ is strictly lower triangular} \}.
    \]
    To verify this, we label the strands from bottom to top and note that by \cite{Kalman06} (see also, \cite[Section 5.1]{CasalsNg}) the entry $(i, j)$ of $B_\beta(z)$ counts paths beginning at the left of $\beta$ starting with the $i$-th strand and ending on the right of $\beta$ on the $j$-th strand. Consider the resulting matrix with entries $(i, j)$ for $1\leq i\leq j\leq n$. The entries that lie on or above the diagonal compute the differential of the degree $1$ generators of $\A(\La(\beta\Delta))$ and define $\V(\La(\beta\Delta))$.
    
    Again by \cite[Example 2.2]{CGGS1}, we can find algebraically independent polynomials of $\{w_{ij}\}_{1\leq i<j\leq n}$, which we denote by $\{u_{ji}\}_{1\leq i<j\leq n}$, such that
     \begin{align*}
         B_\Delta(w) = L(u) w_0 = \begin{pmatrix}
             1 & 0 & \dots & 0 \\ u_{21} & 1 & \dots & 0 \\
             \vdots & \vdots & \ddots & \vdots \\ u_{n1} & u_{n2} & \dots & 1
         \end{pmatrix}
         \begin{pmatrix}
             0 & \dots & 0 & 1 \\ 0 & \dots & 1 & 0 \\
             \vdots & \iddots & \vdots & \vdots \\ 1 & \dots & 0 & 0
         \end{pmatrix}.
     \end{align*}
    Therefore the variety $\V(\Lambda(\beta \Delta))$ is defined by the equation
    \[
    B_\beta(z)=L'w_0^{-1}L(u)^{-1}D(t)^{-1},
    \]
    where $L'$ is a lower unitriangular matrix given by subtracting $I_n$ from the strictly lower triangular matrix given in the equation defining $\V(\La(\beta\Delta))$. Conjugating $w_0^{-1}$ past $L'$, we obtain an upper unitriangular matrix $U'$ and we conclude that  $\V(\La(\beta\Delta))$ is cut out by the equation $w_0B_\beta(z)=U'L(u)^{-1}D(t)^{-1}$.
    Equivalently, $\V(\La(\beta\Delta))$ is defined by the condition that $B_\beta(z)w_0$ admits an $LU$ decomposition. 
\end{proof}

\begin{theorem}\label{thm:-1=braid vty_new}
    Let $\beta\in \Br_n^+$ such that $\delta(\beta) = w_0$. There is an isomorphism $\Aug(\La(\beta\Delta))\cong X(\beta)$. Moreover, if $\beta = \Delta\gamma$ for some $\gamma \in \Br_n^+$, the isomorphism is the identity map on the degree $0$ generators corresponding to $\gamma$ in the Chekanov--Eliashberg dgas.
\end{theorem}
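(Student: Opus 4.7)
The plan is to construct the isomorphism as a composition of the preceding results and then verify the composition is an isomorphism. By \cref{thm:augmentation_pig=braid variety}, $X(\beta)\cong \V(\La_\mathrm{pig}(\beta\Delta))$; since $\A(\La_\mathrm{pig}(\beta\Delta))$ has no negative-degree generators, \cref{rmk:aug_naive_equal} gives $\V(\La_\mathrm{pig}(\beta\Delta))=\Aug(\La_\mathrm{pig}(\beta\Delta))$. By \cref{lem:isomorphism_naive_aug} there is a natural injection $\V(\La_\mathrm{pig}(\beta\Delta))\hookrightarrow\V(\La(\beta\Delta))$, and by \cref{lem:reflect_iso} there is an isomorphism $\V(\La(\beta\Delta))\cong \Aug(\La(\beta\Delta))\times \C^{\binom n2}$. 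Projecting to the first factor yields a morphism $X(\beta)\to \Aug(\La(\beta\Delta))$, and the claim is that this composition is the desired isomorphism.

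To verify that the composition is an isomorphism, I first observe the matching of dimensions $\dim X(\beta)=\dim \Aug(\La(\beta\Delta))$, which follows from \cref{lem:reflect_iso}. Bijectivity is established using Legendrian isotopy invariance of augmentation moduli: the pigtail closure $\La_\mathrm{pig}(\beta\Delta)$ and the standard closure $\La(\beta\Delta)$ are Legendrian isotopic (see \cite[Figure 8]{CasalsNg}), and the Reidemeister moves comprising this isotopy induce a stable tame isomorphism of Chekanov--Eliashberg dgas, yielding an isomorphism $\Aug(\La_\mathrm{pig}(\beta\Delta))\cong \Aug(\La(\beta\Delta))$. The main obstacle is to verify that this isomorphism coincides with the composition displayed above; this amounts to showing that the injection from \cref{lem:isomorphism_naive_aug}, which is induced by the same isotopy, selects exactly one representative in $\V(\La_\mathrm{pig}(\beta\Delta))$ from each dga-homotopy class of $\V(\La(\beta\Delta))$. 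Concretely, one checks that the subvariety $\V(\La_\mathrm{pig}(\beta\Delta))\subset \V(\La(\beta\Delta))$ sits as the zero locus of the $\binom n2$ strictly-lower-triangular parameters of $I_n+B_\beta(x)D(t)B_\Delta(w)$, and that these parameters are complementary to the fibers of the quotient map $\V(\La(\beta\Delta))\twoheadrightarrow\Aug(\La(\beta\Delta))$.

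For the second part, assume $\beta=\Delta\gamma$ and let $z_i$ be a degree $0$ generator of $\A(\La(\beta\Delta))$ corresponding to a crossing of $\gamma$. Under \cref{thm:augmentation_pig=braid variety}, the braid-matrix variable at position $i$ maps to the corresponding degree $0$ generator of $\A(\La_\mathrm{pig}(\beta\Delta))$, and under the inclusion of \cref{lem:isomorphism_naive_aug} this is sent to $z_i$ itself. By \cref{rmk:nonexplicit_iso}, for $\beta=\Delta\gamma$ the isomorphism of \cref{lem:reflect_iso} coincides with \eqref{eq:equiv_factor_new}, so the projection $\V(\La(\beta\Delta))\twoheadrightarrow \Aug(\La(\beta\Delta))$ is simply the quotient by dga homotopy. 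Because the differential of each such $z_i$ in $\A(\La(\Delta\gamma\Delta))$ involves no degree $-1$ generator (by the same analysis as in the proof of \cref{thm:bijection_equiv}, where only the crossings of the leftmost $\Delta$ contribute degree $-1$ generators to their differentials), dga homotopy preserves the value $\epsilon(z_i)$, and the composition is therefore the identity on these generators.
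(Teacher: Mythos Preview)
Your approach is essentially the same as the paper's: both reduce to comparing $\Aug(\La_{\mathrm{pig}}(\beta\Delta))$ with $\Aug(\La(\beta\Delta))$ via \cref{thm:augmentation_pig=braid variety} and \cref{lem:isomorphism_naive_aug}, using that the injection on naive augmentation varieties is induced by the same Legendrian isotopy that gives the isomorphism of augmentation varieties.

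The paper's execution is slightly cleaner: rather than routing through \cref{lem:reflect_iso} and then checking that the image of $\V(\La_{\mathrm{pig}}(\beta\Delta))$ is transverse to the dga-homotopy fibers, it simply records the commutative square
\[
\begin{tikzcd}
    \V(\Lambda_{\mathrm{pig}}(\beta\Delta)) \rar[equal] \dar[hook] & \Aug(\La_{\mathrm{pig}}(\beta\Delta)) \dar{\cong} \\
    \V(\Lambda(\beta\Delta)) \rar[two heads] & \Aug(\La(\beta\Delta))
\end{tikzcd}
\]
where the right vertical map is the isomorphism coming from Legendrian isotopy invariance and commutativity holds because both vertical maps are induced by that same isotopy. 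This immediately gives bijectivity of the composite and shows that the $z_i$ descend to $\Aug(\La(\beta\Delta))$, without any appeal to \cref{lem:reflect_iso} or to an explicit transversality check. Your treatment of the ``Moreover'' clause is more detailed than the paper's, and correct.
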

\begin{proof}
    By \cref{thm:augmentation_pig=braid variety}, it suffices to prove that there is an somorphism $\Aug(\La(\beta\Delta)) \cong \Aug(\La_{\mathrm{pig}}(\La(\beta\Delta))$ with the desired properties.

    By \cref{lem:isomorphism_naive_aug} there is a natural injection $\V(\Lambda_{\mathrm{pig}}(\beta\Delta)) \hookrightarrow \V(\La(\beta\Delta))$. We have a commutative diagram of algebraic varieties
    \[
    \begin{tikzcd}
        \V(\Lambda_{\mathrm{pig}}(\beta\Delta)) \rar[equal] \ar[d, hook] & \Aug(\La_{\mathrm{pig}}(\beta\Delta)) \dar{\cong} \\
    \V(\Lambda(\beta\Delta)) \ar[r, two heads] & \Aug(\La(\beta\Delta))
    \end{tikzcd}.
    \]
    This implies that the coordinate functions $z_i$, for $ {n \choose 2} + 1\leq i \leq r$ on $\V(\Lambda(\beta\Delta))$ descend to the augmentation variety $\Aug(\La(\beta\Delta))$.
\end{proof}

\subsection{Braid varieties, flags and sheaves}\label{sec:braid_sheaf}
We give an alternative description of braid varieties in terms of flags using the moduli space of microlocal rank 1 sheaves, another invariant of Legendrians \cite{STZ_ConstrSheaves}. There is an equivalence between (the categories of) microlocal rank 1 sheaves and augmentations of Legendrian knots~\cite{NRSSZ}.

A complete flag $F$ is defined as a sequence of vector spaces $0 \subset F_1 \subset F_2 \subset \dots \subset F_{n-1} \subset \C^n$ where $\dim F_i = i$. For two complete flags $F(x)$ and $F(y)$, we say that they are in relative position $s_i \in S_n$ if $F_j(x) = F_j(y)$ for $j \neq i+1$, and if $F_{i+1}(x)$ and $F_{i+1}(y)$ are transverse in $F_{i+2}(x) = F_{i+2}(y)$. If $F(x)$ and $F(y)$ are in relative position $s_i \in S_n$ we use the notation $F(x) \xrightarrow{s_i} F(y)$. Complete flags are parametrized by the flag variety $G/B$, and we fix an identification between $xB \in G/B$ and complete flags for $x\in G$. Using the Bruhat decomposition $G = \bigsqcup_{w \in S_n} BwB$, we alternatively define two flags $xB$ and $yB$ to be in relative position $w \in S_n$ if $x^{-1}y \in BwB$; in this case we again use the notation $xB \xrightarrow{s_i} yB$. See e.g.\@ the survey \cite{Speyer23} for more details on the Bruhat decomposition and flags.

The proof of the following proposition is the same as the proof of \cite[Corollary 3.7]{CGGLSS} with the only exception that we use different braid matrices compared to \cite{CGGLSS}.

\begin{proposition}\label{prop:braid_variety}
    Let $\beta = \sigma_{i_1} \cdots \sigma_{i_r} \in \Br_n^+$ and $\pi \in S_n$. The braid variety of $\beta$ and $\pi$ is defined as
    \[
    X(\beta, \pi) \coloneqq \{(F_1, F_2, \dots, F_{r+1}) \in (G/B)^{r+1} \mid F_1 = B,\; F_k \xrightarrow{s_{i_k}} F_{k+1}, \; F_{r+1} = \pi B\}.
    \]
    We write $X(\beta) \coloneqq X(\beta, \delta(\beta))$.
\end{proposition}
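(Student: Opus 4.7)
The plan is to construct an explicit algebraic isomorphism between the matrix-theoretic definition of $X(\beta,\pi)$ given in \cref{dfn:braid_var} and the flag-theoretic description above. Given $(z_1,\ldots,z_r) \in \C^r$ with $B_\beta(z)\pi \in B$, I will set $g_1 := I$ and inductively $g_{k+1} := g_k \cdot B_{i_k}(z_k)^{-1}$, and then define flags $F_k := g_k B \in G/B$.

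First, I would verify that this tuple lies in the flag-theoretic variety. The boundary condition $F_1 = B$ is immediate. The key identity is the factorization $B_i(z) = s_i u_i(z)$, where $u_i(z) := I + z E_{i,i+1}$ is upper unitriangular, and hence $B_i(z)^{-1} = u_i(-z) s_i \in B s_i B$. This yields $g_k^{-1} g_{k+1} = B_{i_k}(z_k)^{-1} \in B s_{i_k} B$, which is exactly $F_k \xrightarrow{s_{i_k}} F_{k+1}$. Telescoping gives $g_{r+1} = B_{i_1}(z_1)^{-1} \cdots B_{i_r}(z_r)^{-1} = B_\beta(z)^{-1}$, so the boundary condition $F_{r+1} = \pi B$ becomes $B_\beta(z)^{-1} \in \pi B$, which inverts to $B_\beta(z)\pi \in B$.

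For the inverse map, the key lemma is that $z \mapsto B_i(z)^{-1} B$ defines a bijection $\C \to B s_i B / B$. Direct computation shows that this flag has $V_j$ standard for $j \neq i$ and $V_i / V_{i-1}$ spanned by the image of $e_{i+1} - z e_i$; as $z$ ranges over $\C$, this exhausts the cell $B s_i B / B$, each flag hit exactly once. Given a tuple of flags satisfying the conditions, I recover $(z_1,\ldots,z_r)$ inductively by taking $z_k$ to be the unique scalar with $F_{k+1} = g_k B_{i_k}(z_k)^{-1} B$ and setting $g_{k+1} := g_k B_{i_k}(z_k)^{-1}$. The two constructions are mutual inverses and polynomial in the respective coordinates, so the bijection is an isomorphism of algebraic varieties.

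The main obstacle will be reconciling the conventions. The matrix $B_\beta(z) = B_{i_r}(z_r) \cdots B_{i_1}(z_1)$ is the product in reverse order with respect to the braid word, while the natural forward flag construction $g_{k+1} = g_k B_{i_k}(z_k)$ would collapse in the present convention because $B_i(z) B = s_i B$ is independent of $z$ (using $B_i(z) = s_i u_i(z)$ with $u_i(z) \in B$). Working with $B_i(z)^{-1}$ instead both yields a genuine parametrization of the Bruhat cell and reverses the product order to match, since $g_{r+1} = B_\beta(z)^{-1}$ pairs correctly with the condition $B_\beta(z)\pi \in B$. This is essentially the argument of \cite[Corollary 3.7]{CGGLSS}, adapted to the present braid matrix convention.
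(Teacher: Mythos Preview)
Your argument is correct and is precisely the proof the paper points to: the paper does not write out a proof of this proposition at all, only stating that it ``is the same as the proof of \cite[Corollary~3.7]{CGGLSS} with the only exception that we use different braid matrices.'' You have supplied exactly that adaptation, including the key convention fix---working with $g_{k+1}=g_kB_{i_k}(z_k)^{-1}$ so that the telescoped product $g_{r+1}=B_\beta(z)^{-1}$ matches the reversed-order definition of $B_\beta(z)$ and so that the map $z\mapsto B_i(z)^{-1}B$ genuinely parametrizes $Bs_iB/B$.
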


Motivated by the result that augmentations are sheaves \cite{NRSSZ}, the isomorphism in \cref{thm:-1=braid vty_new} should lead to an isomorphism between the braid variety and a certain moduli space of microlocal rank $1$ sheaves, namely, microlocal rank $1$ sheaves for the $(-1)$-closure $\Lambda(\beta\Delta)) \subset \R^3$ satellited over the standard unknot or the cylindrical closure $\Lambda_{\mathrm{cyl}}(\Lambda(\beta\Delta))$ satellited over the outward conormal $\Lambda_{S^1} \subset S^*\R^2$ of the unit circle $S^1 \subset \R^2$.

\begin{notation}\label{notn:sheaves}
    Let $D$ be a front diagram of a Legendrian link $\varLambda \subset \R^3$ with a set of marked points $T \subset D$. Let $\mathcal{M}_1^\textit{fr}(D, T)$ denote the framed moduli space of microlocal rank 1 sheaves on $\R^2$ with singular support on $D$. We sometimes also use the notation $\mathcal M_1^\textit{fr}(\varLambda,T)$.
\end{notation}

The following result is essentially contained in \cite[Corollaries 6.3 and 6.6]{CasalsLi} or \cite[Lemma 4.3 and Proposition 4.4]{CasalsWeng}, but we provide a direct proof here for exposition.

\begin{proposition}\label{prop:sheaves_braid_var}
    Let $\beta \in \Br_n^+$ such that $\delta(\beta) = w_0$. Then there are isomorphisms
    \[
    X(\beta) \cong \mathcal{M}_1^\textit{fr}(\Lambda_\mathrm{cyl}(\beta\Delta), T) \cong \mathcal{M}_1^\textit{fr}(\Lambda(\beta\Delta), T),
    \]
    where $\mathcal{M}_1^\textit{fr}(\Lambda(\beta\Delta), T)$ is the framed moduli space of microlocal rank 1 sheaves on $\R^2$ with singular support on $\Lambda(\beta\Delta)$ with one base point per strand on the braid.
\end{proposition}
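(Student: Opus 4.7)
The plan is to proceed in two steps: first establish $X(\beta) \cong \mathcal M_1^\textit{fr}(\Lambda_\mathrm{cyl}(\beta\Delta), T)$, and then establish $\mathcal M_1^\textit{fr}(\Lambda_\mathrm{cyl}(\beta\Delta), T) \cong \mathcal M_1^\textit{fr}(\Lambda(\beta\Delta), T)$. For the first, I will use the combinatorial description of microlocal rank $1$ sheaves from Shende--Treumann--Zaslow \cite{STZ_ConstrSheaves}: such a sheaf on $\R^2$ with singular support on the front $D$ of $\Lambda_\mathrm{cyl}(\beta\Delta)$ is equivalent to assigning a vector space to each region of $\R^2 \setminus D$ together with maps across arcs, subject to (a) the microlocal rank condition at each strand, (b) the crossing relation (two generic flags in relative position $s_i$ at the $i$-th braid crossing), and (c) the cusp condition. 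The framing data (one marked point per strand) rigidifies the choice of trivialization in each region so that the stalks are genuine vector spaces rather than lines up to scalar, and the tuple of regions cut out by the strands organizes into a complete flag refining the microlocal filtration.

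Unrolling the front of $\Lambda_\mathrm{cyl}(\beta\Delta)$ along the braid $\beta\Delta$ then produces a sequence of complete flags $F_1 \xrightarrow{s_{i_1}} F_2 \xrightarrow{s_{i_2}} \cdots \xrightarrow{s_{i_{r+\binom{n}{2}}}} F_{r+\binom{n}{2}+1}$, and the cylindrical closure identifies $F_{r+\binom{n}{2}+1}$ with $F_1$ up to the monodromy of the trivialization. Parametrizing the transitions by the braid matrices $B_{i_k}(z_k)$ of \eqref{eq:braid_matrix} and using that the $\Delta$-portion is always generic, the closure condition $F_{r+\binom{n}{2}+1} = F_1$ combined with the factorization $B_\Delta(w) = L(u)w_0$ from the proof of \cref{thm:augmentation_pig=braid variety} collapses to the condition $B_\beta(z)w_0 \in B$. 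By \cref{dfn:braid_var} and \cref{prop:braid_variety}, this exactly cuts out $X(\beta)$; the identification is an isomorphism because the flag description determines the sheaf up to unique isomorphism in the framed setting.

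For the second isomorphism, I will use invariance of the framed microlocal sheaf moduli under Legendrian isotopy inside the contact ambient manifold. The cylindrical closure $\Lambda_\mathrm{cyl}(\beta\Delta)$, viewed as a satellite along the outward conormal $\Lambda_{S^1} \subset S^*\R^2$ of the unit circle, is Legendrian isotopic (through satellite operations) to the $(-1)$-closure $\Lambda(\beta\Delta) \subset \R^3$ viewed under the front-cone identification $\R^3 \hookrightarrow J^1 S^1$; the isotopy passes through the half-twist closing up the braid on the left, which is already present in $\beta\Delta$ since $\delta(\beta) = w_0$. Under this isotopy the sets of marked points can be tracked consistently, and \cite{STZ_ConstrSheaves, NRSSZ} (or \cite[Corollaries 6.3 and 6.6]{CasalsLi}) guarantees that the framed moduli spaces are canonically isomorphic.

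The main obstacle will be the bookkeeping in the first step: carefully matching the conventions for braid matrices \eqref{eq:braid_matrix} and the STZ crossing conditions, and justifying that the cylindrical closure relation on flags produces precisely $B_\beta(z)w_0 \in B$ rather than a conjugate or Borel-shifted version. A clean way around this is to follow the argument of \cite[Lemma 4.3 and Proposition 4.4]{CasalsWeng} verbatim, after checking that the marked-point conventions align with those used there. The second step is essentially formal once the isotopy is constructed, but care is required to verify that the isotopy fixes the framings setwise, so that it induces an isomorphism of the framed moduli spaces and not just a torsor over $(\C^*)^n$.
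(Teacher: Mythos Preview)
Your approach is correct and essentially matches the paper's: both rest on the Shende--Treumann--Zaslow flag description of microlocal rank~1 sheaves and invoke the same results from \cite{CasalsLi,CasalsWeng}. The paper orders the steps slightly differently---it first passes from the cylindrical to the $(-1)$-closure via \cite[Proposition~6.5]{CasalsLi} (which identifies the top-strand flag as being in relative position $w_0$ with $F_1$), and then strips off the $\Delta$ portion by Reidemeister~2 moves on the front rather than by your algebraic factorization $B_\Delta(w)=L(u)w_0$---but the substance is the same.
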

\begin{proof}
    Write $\beta = \sigma_{i_1} \dots \sigma_{i_{r}}$ and $\Delta = \sigma_{i_{r+1}} \dots \sigma_{i_{r+n(n-1)/2}}$ and $N \coloneqq r+1+n(n-1)/2$. Then, by \cite[Proposition 1.5]{STZ_ConstrSheaves}, \cite[Corollary 6.3]{CasalsLi}, or \cite[Lemma 4.3]{CasalsWeng}, we know that the moduli space of microlocal rank 1 sheaves on $\R^2$ with singular support on $\Lambda(\beta\Delta)$ is given by
    \[
    \mathcal{M}_1(\Lambda_\mathrm{cyl}(\beta\Delta)) = \{(F_1, \dots, F_N) \in (G/B)^N \mid F_k \xrightarrow{s_{i_k}} F_{k+1}, \; F_1 = F_N\}/G.
    \]
    By the argument in \cite[Proposition 6.5]{CasalsLi}, we know that the moduli space of the $(-1)$-closure is isomorphic to the moduli space of the cylindrical closure, where the flag on the top $n$-strands in the $(-1)$-closure is in relative position $w_0$ with the flag $F_1 = F_N$
    \[
    \mathcal{M}_1(\Lambda_\mathrm{cyl}(\beta\Delta)) \cong \mathcal{M}_1(\Lambda(\beta\Delta)).
    \]
    Consider Reidemeister 2-moves at the right cusps and the $\Delta$ crossings as in \cref{fig:r2_normal_rulings} which induce equivalences of the moduli spaces of sheaves as in \cite[Proposition 6.5]{CasalsLi}. The flag on the top $n$-strands in the $(-1)$-closure is now the opposite flag of $F_{r+1}$. Thus, we have that 
    \[
    \mathcal{M}_1(\Lambda(\beta\Delta)) = \{(F_1, \dots, F_{r+1}) \in (G/B)^{r+1} \mid F_k \xrightarrow{s_{i_k}} F_{k+1},\; F_1 = w_0F_{r+1}\}/G.
    \]
    Finally, given the framing by one base points per strand on the braid, we may assume that $F_1 = B$. Therefore we have
    \[
    \mathcal{M}_1^\textit{fr}(\Lambda_\mathrm{cyl}(\beta\Delta), T) \cong \mathcal{M}_1^\textit{fr}(\Lambda(\beta\Delta), T) = X(\beta).
    \]
\end{proof}

\section{Decompositions of braid and augmentation varieties}\label{sec:decomp_braid_varieties_mcs}

In this section we discuss various decompositions of braid varieties and augmentation varieties. In \cref{sec:weave_decomp} we describe the weave decomposition of the braid variety introduced by Casals--Gorsky--Gorsky--Simental \cite{CGGS1}. Braid varieties generalize open Richardson varieties, and in \cref{sec:deodhar_decomp} we show that the weave decomposition of braid varieties generalizes the well-known Deodhar decomposition of open Richardson varieties. After reviewing the definition of a Morse complex sequence in \cref{sec:mcs} we describe the ruling decomposition of the augmentation variety in \cref{sec:ruling_decomp} introduced by Henry--Rutherford \cite{HenryRutherford15}.

\subsection{Weave decomposition}\label{sec:weave_decomp}
    In this section we follow \cite[Section 5]{CGGS1} and associate to a weave $\w \colon \beta_1 \to \beta_2$ an algebraic variety $\mathfrak X(\w)$ which is used to construct a so-called ``weave decomposition" of the braid variety associated to $\beta_1$. We first address the case of (unframed) algebraic weaves and then address framed algebraic weaves.

    \subsubsection{Algebraic weaves}

    \begin{definition}[Algebraic weave {\cite[Definition 5.1]{CGGS1}}]\label{dfn:algebraic_weave}
        An \emph{algebraic weave} is a sliced weave $\w \colon \beta_1 \to \beta_2$ (see \cref{dfn:sliced_weave}) that has been decorated with dashed rays at every trivalent vertex, cup, and cap, that go to the right boundary of the weave; see \cref{fig:alg_weave1,fig:alg_weave2,fig:alg_weave3}. A dashed ray at a point $(b,r)$ is the subset $(b,1) \times \{r\} \subset (0,1) \times [0,1]$.

        A transverse intersection between a dashed ray and the weave is called a \emph{virtual vertex}.

        In an algebraic weave, a \emph{weave segment} is an edge of the weave that is not part of a dashed ray. A \emph{dashed segment} is an edge that is an edge of the weave that belongs to a dashed ray. Equip an algebraic $n$-weave $\w \colon \beta_1 \to \beta_2$  with the following:
    \begin{itemize}
        \item For each $i$-colored weave segment, a variable $z$. 
        \item For each dashed segment, an $n\times n$ invertible upper triangular matrix $V$.
    \end{itemize}
    Define
    \[
        \mathbb V^{\w} \coloneqq \C^{\text{weave segments}} \times \left(\mathbb C^{\binom{n}2} \times \left(\C^\ast\right)^n\right)^{\text{dashed segments}},
    \]
    where we identify $\mathbb C^{\binom n2} \times \C^\ast$ with the set of $n\times n$ invertible upper triangular matrices.
    \end{definition}
    \begin{remark}\label{rmk:dashed_rays_right}
        Our dashed rays go \emph{to the right}, which is the opposite of \cite[Definition 5.1]{CGGS1}, but the same as in \cite[Section 5.1.1]{CGGLSS}.
    \end{remark}
    \begin{figure}[!htb]
        \centering
        \begin{subfigure}{0.3\textwidth}
            \centering
            \includegraphics{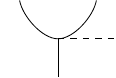}
            \caption{}\label{fig:alg_weave1}
        \end{subfigure}
        \begin{subfigure}{0.3\textwidth}
            \centering
            \includegraphics{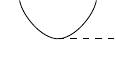}
            \caption{}\label{fig:alg_weave2}
        \end{subfigure}
        \begin{subfigure}{0.3\textwidth}
            \centering
            \includegraphics{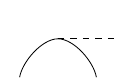}
            \caption{}\label{fig:alg_weave3}
        \end{subfigure}
        \caption{Dashed rays at a trivalent vertex, cup, and cap, respectively.}
    \end{figure}
     See \cref{fig:intro_weave_mcs,{fig:cluster_ex}} for two examples of algebraic weaves. The algebraic weave in \cref{fig:intro_weave_mcs} has $15$ weave segments and $3$ dashed segments, while the algebraic weave in \cref{fig:cluster_ex} has $5$ weave segments and $2$ dashed segments.

    \begin{definition}[Monodromy]\label{dfn:monodromy}
        Let $\w \colon \beta_1 \to \beta_2$ be an algebraic $n$-weave equipped with variables and invertible upper triangular matrices matrices associated to weave segments and dashed segments, respectively. Let $\tau \colon S^1 \to (0,1) \times [0,1]$ be an embedding that only has transverse intersections with $\w$ and its dashed rays.

        The \emph{monodromy} of $\w$ along $\tau$ is defined as the product of the following matrices, according to the orientation of $\tau$:
        \begin{enumerate}
            \item $B_i(z)$ if $\tau$ crosses an $i$-colored weave segment with variable $z$ \emph{from right to left}.
            \item $B_i(z)^{-1}$ if $\tau$ crosses an $i$-colored weave segment with variable $z$ \emph{from left to right}.
            \item $V$ if $\tau$ crosses a dashed segment with matrix $V$ \emph{from top to bottom}.
            \item $V^{-1}$ if $\tau$ crosses a dashed segment with matrix $V$ \emph{from bottom to top}.
        \end{enumerate}
    \end{definition}
    \begin{remark}\label{rmk:monodromy_diff}
        Items (1) and (2) in \cref{dfn:monodromy} differ from \cite[Definition 5.3(i) and (ii)]{CGGS1} to reflect the fact that our dashed rays go to the right instead of to the left; see \cref{rmk:dashed_rays_right}.
    \end{remark}
    
    \begin{notation}By construction of the weave $\w \colon \beta_1 \to \beta_2$ there is a single weave segment associated with each letter of the braid word $\beta_2$. If $\beta_2 = \sigma_{j_1} \cdots \sigma_{j_k}$, we let $z_{j_m}$ denote the variable corresponding to the $i_m$-colored weave segment corresponding to the letter $\sigma_{j_m}$.
    \end{notation}
     As in \cref{notn:braid_matrix_word} we use the following notation
    \begin{equation}\label{eq:braid_comb_matrix}
        B_{\beta_2}(z_1,\ldots,z_m) \coloneqq B_{i_m}(z_{m}) \cdots B_{i_1}(z_{1}),
    \end{equation}
    and note that it is equal to the monodromy of a path from right to left intersecting every weave segment at the bottom of the weave.
    \begin{definition}\label{dfn:weave_corr}
        Let $\w \colon \beta_1 \to \beta_2$ be an algebraic $n$-weave with variables and invertible upper triangular matrices associated to weave segments and dashed segments, respectively, as described above. Let $\pi \in S_n$ be represented by a permutation matrix in $\GL(n,\C)$. We define $\mathfrak X(\w,\pi)$ to be the closed subvariety of $\mathbb V^{\w}$ that is cut out by the following conditions:
        \begin{itemize}
            \item The monodromy around each closed loop around each vertex and virtual vertex of $\w$ is the identity matrix.
            \item The matrix $B_{\beta_2}(z_1,\ldots,z_m)\pi$ is upper triangular.
        \end{itemize}
        We use the notation $\mathfrak{X}(\w) \coloneqq \mathfrak{X}(\w,w_0)$.
    \end{definition}
    \begin{remark}
        Although \cref{dfn:monodromy} differs from \cite[Definition 5.3]{CGGS1} (see \cref{rmk:monodromy_diff}), our definition of $\mathfrak{X}(\w)$ in \cref{dfn:weave_corr} coincides with $\mathfrak M(\w)$ in \cite[Definition 5.4]{CGGS1}.
    \end{remark}
    \begin{remark}
        The $z$-variables on all the weave segments are uniquely determined by the $z$-variables at the top of the weave using the trivial monodromy condition. This immediately implies that we have an injection $\mathfrak{X}(\w) \hookrightarrow X(\beta_1)$. 
    \end{remark}

    The monodromy condition for algebraic weaves determines the relation between the $z$-variables of each weave segment in each layer as follows (cf.\@ \cite[pp.\@ 1153--1554]{CGGS1}):

\begin{lemma}\label{lma:weave_monodromy_vertex}
    For an algebraic weave with variables and invertible upper triangular matrices associated to weave segments and dashed segments, respectively, the monodromy around vertices and virtual vertices holds if and only if the following conditions hold:
\begin{enumerate}
    \item \label{cond:hexavalent} \textbf{At a hexavalent vertex colored by $i$ and $i+1$:} We have 
    \[
    B_i(z_3)B_{i+1}(z_2)B_i(z_1)=B_{i+1}(z_1)B_{i}(z_2- z_1z_3)B_{i+1}(z_3).
    \]
    \item \label{cond:distant} \textbf{At a distant crossing colored by $i$ and $k$:} We have
    \[
    B_i(z)B_k(w)=B_{k}(w)B_{i}(z), \qquad |i - k|\geq 2.
    \]
    \item \label{cond:trivalent}\textbf{At a trivalent vertex colored by $i$:} We have $z_2\neq 0$ and  
    \[
    B_i(z_2)B_{i}(z_1)=VB_i(z_1+z_2^{-1}), \qquad V=\begin{pmatrix}-z_2^{-1} & 1\\ 0 & z_2\end{pmatrix}.
    \]
    \item \label{cond:cup} \textbf{At a cup colored by $i$:} We have $z_2 = 0$ and
    \[
    B_i(0) B_i(z_1) = V, \qquad V = \begin{pmatrix}1 & z_1\\ 0 & 1\end{pmatrix}.
    \]
    \item \label{cond:virtual} \textbf{At a virtual vertex colored by $i$:} We have
    \[
    B_i(z)V = V'B_i(w), \qquad w = a_{ii}^{-1}(a_{i,i+1}+za_{i+1,i+1}), \; V = (a_{k,\ell})_{k \leq \ell},
    \]
    where $V'$ is completely determined by $V$ and $z$.
    \qed
\end{enumerate}
\end{lemma}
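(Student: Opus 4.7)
The proof is a local analysis: at each (virtual) vertex, I would pick a small simple counterclockwise loop bounding a disk containing only that vertex and apply \cref{dfn:monodromy} to express its monodromy as an explicit product of braid matrices $B_i(z)$ and invertible upper triangular matrices $V$. The claim to verify is that setting this product equal to the identity is equivalent to the stated matrix identity. Since different small loops around the same vertex are homotopic through the complement of the vertex (and all other vertices), they cross the same set of incident weave and dashed segments with the same signs, so the resulting equation is independent of the choice of loop.

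For the hexavalent vertex and the distant crossing (items (1) and (2)), the verification reduces to classical braid identities for the matrices $B_i(z)$. For the hexavalent case, a loop surrounding the vertex crosses three segments on each side; after pushing the right-to-left crossings to one side and the left-to-right crossings to the other, the triviality of monodromy becomes the matrix identity (1), verifiable by a direct $3\times 3$ block computation (cf.\ \cite[Example 2.2]{CGGS1}). For the distant case with $|i-k| \geq 2$, the matrices $B_i(z)$ and $B_k(w)$ act on disjoint $2\times 2$ diagonal blocks, so they commute, yielding (2) immediately.

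For the trivalent vertex, a small loop encloses the vertex and the initial segment of the dashed ray emanating from it. Reading off the monodromy gives $B_i(z_2)B_i(z_1) = V B_i(z_3)$, where $z_3$ is the variable on the outgoing strand and $V$ is the upper triangular matrix on the dashed segment. Inside the relevant $2\times 2$ block one directly computes $B_i(z_2)B_i(z_1) = \bigl(\begin{smallmatrix} 1 & z_1 \\ z_2 & 1+z_1 z_2\end{smallmatrix}\bigr)$; the existence of an upper triangular factorization of the form $V B_i(z_3)$ forces $z_2 \neq 0$ (so the $(2,1)$-entry can be produced by the $B_i(z_3)$ factor) and then uniquely determines $z_3 = z_1 + z_2^{-1}$ and $V = \bigl(\begin{smallmatrix}-z_2^{-1} & 1 \\ 0 & z_2\end{smallmatrix}\bigr)$, proving (3). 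The cup case (4) is the degenerate limit in which no outgoing braid strand is present; the loop monodromy reads $B_i(0)B_i(z_1) = V$, which evaluates directly to $\bigl(\begin{smallmatrix} 1 & z_1 \\ 0 & 1\end{smallmatrix}\bigr)$ and corresponds to the $z_2 = 0$ specialization of (3).

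Finally, for the virtual vertex (5), a small loop encloses one transverse intersection of an $i$-colored weave segment with a dashed segment; its monodromy reads $B_i(z) V = V' B_i(w)$. Writing the relevant $2\times 2$ block of $V$ as $\bigl(\begin{smallmatrix} a_{ii} & a_{i,i+1} \\ 0 & a_{i+1,i+1}\end{smallmatrix}\bigr)$ and comparing entries on both sides, the requirement that $V'$ be upper triangular forces $w = a_{ii}^{-1}(a_{i,i+1} + z\, a_{i+1,i+1})$, after which $V'$ is uniquely determined by $V$ and $z$. The only subtlety is that the above manipulation uses $a_{ii} \neq 0$ (and, for (3), $z_2 \neq 0$), but these non-vanishing conditions are built into the definition of $\mathbb{V}^\w$, which requires the dashed-segment matrices to be invertible upper triangular. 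I do not expect any serious obstacle: the content of the lemma is really a finite checklist of explicit $2\times 2$ and $3\times 3$ matrix identities, and the main care needed is bookkeeping of signs and orientations in \cref{dfn:monodromy} to match the conventions introduced in \cref{rmk:dashed_rays_right,rmk:monodromy_diff}.
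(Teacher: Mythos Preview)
Your proposal is correct and is precisely the intended argument: the paper states this lemma without proof (note the \qed in the statement and the preceding reference to \cite[pp.~1153--1554]{CGGS1}), treating it as a finite list of direct matrix verifications exactly along the lines you sketch. Your bookkeeping of the $2\times 2$ blocks in cases (3)--(5) is accurate, and the only minor imprecision is calling the cup case a ``specialization'' of (3): it is rather the complementary case where upper-triangularity of $B_i(z_2)B_i(z_1)$ itself forces $z_2=0$, but your computation there is correct regardless.
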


Whenever there is a trivalent vertex or a cup, by condition \eqref{cond:trivalent} and \eqref{cond:cup}, the trivial monodromy condition introduces an upper triangular matrix corresponding to the dashed ray. Then, we can push the upper triangular matrix all the way to the right following the dashed ray using condition \eqref{cond:virtual}.

\begin{lemma}\label{lma:tri_hexa_corresp}
    Let $\w_{\mathrm{tri}} \colon \sigma_1^2 \to \sigma_1$, $\w_{\mathrm{hexa}} \colon \sigma_1\sigma_2\sigma_1 \to \sigma_2\sigma_1\sigma_2$, and $\w_{\mathrm{cup}} \colon \sigma_1^2 \to 1$ be the algebraic weaves consisting of a single trivalent vertex, a single hexavalent vertex, and a single cup, respectively. Then
    \begin{align*}
        \mathfrak X(\w_{\mathrm{tri}}) &= \left\{(z_1,z_2,z_3,1,-z_2^{-1},z_2) \in \C^3 \times (\C \times (\C^\ast)^2) \;\middle|\; z_3 = z_1+z_2^{-1}, \, z_3 = 0\right\},\\
        \mathfrak X(\w_{\mathrm{hexa}}) &= \left\{(z_1,z_2,z_3,z_3,z_2-z_1z_3,z_1) \in \C^6 \;\middle|\; z_1 = z_2 = z_3 = 0 \right\},\\
        \mathfrak X(\w_{\mathrm{cup}}) &= \left\{(z_1,z_2,z_1,1,1) \in \C^2 \times (\C \times (\C^\ast)^2) \;\middle|\; z_2 = 0\right\}.
    \end{align*}
    \qed
\end{lemma}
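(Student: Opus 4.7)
The proof is a direct computation that applies \cref{lma:weave_monodromy_vertex} at the unique interior vertex of each of the three weaves, and then imposes the upper triangular condition of \cref{dfn:weave_corr}. Throughout, we parametrize an invertible upper triangular $2\times 2$ matrix $V = \left(\begin{smallmatrix} a & b \\ 0 & c \end{smallmatrix}\right) \in \C \times (\C^\ast)^2$ by the ordered triple $(b, a, c)$, so that a dashed segment contributes three coordinates.

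For $\w_\mathrm{tri}\colon \sigma_1^2 \to \sigma_1$, label the two top weave segments by $z_1, z_2$, the bottom weave segment by $z_3$, and the unique dashed segment by $V$. Applying condition~\eqref{cond:trivalent} at the trivalent vertex yields $z_2 \neq 0$, the relation $z_3 = z_1 + z_2^{-1}$, and $V = \left(\begin{smallmatrix}-z_2^{-1} & 1 \\ 0 & z_2\end{smallmatrix}\right)$, contributing the dashed coordinates $(1, -z_2^{-1}, z_2)$. Since the bottom braid $\sigma_1$ lives on two strands, $w_0 = s_1$, and a short computation gives $B_1(z_3)\, w_0 = \left(\begin{smallmatrix} 1 & 0 \\ z_3 & 1 \end{smallmatrix}\right)$, which is upper triangular if and only if $z_3 = 0$. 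Collecting these conditions yields the stated description of $\mathfrak X(\w_\mathrm{tri})$.

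For $\w_\mathrm{hexa}\colon \sigma_1\sigma_2\sigma_1 \to \sigma_2\sigma_1\sigma_2$, label the three top weave segments (reading left to right) by $z_1, z_2, z_3$. There are no dashed rays, and condition~\eqref{cond:hexavalent} at the hexavalent vertex forces the bottom segments (reading left to right, corresponding to the letters of $\sigma_2\sigma_1\sigma_2$) to carry the variables $z_3,\, z_2 - z_1 z_3,\, z_1$. Using the matrix-word convention of \cref{notn:braid_matrix_word} one computes
\[
    B_2(z_1)\, B_1(z_2-z_1z_3)\, B_2(z_3) \cdot w_0 \;=\; \begin{pmatrix} 1 & 0 & 0 \\ z_3 & 1 & 0 \\ z_2 & z_1 & 1 \end{pmatrix},
\]
which is upper triangular exactly when $z_1 = z_2 = z_3 = 0$, giving the stated description.

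For $\w_\mathrm{cup}\colon \sigma_1^2 \to 1$, label the two top weave segments by $z_1, z_2$ and the unique dashed segment by $V$. Condition~\eqref{cond:cup} at the cup directly yields $z_2 = 0$ and $V = \left(\begin{smallmatrix} 1 & z_1 \\ 0 & 1 \end{smallmatrix}\right)$, contributing the dashed coordinates $(z_1, 1, 1)$. The bottom of the weave carries no weave segments, so the upper triangular condition of \cref{dfn:weave_corr} imposes no further constraint, giving the stated description. The whole proof is a matter of careful bookkeeping rather than substantive mathematics; the main points to track are (i) the reversed ordering of the factors in $B_\beta(z_1,\ldots,z_m) = B_{i_m}(z_m)\cdots B_{i_1}(z_1)$ relative to the left-to-right reading of the braid word, and (ii) the rightward orientation convention for dashed rays (\cref{rmk:dashed_rays_right}), which enters the monodromy definition \cref{dfn:monodromy} and through it the statements of \cref{lma:weave_monodromy_vertex} used above.
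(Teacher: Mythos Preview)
Your proof is correct and takes essentially the same approach as the paper, which offers no argument at all (the lemma is marked with \qed and is meant to follow directly from \cref{lma:weave_monodromy_vertex} together with \cref{dfn:weave_corr}). Your explicit matrix computations for the upper-triangularity condition, and your careful tracking of the $(b,a,c)$ ordering for the dashed-segment coordinates, are exactly what the reader is expected to supply.

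One small caveat on the cup case: your justification that ``the upper triangular condition imposes no further constraint'' because there are no bottom weave segments is not literally what \cref{dfn:weave_corr} says---with $\beta_2$ the trivial braid on two strands one has $B_{\beta_2} = I_2$ and $I_2\cdot w_0$ is not upper triangular. The paper's stated description of $\mathfrak X(\w_{\mathrm{cup}})$ is nonetheless what is intended (these elementary weaves are being used as local building blocks, cf.\ \cref{thm:functor_w_to_c}); your conclusion matches the lemma as stated, so this is a convention wrinkle in the paper rather than a gap in your argument.
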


\begin{figure}[!htb]
    \centering
    \includegraphics{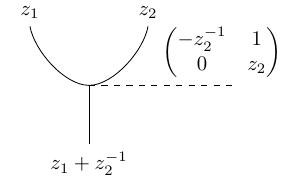}
    \hspace{5mm}
    \includegraphics{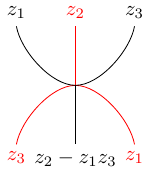}
    \hspace{5mm}
    \includegraphics{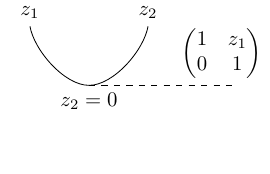}
    \caption{Variables associated to weave segments near a trivalent vertex, hexavalent vertex, and a cup, respectively.}
    \label{fig:tri_hexa}
\end{figure}

Let us summarize the constructions in this section as the construction of a functor.
\begin{definition}[Weave category]\label{dfn:weave_category}
    Let $n\in \Z_{\geq 1}$. Let $\mathfrak{W}_n$ be the category defined as follows:
    \begin{description}
        \item[Objects] $\Ob(\mathfrak{W}_n) = \Br_n^+$.
        \item[Morphisms] A morphism $\beta_1 \to \beta_2$ is an algebraic weave from $\beta_1$ to $\beta_2$.
        \item[Composition] The composition is defined as vertical composition of weaves (see \cref{fig:vertical_comp}).
    \end{description}
\end{definition}
Let $\mathfrak{C}$ denote the category whose objects are algebraic varieties, and a morphism $X \to Y$ is a correspondence, i.e., an algebraic variety $Z$ and algebraic maps $X \leftarrow Z \to Y$. Based on \cref{lma:tri_hexa_corresp}, we have the following theorem \cite[Theorem 1.3]{CGGS1}; see also \cite[Theorem 5.9]{CGGS1}.
\begin{theorem}[{\cite[Theorem 1.3]{CGGS1}}]\label{thm:functor_w_to_c}
    For any $n\in \mathbb Z_{\geq 1}$, there exists a functor
    \[
    \mathfrak X \colon \mathfrak W_n \longrightarrow \mathfrak C,
    \]
    that is defined on objects by $\mathfrak X(\beta) \coloneqq X(\beta)$ and on morphisms $\w \colon \beta_1 \to \beta_2$ by $\mathfrak X(\w)$.
    Moreover, if $\w \colon \beta_1 \to \beta_2$ is a simplifying algebraic weave with $c$ cups and $t$ trivalent vertices, then there is an isomorphism $\mathfrak{X}(\w) \cong \C^c \times (\C^\ast)^t \times X(\beta_1)$ which defines an injective composition
    \[
    \phi_\w \colon \C^c \times (\C^\ast)^t \times X(\beta_1) \overset{\cong}{\longleftarrow} \mathfrak X(\w) \hooklongrightarrow X(\beta_2). \eqno\qed
    \]
\end{theorem}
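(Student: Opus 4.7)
The plan is to define $\mathfrak X$ on morphisms, verify functoriality via a fiber product description, and analyze simplifying weaves slice by slice using the local models of \cref{lma:weave_monodromy_vertex,lma:tri_hexa_corresp}. For $\w:\beta_1\to\beta_2$, the variety $\mathfrak X(\w)$ is already built in \cref{dfn:weave_corr}, and I would equip it with the structure maps $X(\beta_1)\leftarrow\mathfrak X(\w)\to X(\beta_2)$ of the correspondence by recording the $z$-variables on the weave segments meeting $\partial\D^2_+$ and $\partial\D^2_-$, respectively. That the second map lands in $X(\beta_2)$ is built into \cref{dfn:weave_corr}. For the first, I would propagate the monodromy of a horizontal loop one slice at a time from the bottom to the top of the weave, applying the appropriate identity of \cref{lma:weave_monodromy_vertex} at each vertex encountered; the cumulative effect is an equation of the form $B_{\beta_1}(z')\,\pi = U\,B_{\beta_2}(z)\,\pi$ where $U$ is a product of upper triangular matrices arising from the dashed rays, which forces upper-triangularity on $B_{\beta_1}(z')\,\pi$.

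Next I would verify functoriality. For $\w_1:\beta_1\to\beta_2$ and $\w_2:\beta_2\to\beta_3$, the vertical composition $\w_2\circ\w_1$ has weave and dashed segments partitioned into those of $\w_1$ and $\w_2$, glued along the interface weave segments carrying the $z$-variables of $\beta_2$. Every monodromy condition is local and hence belongs to exactly one of $\w_1$ or $\w_2$, and the upper-triangularity condition imposed at $\partial\D^2_-$ is that of $\mathfrak X(\w_2)$. Comparing defining equations gives a canonical identification $\mathfrak X(\w_2\circ\w_1)\cong\mathfrak X(\w_1)\times_{X(\beta_2)}\mathfrak X(\w_2)$, which is the composition in $\mathfrak C$, yielding the functor.

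For the second half of the statement I would decompose a simplifying weave, via \cref{dfn:sliced_weave}, into trivial, hexavalent, tetravalent, trivalent, and cup slices (no caps). Trivial slices contribute identity correspondences. Hexavalent vertices (condition \eqref{cond:hexavalent}) and tetravalent vertices (condition \eqref{cond:distant}) present one set of $z$-variables as a polynomial bijection of the other, giving the graph of an isomorphism of braid varieties. Trivalent vertices and cups, by \cref{lma:tri_hexa_corresp}, respectively contribute a free $\C^\ast$-factor (the variable $z_2$) and a free $\C$-factor (the variable $z_1$) encoded in the dashed ray matrix. Assembling the slices via functoriality yields the asserted product form, and the stated injection follows because, reading the weave top to bottom, each free parameter at a trivalent vertex or cup is uniquely recovered from the $z$-variables of the slice immediately above it, hence from the top boundary data by descending induction.

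The hard part will be the propagation argument invoked in the first step and reappearing in the slice-by-slice assembly: one must carefully track every dashed ray matrix as it is moved toward the right boundary via condition \eqref{cond:virtual}, verifying that it remains upper triangular and does not disturb the upper-triangularity of $B_\beta(z)\,\pi$. The algebra is entirely contained in \cref{lma:weave_monodromy_vertex}, so no new computations are required, but the combinatorics of tracking dashed rays through the weave is where the bulk of the verification sits.
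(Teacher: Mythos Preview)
The paper does not supply its own proof: the theorem is stated with a terminal $\qed$ and attributed to \cite[Theorem 1.3]{CGGS1} (with \cite[Theorem 5.9]{CGGS1} referenced just before it). Your sketch is sound and follows the expected route—functoriality via the fiber product over the interface braid variety, upper-triangularity propagated slice by slice using \cref{lma:weave_monodromy_vertex}, and the product factorization read off from the local computations in \cref{lma:tri_hexa_corresp}.

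One point to tighten: the free $\C^\ast$- and $\C$-factors arise when you read the weave \emph{bottom to top} (inverting a trivalent or cup move requires choosing a free parameter), whereas the injection into the \emph{top} braid variety comes from reading top to bottom (the monodromy conditions determine all lower data from the top). You have both directions present in your final paragraph but somewhat conflated; separating them makes the argument cleaner. It also reveals that the statement as printed has $\beta_1$ and $\beta_2$ interchanged relative to the parallel \cref{lma:functoriality} later in the paper: for a simplifying weave $\w\colon\beta_1\to\beta_2$ one in fact has $\mathfrak X(\w)\cong\C^c\times(\C^\ast)^t\times X(\beta_2)$ and $\mathfrak X(\w)\hookrightarrow X(\beta_1)$, which is what your argument actually establishes.
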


The following theorem describes the weave decomposition of the braid variety and is completely analogous to \cite[Theorem 5.35]{CGGS1}.

\begin{theorem}[Weave decomposition]\label{thm:weave_decomp}
    Let $\beta \in \Br_n^+$ and let $\w \colon \beta \to \Delta$ be a Demazure weave. There exists a tuple of simplifying weaves $(\w = \w_1,\w_2,\dots,\w_k)$ such that
    \begin{itemize}
        \item $X(\beta) = \bigcup_{i=1}^k \mathfrak X({\w_i})$,
        \item $\mathfrak X(\w_i) \subset X(\beta)$ are pairwise disjoint for $i\in \{1,\dots,k\}$, and
        \item $\mathfrak X(\w) \cong (\C^\ast)^{\ell(\beta)-\binom n2}$ is the unique piece of maximal dimension.
    \end{itemize}
    Call such a tuple $(\w_1,\dots,\w_k)$ a \emph{decomposing tuple} for $X(\beta)$.
    \qed
\end{theorem}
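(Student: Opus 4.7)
The plan is to argue by induction on $t \coloneqq \ell(\beta)-\binom{n}{2}$, the number of trivalent vertices of the Demazure weave $\w\colon\beta\to\Delta$. For the base case $t=0$, the weave $\w$ consists only of hexavalent and tetravalent vertices, so $\beta$ and $\Delta$ are equivalent as braid words, $X(\beta)$ is a single point, and the singleton tuple $(\w)$ is a decomposing tuple with $\mathfrak{X}(\w)=X(\beta)$.

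For the inductive step $t\geq 1$, pick a trivalent vertex $v$ in $\w$ and recall from \cref{lma:weave_monodromy_vertex}(\ref{cond:trivalent}) that the trivial monodromy at $v$ forces $z_2(v)\neq 0$, where $z_2(v)$ denotes the upper $z$-variable at $v$. Thus $\mathfrak{X}(\w)\subset X(\beta)$ is the open locus $\{z_2(v)\neq 0\}$ intersected with the remaining vertex constraints of $\w$, and we decompose $X(\beta)=\mathfrak{X}(\w)\sqcup Z$ with $Z = X(\beta)\cap \{z_2(v)=0\}$ closed. On $Z$, the local relation specializes from the trivalent equation to the cup equation $B_i(0)B_i(z_1)=V$ of \cref{lma:weave_monodromy_vertex}(\ref{cond:cup}); geometrically, the two $\sigma_i$ strands at $v$ absorb into an upper triangular matrix propagating along a dashed ray, and this lets us replace the trivalent vertex at $v$ in $\w$ by a cup in the local picture.

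The main technical step, carried out analogously to \cite[Theorem 5.35]{CGGS1}, is to extend this local substitution to a full simplifying weave $\w^v\colon\beta\to\Delta$. One propagates the absorbed upper triangular matrix rightward along the dashed ray using the virtual vertex relation \cref{lma:weave_monodromy_vertex}(\ref{cond:virtual}), and reuses the hexavalent and remaining trivalent portion of $\w$, inserting additional cups or adjusting via hexavalent moves wherever needed so that the modified braid word still terminates at $\Delta$. This extension is the \emph{main obstacle} of the argument: removing the $\sigma_i^2$ pair via a cup can alter the Demazure product of intermediate braid words, and one must verify that sliced weave moves without caps still suffice to reach $\Delta$. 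Granted the extension, $\w^v$ has at least one cup and strictly fewer trivalent vertices than $\w$, so applying the inductive hypothesis to its underlying Demazure portion yields a decomposing tuple $(\w^v, \w_3, \ldots, \w_k)$ for $Z$.

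Combining these yields the tuple $(\w, \w^v, \w_3, \ldots, \w_k)$ for $X(\beta)$. Pairwise disjointness follows from $\mathfrak{X}(\w)\cap Z=\varnothing$ together with the inductive disjointness on $Z$. For the dimension claim, note that any simplifying weave $\w_i\colon\beta\to\Delta$ with $t_i$ trivalent vertices and $c_i$ cups satisfies $t_i+2c_i=\ell(\beta)-\binom{n}{2}$, and by \cref{thm:functor_w_to_c} we have $\dim\mathfrak{X}(\w_i)=t_i+c_i=\ell(\beta)-\binom{n}{2}-c_i$; since $c(\w)=0$ while every other piece in the tuple has $c_i\geq 1$, the piece $\mathfrak{X}(\w)\cong(\C^\ast)^{\ell(\beta)-\binom{n}{2}}$ is uniquely of maximal dimension.
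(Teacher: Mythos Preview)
The paper does not supply its own proof; the theorem is stated with a \qed and cited from \cite[Theorem 5.35]{CGGS1}.

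Your inductive step has a genuine gap at the decomposition $X(\beta)=\mathfrak{X}(\w)\sqcup Z$ with $Z=\{z_2(v)=0\}$. This equality fails whenever $\w$ has more than one trivalent vertex: the complement $\{z_2(v)\neq 0\}$ strictly contains $\mathfrak{X}(\w)$, because membership in $\mathfrak{X}(\w)$ also requires $z_2(v')\neq 0$ at every \emph{other} trivalent vertex $v'$. (There is a second problem: if $v$ is not the topmost trivalent vertex then $z_2(v)$ is only a rational function on $X(\beta)$, defined on the locus where all trivalent vertices above $v$ already have nonzero $z_2$, so $\{z_2(v)=0\}$ is not even a well-defined closed subvariety.) As a result the points of $\{z_2(v)\neq 0\}\setminus\mathfrak{X}(\w)$ are never accounted for, and the tuple you build does not cover $X(\beta)$.

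The argument in \cite{CGGS1} (whose structure is visible in the paper's proofs of \cref{lma:decomp_tuple_indep_equiv} and \cref{thm:deodhar=weave}) instead splits at the \emph{topmost} trivalent vertex $v_1$, where $z_2(v_1)$ is genuinely regular, and recurses on \emph{both} branches: the locus $\{z_2(v_1)\neq 0\}$ is, via the trivalent relation, a $\C^\ast$-factor times a shorter braid variety carrying the remainder of $\w$ as its Demazure weave, while $\{z_2(v_1)=0\}$ is, via the cup relation, a $\C$-factor times another shorter braid variety. The decomposing tuple for $X(\beta)$ is obtained by prepending a trivalent (respectively a cup) to the two inductively obtained subtuples. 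Your identification of the ``main obstacle'' on the cup side is accurate, but it only enters after this two-branch recursion has been set up correctly; the single peel-off of $\mathfrak{X}(\w)$ that you propose does not reach it.
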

\begin{lemma}\label{lma:decomp_tuple_indep_equiv}
    Suppose $\beta \in \Br_n^+$ and that $(\w_1,\dots,\w_k)$ is a decomposing tuple for $X(\beta)$. If $\w_i \approx \w_i'$ (\cref{dfn:inductive_equiv}), then $(\w_1',\ldots,\w_k')$ is a decomposing tuple, and the two decompositions of $X(\beta)$ are in fact equal.
\end{lemma}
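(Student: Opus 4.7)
The plan is to prove the stronger statement that $\mathfrak{X}(\w) = \mathfrak{X}(\w')$ as subvarieties of $X(\beta)$ whenever $\w \approx \w'$. Once this is established, all three properties in \cref{thm:weave_decomp} immediately transfer from $(\w_1,\dots,\w_k)$ to $(\w_1',\dots,\w_k')$: the union, the pairwise disjointness, and the identification of the unique top-dimensional piece all depend only on the subvarieties $\mathfrak{X}(\w_i) \subset X(\beta)$.

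First, I would unpack what $\w \approx \w'$ means. By \cref{dfn:inductive_equiv}, the projections $p(\w) = p(\w')$ agree, so $\w$ and $\w'$ are built by the inductive procedure of \cref{dfn:inductive_simplifying_weave} with the same sequence in $\{A,B,C\}$, i.e., the same trivalent vertices and cups are introduced in the same order. The two weaves may differ only in the hexavalent and tetravalent vertices used to reposition the rightmost strand of $\w(\beta^{\leq j})$ into the appropriate color class before each trivalent or cup insertion.

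Second, I would use \cref{lma:weave_monodromy_vertex} to argue that the hexavalent and tetravalent rearrangements do not affect $\mathfrak{X}(\w)$ as a subvariety of $X(\beta)$. Conditions \eqref{cond:hexavalent} and \eqref{cond:distant} are precisely the braid and commutation relations $B_i(z_3)B_{i+1}(z_2)B_i(z_1) = B_{i+1}(z_1)B_i(z_2 - z_1 z_3)B_{i+1}(z_3)$ and $B_i(z)B_k(w) = B_k(w)B_i(z)$ for $|i-k|\geq 2$. These are not extra constraints on the top-boundary variables of $X(\beta)$, but rather invertible polynomial changes of coordinates on the internal weave segments. Consequently, inserting, deleting, or replacing hexavalent and tetravalent blocks in a weave preserves the image of the induced injection $\mathfrak{X}(\w) \hookrightarrow X(\beta)$. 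Equivalently, by the functoriality of $\mathfrak{X}$ established in \cref{thm:functor_w_to_c}, the correspondence $X(\beta) \leftarrow \mathfrak{X}(\w) \to X(\Delta)$ depends only on the equivalence class of $\w$ under the relations induced by hexavalent and tetravalent identities.

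Third, I would check the bookkeeping for dashed rays, which I expect to be the main technical obstacle. Each trivalent vertex or cup introduces a dashed ray carrying an upper triangular matrix $V$; this ray traverses all weave segments to its right, imposing the virtual-vertex relations of \cref{lma:weave_monodromy_vertex}\eqref{cond:virtual}. A priori, when the hexavalent/tetravalent rearrangements differ between $\w$ and $\w'$, a given dashed ray may cross a different set of weave segments. The point is that condition \eqref{cond:virtual} allows one to propagate $V$ rightward through any weave segment by a universal substitution on both the $z$-variable and the entries of $V$, so the composite constraint imposed on the top-boundary $z$-variables at the right edge is the same in either case; this is again a manifestation of the braid-relation compatibility at the hexavalent and tetravalent vertices traversed. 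Combining this layer-by-layer analysis with the inductive construction of $\w$ and $\w'$, one sees that the conditions cutting out $\mathfrak{X}(\w)$ and $\mathfrak{X}(\w')$ inside $\mathbb{V}^{\w}$ and $\mathbb{V}^{\w'}$ project to the same subvariety of $X(\beta)$, completing the proof.
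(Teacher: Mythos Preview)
Your proposal is correct and follows essentially the same approach as the paper: both argue that hexavalent and tetravalent vertices induce isomorphisms of the intermediate braid varieties (the paper cites \cref{lma:tri_hexa_corresp} and the proof of \cite[Theorem 5.35]{CGGS1}, you unpack this directly via \cref{lma:weave_monodromy_vertex}), so that the image $\mathfrak{X}(\w) \hookrightarrow X(\beta)$ depends only on the sequence of trivalent vertices and cups encoded by $p(\w)$. Your treatment is more explicit about the dashed-ray bookkeeping, but the core idea is identical.
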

\begin{proof}
    By \cref{lma:tri_hexa_corresp}, the weave correspondence induced by hexavalent and tetravalent vertices induce isomorphisms of the braid varieties. Therefore, by the proof of \cite[Theorem 5.35]{CGGS1}, it is clear that each piece in the decomposition of $X(\beta)$ does not depend on the hexavalent and tetravalent vertices used in the definition of a right simplifying weave, but only the sequences of trivalent vertices and cups.
\end{proof}

\subsubsection{Framed algebraic weaves}\label{ssec:framed-weave}
Following \cite[Section 5]{CGGLSS}, we now consider the algebraic variety of framed weaves and framed flags. While unframed weaves and flags define the same moduli space and induce the same weave decomposition, framed weaves and flags have the advantage of providing the cluster structure on the varieties.

Given a simplifying algebraic weave $\w\colon \beta_1 \to \beta_2$ with $t$ trivalent and $c$ cups, by \cref{thm:functor_w_to_c}, we have an algebraic isomorphism
\[
\mathfrak{X}(\w) \cong \C^c \times (\C^\ast)^t \times X(\beta_1).
\]
In particular, for each Demazure weave $\w \colon \beta \to \Delta$ with $t$ trivalent vertices, we have an embedding of an algebraic torus
\[
(\C^\ast)^{t} \hooklongrightarrow X(\beta).
\]
Given a Demazure weave $\w \colon \beta \to \Delta$ one can define a quiver $Q_\w$ whose vertices are trivalent vertices of the weave and whose exchange matrix $\epsilon_\w$ is a certain intersection matrix (it is the intersection matrix of the Lusztig cycles associated to trivalent vertices \cref{dfn:lusztig_cycle}); see \cite[Section 4.6]{CGGLSS}. Let $\A(\varepsilon_\w)$ be the cluster algebra defined by the quiver with exchange matrix $\epsilon_\w$. Then by \cite[Theorem 1.1]{CGGLSS} there is an isomorphism
\[
\C[X(\beta)] \cong \A(\varepsilon_\w),
\]
and the monomials coming from the open toric chart $(\C^\ast)^{t} \rightarrow X(\beta)$ are the cluster coordinates associated to the initial seed determined by $Q_\w$ (in particular, they are regular on $X(\beta)$).

\begin{notation}\label{not:xi(u)}
    Let $a,b\in \C^\ast$. For $i < k$, define
    \[
    \chi_{i,k}(a,b) \coloneqq \diag(1,\ldots,1,a,1,\ldots,1,b,1,\ldots,1),
    \]
    where the elements $a$ and $b$ are the $i$-th and $k$-th diagonal elements, respectively. For $u\in \C^\ast$, define $\chi_i(u) \coloneqq \chi_{i,i+1}(-u^{-1},u)$. Thus, $u$ and $u^{-1}$ are the $i$-th and $(i+1)$-st diagonal elements, respectively.
\end{notation}

The following identities establish an isomorphism between $X(\beta)$ and a framed version of the braid variety $X_\textit{fr}(\beta)$ which is defined below in \cref{lem:framed-flag=flag}.

\begin{lemma}\label{lma:move_marked_points}
    The following identities hold:
    \begin{enumerate}
        \item $B_i(z)\chi_{i,i+1}(a,b) = \chi_{i,i+1}(a,b)B_i(ab^{-1}z)$,
        \item $B_i(z)\chi_{i,k}(a,b) = \chi_{i+1,k}(a,b)B_i(a^{-1}z)$ for $i < k+1$,
        \item $B_i(z)\chi_{i+1,k}(a,b) = \chi_{i,k}(a,b)B_i(az)$ for $i < k+1$,
        \item $B_k(z)\chi_{i,k}(a,b) = \chi_{i,k+1}(a,b)B_k(b^{-1}z)$ for $i < k+1$,
        \item $B_k(z)\chi_{i,k+1}(a,b) = \chi_{i,k}(a,b)B_k(bz)$ for $i < k+1$,
        \item $B_\ell(z)\chi_{i,k}(a,b) = \chi_{i,k}(a,b)B_\ell(z)$ for $i,k \not\in \{\ell,\ell+1\}$.\qed
    \end{enumerate}
\end{lemma}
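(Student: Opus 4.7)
The plan is to verify each identity by direct block matrix computation, exploiting the sparsity of both factors. Both $B_i(z)$ and $\chi_{i,k}(a,b)$ differ from the $n\times n$ identity only on a small set of rows and columns: $B_i(z)$ is nontrivial only on rows and columns $\{i,i+1\}$, with $2\times 2$ block $B(z)=\bigl(\begin{smallmatrix}0&1\\1&z\end{smallmatrix}\bigr)$, while $\chi_{i,k}(a,b)$ is diagonal with scalars $a$ at position $i$ and $b$ at position $k$. Right-multiplication by $\chi_{i,k}(a,b)$ therefore scales column $i$ by $a$ and column $k$ by $b$, and left-multiplication scales the corresponding rows. Each identity thus reduces to a matrix equation on a small subblock that can be checked by hand.

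For identity (1), both matrices act on $\{i,i+1\}$, so the claim reduces to a single $2\times 2$ computation comparing $B(z)\,\diag(a,b)$ with $\diag(a,b)\,B(ab^{-1}z)$. For identities (2)--(5), the index sets $\{i,i+1\}$ and $\{i,k\}$ (or $\{i+1,k\}$) overlap in exactly one index, so the identity reduces to verifying a $3\times 3$ block identity on rows/columns $\{i,i+1,k\}$. The movement of the pivot from $\chi_{i,k}$ to $\chi_{i+1,k}$ (and vice versa) reflects the fact that the anti-diagonal of $B(z)$ interchanges the roles of the $i$-th and $(i+1)$-st coordinates when passing through, and the rescaling of $z$ on the right-hand side records how one of $a$ or $b$ is absorbed into the $B_i$ factor. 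For (6) the indices $\{\ell,\ell+1\}$ and $\{i,k\}$ are disjoint by hypothesis, so the two matrices act on disjoint rows and columns and commute trivially.

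No genuine obstacle arises beyond bookkeeping: one must track carefully which diagonal scalar multiplies which row or column of $B_i$ on each side, and ensure that the index of the shifted $\chi$-factor and the scaling of $z$ are correctly paired. Note that identities (3) and (5) follow from (2) and (4) respectively by multiplying both sides on the appropriate side by $\chi^{-1}$ and using $\chi_{i,k}(a,b)^{-1}=\chi_{i,k}(a^{-1},b^{-1})$, so in practice only (1), (2), (4), and (6) require an independent check, each a routine calculation of a few lines.
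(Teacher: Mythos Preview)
Your approach is exactly right and matches the paper's treatment: the paper omits the proof entirely (the statement ends with \qed), since each identity is a routine block-matrix check of the kind you describe. Your reduction to $2\times 2$ or $3\times 3$ subblocks, your observation that (3) and (5) follow from (2) and (4) via $\chi_{i,k}(a,b)^{-1}=\chi_{i,k}(a^{-1},b^{-1})$, and your handling of (6) by disjointness of supports are all correct.

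One caution if you actually carry out the $2\times2$ check for (1): as printed, the identity does not hold. Computing both sides on the $\{i,i+1\}$-block gives
\[
B(z)\begin{pmatrix}a&0\\0&b\end{pmatrix}=\begin{pmatrix}0&b\\a&bz\end{pmatrix},
\qquad
\begin{pmatrix}a&0\\0&b\end{pmatrix}B(ab^{-1}z)=\begin{pmatrix}0&a\\b&az\end{pmatrix},
\]
which disagree unless $a=b$. The intended identity swaps the diagonal entries on the right-hand side, i.e.\ $B_i(z)\chi_{i,i+1}(a,b)=\chi_{i,i+1}(b,a)B_i(ba^{-1}z)$; this is what your computation will actually verify. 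The remaining identities (2)--(6) hold as stated.
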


\begin{lemma}[{\cite[Lemma 3.13]{CGGLSS}}]\label{lem:framed-flag=flag}
    Let $\beta = \sigma_{i_1}\dots\sigma_{i_r} \in \Br_n^+$. For any fixed collection $(u_1, \dots, u_r) \in (\C^\ast)^r$, there is an isomorphism
    \[X_\textit{fr}(\beta) \coloneqq \{({z}_1, \dots, {z}_r) \in \C^r \mid  \chi_{i_r}(u_r) B_{i_r}({z}_r) \cdots \chi_{i_1}(u_1)B_{i_1}({z}_1) \delta(\beta) \in B\} \cong X(\beta),
    \]
    where we identify the Weyl group $W = S_n$ with the permutation matrices in $G = \GL(n, \C)$.
    \qed
\end{lemma}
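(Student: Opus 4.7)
The plan is to push all of the diagonal factors $\chi_{i_j}(u_j)$ to the right past the braid matrices $B_{i_j}(z_j)$ by iteratively applying the identities of \cref{lma:move_marked_points}, thereby collecting them into a single diagonal matrix $D$ at the far right of the product and converting the defining condition of $X_{\textit{fr}}(\beta)$ into the defining condition of $X(\beta)$ after a rescaling of the $z$-variables.

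First, I would prove by induction on the number of factors that there exist nonzero monomials $c_j(u) \in \C^\ast$ in the $u_k$'s and a diagonal matrix $D \in B$ (depending only on the fixed tuple $(u_1,\ldots,u_r)$) such that
\[
\chi_{i_r}(u_r) B_{i_r}(z_r) \cdots \chi_{i_1}(u_1) B_{i_1}(z_1) = B_{i_r}(\tilde z_r) \cdots B_{i_1}(\tilde z_1) \cdot D, \qquad \tilde z_j = c_j(u) z_j.
\]
The inductive step moves the leftmost $\chi$-factor to the right, one $B_i$ at a time, using the appropriate identity among (1)--(6) of \cref{lma:move_marked_points} according to the relationship between the $\chi$-index and the Artin index of the braid matrix it passes. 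The essential invariant to track along the way is that each $\chi$-factor, although possibly spreading from $\chi_{i,i+1}(a,b)$ to $\chi_{i',k'}(a,b)$ with $|i'-k'|\geq 2$, always remains a diagonal matrix whose two nontrivial diagonal entries are nonzero monomials in the original $u_k$'s. Consequently every rescaling factor appearing in \cref{lma:move_marked_points} (namely $a^{\pm 1}$, $b^{\pm 1}$, or $ab^{-1}$) is an element of $\C^\ast$, so each $c_j(u)$ is nonzero.

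Second, once the product has been written in the form $B_{i_r}(\tilde z_r) \cdots B_{i_1}(\tilde z_1)\cdot D$, I would observe that $D\delta(\beta) = \delta(\beta)\cdot(\delta(\beta)^{-1}D\delta(\beta))$, and conjugation of a diagonal matrix by a permutation matrix yields another diagonal matrix, hence again an element of $B$. Therefore the condition
\[
\chi_{i_r}(u_r)B_{i_r}(z_r)\cdots\chi_{i_1}(u_1)B_{i_1}(z_1)\delta(\beta) \in B
\]
is equivalent to $B_{i_r}(\tilde z_r)\cdots B_{i_1}(\tilde z_1)\delta(\beta) \in B$, which is exactly the defining condition of $X(\beta)$. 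The map $(z_1,\ldots,z_r) \mapsto (\tilde z_1,\ldots,\tilde z_r)$ is a diagonal linear isomorphism of $\C^r$ with inverse $\tilde z_j \mapsto c_j(u)^{-1}\tilde z_j$, and by the above equivalence it restricts to the desired isomorphism $X_{\textit{fr}}(\beta) \cong X(\beta)$.

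The main obstacle is the bookkeeping in the inductive step: one must track how the indices of the $\chi_{i,k}(a,b)$ evolve as they pass through adjacent braid matrices, while verifying that the entries $a,b$ remain in $\C^\ast$ and that all rescaling factors combine into well-defined nonzero monomials $c_j(u)$. None of the individual identities in \cref{lma:move_marked_points} is difficult, but making sure that the induction closes and yields a single diagonal matrix $D$ at the end requires carefully choosing the order in which the $\chi$-factors are moved (for instance, fully transporting $\chi_{i_r}(u_r)$ to the right before touching $\chi_{i_{r-1}}(u_{r-1})$, using that distinct diagonal matrices always commute past one another).
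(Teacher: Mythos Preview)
Your proof is correct and follows essentially the same strategy as the paper (and the cited \cite[Lemma 3.13]{CGGLSS}): use the identities of \cref{lma:move_marked_points} to separate the diagonal factors from the braid matrices, reducing the framed condition to the unframed one via a monomial rescaling of the $z$-variables. The only difference is cosmetic: the paper pushes the diagonals to the \emph{left} (see the proof of \cref{lem:framed-weave=weave}), obtaining $\diag(f_1,\ldots,f_n)\,B_{i_r}(g_r z_r)\cdots B_{i_1}(g_1 z_1)$ directly, which avoids your final conjugation by $\delta(\beta)$.
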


\begin{definition}[Framed algebraic weave]
    A \emph{framed algebraic weave} is an algebraic weave $\w \colon \beta_1 \to \beta_2$ together with the following:
    \begin{itemize}
        \item For each $i$-colored weave segment, a pair of variables $({z},u) \in \C \times \C^\ast$. The first entry is called a \emph{${z}$-variable}, and the second entry is called a \emph{$u$-variable}. 
        \item For each dashed segment, an $n\times n$ unipotent matrix $V$.
\end{itemize}
\end{definition}
Similarly to \cref{sec:weave_decomp}, we define
\[
    \mathbb V^{\w}_\textit{fr} \coloneqq (\C \times \C^\ast)^{\text{weave segments}} \times \left(\mathbb C^{\binom{n}2}\right)^{\text{dashed segments}},
\]
where $\mathbb C^{\binom{n}2}$ is identified with the set of $n \times n$ unipotent matrices.

\begin{definition}[Framed monodromy]\label{dfn:framed_monodromy}
    Let $\w \colon \beta_1 \to \beta_2$ be a framed algebraic weave. Let $\tau \colon S^1 \to (0,1) \times [0,1]$ be an embedding such that all intersections with $\w$ and its dashed rays are transverse.
    
    The \emph{framed monodromy} of $\w$ along $\tau$ is defined as the product of the following matrices, according to the orientation of $\tau$:
    \begin{enumerate}
        \item $\chi_i(u)B_i({z})$ if $\tau$ crosses an $i$-colored weave segment \emph{from right to left}.
        \item $B_i({z})^{-1}\chi_i(u)^{-1}$ if $\tau$ crosses an $i$-colored weave segment \emph{from left to right}.
        \item $V$ if $\tau$ crosses a dashed segment with matrix $V$ \emph{from top to bottom}.
        \item $V^{-1}$ if $\tau$ crosses a dashed segment with matrix $V$ \emph{from bottom to top}.
    \end{enumerate}
\end{definition}

\begin{definition}\label{dfn:framed_weave_corr}
    Let $\w \colon \beta_1 \to \beta_2$ be a framed algebraic weave. Let $\pi \in S_n$ be represented by a permutation matrix in $\GL(n,\C)$. We define $\mathfrak X_\textit{fr}(\w,\pi)$ to be the closed subvariety of $\mathbb V^{\w}_\textit{fr}$ that is cut out by the following conditions:
    \begin{itemize}
        \item The framed monodromy around each closed loop around each vertex and virtual vertex of $\w$ is the identity matrix.
        \item The matrix $B_{\beta_2}({z}_1, \ldots, {z}_r)\pi$ is upper triangular, where ${z}_1,\ldots,{z}_r$ are the ${z}$-variables associated to the weave segments at the bottom of the weave after setting all the $u$-variables associated with the weave segments at the top of the weave equal to $1$.
    \end{itemize}
    We use the notation $\mathfrak{X}_\textit{fr}(\w) \coloneqq \mathfrak{X}_\textit{fr}(\w,w_0)$.
\end{definition}
\begin{remark}
    The $z$- and $u$-variables on all the weave and dashed segments are uniquely determined by the $z$-variables at the top of the weave using the framed monodromy condition. This immediately implies that there exists an injective map $\mathfrak{X}_\textit{fr}(\w) \hookrightarrow X(\beta_1)$. 
\end{remark}

Similarly to the monodromy condition of algebraic weaves, the framed monodromy condition for framed algebraic weaves determines the relation between the $z$-variables assigned to each weave segment in each layer.

\begin{lemma}\label{lem:framed-weave=weave}
    Let $\w \colon \beta_1 \to \beta_2$ be a framed algebraic weave. Then there is an algebraic isomorphism $\mathfrak{X}_\textit{fr}(\w) \cong \mathfrak{X}(\w)$ that fits into the following commutative diagram
    \[\begin{tikzcd}
        X_\textit{fr}(\beta_2) \ar[d,"\cong"] & \mathfrak{X}_\textit{fr}(\w) \ar[d, "\cong"] \ar[l,"\pi" above] \ar[r] & X(\beta_1) \ar[d, "="] \\
        X(\beta_2) & \mathfrak{X}(\w) \ar[l,"\pi" above] \ar[r] & X(\beta_1).
    \end{tikzcd}\]
\end{lemma}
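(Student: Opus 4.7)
The plan is to exhibit an explicit bijection between the datasets $\mathbb V^{\w}_\textit{fr}$ and $\mathbb V^{\w}$ that respects the monodromy constraints and the upper-triangular boundary condition, and then check commutativity of the diagram. The key tool is the commutation identities of \cref{lma:move_marked_points}, which describe how diagonal factors $\chi_{i,k}(a,b)$ propagate through braid matrices $B_j(z)$ at the cost of rescaling the $z$-variables and sometimes shifting the indices of the diagonal.

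Given framed data, I would produce unframed data by successively ``pushing'' the $\chi_i(u)$-factors attached to the weave segments outward along the dashed rays. Each time a $\chi$-factor meets a weave segment, \cref{lma:move_marked_points} lets one rewrite the product as a new braid matrix followed by a transported $\chi$-factor on the other side; each time a $\chi$-factor reaches a dashed segment, it is absorbed into the unipotent matrix $V$ to produce a general invertible upper triangular matrix, which becomes the unframed assignment. The inverse map decomposes each unframed upper triangular matrix on a dashed segment as (unipotent)$\cdot$(diagonal) and propagates the diagonal part backward along the dashed ray, recording it as $u$-variables on the weave segments it crosses.

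To check that the framed monodromy condition translates to the unframed one, I would verify the local models of \cref{lma:weave_monodromy_vertex} one by one. At hexavalent and tetravalent vertices the diagonal factors commute through the vertex and the underlying braid/commutation relation on the $z$-variables is unchanged. At a trivalent vertex or a cup, the unframed matrix on the outgoing dashed segment is upper triangular with an explicit non-trivial diagonal (namely $\mathrm{diag}(-z_2^{-1}, z_2)$ or $\mathrm{diag}(1,1)$ in the $(i,i+1)$-block, from \cref{lma:weave_monodromy_vertex}\eqref{cond:trivalent}--\eqref{cond:cup}); the correspondence records this diagonal as $u$-variables on the two weave segments incident to the vertex, so that $V$ becomes unipotent in the framed picture. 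The upper-triangular condition at the bottom together with the normalization fixing the top $u$-variables to $1$ in \cref{dfn:framed_weave_corr} matches exactly the identification $X_\textit{fr}(\beta_2) \cong X(\beta_2)$ from \cref{lem:framed-flag=flag}, so commutativity of the diagram follows directly from the construction: both horizontal maps read off the top and bottom $z$-variables of the weave, which are unchanged by the translation procedure once the top $u$-variables are normalized to $1$.

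The main technical obstacle is the bookkeeping of how $\chi$-factors pick up the index shifts dictated by \cref{lma:move_marked_points}(2)--(5) as they are transported through virtual vertices, and checking that around each closed loop (in particular around each vertex) this propagation yields a globally consistent decoration rather than a multivalued one. Once this local-to-global verification is in place, the bijection is algebraic in both directions, it carries the framed monodromy locus onto the unframed monodromy locus, and the isomorphism $\mathfrak{X}_\textit{fr}(\w) \cong \mathfrak{X}(\w)$ of the lemma follows.
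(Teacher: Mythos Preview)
Your proposal is correct and follows essentially the same approach as the paper: both use the commutation identities of \cref{lma:move_marked_points} to transport the diagonal $\chi$-factors through the braid matrices, rescaling the $z$-variables by Laurent monomials in the $u$-variables, so that the framed monodromy conditions translate into the unframed ones and vice versa. The paper's proof is considerably terser---it simply notes that any product $\chi_{i_r}(u_r)B_{i_r}(z_r)\cdots\chi_{i_1}(u_1)B_{i_1}(z_1)$ can be rewritten as $\diag(f_1,\ldots,f_n)B_{i_r}(g_rz_r)\cdots B_{i_1}(g_1z_1)$ with the $f_i,g_i$ monomials in the $u_j^{\pm1}$, and that comparing the two monodromy conditions on each layer gives the isomorphism---whereas you spell out the local models at each vertex type and the absorption of diagonals into the dashed-segment matrices.
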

\begin{proof}
    The proof is similar to \cref{lem:framed-flag=flag} which is analogous to the proof of \cite[Lemma 3.13]{CGGLSS}. Using the identities in \cref{lma:move_marked_points} shows that any product of the form
    \[
    \chi_{i_r}(u_r)B_{i_r}({z}_r) \cdots \chi_{i_1}(u_1)B_{i_1}({z}_1)
    \]
    is equal to a product of the form
    \[
    \diag(f_1,\ldots,f_n)B_{i_r}(g_r{z}_r) \cdots B_{i_1}(g_1{z}_1),
    \]
    where $f_i$ and $g_i$ are monomials in $\C[u_1^{\pm1},\ldots,u_r^{\pm1}]$. Comparing the framed and unframed monodromy conditions defines an isomorphism $\mathfrak{X}_\textit{fr}(\w) \cong \mathfrak{X}(\w)$ which, on each layer of the weave, restricts to an isomorphism $X_\textit{fr}(\beta_2) \cong X(\beta_2)$.
\end{proof}

\subsection{Cluster variables}\label{sec:cluster_variables}
It is shown in Casals--Gorsky--Gorsky--Le--Shen--Simental \cite{CGGLSS} and Galashin--Lam--Sherman-Bennett--Speyer \cite{GLSS} independently that the braid variety $X(\beta)$ is a cluster $\A$-variety (equivalently, a cluster $K_2$-variety); in other words, that the coordinate ring $\C[X(\beta)]$ is a cluster algebra. (Recently, it was shown that the two cluster structures agree \cite{CGGSBS}.) In this section, we give a description of how the cluster algebra structure (in particular, the cluster variables) is related to the weave decomposition of the variety. First, we define Lusztig cycles in a Demazure weave as in \cite[Section 4]{CGGLSS}.

\begin{definition}[Lusztig cycle {\cite[Definitions 4.8 and 4.12]{CGGLSS}}]\label{dfn:lusztig_cycle}
    Let $\w$ be a Demazure weave. For each trivalent vertex $v \in V(\w)$, the \emph{Lusztig cycle} $\gamma_v$ associated to $v$ is a weighted collection of edges, encoded by a map $\gamma_v \colon E(\w) \to \mathbb{N}$ such that the following holds.
    \begin{itemize}
        \item For the edges $e$ above or on the same layer of the trivalent vertex $v \in V(\w)$, we have $\gamma_v(e) = 0$ except for the outgoing edge $e_{vo}$ for the trivalent vertex where $\gamma_v(e_{vo}) = 1$.
         \item For any trivalent vertex $v' \in V(\w)$ below $v \in V(\w)$ with incoming edges $e_l, e_r$ and outgoing edge $e_o$, we have
        \[
            \gamma_v(e_o) = \min(\gamma_v(e_l), \gamma_v(e_r)).
        \]
        \item For any hexavalent vertex $v' \in V(\w)$ below $v \in V(\w)$ with incoming edges $e_1, e_2, e_3$ and outgoing edges $e_1', e_2', e_3'$, we have
        \begin{align*}
            \gamma_v(e_1') &= \gamma_v(e_2) + \gamma_v(e_3) - \min(\gamma_v(e_1), \gamma_v(e_3)) \\
            \gamma_v(e_3') &= \gamma_v(e_1) + \gamma_v(e_2) - \min(\gamma_v(e_1), \gamma_v(e_3)) \\
            \gamma_v(e_2') &= \min(\gamma_v(e_1), \gamma_v(e_3)).
        \end{align*}
    \end{itemize}
\end{definition}

We now explain how the cluster variables of the initial seed associated with a right inductive weave $\w(\beta)$ on $\beta \in \Br_n^+$ are computed. Our notations of $s$-variables follows \cite[Theorem 5.19]{CGGLSS}.

\begin{definition}[{{\cite[Definition 5.19]{CGGLSS}}}]\label{dfn:s-variable_weave}
    Let $\w$ be a right simplifying weave and $v \in V(\w)$ be a trivalent vertex. Then $s_v \in \C[\mathfrak{X}(\w)]$ is the ${z}$-variable assigned to the northeast incoming framed weave segment $e_{ne}$ of the trivalent vertex $v$ after setting all $u$-variables at the top of the weave equal to one.
\end{definition}

\begin{definition}[{\cite[Definition 5.18]{CGGLSS}}]
    Let $\beta \in \Br_n^+$ such that $\delta(\beta) = w_0$, and let $\w(\beta)$ be a right inductive weave. Given two trivalent vertices $v, v' \in V(\w(\beta))$ we say that \emph{$v'$ covers $v$} if $\gamma_{v'}(e_{nw}) \neq 0$ where $e_{nw}$ is the northwest incoming edge of $v$.
\end{definition}

The following result is analogous to \cite[Theorem 5.19]{CGGLSS}, which is stated for left inductive weaves.

\begin{theorem}\label{thm:cluster_inductive}
    Let $\w$ be a right inductive weave.
    \begin{enumerate}
        \item For the topmost trivalent vertex $v \in V(\w)$, the $s_v$-variable agrees with the corresponding ${z}$-variable on the edge $e_{ne}$ at the top layer of the framed weave.
        \item For any trivalent vertex $v \in V(\w)$, the cluster variable $A_v(\w)$ satisfies the inductive formula
        \[
        A_v = s_v \cdot \prod_{v' \text{ covers } v} A_{v'}^{\gamma_{v'}(e_{nw})}. \eqno\qed
        \]
    \end{enumerate}
\end{theorem}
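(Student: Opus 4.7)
The plan is to mimic the proof of \cite[Theorem 5.19]{CGGLSS} for left inductive weaves, adapting the combinatorics to the right-inductive convention used in this paper (with dashed rays going to the right rather than the left). We proceed by downward induction on the trivalent vertices of $\w$, ordered from top to bottom as $v_1, \dots, v_t$.

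For the base case, consider the topmost trivalent vertex $v_1$. By \cref{dfn:lusztig_cycle}, a trivalent vertex $v'$ covers $v$ only if $v'$ lies strictly below $v$, so no $v'$ covers $v_1$ and the product in the inductive formula is empty. Since the portion of $\w$ strictly above $v_1$ contains only hexavalent and tetravalent vertices, and since in a right inductive weave the cluster variable $A_{v_1}$ associated to the topmost trivalent vertex is tautologically the $z$-variable assigned by $\mathfrak{X}_\textit{fr}(\w) \hookrightarrow X_\textit{fr}(\beta)$ to the northeast incoming edge $e_{ne}$ (at the top layer, after setting all top $u$-variables to $1$; this is the framed analog of \cref{thm:functor_w_to_c}), we obtain $A_{v_1} = s_{v_1}$ from \cref{dfn:s-variable_weave}.

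For the inductive step, suppose the formula is established for every trivalent vertex strictly above $v$. The key computation is to trace the $z$-variable on $e_{ne}(v)$ upward through $\w$ to the top layer, using the framed monodromy rules in \cref{lma:weave_monodromy_vertex} together with the transport identities of \cref{lma:move_marked_points}. Each hexavalent vertex above $v$ transforms the $z$-variable by condition \eqref{cond:hexavalent}, $B_i(z_3)B_{i+1}(z_2)B_i(z_1) = B_{i+1}(z_1)B_i(z_2 - z_1 z_3)B_{i+1}(z_3)$, and each trivalent vertex $v'$ above $v$ transforms it by condition \eqref{cond:trivalent}, $B_i(z_2)B_i(z_1) = V B_i(z_1 + z_2^{-1})$. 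The Lusztig cycle rules in \cref{dfn:lusztig_cycle} are precisely designed to bookkeep the multiplicities with which the $s$-variables (and hence, by induction, the cluster variables $A_{v'}$) at vertices above $v$ enter this transport: the recursion $\gamma_v(e_o) = \min(\gamma_v(e_l),\gamma_v(e_r))$ at trivalent vertices, and the additive-minimum recursion at hexavalent vertices, exactly matches the pattern of substitutions generated by repeated applications of the above identities. Matching the two combinatorial recursions layer-by-layer yields
\[
s_v = \frac{A_v}{\prod_{v' \text{ covers } v} A_{v'}^{\gamma_{v'}(e_{nw}(v))}},
\]
which is the desired formula after multiplying across.

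The main obstacle is the bookkeeping in the inductive step: one must verify that the local change of $z$-variables at each hexavalent or trivalent vertex above $v$ corresponds exactly to the local rule in \cref{dfn:lusztig_cycle} for the Lusztig cycle $\gamma_{v'}$ evaluated along the path from the top of the weave down to $e_{ne}(v)$, and in particular that $\gamma_{v'}(e_{nw}(v))$ is the correct exponent of $A_{v'}$. A convenient way to organize this is to reflect the argument of \cite[Section 5.4]{CGGLSS} across the vertical axis: under horizontal reflection a right inductive weave becomes a left inductive weave, the roles of $e_{nw}$ and $e_{ne}$ are swapped, and the dashed rays (which in this paper go to the right) become dashed rays going to the left as in \cite{CGGS1, CGGLSS}. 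The framed monodromy conditions are preserved under this reflection up to transposition of the matrix identities, so the combinatorial bookkeeping in \cite[Proof of Theorem 5.19]{CGGLSS} carries over verbatim, giving the stated inductive formula.
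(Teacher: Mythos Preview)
Your approach is essentially the paper's: the paper gives no proof at all, simply noting that the statement is the right-inductive analogue of \cite[Theorem~5.19]{CGGLSS} and appending a \textsc{qed}. Your reflection argument makes this analogy explicit, which is exactly the intended justification.

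One slip to fix: in the base case you write that ``$v'$ covers $v$ only if $v'$ lies strictly \emph{below} $v$.'' This is backwards. By \cref{dfn:lusztig_cycle} the Lusztig cycle $\gamma_{v'}$ vanishes on all edges above or at the layer of $v'$ (except the outgoing edge), so $\gamma_{v'}(e_{nw}(v)) \neq 0$ forces $e_{nw}(v)$ to lie \emph{below} $v'$, i.e.\ $v'$ must lie \emph{above} $v$. With ``below'' your sentence does not imply the conclusion you want (every other vertex is below the topmost one). With ``above'' the conclusion that no $v'$ covers $v_1$ is immediate, and the rest of your argument goes through.
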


\begin{example}\label{ex:cluster-hopf}
    Let $\beta = \sigma_1^3$. Then 
    \[
        X(\beta) = \{(z_1, z_2, z_3) \in \C^3 \mid B_1(z_3)B_1(z_2)B_1(z_1)w_0 \in B\}.
    \]
    We compute the cluster variables $A_v(\w)$ for a right inductive weave $\w$. Using the framed monodromy conditions from \cref{dfn:framed_weave_corr}, we find the framed weave with trivial monodromy to be as in \cref{fig:cluster_ex}.
    After setting $u_1 = u_2 = u_3 = 1$, \cref{thm:cluster_inductive} gives $A_1 = s_1 = z_2$, $s_2 = z_3 - z_2^{-1}$, and $A_2 = s_2A_1 = z_2 z_3-1$.
    \begin{figure}[!htb]
        \centering
        \includegraphics{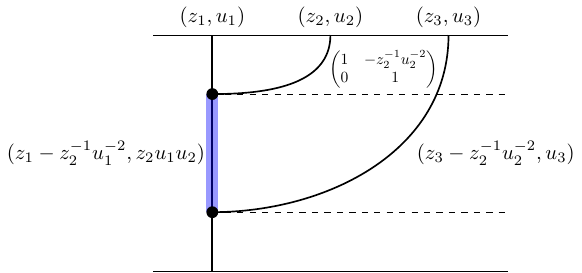}
        \caption{A framed right inductive weave with trivial monodromy. The highlighted edge is the only Lusztig cycle that covers.}
        \label{fig:cluster_ex}
    \end{figure}
\end{example}

\subsection{Morse complex sequences}\label{sec:mcs}
Morse complex sequences, which were first introduced in unpublished work of Pushkar{\cprime}, and in print by Henry \cite[Section 3]{Henry}, are a combinatorial version of generating families (families of Morse functions that generate Legendrians). They are a finite sequence of chain complexes over a commutative ring $R$ together with chain maps relating consecutive chain complexes constrained by the front diagram. Our presentation follows \cite{Henry,HenryRutherford15}.

\begin{definition}[Morse complex sequence]\label{dfn:mcs}
    A graded \emph{Morse complex sequence (MCS)} over a commutative ring $R$ for a front diagram $D$ equipped with a Maslov potential, is a quadruple
    \[
        C = (\{(C_\ell, d_\ell)\}_{\ell=0}^m, \{x_\ell\}_{\ell=0}^m, H, \{\varphi_\ell\}_{\ell=0}^{m-1})
    \]
    consisting of the following
    \begin{enumerate}
        \item $H$ is a set whose elements are called \emph{handleslide marks}. A handleslide mark is a vertical line segment in the $xz$-plane avoiding crossings and cusps of $D$ with endpoints on two strands with the same Maslov potential of $D$. A handleslide mark also comes equipped with an element $r \in R$.
        \item $\{x_\ell\}_{\ell=0}^m$ is a strictly increasing sequence so that $D \subset \{x_0 \leq x \leq x_m\}$, and such that for each $\ell \in \{0,\ldots,m-1\}$ the tangle $D_{\ell} \coloneqq D \cap \{x_\ell \leq x \leq x_{\ell+1}\}$ only contains a single cusp, crossing, or handleslide mark (see \cref{fig:mcs_diff1,fig:mcs_diff2,fig:mcs_diff3,fig:mcs_diff4}).
        \item For each $\ell \in \{0,\ldots,m\}$, $C_\ell$ is a free graded $R$-module generated by $e(\ell)_1, \ldots, e(\ell)_{s_{\ell}}$, labeled by the points $D \cap \{x = x_\ell\}$ from top to bottom. The grading of each $e(\ell)_i$ is defined to be the value of the Maslov potential of the strand it is labeled by. The differential $d_\ell$ is defined by the following local contributions for all $\ell \in \{0,\ldots,m-1\}$:
        \begin{enumerate}
            \item If $D_\ell$ contains a crossing of the $k$-th and $(k+1)$-st strands (see \cref{fig:mcs_diff1}), then
            \[
            \langle d_{\ell} e(\ell)_k,e(\ell)_{k+1}\rangle = \langle d_{\ell+1} e(\ell+1)_k,e(\ell+1)_{k+1}\rangle = 0.
            \]
            \item If $D_\ell$ contains a right cusp between the $k$-th and $(k+1)$-st strands (see \cref{fig:mcs_diff2}), then
            \[
            \langle d_{\ell} e(\ell)_k,e(\ell)_{k+1}\rangle = -s_k, \quad s_k \in R^\ast.
            \]
            \item If $D_\ell$ contains a left cusp between the $k$-th and $(k+1)$-st strands (see \cref{fig:mcs_diff3}), then
            \[
            \langle d_{\ell+1} e(\ell+1)_k,e(\ell+1)_{k+1}\rangle = 1.
            \]
        \end{enumerate}
        \item For each $\ell \in \{0,\ldots,m-1\}$, $\varphi_\ell \colon (C_\ell,d_\ell) \to (C_{\ell+1},d_{\ell+1})$ is a quasi-isomorphism defined by the tangle $D_\ell$, depending on its kind:
        \begin{enumerate}
            \item If $D_\ell$ contains a crossing of the $k$-th and $(k+1)$-st strands (see \cref{fig:mcs_diff1}), then $\varphi_\ell$ is defined on the set of generators as follows
            \[
            \varphi_\ell(e(\ell)_i) = \begin{cases}
                e(\ell+1)_{k+1}, & i = k, \\
                e(\ell+1)_k, & i = k+1, \\
                e(\ell+1)_i, & \text{otherwise}.
            \end{cases}
            \]
            \item If $D_\ell$ contains a right cusp between the $k$-th and $(k+1)$-st strands (see \cref{fig:mcs_diff2}), then $\varphi_\ell$ is defined on the set of generators as follows
            \[
            \varphi_\ell(e(\ell)_i) = \begin{cases}
                0, & i \in \{k,k+1\}, \\
                e(\ell+1)_{i}, & i < k, \\
                e(\ell+1)_{i-2}, & i > k+1.
            \end{cases}
            \]
            \item If $D_\ell$ contains a left cusp between the $k$-th and $(k+1)$-st strands (see \cref{fig:mcs_diff3}), then $\varphi_\ell$ is defined on the set of generators as follows
            \[
            \varphi_\ell(e(\ell)_i) = \begin{cases}
                e(\ell+1)_{i}, & i \leq k, \\
                e(\ell+1)_{i+2}, & i \geq k+1. 
            \end{cases}
            \]
            \item If $D_\ell$ contains a handleslide mark between the $j$-th and $k$-th strands equipped with the element $r\in R$ (see \cref{fig:mcs_diff4}), then $\varphi_\ell$ is defined on the set of generators as follows
            \[
            \varphi_\ell(e(\ell)_i) = \begin{cases}
                e(\ell+1)_j+re(\ell+1)_k, & i = j, \\
                e(\ell+1)_i, & \text{otherwise}.
            \end{cases}
            \]
        \end{enumerate}
    \end{enumerate}
\end{definition}

\begin{figure}[!htb]
    \centering
    \begin{subfigure}{0.45\textwidth}
        \centering
        \includegraphics{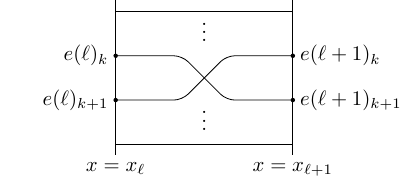}
        \caption{}\label{fig:mcs_diff1}
    \end{subfigure}
    \begin{subfigure}{0.45\textwidth}
        \centering
        \includegraphics{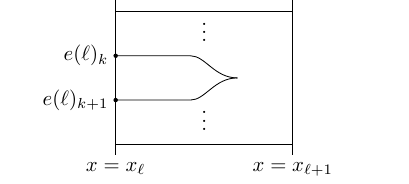}
        \caption{}\label{fig:mcs_diff2}
    \end{subfigure}
    \begin{subfigure}{0.45\textwidth}
        \centering
        \includegraphics{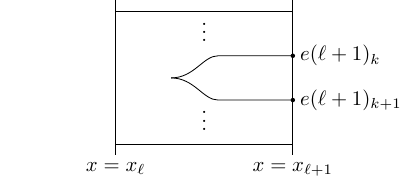}
        \caption{}\label{fig:mcs_diff3}
    \end{subfigure}
    \begin{subfigure}{0.45\textwidth}
        \centering
        \includegraphics{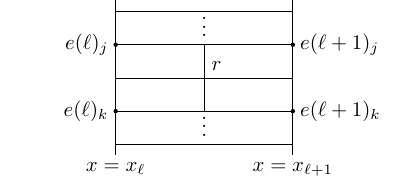}
        \caption{}\label{fig:mcs_diff4}
    \end{subfigure}
    \caption{The different kinds of tangles $D_\ell$ used to define $d_\ell$ and $\varphi_\ell$.}
\end{figure}

\begin{remark}\label{rmk:mcs_difference}
    Our \cref{dfn:mcs}(4)(d) differs by a minus sign compared to \cite[Definition 4.1(5)(b)]{HenryRutherford15}. Consequently our MCS moves \cref{fig:mcs_local3,fig:mcs_local5} differ from \cite[Figures 13(d) and 13(e)]{HenryRutherford15} by a minus sign. This will be important when comparing Morse complex sequences with algebraic weaves in \cref{sec:monodromy_mcs}; see \cref{rmk:mcs_matrix_order}. Our definition of SR-form MCS (see \cref{dfn:sr-form_mcs} below) consequently also differs from the definition in Henry--Rutherford by suitable signs; see \cref{rmk:sr-form_diff}.
\end{remark}

\begin{remark}\label{rmk:mcs_marked_pts}
 Because we work with the front diagram of the $(-1)$-closure of $\beta\Delta$ for some $\beta \in \Br_n^+$ with one marked point per strand (as opposed to one marked point per component), in \cref{dfn:mcs}(3)(b), we set $\langle d_{\ell} e(\ell)_k,e(\ell)_{k+1}\rangle = -s_k$ for $s_k \in R^\ast$ for every right cusp. This is in contrast with \cite[Definition 4.1]{HenryRutherford15}, where only some right cusps are marked.
\end{remark}

\begin{definition}[Simple left cusp]
    Let $C$ be an MCS over $R$ for a front diagram $D$ equipped with a Maslov potential. A \emph{simple left cusp} for $C$ is a left cusp in $D$ (see \cref{fig:mcs_diff3}) such that
    \[
    \langle d_{\ell+1}e(\ell+1)_j,e(\ell+1)_k\rangle = 0 = \langle d_{\ell+1}e(\ell+1)_j,e(\ell+1)_{k+1}\rangle \quad \forall j < k.
    \]
\end{definition}
A simple right cusp is defined analogously, although we only consider simple left cusps in this paper. The main motivation for introducing simple left cusps is the following.
\begin{proposition}[{\cite[Proposition 4.2]{HenryRutherford15}}]\label{prop:handleslides_determine_mcs}
    An MCS for a front diagram $D$ such that all of its left cusps are simple, is uniquely determined by its handleslide set $H$.
    \qed
\end{proposition}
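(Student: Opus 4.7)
The plan is to prove the proposition by induction on the layer index $\ell$, showing that each differential $d_\ell$ on $C_\ell$ is uniquely determined by $H$ (together with the front diagram $D$ and the fixed marked point values at right cusps). Placing $x_0$ to the left of every feature of $D$ makes the base case trivial: $C_0 = 0$ and $d_0 = 0$. Since each chain map $\varphi_\ell$ is prescribed explicitly in terms of the tangle $D_\ell$ and, when applicable, the corresponding handleslide coefficient in $H$, establishing uniqueness of the differentials suffices to uniquely recover the whole MCS.

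For the inductive step, assume $d_\ell$ is known and split according to the type of tangle $D_\ell$. The chain-map condition $d_{\ell+1}\varphi_\ell = \varphi_\ell d_\ell$ together with the local MCS constraints should determine $d_{\ell+1}$. When $D_\ell$ is a crossing or a handleslide, $\varphi_\ell$ is an isomorphism of the underlying graded modules, and the chain-map condition forces $d_{\ell+1} = \varphi_\ell d_\ell \varphi_\ell^{-1}$ uniquely; the zero crossing condition on $d_{\ell+1}$ is then automatic from the corresponding condition on $d_\ell$. When $D_\ell$ is a right cusp, $\varphi_\ell$ is surjective (killing only the two strands ending at the cusp), so the chain-map condition determines $d_{\ell+1}$ on every generator of $C_{\ell+1}$.

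The case that requires the simple cusp hypothesis is that of a left cusp. Here $\varphi_\ell$ is the canonical inclusion $C_\ell \hookrightarrow C_{\ell+1}$ omitting the two newly introduced generators $e_k, e_{k+1}$. The chain-map condition determines only the components of $d_{\ell+1}(\varphi_\ell(e_i))$ lying inside $\varphi_\ell(C_\ell)$, leaving the potential components along $e_k$ and $e_{k+1}$ a priori free. The simple cusp hypothesis is exactly what eliminates this ambiguity: by definition it forces $\langle d_{\ell+1}(e_j), e_k\rangle = \langle d_{\ell+1}(e_j), e_{k+1}\rangle = 0$ for all $j < k$, and by grading combined with $d^2 = 0$ the vanishing extends to the remaining indices. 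The MCS axiom $\langle d_{\ell+1}(e_k), e_{k+1}\rangle = 1$, combined with the gradings of the two new strands at the cusp (with Maslov potentials differing by one), then pins down $d_{\ell+1}$ on the new generators $e_k, e_{k+1}$ themselves. The main obstacle is precisely the bookkeeping at the left cusp, where the simple cusp hypothesis is the essential additional input beyond the chain-map and $d^2=0$ conditions needed to remove all off-diagonal freedom.
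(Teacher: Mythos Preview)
The paper does not supply its own proof here; the proposition is cited from \cite[Proposition~4.2]{HenryRutherford15} and closed without argument. Your induction on the slice index $\ell$ is the standard argument, and the crossing, handleslide, and right-cusp cases are handled correctly.

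The left-cusp step has a gap. Your claim that ``the gradings of the two new strands at the cusp \dots\ pin down $d_{\ell+1}$ on the new generators $e_k,e_{k+1}$'' is not justified: if some old strand $e_j$ with $j>k+1$ has Maslov potential equal to $\mu(e_{k+1})$, grading alone does not exclude a nonzero coefficient $\langle d_{\ell+1}(e_k),e_j\rangle$, and neither the axiom $\langle d_{\ell+1}(e_k),e_{k+1}\rangle=1$ nor $d^2=0$ forces it to vanish. What actually determines $d_{\ell+1}(e_k)=e_{k+1}$ and $d_{\ell+1}(e_{k+1})=0$ in Henry--Rutherford's definition is the birth axiom that $Re_k\oplus Re_{k+1}$ be a \emph{subcomplex} of $(C_{\ell+1},d_{\ell+1})$, so that the quotient by it is a complex canonically identified with $(C_\ell,d_\ell)$; together with the strict triangularity of the differential this forces those values. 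With that correction, your use of the simple-cusp hypothesis for $j<k$ and of triangularity for $j>k+1$ correctly disposes of the remaining mixed coefficients $\langle d_{\ell+1}(e_j),e_k\rangle$ and $\langle d_{\ell+1}(e_j),e_{k+1}\rangle$, and the induction closes. One related remark: if one takes the paper's phrasing literally and insists that $\varphi_\ell$ itself is a chain map, then $d_{\ell+1}(\varphi_\ell(e_i))=\varphi_\ell(d_\ell e_i)$ already forces the $e_k$- and $e_{k+1}$-components of $d_{\ell+1}$ on old generators to vanish, making the simple-cusp hypothesis redundant; your account matches Henry--Rutherford's quotient formulation, under which the hypothesis is genuinely needed.
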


It is useful to study the equivalence class of MCSs under a number of ``MCS moves.'' For computational purposes in \cref{sec:mcs_and_alg_weaves}, we include additional MCS moves involving marked points that are depicted in \cref{fig:mcs_moves_marked_pts}.

\begin{definition}[{\cite[Definition 2.6]{HenryRutherford15Equiv}}]\label{defn:MCSmoves}
    Let $D$ be a front diagram equipped with a Maslov potential. Let $C$ and $C'$ be two MCSs for $D$ such that all of its left cusps are simple with respect to both $C$ and $C'$. We say that $C$ and $C'$ are equivalent if they are related by a finite sequence of MCS moves, depicted in \cref{fig:mcs_moves,fig:mcs_moves_marked_pts}. 
\end{definition}

\begin{figure}[!htb]
    \centering
    \begin{subfigure}{0.45\textwidth}
        \centering
        \includegraphics{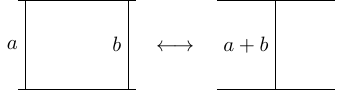}
        \caption{}\label{fig:mcs_local1}
    \end{subfigure}
    \begin{subfigure}{0.45\textwidth}
        \centering
        \includegraphics{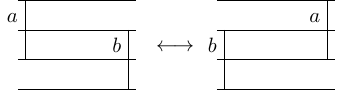}
        \caption{}\label{fig:mcs_local2}
    \end{subfigure}
    \begin{subfigure}{0.45\textwidth}
        \centering
        \includegraphics{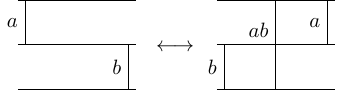}
        \caption{}\label{fig:mcs_local3}
    \end{subfigure}    
    \begin{subfigure}{0.45\textwidth}
        \centering
        \includegraphics{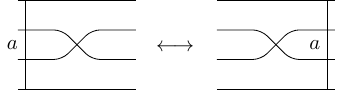}
        \caption{}\label{fig:mcs_local4}
    \end{subfigure}
    \begin{subfigure}{0.45\textwidth}
        \centering
        \includegraphics{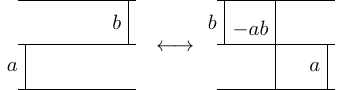}
        \caption{}\label{fig:mcs_local5}
    \end{subfigure}
    \begin{subfigure}{0.45\textwidth}
        \centering
        \includegraphics{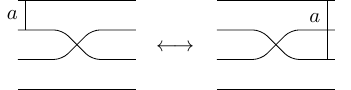}
        \caption{}\label{fig:mcs_local6}
    \end{subfigure}
    \begin{subfigure}{0.45\textwidth}
        \centering
        \includegraphics{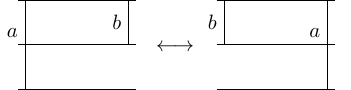}
        \caption{}\label{fig:mcs_local7}
    \end{subfigure}
    \begin{subfigure}{0.45\textwidth}
        \centering
        \includegraphics{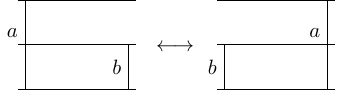}
        \caption{}\label{fig:mcs_local8}
    \end{subfigure}
    \begin{subfigure}{0.45\textwidth}
        \setcounter{subfigure}{8}
        \centering
        \includegraphics{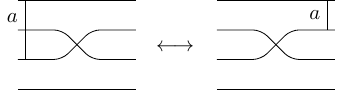}
        \caption{}\label{fig:mcs_local_extra1}
    \end{subfigure}
    \begin{subfigure}{0.45\textwidth}
        \centering
        \includegraphics{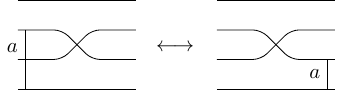}
        \caption{}\label{fig:mcs_local_extra2}
    \end{subfigure}
    \begin{subfigure}{0.45\textwidth}
        \centering
        \includegraphics{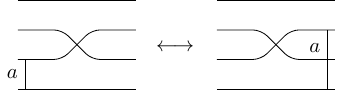}
        \caption{}\label{fig:mcs_local_extra3}
    \end{subfigure}
    \begin{subfigure}{0.45\textwidth}
        \centering
        \includegraphics{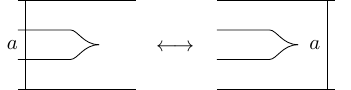}
        \caption{}\label{fig:mcs_local_cusp1}
    \end{subfigure}
    \begin{subfigure}{0.45\textwidth}
        \centering
        \includegraphics{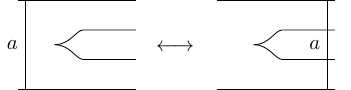}
        \caption{}\label{fig:mcs_local_cusp2}
    \end{subfigure}
    \begin{subfigure}{0.45\textwidth}
        \centering
        \includegraphics{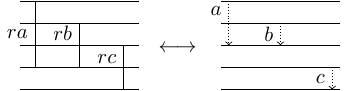}
        \caption{}\label{fig:mcs_local13}
    \end{subfigure}
    \caption{MCS moves.}
    \label{fig:mcs_moves}
\end{figure}

\begin{figure}[!htb]
    \centering
    \begin{subfigure}{0.45\textwidth}
        \centering
        \includegraphics{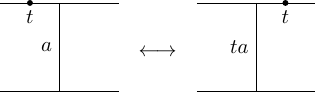}
        \caption{}\label{fig:mcs_local_marked1}
    \end{subfigure}
    \begin{subfigure}{0.45\textwidth}
        \centering
        \includegraphics{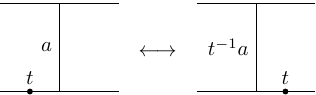}
        \caption{}\label{fig:mcs_local_marked2}
    \end{subfigure}
    \begin{subfigure}{0.45\textwidth}
        \centering
        \includegraphics{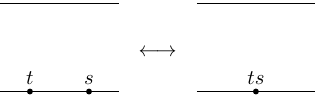}
        \caption{}\label{fig:mcs_local_marked3}
    \end{subfigure}
    \begin{subfigure}{0.45\textwidth}
        \centering
        \includegraphics{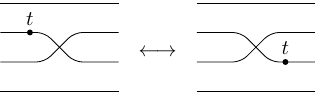}
        \caption{}\label{fig:mcs_local_marked4}
    \end{subfigure}
    \begin{subfigure}{0.45\textwidth}
        \centering
        \includegraphics{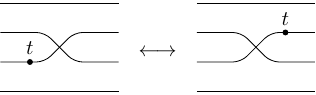}
        \caption{}\label{fig:mcs_local_marked5}
    \end{subfigure}
    \begin{subfigure}{0.45\textwidth}
        \centering
        \includegraphics{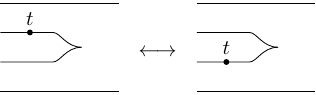}
        \caption{}\label{fig:mcs_local_marked6}
    \end{subfigure}
    \caption{MCS moves involving marked points.}
    \label{fig:mcs_moves_marked_pts}
\end{figure}

There are two special forms of MCSs called the A-form and SR-form that are closely related to augmentations and normal rulings, respectively. 
\begin{definition}[A-form MCS {\cite[Definition 5.1]{HenryRutherford15}}]\label{dfn:a_form_mcs}
    A (graded) \emph{A-form MCS} for a front diagram $D$ equipped with a Maslov potential is a graded MCS for $D$ with simple left cusps such that the set of handleslide marks consists of one handleslide mark immediately to the left of every crossing between the two crossing strands, equipped with an arbitrary element of $R$.
\end{definition}

\begin{definition}[SR-form MCS {\cite[Definition 4.4]{HenryRutherford15}}]\label{dfn:sr-form_mcs}
    Let $D$ be a front diagram equipped with a Maslov potential. A graded \emph{SR-form MCS} for $D$ with respect to a normal ruling $\rho \in \mathfrak{R}(D)$ is a graded MCS for $D$ with simple left cusps such that the set of handleslide marks consists of the following:
    \begin{description}
        \item[Near switches] One handleslide mark immediately to the left and one handleslide mark immediately to the right of every switch equipped with the elements $r \in R^\ast$ and $-r^{-1} \in R^\ast$, respectively. Moreover, there is one more handleslide mark between the two ``companion strands" of the two strands involved in the switch, depending on whether the ruling disks are nested or disjoint (see the first row of \cref{fig:ruling}), equipped with the element $-ar^{-1}b^{-1}$, where $a$ and $b$ are certain coefficients in the differential $d_\ell$; see \cref{fig:mcs1,fig:mcs2,fig:mcs3}.
        \item[Near returns] One handleslide mark immediately to the left of the return equipped with the element $r \in R$. Moreover, one more handleslide mark between the two ``companion strands'' of the two strands involved in the return, depending on whether the ruling disks are nested or disjoint (see the third row of \cref{fig:ruling}), equipped with the element $-a^{-1}rb$ or $-arb^{-1}$, depending on the type of return, where $a$ and $b$ are certain coefficients in the differential $d_\ell$; see \cref{fig:mcs4,fig:mcs5,fig:mcs6}.
        \item[Near departures] One handleslide mark immediately to the left of the departure equipped with the element $0 \in R$.
    \end{description}
\end{definition}

\begin{figure}[!htb]
    \centering
    \begin{subfigure}{0.32\textwidth}
        \centering
        \includegraphics{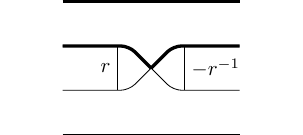}
        \caption{}\label{fig:mcs1}
    \end{subfigure}
    \begin{subfigure}{0.32\textwidth}
        \centering
        \includegraphics{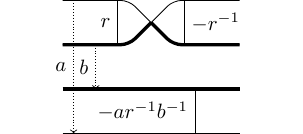}
        \caption{}\label{fig:mcs2}
    \end{subfigure}
    \begin{subfigure}{0.32\textwidth}
        \centering
        \includegraphics{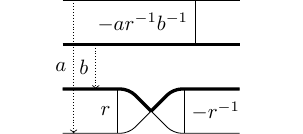}
        \caption{}\label{fig:mcs3}
    \end{subfigure}
    
    \vspace{1cm}
    
    \begin{subfigure}{0.32\textwidth}
        \centering
        \includegraphics{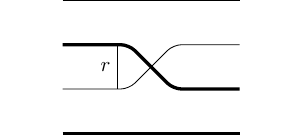}
        \caption{}\label{fig:mcs4}
    \end{subfigure}
    \begin{subfigure}{0.32\textwidth}
        \centering
        \includegraphics{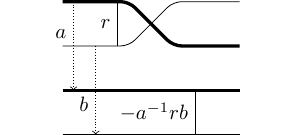}
        \caption{}\label{fig:mcs5}
    \end{subfigure}
    \begin{subfigure}{0.32\textwidth}
        \centering
        \includegraphics{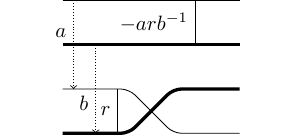}
        \caption{}\label{fig:mcs6}  
    \end{subfigure}
    \caption{Local models for the SR-form MCS associated to a normal ruling. The thickness of the lines indicate the boundaries of the two ruling disks. A dotted arrow between strands $j$ and $k$ for $j < k$ with label $x$ means $\langle d_\ell e(\ell)_k, e(\ell)_j\rangle = x$.}
\end{figure}
\begin{remark}
    A handleslide mark equipped with $0 \in R$ is equivalent to the absence of that handleslide mark.
\end{remark}
\begin{remark}\label{rmk:sr-form_diff}
    The elements decorating the handleslide marks between the companion strands in the SR-form MCS differ from those appearing in \cite[Definition 4.4]{HenryRutherford15} by a minus sign; see \cref{rmk:mcs_difference}.
\end{remark}

  Equivalent SR-form Morse complex sequences belong to the same normal ruling:

\begin{proposition}[{\cite[Corollary 4.3]{HenryRutherford15Equiv}}]\label{prop:SRmcs-equiv}
    Let $D$ be a nearly plat front diagram of a Legendrian link $\Lambda$ in $\R^3$. Suppose $C$ and $C'$ are equivalent SR-form Morse complex sequences. Then there exists $\rho \in \mathfrak{R}(D)$ such that $C, C' \in \MCS^\rho(D)$.
    \qed
\end{proposition}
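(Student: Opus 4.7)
The plan is to identify an invariant of MCS equivalence classes that recovers the normal ruling whenever the MCS happens to be in SR-form, and then to verify that this invariant is preserved by each of the MCS moves listed in Figures involving \ref{fig:mcs_local1}--\ref{fig:mcs_local_marked6}. Since MCS equivalence is by definition generated by finitely many such moves, this suffices to produce a common ruling $\rho \in \mathfrak{R}(D)$ for $C$ and $C'$.

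To carry this out, for each MCS $C$ and each vertical slice $x = x_\ell$, I associate the pairing
\[
\pi_\ell(C) \coloneqq \{\{i,j\} : i < j,\ \langle d_\ell e(\ell)_j,\, e(\ell)_i\rangle \neq 0\}
\]
together with the induced classification of each crossing as switch/return/departure coming from the local handleslide pattern. Inspection of the local models in Figures \ref{fig:mcs1}--\ref{fig:mcs6} shows that whenever $C$ is in SR-form with respect to some $\rho$, the pairing $\pi_\ell(C)$ agrees with the companion-strand pairing of $\rho$ at every generic slice, and the switch/return/departure labeling of the crossings agrees with that of $\rho$. In particular, $\rho$ is recovered from the sequence $\{\pi_\ell(C)\}_\ell$ together with the handleslide data of $C$, so it suffices to show that this recovered ruling is invariant under the MCS moves.

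The verification is a case analysis on the MCS moves. Moves supported in a region disjoint from any crossing of $D$ (the handleslide-only moves of Figures \ref{fig:mcs_local1}--\ref{fig:mcs_local_extra3}, the cusp moves of Figures \ref{fig:mcs_local_cusp1}--\ref{fig:mcs_local13}, and the marked-point moves of Figure \ref{fig:mcs_moves_marked_pts}) leave $\pi_\ell$ unchanged at every slice outside the move's support, and a direct computation shows the internal change of basis preserves the pairing structure. Moves whose support does contain a crossing of $D$ require checking that, under the hypothesis that both the input and output are SR-form, the crossing's type and the pairings of uninvolved strands are preserved; in each such case the local SR-form pattern (with its prescribed coefficients such as $-r^{-1}$, $-ar^{-1}b^{-1}$, $-a^{-1}rb$, etc.) rigidly determines the outcome. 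The main obstacle is precisely this case analysis near crossings, where we must check that the SR-form coefficients transform consistently; however, because each move is local and involves only a bounded number of strands and handleslides, in every case the recovered ruling is unchanged. Combining the cases, $\rho(C) = \rho(C')$ for any two SR-form MCSs related by a single MCS move, and hence for any two equivalent SR-form MCSs; this common ruling is the desired $\rho$.
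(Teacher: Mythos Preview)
The paper cites this result from \cite{HenryRutherford15Equiv} without proof, so there is no in-paper argument to compare against; but your argument has a real gap. Your strategy---extract from each MCS a combinatorial object that is constant along MCS equivalences and recovers $\rho$ on SR-form MCSs---is the right one, but your move-by-move verification of invariance explicitly assumes that \emph{both} the input and output of each single move are in SR-form. Equivalence of $C$ and $C'$ only gives a chain $C=C_0\to C_1\to\cdots\to C_N=C'$ of MCS moves through arbitrary MCSs; the intermediate $C_i$ need not be in SR-form, so your per-move hypothesis is unjustified. And the raw nonzero-entry set $\pi_\ell$ is genuinely \emph{not} preserved by MCS moves: already for the merge move of \cref{fig:mcs_local1}, the differential at a slice strictly between the two original handleslides changes by conjugation by $S_{i,j}(a)$, which can create or destroy entries and need not leave $\pi_\ell$ a matching.

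The fix is to replace $\pi_\ell$ by the Barannikov (canonical) pairing of the filtered complex $(C_\ell,d_\ell)$, filtered by strand height. Handleslide and marked-point matrices are triangular, so every MCS move changes $d_\ell$ within its support only by conjugation by a filtered automorphism (and leaves $d_\ell$ outside the support untouched, cf.\ \cref{cor:mcs_matrix}); hence the Barannikov pairing in each region between consecutive crossings of $D$ is an honest MCS-equivalence invariant. For an SR-form MCS the differential at each generic slice is already in canonical form, with pairing equal to the ruling pairing (this is the meaning of the dotted arrows in \cref{fig:mcs1,fig:mcs2,fig:mcs3,fig:mcs4,fig:mcs5,fig:mcs6}), so your $\pi_\ell$ happens to agree with the correct invariant at the endpoints $C$ and $C'$; what is missing is the filtered-complex argument that carries it through the intermediate, non-SR-form steps.
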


\begin{notation}
    Let $D$ be a front diagram equipped with a Maslov potential.
    \begin{itemize}
        \item Let $\MCS^A(D)$ denote the set of A-form MCSs and let $\widehat{\MCS}{}^A(D)$ denote its set of equivalence classes.
        \item Let $\MCS^\rho(D)$ denote the set of SR-form MCSs with respect to the normal ruling $\rho$ and let $\widehat{\MCS}{}^\rho(D)$ denote its set of equivalence classes.
    \end{itemize}
    To avoid certain edge cases, we also define $\widehat{\MCS}{}^A(D) \coloneqq \varnothing$ if $\MCS^A(D) = \varnothing$ and $\widehat{\MCS}{}^\rho(D) \coloneqq \varnothing$ if $\MCS^\rho(D) = \varnothing$ (cf.\@ \cref{def:aug_variety}).
\end{notation}

\begin{theorem}[{\cite[Theorem 5.2]{HenryRutherford15}}]\label{thm:hr_Amcs_aug}
    Let $D$ be a nearby plat front diagram of a Legendrian link $\Lambda\subset \R^3$. Then there is an isomorphism $\MCS^A(D) \cong \V(D)$.
    \qed
\end{theorem}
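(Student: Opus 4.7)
The plan is to construct an explicit bijection $\Phi\colon \MCS^A(D)\to \V(D)$ by reading off augmentation data from the MCS and vice versa, then checking that the respective well-definedness constraints on the two sides correspond under $\Phi$.

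First, I would parameterize both sides. By \cref{prop:handleslides_determine_mcs}, an A-form MCS is completely determined by (i) the ring element $r_c\in R$ at the handleslide mark immediately to the left of each crossing $c$ and (ii) the element $s_k\in R^\ast$ appearing in the right-cusp differential $\langle d_\ell e(\ell)_k,e(\ell)_{k+1}\rangle=-s_k$ for each right cusp (by \cref{rmk:mcs_marked_pts}, there is one such value per strand, matching the marked point $t_k$). Similarly, a point $\epsilon\in \V(D)$ is a tuple $(\epsilon(t_i),\epsilon(a_j))$ where $a_j$ are the degree $0$ generators of $\A(D)$ coming from crossings. The candidate bijection sends $\Phi(C)=\epsilon$ with $\epsilon(a_c)=r_c$ and $\epsilon(t_k)=s_k$.

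Next, I would verify the well-definedness of $\Phi$ by sweeping left-to-right across $D$. The key dictionary is that the off-diagonal coefficient $\langle d_\ell e(\ell)_j,e(\ell)_k\rangle$ at any vertical slice encodes a combinatorial count of admissible ``partial'' disks whose boundary lies to the right of that slice, weighted by the accumulated handleslide values and marked point values; this reproduces the Chekanov--Eliashberg disk count defining $\partial$ applied to $\epsilon$. The propagation rules in \cref{dfn:mcs}(4) are precisely the local transformations of this weighted disk count when passing through a handleslide (adding $r$ times the tail of a disk through the handleslide jump), a crossing (swapping strands) or a cusp (capping off or creating a pair of strands with the prescribed coefficient). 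Under this interpretation, the constraint $\langle d_\ell e(\ell)_j,e(\ell)_k\rangle=0$ at every crossing $c$ of strands $j,k$ (from \cref{dfn:mcs}(3)(a)), together with the prescribed right-cusp coefficient, translates verbatim into the augmentation equations $\epsilon(\partial b)=0$ for the degree $1$ generators $b$ coming from crossings and right cusps. The inverse map is constructed by the same rule: given an augmentation, place handleslide marks valued by $\epsilon(a_c)$ and choose right-cusp coefficients $s_k=\epsilon(t_k)$, then propagate the differentials from the leftmost slice; the augmentation equations guarantee that the required vanishing and cusp conditions hold, and uniqueness follows again from \cref{prop:handleslides_determine_mcs}.

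The main obstacle is making the dictionary between $d_\ell$ and the partial augmentation count precise, i.e., verifying locally (at each handleslide, crossing, cap, and cusp) that the combinatorial propagation rule for $d_\ell$ in \cref{dfn:mcs} exactly matches the recursive formula for the Chekanov--Eliashberg differential read off from the front diagram, with the correct signs (this is where the sign discrepancy noted in \cref{rmk:mcs_difference,rmk:sr-form_diff} enters). Once this local correspondence is established at each type of tangle, the bijection and its compatibility with the defining equations of $\V(D)$ follow by iterating across the slices $x_0<x_1<\cdots<x_m$, and the algebraic isomorphism statement is obtained by checking that $\Phi$ is polynomial in the natural coordinates on both sides.
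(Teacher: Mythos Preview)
The paper does not supply its own proof of this statement; it is quoted from \cite[Theorem~5.2]{HenryRutherford15} with a bare \verb|\qed|. The only additional content the paper provides is \cref{rmk:aform_aug_iso}, which records the explicit form of the isomorphism in the special case of $(-1)$-closures: $\epsilon(a)=-x_a$, where $x_a$ is the handleslide value at the crossing corresponding to the degree~$0$ generator $a$.

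Your outline is exactly the Henry--Rutherford argument, so it matches what the paper is citing. One small discrepancy: you set $\epsilon(a_c)=r_c$, whereas the paper's sign conventions (see \cref{rmk:mcs_difference} and \cref{rmk:aform_aug_iso}) force $\epsilon(a_c)=-r_c$; this is harmless but should be tracked if you carry out the local disk-count verification. Otherwise your identification of the main technical point --- that the propagation rules for $d_\ell$ in \cref{dfn:mcs}(4) reproduce the front-projection recursion for $\epsilon\circ\partial$ slice by slice --- is precisely the content of the cited proof.
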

\begin{remark}\label{rmk:aform_aug_iso}
    The isomorphism in \cref{thm:hr_Amcs_aug} for the $(-1)$-closure of $\beta\Delta$ for some $\beta \in \Br_n^+$ (see \cref{fig:closure}) is explicitly given by defining $\epsilon \colon \A(\La(\beta\Delta)) \to \C$ by $\epsilon(a) = -x_a$, where $x_a$ is the handleslide mark associated with the crossing of $\La(\beta\Delta)$ that corresponds to the degree $0$ generator $a$.
\end{remark}

\begin{theorem}[{\cite[Theorem 1.1]{HenryRutherford15Equiv}}]\label{thm:hr_Amcs_aug_equiv}
    Let $D$ be a nearly plat front diagram of a Legendrian link $\Lambda$ in $\R^3$. Then there exists an isomorphism
    \[
    \widehat\MCS{}^A(D) \cong \Aug(D). \eqno\qed
    \]
\end{theorem}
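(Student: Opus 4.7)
The plan is to use the isomorphism $\Psi \colon \MCS^A(D) \overset{\cong}{\longrightarrow} \V(D)$ provided by \cref{thm:hr_Amcs_aug}, and show that it descends to a bijection on equivalence classes. Since $\Aug(D) = \V(D)/{\simeq}$ by \cref{def:aug_variety}, the task reduces to proving that for A-form MCSs $C, C'$, the corresponding augmentations $\Psi(C)$ and $\Psi(C')$ are dga homotopic if and only if $C$ and $C'$ are related by a finite sequence of MCS moves (\cref{defn:MCSmoves}). Under the explicit formula in \cref{rmk:aform_aug_iso}, $\Psi$ reads handleslide coefficients directly as augmentation values, so both sides have a transparent combinatorial description to compare.

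For the forward direction (MCS moves yield dga homotopic augmentations), I would work move-by-move through \cref{fig:mcs_moves,fig:mcs_moves_marked_pts}. For each local MCS move, the handleslide marks outside a small neighborhood of the move are unchanged, and only finitely many handleslide coefficients are altered by an explicit local substitution. I would construct a dga homotopy $\eta \colon \A(D) \to \C$ by setting $\eta$ to vanish on all generators except for a small set of degree $-1$ generators localized near the move, with explicitly prescribed values reading off the local coefficient change. The two conditions in \cref{def:dg-homotopy}, namely $\epsilon_1(x) - \epsilon_2(x) = \eta(\partial x)$ and the twisted Leibniz identity, then reduce to a finite case check against the local forms of $\partial$ recorded in \cref{dfn:mcs}.

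For the reverse direction (dga homotopic augmentations come from MCS-equivalent A-form MCSs), I would first observe that a dga homotopy $\eta$ is determined by its values on degree $-1$ generators (together with $\epsilon_2$, since $\eta$ on products is forced by the twisted Leibniz rule), so it suffices to realize \emph{elementary} dga homotopies supported on a single degree $-1$ generator. For each elementary homotopy one must exhibit an explicit finite sequence of MCS moves that effects the prescribed change in handleslide coefficients. The key technical step is a normal-form reduction: using the handleslide-through-crossing moves \cref{fig:mcs_local3,fig:mcs_local5} together with the cusp and marked-point moves \cref{fig:mcs_local_cusp1,fig:mcs_local_cusp2,fig:mcs_moves_marked_pts}, one shows that every A-form MCS can be reduced to a canonical representative depending only on the dga homotopy class of $\Psi(C)$, and that the transformations available between canonical representatives exactly match dga homotopies.

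The main obstacle is this last realization step in the reverse direction: one must check that the MCS calculus is rich enough to implement every elementary dga homotopy, without accidentally producing new ones. Two ingredients compound the difficulty in our setting compared to \cite{HenryRutherford15Equiv}: first, we mark \emph{every} strand rather than one point per component (\cref{rmk:mcs_marked_pts}), so the invertible variables $\epsilon(t_i)$ correspond to right-cusp handleslide coefficients $s_k$ that must be tracked through the marked-point moves of \cref{fig:mcs_moves_marked_pts}; second, the sign conventions differ from loc.\@ cit.\@ (\cref{rmk:mcs_difference,rmk:sr-form_diff}), so the coefficient substitutions in each MCS move must be recomputed to verify that they produce genuine dga homotopies rather than merely chain homotopies of the underlying linearized complexes. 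Once this bookkeeping is in place, the descent of $\Psi$ to equivalence classes is immediate, and the resulting bijection is the required isomorphism.
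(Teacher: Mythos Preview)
The paper does not prove this theorem at all: it is stated with a terminal \qed and attributed to \cite[Theorem 1.1]{HenryRutherford15Equiv}, with the following remark noting only that the original $\Z/2\Z$ argument goes through over any commutative ring once the sign conventions of \cref{rmk:mcs_difference} are in place. So there is no ``paper's own proof'' to compare against; the authors import the result wholesale.

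Your outline is a faithful sketch of how the Henry--Rutherford argument actually runs, and the two directions you identify (MCS moves induce dga homotopies by local checks; conversely, elementary dga homotopies supported on single degree $-1$ generators are realized by explicit MCS move sequences, after which a normal-form argument finishes) are exactly the structure of their proof. Your remarks about the extra marked points per strand and the sign discrepancies are well-taken and match the paper's own caveats in \cref{rmk:mcs_marked_pts,rmk:mcs_difference}. If you were to write this up in full, the genuine work is in the reverse direction: verifying that the MCS move calculus is complete for elementary homotopies is a lengthy case analysis in the original source, and you would need to redo it with the present conventions rather than just assert it. But as a plan there is no gap; you have simply written a proof where the paper is content to cite one.
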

\begin{remark}
    Both \cref{prop:SRmcs-equiv} and \cref{thm:hr_Amcs_aug_equiv} are originally proved over $\Z/2\Z$ in \cite{HenryRutherford15Equiv}. However, with the sign conventions we define (following \cite{HenryRutherford15}), the proofs go through with coefficients in any commutative ring.
\end{remark}

\subsection{Ruling decomposition}\label{sec:ruling_decomp}
In this section we discuss the ruling decomposition of the augmentation variety introduced by Henry--Rutherford \cite{HenryRutherford15}.

\begin{theorem}[{\cite[Theorem 6.1]{HenryRutherford15}}]\label{thm:one-to-one_A_SR}
    Let $D$ be a nearly plat front diagram of a Legendrian link $\Lambda$ in $\R^3$. There exists an isomorphism 
    \begin{equation}\label{eq:A_SR}
        \MCS^A(D) \cong \bigsqcup_{\rho \in \mathfrak{R}(D)} \MCS^\rho(D),
    \end{equation}
    where $\mathfrak{R}(D)$ is the set of all normal rulings of $D$.
    \qed
\end{theorem}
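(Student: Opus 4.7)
The plan is to construct explicit mutually inverse maps
\[
\Phi\colon \MCS^A(D) \longrightarrow \bigsqcup_{\rho\in\mathfrak{R}(D)} \MCS^\rho(D), \qquad \Psi\colon \bigsqcup_{\rho\in\mathfrak{R}(D)} \MCS^\rho(D) \longrightarrow \MCS^A(D),
\]
via a left-to-right sweep of the front diagram $D$. Both maps are purely local (supported in a small horizontal strip around each crossing and cusp), so well-definedness amounts to checking each local model.

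For $\Phi$, given $C=(\{(C_\ell,d_\ell)\},\{x_\ell\},H,\{\varphi_\ell\})\in \MCS^A(D)$, we sweep across $D$ from left to right. Immediately past each left cusp, the pair of newly born strands is recorded as a companion pair. At a crossing $c$ of strands $k, k+1$ with preceding handleslide of coefficient $r$, we first consult the differential $d_\ell$ just to the left of $c$: since all left cusps are simple and the differential is propagated by the chain maps $\varphi_\ell$, the matrix $d_\ell$ encodes a unique involution pairing the strands into companions. Based on this pairing and the value $r$, we split into three cases matching \cref{dfn:normal_ruling}. If strands $k$ and $k+1$ are companions and $r\in R^\ast$, we declare $c$ a switch and, using the MCS moves in \cref{fig:mcs_local3,fig:mcs_local5,fig:mcs_local6,fig:mcs_local8}, rearrange the handleslide marks to produce exactly the SR-form configuration of \cref{fig:mcs1,fig:mcs2,fig:mcs3} with auxiliary coefficient $-ar^{-1}b^{-1}$ determined by the coefficients of $d_\ell$. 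If strands $k,k+1$ are not companions we distinguish, using the interlacement status of the two companion pairs before and after $c$, whether $c$ should be a return (in which case we record the required auxiliary handleslide with coefficient depending on $r$ and on $d_\ell$ as in \cref{fig:mcs4,fig:mcs5,fig:mcs6}) or a departure (in which case $r$ is forced to equal $0$). Updating the companion pairing and continuing yields a normal ruling $\rho$ and an SR-form MCS $C'\in \MCS^\rho(D)$.

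Conversely, for $\Psi$, given $C\in \MCS^\rho(D)$, we use the MCS moves to migrate every auxiliary handleslide (the one between companion strands) across the intervening crossings and handleslides via \cref{fig:mcs_local3,fig:mcs_local4,fig:mcs_local5,fig:mcs_local6,fig:mcs_local7,fig:mcs_local8}, and then fuse the leftover handleslide marks at each crossing using \cref{fig:mcs_local3}, producing a single handleslide immediately to the left of each crossing, i.e., an A-form MCS. The fact that $\Psi\circ\Phi=\mathrm{id}$ and $\Phi\circ\Psi=\mathrm{id}$ reduces to a local verification at each crossing. The main obstacle is step one of the forward map: one must show that the sequence of types assigned by $\Phi$ really defines a \emph{normal} ruling, i.e.\@ that the companion pairing tracked by the differentials never forces an interlaced switch. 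This is the content of an inductive invariant showing that, at each horizontal slice, the pairing of strands induced by $d_\ell$ at that slice is always geometrically realizable by nested/disjoint ruling disks, together with a local check that the coefficient assignments of \cref{dfn:sr-form_mcs} are precisely what the MCS moves force (up to the sign discrepancies noted in \cref{rmk:mcs_difference} and \cref{rmk:sr-form_diff}). Since all verifications are local and the MCS moves are invertible, the inverses are given by reversing the move sequences, completing the bijection.
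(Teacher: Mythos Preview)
The paper does not prove this theorem itself---it is quoted from \cite{HenryRutherford15} with a \qed, and only the shape of the bijection is sketched in \cref{rmk:A_SR_algorithm}. Your overall plan (a left-to-right sweep that rewrites the handleslide set crossing by crossing via MCS moves) is indeed the structure of that sketch.

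There is, however, a genuine error in your trichotomy. You declare $c$ a switch when ``strands $k$ and $k+1$ are companions and $r\in R^\ast$''. But in a normal ruling the two strands meeting at a crossing can \emph{never} be companions: the companion of a strand is the other boundary component of the same ruling disk, and if the two boundaries of a single disk crossed each other, the region they co-bound would fail to be a topological disk. On the MCS side this is precisely \cref{dfn:mcs}(3)(a), which forces $\langle d_\ell e(\ell)_k,e(\ell)_{k+1}\rangle=0$ at every crossing, so the involution you read off $d_\ell$ never pairs the two crossing strands. With your stated criterion no crossing would ever be declared a switch. The correct trichotomy (after the accumulated handleslides have been pushed past $c$, as in \cref{rmk:A_SR_algorithm}) concerns the relative position of the two \emph{distinct} ruling disks containing strands $k$ and $k+1$: if those disks are nested or disjoint just left of $c$, the crossing is a switch when the resulting coefficient is a unit and a departure otherwise; if the disks are interlaced, the crossing is a return.

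Your assertion that at a departure ``$r$ is forced to equal $0$'' is the substantive point, not an input: one must show that after pushing, the coefficient at a nested/disjoint crossing that fails to be a unit in fact vanishes. Henry--Rutherford deduce this from the MCS axioms together with the inductively maintained standard form of $d_\ell$, and this is where the actual work in their argument lies.
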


\begin{remark}\label{rmk:A_SR_algorithm}
    Let us give a brief description of the bijection in \cref{thm:one-to-one_A_SR}.
    
    First we consider the map taking an A-form MCS $\mathcal C_1$ that corresponds to an augmentation $\epsilon$ to an SR-form MCS with respect to $\rho$. Namely, the map constructs a sequence of MCSs $\mathcal{C}_1,\ldots, \mathcal C_N$ such that $\mathcal C_N$ is an SR-form MCS. By induction assume that $\mathcal{C}_i$ is in SR-form to the left of the $i$-th crossing $q_i$. If $q_{i-1}$ is a switch, there are two handleslide mark to the right of $q_{i-1}$ labeled by $-x_{i-1}^{-1}$ and $x_{i-1}^{-1}$; use MCS moves to push all handleslide marks between $q_{i-1}$ and $q_i$ to the right of $q_i$, keeping the handleslide mark labeled by $-x_{i-1}^{-1}$ fixed. If $q_{i-1}$ is not a switch, use MCS moves to push all handleslide marks between $q_{i-1}$ and $q_i$ to the right of $q_i$. Next, if $q_i$ is a return or departure, this is $\mathcal C_{i+1}$. If $q_i$ is a switch, then there is a handleslide mark to the left of $q_i$ between the crossing strands with coefficient $x_i$. Add two handleslide marks between the crossing strands to the right of $q_i$ with coefficients $-x_i^{-1}$ and $x_i^{-1}$. We iterate this process until we have passed all crossings. If there are any handleslide marks remaining to the right of the rightmost crossing we simply remove them. It turns out that the final MCS $\mathcal{C}_N$ is in SR-form with respect to $\rho$.

    We now describe the reverse map taking an SR-form MCS $\mathcal C'_1$ with respect to $\rho$ to an A-form MCS. As above, we construct a sequence of MCSs $\mathcal C'_1,\ldots,\mathcal C'_N$ where $\mathcal C'_N$ is an A-form MCS. Again, by induction, suppose that $\mathcal{C}'_i$ is in A-form to the left of the $i$-th crossing $q_i$. First, use MCS moves to push the handleslide marks between $q_{i-1}$ and $q_i$ to the right of $q_i$. If $q_i$ is a return or departure of $\rho$, this is $C'_{i+1}$. If $q_i$ is a switch, then there were handleslide marks to the left and right of $q_i$ between the crossing strands with coefficients $x_i$ and $-x_i^{-1}$, respectively, before pushing handleslide marks to the right. Thus, after pushing the handleslide marks to the right of $q_i$, there will still be a handleslide mark to the left of $q_i$. We iterate this process until we have passed all crossings. If there are any handleslide marks (including the ones between the companion strands) remaining to the right of the rightmost crossing we simply remove them. The final MCS $\mathcal{C}'_N$ will be in A-form MCS with respect to $\epsilon$.
\end{remark}

\begin{theorem}[{\cite[Theorems 4.6 and 4.12]{HenryRutherford13}}]\label{thm:sr_bijection}
   Let $D$ be a front diagram of the $(-1)$-closure of $\beta\Delta$ for some $\beta \in \Br_n^+$ with $\delta(\beta) = w_0$. Let $\rho$ be a normal ruling for $D$. There is an isomorphism
   \begin{equation}\label{eq:sr_handleslide_marks}
    \eta_\rho \colon \C^{r(\rho)} \times (\C^\ast)^{s(\rho)} \overset{\cong}{\longrightarrow} \MCS^\rho(D)
    \end{equation}
    that is given by sending a tuple $((x_i)_{1\leq i\leq r(\rho)},(z_j)_{1\leq j\leq s(\rho)})$ to the SR-form MCS with handleslide marks $x_i$ at returns of $\rho$, and $z_j$ (and $-z_j^{-1}$) at switches of $\rho$.
    \qed
\end{theorem}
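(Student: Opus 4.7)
The plan is to construct $\eta_\rho$ explicitly as a combinatorial map and then verify it is bijective directly, relying on \cref{prop:handleslides_determine_mcs} to reduce everything to a statement about handleslide sets. Given a tuple $((x_i)_{i=1}^{r(\rho)},(z_j)_{j=1}^{s(\rho)})$, I build a candidate SR-form MCS $C$ by scanning the front $D$ from left to right and placing handleslide marks according to \cref{dfn:sr-form_mcs}: at the $j$-th switch, place marks with coefficients $z_j$ and $-z_j^{-1}$ between the crossing strands immediately to the left and right of the crossing; at the $i$-th return, place a mark with coefficient $x_i$ immediately to the left of the crossing; at each departure, place nothing. The additional companion-strand handleslides prescribed by \cref{dfn:sr-form_mcs} (with coefficients of the form $-ar^{-1}b^{-1}$, $-a^{-1}rb$, or $-arb^{-1}$) are determined by the entries $a,b$ of the differential immediately to the left of the crossing.

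Well-definedness comes down to showing that these companion-strand coefficients make sense, i.e., that the denominators $a,b$ are always invertible where they appear. I would argue this by induction along the $x$-coordinate. At the start of $D$ (immediately to the right of the left cusps) the differentials are determined by \cref{dfn:mcs}(3)(c) and all left cusps are simple by construction. Moving past each tangle $D_\ell$, the map $\varphi_\ell$ propagates the differential. At a switch, the relevant entries $a,b$ that appear as denominators are precisely switch coefficients $z_{j'}$ from earlier switches on the companion strands (which lie in $R^*$ by hypothesis) or right-cusp constants $-s_k \in R^*$ (\cref{rmk:mcs_marked_pts}). At a return, only one of $a,b$ appears in a denominator, and the same analysis applies. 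Thus the data defining $C$ is well-defined; combined with \cref{prop:handleslides_determine_mcs} this yields $\eta_\rho(((x_i),(z_j))) \in \MCS^\rho(D)$.

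Injectivity is immediate: distinct tuples produce distinct handleslide coefficients at a switch or return, hence distinct elements of $\MCS^\rho(D)$. For surjectivity, given any $C' \in \MCS^\rho(D)$, read off the coefficient at the left handleslide of each switch (which lies in $R^* = \C^*$) and the coefficient at the left handleslide of each return (which lies in $R = \C$). This gives a tuple whose image under $\eta_\rho$ agrees with $C'$ on all switch, return, and departure handleslides, and hence on the companion-strand handleslides as well since these are determined by the left-to-right propagation. By \cref{prop:handleslides_determine_mcs}, $C'$ equals $\eta_\rho$ of the read-off tuple.

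The main obstacle is the invertibility claim inside the inductive step, because the SR-form coefficient formulas mix multiplicatively several quantities coming from the propagated differential. Concretely, one must verify by a case analysis of \cref{fig:mcs1,fig:mcs2,fig:mcs3,fig:mcs4,fig:mcs5,fig:mcs6} that the denominators always correspond to either (i) a coefficient at a right cusp, which by \cref{rmk:mcs_marked_pts} is a unit, or (ii) a switch coefficient $z_{j'}$ with $j' < j$, which lies in $\C^*$ by hypothesis. This is precisely the content of \cite[Theorems 4.6 and 4.12]{HenryRutherford13}, and goes through verbatim over an arbitrary commutative ring with the sign conventions of \cref{rmk:sr-form_diff}.
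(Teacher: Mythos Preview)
The paper does not provide a proof of this statement: it is stated with a terminal \qed and attributed to \cite[Theorems 4.6 and 4.12]{HenryRutherford13}. Your proposal therefore goes beyond what the paper does, supplying a sketch of the argument rather than merely citing it. The overall structure of your sketch---build the handleslide set from the tuple, invoke \cref{prop:handleslides_determine_mcs}, and read off coefficients for the inverse---is correct and is essentially how the cited result is proved.

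There is one genuine slip in your inductive step. You claim the denominators $a,b$ are ``right-cusp constants $-s_k \in R^\ast$,'' but this cannot be right: you are scanning $D$ from left to right, so at the moment you reach a switch or return you have not yet encountered any right cusp, and the constraint in \cref{dfn:mcs}(3)(b) plays no role in determining $d_\ell$ there. The correct source of invertibility is the left cusps: by \cref{dfn:mcs}(3)(c) the differential coefficient between the two strands of each ruling disk is initialized to $1$ at its left cusp, and one checks that passing through crossings and SR-form handleslides multiplies these ruling-pair coefficients only by units (namely by $z_j^{\pm 1}$ at switches). This is exactly the inductive invariant one needs, and it is what \cite{HenryRutherford13} establishes. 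With this correction your argument goes through; as written, the appeal to right-cusp constants is a non sequitur.
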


As a consequence of \cref{thm:hr_Amcs_aug,thm:one-to-one_A_SR,thm:sr_bijection} Henry and Rutherford obtain the following ruling decomposition of the naive augmentation variety. The only difference between the statement here and \cite{HenryRutherford15} is that we consider the front diagram $D$ with one marked point per right cusp instead of one marked point per component. From now on we let $R = \C$.

\begin{theorem}[{\cite[Theorem 3.4]{HenryRutherford15}}]\label{thm:HenryRutherford}
Suppose $D$ is a nearly plat front diagram of a Legendrian link $\Lambda \subset \R^3$. There exists a decomposition of the naive augmentation variety of $D$ into a disjoint union
\[\V(D)=\bigsqcup_{\rho\in\frakr(D)} \MCS^\rho(D),\]
where $\frakr(D)$ is the set of all normal rulings of $D$. Each set $\MCS^\rho(D)$ is the image of an injective regular map
\[
\phi_\rho\colon(\C^\ast)^{s(\rho)}\times\C^{r(\rho)} \overset{\cong}{\longrightarrow} \MCS^\rho(D) \hooklongrightarrow \V(D),
\]
where the first map is $\eta_\rho$ in \eqref{eq:sr_handleslide_marks}.
\qed
\end{theorem}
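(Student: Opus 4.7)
The plan is to obtain the ruling decomposition essentially as a corollary of the three preceding results, which are chained together. Specifically, let $\Phi \colon \MCS^A(D) \overset{\cong}{\to} \V(D)$ denote the isomorphism from \cref{thm:hr_Amcs_aug}, sending an A-form MCS with handleslide coefficients $x_a$ to the augmentation $\epsilon$ with $\epsilon(a) = -x_a$ (as in \cref{rmk:aform_aug_iso}). Then combining $\Phi$ with the partition $\MCS^A(D) \cong \bigsqcup_{\rho\in\frakr(D)} \MCS^\rho(D)$ from \cref{thm:one-to-one_A_SR} yields a disjoint union
\[
\V(D) \;=\; \bigsqcup_{\rho\in\frakr(D)} \Phi\bigl(\MCS^\rho(D)\bigr),
\]
which, after identifying each $\MCS^\rho(D)$ with its image in $\V(D)$, is the desired decomposition.

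Next, for each normal ruling $\rho$ I define $\phi_\rho$ as the composition
\[
\C^{r(\rho)} \times (\C^\ast)^{s(\rho)} \xrightarrow{\;\eta_\rho\;} \MCS^\rho(D) \hooklongrightarrow \MCS^A(D) \xrightarrow{\;\Phi\;} \V(D),
\]
where $\eta_\rho$ is the isomorphism from \cref{thm:sr_bijection} assigning to a tuple of parameters the SR-form MCS whose return handleslides and switch handleslides carry exactly those coefficients. Since $\eta_\rho$ is an isomorphism and $\Phi$ is injective, $\phi_\rho$ is injective with image $\Phi(\MCS^\rho(D))$. The fact that these images are pairwise disjoint is guaranteed by \cref{prop:SRmcs-equiv}: distinct rulings give inequivalent SR-form MCSs, so their images in $\V(D)$ cannot coincide.

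The main technical point — and the step I expect to be the most delicate — is verifying that $\phi_\rho$ is a regular map of algebraic varieties (not merely a bijection of sets). This requires tracing through the A-form-to-SR-form algorithm described in \cref{rmk:A_SR_algorithm}, which is a finite sequence of MCS moves that push handleslide marks across crossings according to the local rules of \cref{fig:mcs_moves,fig:mcs_moves_marked_pts}. Each such push transforms handleslide coefficients by a polynomial (or Laurent polynomial) expression in the existing coefficients; the only inverses introduced are of the switch coefficients, which explains the factor $(\C^\ast)^{s(\rho)}$ in the domain, while the return and free handleslide coefficients contribute the factor $\C^{r(\rho)}$. Assembling these local algebraic transformations into a global regular map requires showing that the composite reproduces the augmentation values $\epsilon(a) = -x_a$ at every crossing as a polynomial expression in the SR-parameters, and that the inverse procedure (again from \cref{rmk:A_SR_algorithm}) supplies a rational (in fact regular) inverse onto the image.

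The remaining verifications — disjointness, covering of $\V(D)$, and the dimension count $s(\rho) + r(\rho)$ — follow formally from \cref{thm:one-to-one_A_SR,thm:sr_bijection}. Thus the only genuine work is in step three, and even there the algebraic nature of the MCS moves does most of the heavy lifting; the main obstacle is careful bookkeeping of the handleslide-pushing algorithm to confirm that it is implemented by morphisms of algebraic varieties.
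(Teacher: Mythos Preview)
Your proposal is correct and matches the paper's treatment: the theorem is stated with a \qed and no proof, preceded by the sentence ``As a consequence of \cref{thm:hr_Amcs_aug,thm:one-to-one_A_SR,thm:sr_bijection} Henry and Rutherford obtain the following ruling decomposition,'' and your argument is precisely the chaining of those three results. One small redundancy: you invoke \cref{prop:SRmcs-equiv} for disjointness, but this is already built into the isomorphism of \cref{thm:one-to-one_A_SR}, whose right-hand side is a disjoint union indexed by $\frakr(D)$.
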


Combining the decomposition of the naive augmentation variety $\V(D)$ described in \cref{thm:HenryRutherford} with \cref{prop:SRmcs-equiv,thm:hr_Amcs_aug_equiv}, we obtain a ruling decomposition for the augmentation variety $\Aug(\Lambda)$.

\begin{theorem}\label{thm:HR_decomp_equiv}
    Let $D$ be a nearly plat front diagram of a Legendrian link $\Lambda$ in $\R^3$. There exists an isomorphism
    \begin{equation}\label{eq:A_SR_equiv}
        \phi \colon \widehat\MCS{}^A(D) \overset{\cong}{\longrightarrow} \bigsqcup_{\rho \in \mathfrak{R}(D)} \widehat\MCS{}^\rho(D),
    \end{equation}
    where $\mathfrak{R}(D)$ is the set of normal rulings of $D$, such that the following diagram commutes:
    \[
        \begin{tikzcd}[sep=scriptsize]
            \MCS^A(D) \dar \rar{\text{\eqref{eq:A_SR}}} & \bigsqcup_{\rho \in \mathfrak{R}(D)} \MCS^\rho(D) \dar \\
            \widehat{\MCS}{}^A(D) \rar{\text{\eqref{eq:A_SR_equiv}}} & \bigsqcup_{\rho \in \mathfrak{R}(D)} \widehat{\MCS}{}^\rho(D)
        \end{tikzcd},
    \]
    where the vertical arrows are the quotient maps.
    \qed
\end{theorem}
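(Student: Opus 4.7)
The plan is to define $\phi$ as the descent of the bijection $\Phi$ from \cref{thm:one-to-one_A_SR} to MCS-equivalence classes; once well-definedness is established, commutativity of the diagram is immediate by construction, and uniqueness of $\phi$ is forced by surjectivity of the quotient maps. The substantive content is therefore that $\Phi$ and $\Phi^{-1}$ both preserve MCS equivalence, which simultaneously gives well-definedness, injectivity, and surjectivity of $\phi$. The target of $\phi$ is unproblematic: by \cref{prop:SRmcs-equiv}, equivalent SR-form MCSs cannot cross between ruling pieces, so the equivalence on $\bigsqcup_{\rho} \MCS^\rho(D)$ restricts to an equivalence on each piece $\MCS^\rho(D)$, and its quotient is exactly $\bigsqcup_{\rho} \widehat\MCS{}^\rho(D)$.

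To show that $\Phi$ preserves MCS equivalence, I would decompose the algorithm of \cref{rmk:A_SR_algorithm} into three kinds of local steps: (a) sliding handleslide marks rightward through crossings and cusps via MCS moves; (b) inserting a pair of cancelling handleslide marks with coefficients $\pm x_i^{-1}$ on the two strands immediately to the right of each switch; (c) deleting any handleslide marks that end up to the right of every crossing. Step (a) consists of literal MCS moves from \cref{fig:mcs_moves,fig:mcs_moves_marked_pts} and therefore preserves equivalence. Step (b) is equivalence-preserving because two handleslide marks on the same two strands with opposite coefficients compose to the identity chain map on each slice and can thus be inserted or removed by the handleslide cancellation moves among \cref{fig:mcs_moves}. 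Step (c) is equivalence-preserving because any handleslide mark lying to the right of every crossing and cusp can be slid arbitrarily far right by the moves in \cref{fig:mcs_moves,fig:mcs_moves_marked_pts} and then removed. Granted (a)--(c), if two A-form MCSs $C_1, C_2$ differ by a single elementary MCS move, then the local perturbation propagates through the algorithm as a further sequence of elementary MCS moves in the output, so $\Phi(C_1)$ and $\Phi(C_2)$ are equivalent. The same analysis, applied to the inverse algorithm described in the second paragraph of \cref{rmk:A_SR_algorithm}, yields injectivity of $\phi$.

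The main obstacle is the case-by-case verification underlying the preceding paragraph: each elementary MCS move on the A-form side must be shown to become a finite sequence of elementary MCS moves on the SR-form side after running the algorithm, and the precise sequence depends on whether the nearest crossing is a switch, return, or departure of the resulting ruling and on whether the ruling disks there are nested or disjoint (see \cref{fig:ruling}). One must therefore check each pairing of an input move from \cref{fig:mcs_moves,fig:mcs_moves_marked_pts} with each local ruling configuration from \cref{fig:ruling}. This verification is finite but tedious, and once it is carried out, combining it with the general observations (a)--(c) completes the proof.
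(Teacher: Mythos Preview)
The paper gives no proof of this theorem: the statement ends with a \qed{} symbol and is treated as an immediate consequence of the cited results of Henry--Rutherford (\cref{prop:SRmcs-equiv}, \cref{thm:hr_Amcs_aug_equiv}, \cref{thm:one-to-one_A_SR}, and the algorithm in \cref{rmk:A_SR_algorithm}). So there is nothing to compare against except the implicit one-line deduction.

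Your approach is correct, but you have overcomplicated the final step and manufactured an ``obstacle'' that is not there. Your observations (a)--(c) already establish that every step of the A-to-SR algorithm is an MCS move; hence for \emph{every} A-form MCS $C$ one has $C \sim \Phi(C)$. Equivalence preservation then follows in one line by transitivity: if $C_1 \sim C_2$ then $\Phi(C_1) \sim C_1 \sim C_2 \sim \Phi(C_2)$. The same applies to $\Phi^{-1}$. There is no need to ``track how a local perturbation propagates through the algorithm'' or to perform the case-by-case verification you describe in your final paragraph; the argument does not care what sequence of moves the algorithm happens to apply to each individual input, only that each step is a legitimate MCS move. Once you drop that last paragraph, your proof is complete and is exactly the content the paper's \qed{} is gesturing at.
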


\begin{lemma}\label{lem:a-form_bijection_equiv}
    Let $\beta \in \Br_n^+$. There is an isomorphism
    \begin{equation}\label{eq:a-form_reflect}
        \MCS^A(\La(\Delta\beta)) \overset{\cong}{\longrightarrow} \widehat{\MCS}{}^A(\La(\Delta\beta)) \times \C^{\binom n2},
    \end{equation}
    that is defined by sending an A-form MCS to the pair consisting of its equivalence class and the tuple $(x_{i})_{1\leq i\leq \binom n2} \in \C^{\binom n2}$ of handleslide marks of the crossings of $\Delta$.
\end{lemma}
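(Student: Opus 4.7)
The plan is to construct the isomorphism \eqref{eq:a-form_reflect} as a composition of three previously established isomorphisms and then check that the composition acts in the manner described by the statement. Concretely, I would consider
\[
\MCS^A(\La(\Delta\beta)) \xrightarrow{\text{\cref{thm:hr_Amcs_aug}}} \V(\La(\Delta\beta)) \xrightarrow{\text{\cref{thm:bijection_equiv}}} \Aug(\La(\Delta\beta)) \times \C^{\binom n2} \xrightarrow{\text{\cref{thm:hr_Amcs_aug_equiv}}} \widehat{\MCS}{}^A(\La(\Delta\beta)) \times \C^{\binom n2},
\]
where the first arrow sends an A-form MCS $C$ to the corresponding augmentation $\epsilon_C$, the second to the pair $([\epsilon_C],(\epsilon_C(w_{ij}))_{1\le i<j\le n})$ consisting of its dga-homotopy class together with its values on the crossings $w_{ij}$ of $\Delta$, and the third reinterprets $[\epsilon_C]$ as the MCS-equivalence class $[C]$.

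To match the composition with the formula in the statement, I would invoke the analog of \cref{rmk:aform_aug_iso} for the front diagram of $\La(\Delta\beta)$: an A-form MCS $C$ is sent to the augmentation whose value on the degree-zero generator $a$ is $-x_a$, where $x_a$ is the handleslide mark of $C$ immediately to the left of the crossing associated with $a$. Evaluating at the crossings of $\Delta$ therefore produces the tuple $(-x_{w_{ij}})_{ij}$, which agrees with the tuple of handleslide marks of $\Delta$ after composing with the coordinate-wise automorphism $u\mapsto -u$ of $\C^{\binom n2}$; absorbing this sign into the final factor yields the claimed description. The identification $[\epsilon_C]\leftrightarrow [C]$ appearing in the last factor is precisely the content of \cref{thm:hr_Amcs_aug_equiv}, applied in compatibility with \cref{thm:hr_Amcs_aug}.

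The main obstacle is essentially bookkeeping: verifying that the explicit description of the Henry--Rutherford isomorphism $\MCS^A \cong \V$ in \cref{rmk:aform_aug_iso} transfers from $\La(\beta\Delta)$ to $\La(\Delta\beta)$. This is immediate since the construction depends only on the handleslide decoration of the front diagram and treats all degree-zero crossings on equal footing, regardless of whether the half twist lies to the left or to the right of $\beta$. Once this observation is in place, the three displayed isomorphisms compose to yield the desired map and the lemma follows.
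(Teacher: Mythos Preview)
Your proposal is correct and is essentially the same as the paper's proof, which simply cites \cref{thm:bijection_equiv}, \cref{thm:hr_Amcs_aug}, and \cref{thm:hr_Amcs_aug_equiv}. You have merely spelled out the composition of these three isomorphisms explicitly and tracked the sign from \cref{rmk:aform_aug_iso}, which the paper leaves implicit.
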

\begin{proof}
    This follows from \cref{thm:bijection_equiv}, \cref{thm:hr_Amcs_aug}, and \cref{thm:hr_Amcs_aug_equiv}.
\end{proof}

\begin{lemma}\label{lem:sr_form_bijection_equiv}
    Let $\beta\in \Br_n^+$ be such that $\delta(\beta) = w_0$. For any normal ruling $\rho \in \mathfrak{R}(\La(\Delta\beta))$ there is an isomorphism
    \[
        \C^{r(\rho)-\binom n2} \times (\C^\ast)^{s(\rho)} \overset{\cong}{\longrightarrow} \widehat{\MCS}{}^\rho(\La(\Delta\beta))
    \]
    that is defined by sending $((x_i)_{{\binom n2}+1\leq i\leq r(\rho)},(z_j)_{1\leq j\leq s(\rho)})$ to the equivalence class of the SR-form MCS with handleslide marks $x_i$ at returns of $\rho$, and $z_j$ (and $-z_j^{-1}$) at switches of $\rho$.
\end{lemma}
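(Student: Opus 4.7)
The plan is to derive the statement from \cref{thm:sr_bijection} by quotienting the isomorphism $\MCS^\rho(\La(\Delta\beta)) \cong \C^{r(\rho)}\times(\C^\ast)^{s(\rho)}$ by the free $\C^{\binom n2}$-action induced by dga homotopy. Combining \cref{thm:sr_bijection,thm:one-to-one_A_SR} with \cref{lem:a-form_bijection_equiv,thm:HR_decomp_equiv}, the ruling-wise decomposition of $\MCS^A(\La(\Delta\beta))$ and $\widehat{\MCS}{}^A(\La(\Delta\beta))$ gives, for each $\rho$,
\[
\widehat{\MCS}{}^\rho(\La(\Delta\beta)) \times \C^{\binom n2} \cong \MCS^\rho(\La(\Delta\beta)) \cong \C^{r(\rho)}\times(\C^\ast)^{s(\rho)},
\]
where the $\C^{\binom n2}$-factor corresponds to the handleslide marks at the $\Delta$ crossings in the A-form MCS representative. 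The task is then to identify this factor inside $\C^{r(\rho)}\times(\C^\ast)^{s(\rho)}$ as exactly $\binom n2$ of the $\C$-coordinates in the $\C^{r(\rho)}$ factor.

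For this I would first establish the auxiliary geometric claim that for any normal ruling $\rho$ of $\La(\Delta\beta)$ each of the $\binom n2$ crossings of the $\Delta$ factor is a return in $\rho$. This is the left-sided mirror of the fact used in \cref{prop:max_ruling} that the $\Delta$ crossings of $\La(\beta\Delta)$ are forced to be departures; it follows either by the same Reidemeister 2-cancellation argument applied on the left (where the ruling disks emerging from the left cusps are initially nested, so each $\Delta$ crossing must transition from interlaced to nested/disjoint disks), or more directly by \cref{lma:rulings_upside_down} under which departures and returns are swapped. Granting the claim, \cref{rmk:aform_aug_iso} identifies the A-form handleslide marks with the augmentation values up to sign, and \cref{thm:bijection_equiv} shows that the unique dga-homotopy representative of an augmentation class is obtained by setting the $\Delta$-crossing values to zero; in particular the $\C^{\binom n2}$-action on $\MCS^\rho(\La(\Delta\beta))$ is the free action by independent translation of the A-form handleslide marks at the $\Delta$ crossings.

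Finally, I would trace the A-to-SR algorithm of \cref{rmk:A_SR_algorithm} to match the action on the $\MCS^\rho$ side with translation of $\binom n2$ of the return coordinates: since the $\Delta$ crossings appear first in the word $\Delta\beta$ and all are returns, the algorithm does not yet have any switch-induced coefficients to drag through them, and the handleslide mark immediately to the left of each $\Delta$ crossing is retained as the primary SR-form return handleslide mark at that crossing. The composition $\C^{r(\rho)}\times(\C^\ast)^{s(\rho)} \xrightarrow{\eta_\rho} \MCS^\rho(\La(\Delta\beta)) \twoheadrightarrow \widehat{\MCS}{}^\rho(\La(\Delta\beta))$ therefore factors as the projection forgetting the $\binom n2$ return coordinates at the $\Delta$ crossings, which yields the claimed isomorphism $\C^{r(\rho)-\binom n2}\times(\C^\ast)^{s(\rho)} \cong \widehat{\MCS}{}^\rho(\La(\Delta\beta))$ with the stated explicit formula. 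The main obstacle I expect is the bookkeeping in this last step: one must check that as the A-to-SR algorithm subsequently traverses the crossings of $\beta$, the propagation of handleslides past the already-fixed $\Delta$-crossing return marks does not alter their coefficients, which reduces to a direct verification using the moves of \cref{fig:mcs_moves}.
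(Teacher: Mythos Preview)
Your proposal is correct and follows the same route as the paper's proof: assemble the chain $\C^{r(\rho)} \times (\C^\ast)^{s(\rho)} \cong \MCS^\rho \cong \MCS^A \cong \widehat{\MCS}^A \times \C^{\binom n2} \cong \widehat{\MCS}^\rho \times \C^{\binom n2}$ from \cref{thm:sr_bijection,thm:one-to-one_A_SR,lem:a-form_bijection_equiv,thm:HR_decomp_equiv} and then strip off the $\C^{\binom n2}$-factor coming from the $\Delta$-crossings. Your identification of the $\Delta$ crossings in $\La(\Delta\beta)$ as \emph{returns} is the right one (the paper's own proof writes ``departure,'' which appears to be a slip---compare with \cref{rmk:iso_rainbow}), and the bookkeeping worry you flag at the end does not materialize: the $\Delta$ crossings are leftmost, so the A-to-SR algorithm fixes their return marks before reaching $\beta$, and all subsequent handleslide propagation in \cref{rmk:A_SR_algorithm} goes strictly to the right.
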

\begin{proof}
    From \cref{thm:one-to-one_A_SR,thm:HR_decomp_equiv} we obtain the composition of isomorphisms
    \begin{align*}
        \C^{r(\rho)} \times (\C^\ast)^{s(\rho)} &\overset{\text{\eqref{eq:sr_handleslide_marks}}}{\longrightarrow} \MCS^\rho(\La(\Delta\beta)) \overset{\text{\eqref{eq:A_SR}}}{\longrightarrow} \MCS^A(\La(\Delta\beta)) \\
        &\overset{\text{\eqref{eq:a-form_reflect}}}{\longrightarrow} \widehat{\MCS}{}^A(\La(\Delta\beta)) \times \C^{\binom n2} \overset{\text{\eqref{eq:A_SR_equiv}} \times \id}{\longrightarrow} \widehat{\MCS}{}^\rho(\La(\Delta\beta)) \times \C^{\binom n2}.
    \end{align*}
    For any normal ruling $\rho$, every crossing of $D$ corresponding to Artin generators of $\Delta$ must automatically be a departure; see \cref{prop:max_ruling}. Then, by the description of the isomorphisms \eqref{eq:sr_handleslide_marks} and \eqref{eq:a-form_reflect}, the result follows.
\end{proof}

\begin{remark}
    In contrast to \cref{lem:a-form_bijection_equiv}, we do not have an explicit description of the isomorphism \eqref{eq:a-form_reflect} for general $\beta$; see \cref{rmk:nonexplicit_iso}.
\end{remark}

\begin{corollary}\label{cor:sr_bijection_upside-down}
    Let $\beta \in \Br_n^+$. For any normal ruling $\rho$ of $\La(\beta\Delta)$, there is an isomorphism
    \begin{equation}\label{eq:sr_bijection_upside-down}
        \MCS^\rho(\La(\beta\Delta)) \cong \MCS^\ohrud((\Deltaud\atebud)\Lambdaud).
    \end{equation}
\end{corollary}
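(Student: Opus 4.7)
The plan is to construct the isomorphism as a restriction of the naive-augmentation-variety isomorphism induced by the contactomorphism $\phi(x,y,z)=(-x,y,-z)$ from the proof of Lemma~\ref{lem:reflect_iso}. Since $\phi$ identifies $\La(\beta\Delta)$ with $(\Deltaud\atebud)\Lambdaud$ as Legendrians (up to the isotopy of Figure~\ref{fig:teardrop_isotopy_1}), it produces an isomorphism of Chekanov--Eliashberg dgas, and hence $\V(\La(\beta\Delta))\cong\V((\Deltaud\atebud)\Lambdaud)$. By Theorem~\ref{thm:hr_Amcs_aug}, this is equivalent to a bijection $\MCS^A(\La(\beta\Delta))\cong\MCS^A((\Deltaud\atebud)\Lambdaud)$ at the level of A-form MCS sets.

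Next, by Theorem~\ref{thm:one-to-one_A_SR}, both of these A-form MCS sets decompose as disjoint unions over normal rulings, and Lemma~\ref{lma:rulings_upside_down} identifies the indexing sets via $\rho\mapsto\ohrud$. The core of the argument is then to show that the A-form bijection induced by $\phi$ restricts to a bijection $\MCS^\rho(\La(\beta\Delta))\cong\MCS^\ohrud((\Deltaud\atebud)\Lambdaud)$ between corresponding pieces. Once this is established, the corollary follows by composing the A-to-SR section of Remark~\ref{rmk:A_SR_algorithm} on one side with its inverse on the other.

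The main obstacle is verifying this restriction property, i.e., that an A-form MCS whose A-to-SR output has ruling $\rho$ is sent by $\phi$ to one whose A-to-SR output (in the reflected picture) has ruling $\ohrud$. The A-to-SR algorithm is intrinsically directional (processing crossings left to right, placing handleslides immediately to the left of returns, and leaving departures bare), whereas $\phi$ reverses the reading direction and swaps returns with departures by Lemma~\ref{lma:rulings_upside_down}. I would handle this by a local check at each crossing, tracking how the handleslide configurations produced by the A-to-SR algorithm transform under reflection using the MCS moves of Figures~\ref{fig:mcs_moves} and~\ref{fig:mcs_moves_marked_pts}: a return of $\rho$ with its left-side handleslide transforms into a configuration at the corresponding departure of $\ohrud$ which can be repositioned, via MCS moves, into a valid SR-form marking for $\ohrud$ at the paired return, and similarly at switches the two flanking handleslides swap roles. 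With this local verification in place, the global compatibility follows, and restricting the A-form bijection to pieces yields the desired isomorphism.
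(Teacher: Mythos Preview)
Your approach is essentially the same as the paper's: both derive the SR-form isomorphism from the A-form isomorphism $\MCS^A(\La(\beta\Delta))\cong\MCS^A((\Deltaud\atebud)\Lambdaud)$ (coming from the contactomorphism $\phi$) together with the decomposition of Theorem~\ref{thm:one-to-one_A_SR} and the ruling bijection of Lemma~\ref{lma:rulings_upside_down}. The paper's proof is a single sentence citing this A-form isomorphism and leaves the restriction-to-pieces step implicit; you have correctly unpacked that step and identified it as the place where something must be checked.

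Your concern about the directionality of the A-to-SR algorithm is legitimate, and the local-check strategy you sketch would work in principle, but it is more laborious than necessary and remains a sketch rather than a proof. A cleaner route: the ruling attached to an A-form MCS depends only on its MCS equivalence class (combine Theorem~\ref{thm:HR_decomp_equiv} with Proposition~\ref{prop:SRmcs-equiv}), so the map $\MCS^A(D)\to\mathfrak{R}(D)$ factors through $\widehat{\MCS}{}^A(D)$. The bijection induced by $\phi$ descends to equivalence classes (it comes from a dga isomorphism), and on rulings it is exactly $\rho\mapsto\ohrud$ by Lemma~\ref{lma:rulings_upside_down}. This forces the A-form bijection to respect the SR pieces without any crossing-by-crossing verification. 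If you prefer to stay at the unhatted level, one can instead note that whether a crossing is a switch, return, or departure in the A-to-SR output is determined intrinsically by the differential $d_\ell$ of the MCS immediately to its left, and $\phi$ transports these differentials in the way dictated by Lemma~\ref{lma:rulings_upside_down}.
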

\begin{proof}
    This is a consequence of the isomorphism $\MCS^A((\Deltaud\atebud)\Lambdaud) \cong \MCS^A(\La(\beta\Delta))$ from the proof of \cref{lem:a-form_bijection_equiv}.
\end{proof}
\begin{lemma}\label{lma:iso_-1closures}
    Let $\beta \in \Br_n^+$ be such that $\delta(\beta) = w_0$. For any normal ruling $\rho$ of $\La(\beta\Delta)$, there is an isomorphism
    \begin{equation}\label{eq:sr-form_factor_general}
        \MCS^\rho(\La(\beta\Delta)) \cong \widehat{\MCS}{}^\rho(\La(\beta\Delta)) \times \C^{\binom n2}.
    \end{equation}
\end{lemma}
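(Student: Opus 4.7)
To prove the lemma, the plan is to combine the isomorphism at the level of A-form MCSs with the ruling decompositions on both sides, and then argue that the resulting isomorphism respects each ruling piece.

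First, I would construct an isomorphism
\[
\Psi \colon \MCS^A(\La(\beta\Delta)) \overset{\cong}{\longrightarrow} \widehat{\MCS}{}^A(\La(\beta\Delta)) \times \C^{\binom n2}
\]
by composing $\MCS^A(\La(\beta\Delta)) \cong \V(\La(\beta\Delta))$ from \cref{thm:hr_Amcs_aug}, the splitting $\V(\La(\beta\Delta)) \cong \Aug(\La(\beta\Delta)) \times \C^{\binom n2}$ from \cref{lem:reflect_iso}, and $\Aug(\La(\beta\Delta)) \cong \widehat{\MCS}{}^A(\La(\beta\Delta))$ from \cref{thm:hr_Amcs_aug_equiv}. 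The key compatibility I would then record is that the composition $\pi_1 \circ \Psi$ (where $\pi_1$ denotes the projection to the first factor) equals the MCS-equivalence quotient map $q \colon \MCS^A(\La(\beta\Delta)) \to \widehat{\MCS}{}^A(\La(\beta\Delta))$, since under the above identifications $\pi_1 \circ \Psi$ is precisely the quotient $\V \to \Aug$ by dga-homotopy.

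Next, I would invoke the compatible ruling decompositions from \cref{thm:one-to-one_A_SR} and \cref{thm:HR_decomp_equiv}: both $\MCS^A(\La(\beta\Delta))$ and $\widehat{\MCS}{}^A(\La(\beta\Delta))$ split as disjoint unions indexed by $\mathfrak R(\La(\beta\Delta))$, and the quotient $q$ restricts for each normal ruling $\rho$ to a surjection $\MCS^\rho(\La(\beta\Delta)) \twoheadrightarrow \widehat{\MCS}{}^\rho(\La(\beta\Delta))$. Combined with $\pi_1 \circ \Psi = q$, this forces $\Psi$ to send $\MCS^\rho$ into $\widehat{\MCS}{}^\rho \times \C^{\binom n2}$, and, conversely, any preimage under $\Psi$ of $\widehat{\MCS}{}^\rho \times \C^{\binom n2}$ must lie in $\MCS^\rho$ (since its image under $q$ lies in $\widehat{\MCS}{}^\rho$). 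Thus $\Psi$ restricts for each $\rho$ to the desired isomorphism $\MCS^\rho(\La(\beta\Delta)) \overset{\cong}{\longrightarrow} \widehat{\MCS}{}^\rho(\La(\beta\Delta)) \times \C^{\binom n2}$.

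The main subtlety I anticipate is verifying the compatibility $\pi_1 \circ \Psi = q$, since by \cref{rmk:nonexplicit_iso} the isomorphism in \cref{lem:reflect_iso} is not given explicitly for general $\beta$. Rather than computing $\Psi$ by hand, I would argue conceptually: by \cref{rmk:aug_naive_equal} the quotient map $\V(\La(\beta\Delta)) \to \Aug(\La(\beta\Delta))$ is a $\C^{\binom n2}$-torsor under the action of dga-homotopies that vary the values on the $\binom n2$ degree $-1$ generators, and \cref{lem:reflect_iso} supplies a splitting of this torsor. Any such splitting automatically makes $\Psi$ compatible with the projection to $\widehat{\MCS}{}^A \cong \Aug$, so the final compatibility--and hence the lemma--follows with no reliance on an explicit formula.
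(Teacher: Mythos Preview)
Your overall strategy is correct and matches the paper's: build the splitting at the A-form level, check it is compatible with the quotient to equivalence classes, and then restrict along the compatible ruling decompositions of \cref{thm:one-to-one_A_SR} and \cref{thm:HR_decomp_equiv}. The paper's one-line proof cites exactly these two results together with \cref{lem:a-form_bijection_equiv} and \cref{lma:rulings_upside_down}.

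The weak point is your justification of $\pi_1 \circ \Psi = q$. Your appeal to \cref{rmk:aug_naive_equal} is a misreading: that remark treats the case with \emph{no} negative-degree generators, which does not apply to $\La(\beta\Delta)$. More importantly, the bare statement of \cref{lem:reflect_iso} only gives an abstract isomorphism $\V \cong \Aug \times \C^{\binom n2}$; an arbitrary such isomorphism need not have first projection equal to the dga-homotopy quotient, so calling it ``a splitting of this torsor'' is exactly the claim to be proved. To close the gap you must trace through the \emph{proof} of \cref{lem:reflect_iso}: the isomorphism is the composite $\V(\La(\beta\Delta)) \cong \V((\Deltaud\atebud)\Lambdaud) \cong \Aug((\Deltaud\atebud)\Lambdaud) \times \C^{\binom n2} \cong \Aug(\La(\beta\Delta)) \times \C^{\binom n2}$, where the middle map is explicitly $\epsilon \mapsto ([\epsilon], (\epsilon(w_{ij})))$ from \cref{thm:bijection_equiv} and the outer maps are induced by a dga isomorphism, hence commute with passing to dga-homotopy classes. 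That does yield $\pi_1 \circ \Psi = q$.

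The paper avoids this detour by citing \cref{lem:a-form_bijection_equiv} in place of \cref{lem:reflect_iso}: there the map is described explicitly as sending an A-form MCS to its equivalence class together with the $\binom n2$ handleslide marks at the crossings of $\Delta$, so compatibility with the quotient is built into the statement. It then uses \cref{lma:rulings_upside_down} to transport everything from the reflected front back to $\La(\beta\Delta)$. The underlying content is identical to yours; the paper's packaging simply makes the key compatibility visible up front rather than requiring one to unpack a proof.
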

\begin{proof}
    In view of \cref{lma:rulings_upside_down}, the result follows by combining \cref{thm:one-to-one_A_SR}, \cref{thm:HR_decomp_equiv}, and \cref{lem:a-form_bijection_equiv}.
\end{proof}
\begin{lemma}\label{lma:sr-form_equiv_factors}
    Let $\beta \in \Br_n^+$. For any normal ruling $\rho$ of $\La(\beta\Delta)$, there is an isomorphism
    \[
        \eta_\rho \colon \C^{r(\rho)-\binom n2} \times (\C^\ast)^{s(\rho)} \overset{\cong}{\longrightarrow} \widehat{\MCS}{}^\rho(\La(\beta\Delta)).
    \]
\end{lemma}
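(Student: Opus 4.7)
The plan is to mirror the argument of \cref{lem:sr_form_bijection_equiv}, substituting \cref{lma:iso_-1closures} (the $\La(\beta\Delta)$ analog of \cref{lem:a-form_bijection_equiv}). The starting point is the parametrization of SR-form MCSs from \cref{thm:sr_bijection},
\[
\C^{r(\rho)} \times (\C^\ast)^{s(\rho)} \overset{\cong}{\longrightarrow} \MCS^\rho(\La(\beta\Delta)),
\]
which sends a tuple $((x_i),(z_j))$ to the SR-form MCS whose free handleslide marks are the $x_i$ at the returns and the $z_j$ (along with $-z_j^{-1}$) at the switches of $\rho$.

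The next step is to note that \cref{lma:iso_-1closures} provides a splitting $\MCS^\rho(\La(\beta\Delta)) \cong \widehat{\MCS}{}^\rho(\La(\beta\Delta)) \times \C^{\binom n2}$. This splitting is obtained by combining \cref{thm:one-to-one_A_SR} and \cref{thm:HR_decomp_equiv} with the analog of \cref{lem:a-form_bijection_equiv} for $\La(\beta\Delta)$, which itself follows from \cref{lem:reflect_iso} together with \cref{thm:hr_Amcs_aug} and \cref{thm:hr_Amcs_aug_equiv}. Composing this splitting with the parametrization above yields
\[
\C^{r(\rho)} \times (\C^\ast)^{s(\rho)} \overset{\cong}{\longrightarrow} \widehat{\MCS}{}^\rho(\La(\beta\Delta)) \times \C^{\binom n2}.
\]

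Finally, I would identify the $\C^{\binom n2}$ factor with a coordinate $\C^{\binom n2}$ sitting inside the $\C^{r(\rho)}$ of return parameters, so that dividing by this factor yields the desired isomorphism. Since the equivalence relation defining $\widehat{\MCS}{}^\rho$ is generated by MCS moves coming from dga homotopies (\cref{def:dg-homotopy}), and such homotopies are parameterized by the $\binom n2$ degree $-1$ generators of $\A(\La(\beta\Delta))$, the resulting $\C^{\binom n2}$ action shifts the handleslide marks of a distinguished set of $\binom n2$ returns while fixing the $\C^\ast$-valued marks at the switches (whose nonvanishing is preserved by equivalences). Concretely, one may use the A-to-SR algorithm from \cref{rmk:A_SR_algorithm} to trace through how the $\binom n2$ handleslide marks of the $A$-form MCS at the crossings corresponding to degree $-1$ generators get redistributed onto a specific subset of return handleslide marks in the SR-form.

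The main obstacle is making this identification canonical and checking it is indeed a coordinate embedding $\C^{\binom n2} \hookrightarrow \C^{r(\rho)}$ rather than a more general subvariety. An abstract way to conclude, avoiding this identification, is to observe that an algebraic isomorphism $Y \times \C^{\binom n2} \cong \C^{r(\rho)} \times (\C^\ast)^{s(\rho)}$ where the right-hand side has trivial Picard group and its invertible regular functions are generated (up to constants) by the $(\C^\ast)^{s(\rho)}$ coordinates forces $Y \cong \C^{r(\rho)-\binom n2} \times (\C^\ast)^{s(\rho)}$; this yields the claimed isomorphism $\eta_\rho$.
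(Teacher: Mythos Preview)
Your overall structure matches the paper's: compose \cref{thm:sr_bijection} with \cref{lma:iso_-1closures} to obtain
\[
\C^{r(\rho)}\times(\C^\ast)^{s(\rho)}\;\cong\;\widehat{\MCS}{}^\rho(\La(\beta\Delta))\times\C^{\binom n2},
\]
then cancel the extra factor. The paper's chain differs only in passing explicitly through the reflected front $(\Deltaud\atebud)\Lambdaud$ via \cref{cor:sr_bijection_upside-down} before invoking \cref{lma:iso_-1closures}; since the latter is itself proved by reflection, the two chains agree once unwound.

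The real gap is in your cancellation step. In approach~(a), the phrase ``handleslide marks of the A-form MCS at the crossings corresponding to degree $-1$ generators'' does not parse: those crossings lie between strands of distinct Maslov potential (\cref{dfn:mcs}(1), \cref{dfn:a_form_mcs}), so no handleslide mark is placed there. The $\C^{\binom n2}$ instead arises from the dga-homotopy action on the degree-$0$ marks, and verifying that this action, transported through the A-to-SR algorithm, becomes a coordinate $\C^{\binom n2}\subset\C^{r(\rho)}$ is precisely the work you flag as an obstacle and leave undone. Approach~(b) is not valid as stated: after stripping the torus factor using units you are reduced to showing $Y_0\times\C^{\binom n2}\cong\C^{r(\rho)}\Rightarrow Y_0\cong\C^{r(\rho)-\binom n2}$, which is the Zariski cancellation problem for affine space and is open once $r(\rho)-\binom n2\geq 3$.

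The paper sidesteps this by routing the chain through $(\Deltaud\atebud)\Lambdaud$, where the half-twist sits on the left and the situation is exactly that of \cref{lem:sr_form_bijection_equiv}. There the splitting of \cref{lem:a-form_bijection_equiv} is explicit---the $\C^{\binom n2}$ factor records the A-form handleslide marks at the $\binom n2$ half-twist crossings---so the cancellation is concrete rather than abstract.
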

\begin{proof}
    By \cref{lma:rulings_upside_down} it is clear that there is a one-to-one correspondence between returns of $\rho$ and returns of $\ohrud$, as well as between switches of $\rho$ and switches of $\ohrud$. Then we have a composition of isomorphisms
    \begin{align*}
        \C^{r(\rho)} \times (\C^\ast)^{s(\rho)} &\cong \C^{r(\ohrud)} \times (\C^\ast)^{s(\ohrud)} \overset{\text{\eqref{eq:sr_handleslide_marks}}}{\longrightarrow} \MCS^\ohrud((\Deltaud\atebud)\Lambdaud) \\
        &\overset{\text{\eqref{eq:sr_bijection_upside-down}}}{\longrightarrow} \MCS^\rho(\La(\beta\Delta)) \overset{\text{\eqref{eq:sr-form_factor_general}}}{\longrightarrow} \widehat{\MCS}{}^\rho(\La(\beta\Delta)) \times \C^{\binom n2}. \qedhere
    \end{align*}
\end{proof}

\begin{remark}
In \cref{thm:sr_bijection}, we defined $\eta_\rho \colon \C^{r(\rho)} \times (\C^\ast)^{s(\rho)} \xrightarrow{\cong} \MCS^\rho(D)$. From context, it will always be clear to which map $\eta_\rho$ we are referring.
\end{remark}

\begin{theorem}\label{thm:HenryRutherford_dg-homotopy}
Let $D$ be a nearly plat front diagram of the $(-1)$-closure of $\beta\Delta$ for some $\beta \in \Br_n^+$. There exists a decomposition of the augmentation variety of $D$ into a disjoint union
\[\Aug(D)=\bigsqcup_{\rho\in\frakr(D)} \widehat{\MCS}{}^\rho(D),\]
where $\frakr(D)$ is the set of all normal rulings of $D$, and each set $\widehat{\MCS}{}^\rho(D)$ is identified as the image of the injective regular map
\[\phi_\rho\colon\C^{r(\rho)-\binom n2}\times(\C^\ast)^{s(\rho)} \xrightarrow{\cong} \widehat{\MCS}{}^\rho(D) \hooklongrightarrow \Aug(D).\]
Moreover, the following diagram commutes
\[
\begin{tikzcd}[column sep=scriptsize]
   (\C^\ast)^{s(\rho)}\times\C^{r(\rho)} \dar{\cong}[swap]{\eta_\rho} \drar{\phi_\rho} & \\
   \MCS^\rho(D) \rar[hook] \dar & \V(D) \dar \\
   \widehat{\MCS}{}^\rho(D) \rar[hook] & \Aug(D) \\
   (\C^\ast)^{s(\rho)-\binom n2}\times\C^{r(\rho)} \uar{\eta_\rho}[swap]{\cong} \urar[swap]{\phi_\rho}
\end{tikzcd}.
\]
\end{theorem}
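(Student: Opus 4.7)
The plan is to assemble this theorem directly from the results already established, with essentially no new computation required. The decomposition of $\Aug(D)$ will be obtained by concatenating two isomorphisms: \cref{thm:hr_Amcs_aug_equiv}, which gives $\widehat{\MCS}{}^A(D) \cong \Aug(D)$, and \cref{thm:HR_decomp_equiv}, which gives the disjoint union
\[
\widehat{\MCS}{}^A(D) \;\cong\; \bigsqcup_{\rho \in \mathfrak{R}(D)} \widehat{\MCS}{}^\rho(D).
\]
Composing these, each $\widehat{\MCS}{}^\rho(D)$ is transported to a locally closed subvariety of $\Aug(D)$, yielding $\Aug(D) = \bigsqcup_{\rho \in \frakr(D)} \widehat{\MCS}{}^\rho(D)$.

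I would then define the equivalence class map $\phi_\rho$ as the composition of the isomorphism $\eta_\rho \colon \C^{r(\rho)-\binom n2}\times(\C^\ast)^{s(\rho)} \xrightarrow{\cong} \widehat{\MCS}{}^\rho(D)$ from \cref{lma:sr-form_equiv_factors} with the inclusion $\widehat{\MCS}{}^\rho(D) \hookrightarrow \Aug(D)$ produced in the previous step. Since $\eta_\rho$ is a regular isomorphism and the inclusion is a closed embedding onto a piece of the decomposition, $\phi_\rho$ is automatically an injective regular map with image $\widehat{\MCS}{}^\rho(D)$.

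To verify the displayed diagram, I would inspect each of its three regions separately. The top triangle (targeting $\V(D)$) commutes by \cref{thm:HenryRutherford}, which is exactly the statement that the naive $\phi_\rho$ factors as the naive $\eta_\rho$ followed by the inclusion $\MCS^\rho(D) \hookrightarrow \V(D)$. The middle square commutes because it is the $\rho$-summand of the commutative square in \cref{thm:HR_decomp_equiv}, restricted to $\MCS^\rho(D)$ and $\widehat{\MCS}{}^\rho(D)$. The bottom triangle (targeting $\Aug(D)$) commutes tautologically from the definition of the equivalence class $\phi_\rho$ given above. Note that the two appearances of $\eta_\rho$ in the diagram are not connected by an arrow, so no compatibility between them is required for the commutativity as drawn.

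I do not anticipate a genuine technical obstacle. The only bookkeeping point is that the naive parametrization $(\C^\ast)^{s(\rho)}\times\C^{r(\rho)}$ and its equivalence class counterpart $\C^{r(\rho)-\binom n2}\times(\C^\ast)^{s(\rho)}$ sit on opposite sides of a $\C^{\binom n2}$ quotient coming from \cref{lem:reflect_iso}; this quotient is the content of \cref{lma:sr-form_equiv_factors} and is already built into the construction of $\eta_\rho$ via \cref{cor:sr_bijection_upside-down}, \cref{lma:rulings_upside_down}, and \cref{thm:sr_bijection}. Once that lemma is in hand together with the Henry--Rutherford machinery, the present theorem is a formal consequence.
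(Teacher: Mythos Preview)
Your proposal is correct and follows essentially the same approach as the paper: the paper's proof simply cites \cref{thm:HR_decomp_equiv} and \cref{lma:sr-form_equiv_factors} and notes that the diagram commutes by construction of $\phi_\rho$. Your version unpacks this a bit more explicitly (splitting the diagram into regions and naming the relevant inputs for each), but the underlying argument is the same assembly of already-proved results.
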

\begin{proof}
    This follows from \cref{thm:HR_decomp_equiv} and \cref{lma:sr-form_equiv_factors}. By \cref{thm:HR_decomp_equiv}, the injective regular map $\phi_\rho$ in the statement guarantees, by construction, that the diagram commutes.
\end{proof}

\subsection{Rulings, weaves and sheaves}\label{sec:rulings_weaves_sheaves}
    Weaves and rulings may also be described via the moduli space of microlocal rank 1 sheaves, which, under the equivalence between sheaves and augmentations \cite{NRSSZ}, recovers results in the previous subsections.

    First, we recall the result by Shende--Treumann--Zaslow \cite[Theorem 1.7]{STZ_ConstrSheaves} that rulings induce decompositions on the moduli space of microlocal rank 1 sheaves for Legendrian rainbow closures of positive braids, and generalize it to $(-1)$-closures of positive braids. Roughly speaking, the ruling decomposition comes from the filtration of microlocal rank 1 sheaves whose associated graded sheaves are rank 1 sheaves supported on ruling disks \cite[Section 5.2]{STZ_ConstrSheaves}. Recall the notation $\mathcal M^\textit{fr}_1(D, T)$ from \cref{notn:sheaves}.
\begin{theorem}\label{thm:ruling_sheaf_decomp}
    Let $D$ be a front diagram of the $(-1)$-closure of $\beta\Delta$ for some $\beta \in \Br_n^+$ with $\delta(\beta) = w_0$. There exists a decomposition of the moduli space of microlocal rank 1 sheaves singularly supported on the Legendrian link:
    \[
    \mathcal M^\textit{fr}_1(D, T) = \bigsqcup_{\rho \in \mathfrak R(D)}\mathcal M_1^\textit{fr}(D, T)^\rho,
    \]
    where $\frakr(D)$ is the set of all normal rulings of $D$, and each set $\mathcal M_1^\textit{fr}(D, T)^\rho$ is identified as the image of the injective regular map
    \[
    \phi_\rho'\colon(\C^\ast)^{s(\rho)-\binom n2}\times\C^{r(\rho)} \overset{\cong}{\longrightarrow} \mathcal M_1^\textit{fr}(D, T)^\rho \hooklongrightarrow \mathcal M^\textit{fr}_1(D, T).
    \]
\end{theorem}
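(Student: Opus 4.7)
The plan is to deduce the decomposition by transporting the Henry--Rutherford ruling decomposition (\cref{thm:HenryRutherford_dg-homotopy}) across the chain of isomorphisms
\[
\mathcal M_1^\textit{fr}(D, T) \overset{\cong}{\longleftarrow} X(\beta) \overset{\cong}{\longrightarrow} \Aug(D),
\]
where the left isomorphism is \cref{prop:sheaves_braid_var} and the right is \cref{thm:-1=braid vty_new}; both hypotheses are satisfied since $\delta(\beta) = w_0$ is assumed. For each normal ruling $\rho \in \mathfrak R(D)$, I would define $\mathcal M_1^\textit{fr}(D, T)^\rho$ to be the image of $\widehat{\MCS}{}^\rho(D) \subseteq \Aug(D)$ under the composed isomorphism $\Aug(D) \cong \mathcal M_1^\textit{fr}(D, T)$. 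The disjointness and exhaustion $\mathcal M_1^\textit{fr}(D, T) = \bigsqcup_\rho \mathcal M_1^\textit{fr}(D, T)^\rho$ then follow immediately from the analogous properties on the augmentation side.

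With this definition, $\phi_\rho'$ would be defined as the composition of $\phi_\rho$ from \cref{thm:HenryRutherford_dg-homotopy} with the isomorphism $\Aug(D) \cong \mathcal M_1^\textit{fr}(D, T)$, and its injectivity and regularity would be inherited. The dimension count matches directly: each switch contributes a $\C^\ast$-parameter and each non-cusp return contributes a $\C$-parameter, with the $\binom{n}{2}$ shift arising from the crossings of $\Delta$, which, by \cref{prop:max_ruling}, are forced crossings whose contribution is absorbed by the augmentation-to-sheaf dictionary.

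An alternative, more intrinsic approach would follow Shende--Treumann--Zaslow's construction for rainbow closures: stratify $\mathcal M_1^\textit{fr}(D,T)$ by the locally closed locus of sheaves admitting a filtration whose associated graded pieces are rank 1 local systems supported on the ruling disks of $\rho$, with the $\C^\ast$-factors arising from nontrivial extension classes at switches and the $\C$-factors from extensions at returns. The principal obstacle in this route is adapting STZ's original argument, written for rainbow closures, to the $(-1)$-closure setting, where the presence of $\Delta$ introduces extra Reidemeister 2 cancellations (\cref{fig:r2_normal_rulings}) that must be reconciled with the filtration data near the cusps. Since the transport approach above sidesteps this issue entirely, I would pursue it first and defer the intrinsic sheaf-theoretic description to the comparison statement that appears immediately afterward (namely, \cref{thm:hr_stz}), which is precisely what ensures that the pieces $\mathcal M_1^\textit{fr}(D,T)^\rho$ obtained by transport are genuine sheaf-theoretic ruling strata and not merely abstractly isomorphic to them.
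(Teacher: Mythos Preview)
Your transport approach formally produces \emph{a} decomposition indexed by rulings, but it does not prove the theorem as the paper intends it, and the deferral to \cref{thm:hr_stz} is circular. The pieces $\mathcal M_1^\textit{fr}(D,T)^\rho$ are meant to be the Shende--Treumann--Zaslow strata, defined intrinsically via filtrations whose associated graded pieces are supported on ruling disks; this is what gives the ``sheaf decomposition'' independent content among the four decompositions in \cref{thm:intro_main}. If you instead \emph{define} $\mathcal M_1^\textit{fr}(D,T)^\rho$ as the transport of $\widehat{\MCS}{}^\rho(D)$, then \cref{thm:hr_stz} (which is \cref{thm:ruling_sheaf=ruling_aug}) becomes a tautology, and the main theorem collapses from a four-way comparison to a three-way one. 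Worse, the proof of \cref{thm:ruling_sheaf=ruling_aug} in the paper goes through \cref{thm:ruling_sheaf=weave_sheaf}, whose argument explicitly invokes the intrinsic filtration $R_i\mathcal F_\w(U) = \mathcal F_\w(U)\cap\C^i$ from the present theorem; so you cannot cite it to retroactively supply the intrinsic meaning you deferred.

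The paper takes precisely your ``alternative'' route: it fixes the standard flag via the framing, defines the filtration $R_\bullet\mathcal F$ by intersection with $\C^i$, declares the ruling disks to be the supports of the associated graded, and then adapts the STZ argument to the $(-1)$-closure. The adaptation you flagged as the obstacle is handled exactly as you guessed, via the Reidemeister 2 moves of \cref{fig:r2_normal_rulings} (which induce equivalences on moduli as in the proof of \cref{prop:sheaves_braid_var}), together with the observation that removing the innermost ruling disk from a $(-1)$-closure again yields a $(-1)$-closure, so the STZ induction in their Propositions 6.28 and 6.31 goes through. This is short, and you had already identified all the ingredients; the point is simply that it is not optional here.
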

\begin{proof}
    Since the framing is determined by one base point per strand in the braid, we assume that the flag on the left of the braid is $0 \subset \C \subset \C^2 \subset \dots \subset \C^n$. Thus, for any simple sheaf $\mathcal F \in \mathcal M_1^\textit{fr}(D, T)$, we have a filtration $R_\bullet$ such that
    \[
    R_i\mathcal F(U) = \mathcal F(U) \cap \C^i, \quad 0\leq i \leq n.
    \] 
    Define the ruling disks to be the supports of the associated graded sheaves. Then the result follows from the same argument in \cite[Theorem 1.7 or Proposition 6.31]{STZ_ConstrSheaves}; the only difference is that here we work with general $(-1)$-closures of positive braids, which affects the proof of \cite[Proposition 6.28]{STZ_ConstrSheaves} and \cite[Proposition 6.31]{STZ_ConstrSheaves}. 
    
    However, as in \cref{prop:sheaves_braid_var}, we can consider Reidemeister 2 moves at the right cusps and the $\Delta$ crossings as in \cref{fig:r2_normal_rulings} that induce equivalences of the moduli spaces of sheaves as in \cite[Proposition 6.5]{CasalsLi}. Then, the regions above the right cusps determine a complete flag $\C^n$. We can then follow the same argument in \cite[Proposition 6.28]{STZ_ConstrSheaves} to construct the filtration of the sheaf by intersecting with each subspace in the complete flag. Note that the complement of the boundary of the innermost ruling disk in the $(-1)$-closure of $\beta\Delta$ is still a $(-1)$-closure. This shows that we can follow the same argument in \cite[Proposition 6.31]{STZ_ConstrSheaves} to construct the decomposition of the moduli space.
\end{proof}

    It is not obvious from the descriptions that the ruling decomposition of the augmentation variety agrees with the ruling decomposition of the moduli space of sheaves under the isomorphism \cref{thm:-1=braid vty_new,prop:sheaves_braid_var}. We provide the proof in \cref{sec:weaves_aug_sheaves} using the weave decomposition of the moduli space of sheaves.

    We now prove the folklore result that the moduli space of algebraic weaves $\mathfrak{X}(\w)$ is naturally isomorphic to the moduli space of microlocal rank 1 sheaves with singular support on the associated Legendrian weave. This essentially follows from \cite[Theorem 5.3]{CasalsZaslow20}. Recall from \cref{sec:weaves} that a weave $\w$ yields a Legendrian $L(\w) \subset \R^5$. Similar to \cref{notn:sheaves} we denote by $\mathcal M_1^\textit{fr}(L(\w), T)$ the framed moduli space of microlocal rank 1 sheaves on $\D^2 \times \R$ with singular support in the Legendrian $L(\w) \subset \R^5$. We let $T$ be the set consisting of one base point on each sheet of the weave.

\begin{proposition}\label{prop:sheaf-on-weave}
    Let $\w$ be a weave in $\D^2$. There is an isomorphism 
    \[\mathfrak{X}(\w) \cong \mathcal M_1^\textit{fr}(L(\w), T).\]
\end{proposition}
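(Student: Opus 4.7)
The plan is to reduce the global statement to local calculations via the sliced weave decomposition, and then verify local matching for each of the six local models. First, I would fix a sliced decomposition of $\w$ into horizontal strips such that each strip contains at most one trivalent, hexavalent, cup, cap, or tetravalent vertex (as in \cref{dfn:sliced_weave}), together with the finitely many points where dashed rays cross weave segments (the virtual vertices). Both $\mathfrak{X}(\w)$ and $\mathcal{M}_1^\textit{fr}(L(\w),T)$ satisfy a Mayer--Vietoris / descent property along such a stratification: on $\mathfrak{X}(\w)$ this is immediate from \cref{dfn:weave_corr}, since the defining conditions are imposed vertex by vertex; on $\mathcal{M}_1^\textit{fr}(L(\w),T)$ this is the standard cosheaf property of microlocal sheaves applied to the cover by strips. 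Hence it suffices to show that, for each local model (a single strip of the weave), the natural map from the strip-moduli of sheaves to the strip-variety from \cref{dfn:weave_corr} is an isomorphism, compatibly with the restriction maps to neighboring strips.

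For a trivial strip consisting of $k$ parallel $i$-colored weave segments (with no vertex), the Legendrian surface $L(\w)$ locally agrees with the $(-1)$-closure picture that underlies \cref{prop:sheaves_braid_var}: microlocal rank $1$ sheaves are parametrized by sequences of complete flags $F_1 \xrightarrow{s_{i}} F_2 \xrightarrow{s_i} \cdots$, and after picking the framing provided by $T$ each transition is encoded by a single braid matrix $B_i(z)$. This matches exactly the assignment of a ${z}$-variable to an $i$-colored weave segment and the definition of the monodromy matrix in \cref{dfn:monodromy}. For each vertex, I would verify that the local sheaf-theoretic moduli space is cut out by the corresponding identity in \cref{lma:weave_monodromy_vertex}:
\begin{itemize}
    \item At a hexavalent vertex the sheaf moduli encodes the braid relation $B_i(z_3)B_{i+1}(z_2)B_i(z_1) = B_{i+1}(z_1')B_i(z_2')B_{i+1}(z_3')$, which is exactly condition \eqref{cond:hexavalent}.
    \item At a tetravalent vertex the condition is the commutation $B_i(z)B_k(w)=B_k(w)B_i(z)$ for $|i-k|\geq 2$, i.e., condition \eqref{cond:distant}.
    \item At a trivalent vertex (respectively a cup) the local sheaf picture is a Legendrian swallowtail (respectively a Reidemeister II disk), and the computation of microlocal sheaves gives the factorization $B_i(z_2)B_i(z_1)=VB_i(z_1+z_2^{-1})$ with $V$ upper triangular, matching condition \eqref{cond:trivalent} (respectively \eqref{cond:cup}); a cap is dual.
    \item At a virtual vertex, the dashed segment records the microlocal monodromy of the sheaf along a path in $L(\w)$, and parallel transport of the flag through the monodromy matrix $V$ yields exactly condition \eqref{cond:virtual}.
\end{itemize}
Each of these is a direct local computation and is essentially contained in \cite[Theorem 5.3]{CasalsZaslow20}; the upper-triangular matrices attached to dashed segments correspond to the rank $1$ local system on $L(\w)$ associated to microlocal parallel transport.

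Finally, I would glue these local isomorphisms. The framing $T$ (one base point per sheet) kills the residual $G$-action appearing in \cref{prop:sheaves_braid_var} and rigidifies the moduli of sheaves, so that the descent produces an honest algebraic isomorphism rather than just an equivalence of stacks. The one substantive check at the global level is that the ``rightmost strand'' condition in \cref{dfn:weave_corr} --- namely that $B_{\beta_2}(z_1,\ldots,z_m)\pi$ be upper triangular --- matches the boundary condition at the bottom of $\w|_{\partial\D^2}$ in the sheaf moduli (since pushing all dashed rays off to the right, the accumulated upper triangular matrix is absorbed into the framing of the boundary flag). The main obstacle I anticipate is precisely keeping the conventions consistent at this global step, especially the direction of dashed rays (cf.\@ \cref{rmk:dashed_rays_right,rmk:monodromy_diff}) and the orientation used to define microlocal monodromy, so that the upper triangular matrices on dashed segments match the parallel transport data of the sheaf on $L(\w)$ rather than its inverse.
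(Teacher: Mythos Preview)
Your approach is correct in outline but takes a genuinely different route from the paper. The paper does not slice and glue; instead it invokes \cite[Theorem~5.3]{CasalsZaslow20} once to write the unframed moduli space globally as
\[
\mathcal{M}_1(L(\w)) \;=\; \bigl\{(F_\alpha)\in (G/B)^{\text{weave regions}} \,\big|\, F_\alpha \xrightarrow{s_i} F_{\alpha'} \text{ whenever $\alpha,\alpha'$ share a $G_i$-edge}\bigr\}/G,
\]
i.e.\ one flag per complementary region of $\w$ in $\D^2$, with relative-position conditions across edges. The framing fixes one flag to be $B$, and then each adjacent flag is $B_i(z)$ times its neighbor for a unique $z$, which is exactly the weave-segment variable. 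The only remaining content is that when three regions meet at a trivalent vertex (so three flags are pairwise $s_i$-transverse), writing $F_3 = B_i(z_2)B_i(z_1)F_1 = VB_i(z_3)F_1$ forces an upper triangular $V$ on the dashed ray, and similarly $B_i(z)V = V'B_i(w)$ at virtual crossings with the flags unchanged. This immediately gives an injection of $\mathcal{M}_1^\textit{fr}(L(\w),T)$ into $\mathbb{V}^\w$ whose image is cut out by the trivial monodromy conditions, hence equals $\mathfrak{X}(\w)$.

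The trade-off: your slice-by-slice argument is more modular and makes the role of each vertex type explicit, but you pay for it with a gluing step and a descent statement that you have to justify. The paper's flag-per-region picture is shorter because Casals--Zaslow have already done the global computation; the proof is then essentially a change of coordinates from flags to braid matrices. One point in your write-up to correct: the upper triangular matrices on dashed segments are \emph{not} ``the microlocal monodromy of the sheaf along a path in $L(\w)$'' nor ``the rank~1 local system'' data. A dashed ray lies in a single weave region, so the flag is the \emph{same} on both sides; the matrix $V$ records only the discrepancy between two different braid-matrix parametrizations of that same flag (coming from the two ways of reaching it from the framed base flag). This is why $V$ is determined, not free, and why the virtual-vertex identity $B_i(z)V = V'B_i(w)$ holds with the \emph{flags} unchanged.
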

\begin{proof}
    Denote the components of the complement of $\w$ inside $\D^2$ by weave regions. By \cite[Theorem 5.1]{CasalsZaslow20}, the moduli space of microlocal rank 1 sheaves on $\D^2 \times \R$ with singular support in the associated Legendrian weave $L(\w)$ is given by
    \[
    \mathcal{M}_1(L(\w)) = \{(F_\alpha) \in (G/B)^{\text{weave regions}} \mid F_{\alpha} \xrightarrow{s_i} F_{\alpha'} \text{ if $\alpha, \alpha'$ share a $G_i$-edge}\}/G.
    \]
    When $F_1, F_2$, and $F_3$ are pairwise in relative $s_i$-position, $F_2 = B_i(z_1)F_1$, $F_3 = B_i(z_2)F_2$ and $F_3 = B_i(z_3)F_1$, there is an upper triangular $V$ such that $B_i(z_2)B_i(z_1) = VB_i(z_3)$. This implies the trivial monodromy conditions. Moreover, for any upper triangular matrix $V$, there exists an upper triangular matrix $V'$ such that $B_i(z)V = V'B_i(w)$, and the associated flags of $B_i(z)$, $B_i(z)V$, and $V'B_i(w)$ are the same. Consider the framing by one base points per strand which determines the trivialization of a flag $F_\alpha = B$. We conclude that there is an injection
    \[
    \mathcal{M}_1^\textit{fr}(L(\w), T) \hooklongrightarrow \C^{ \text{weave segments}} \times \left( \C^{\binom n2} \times \left(\C^\ast\right)^n\right)^{\text{dashed segments}}
    \]
    whose image has trivial monodromy and thus is exactly $\mathfrak X(\w)$. This finishes the proof.
\end{proof}

\subsection{Deodhar decomposition}\label{sec:deodhar_decomp}
Braid varieties generalize open Richardson varieties \cite[Corollary 4.6]{CGGS2}. Deodhar constructed a decomposition of the open Richardson varieties by locally closed subvarieties \cite{Deodhar}, and this is generalized to all braid varieties in \cite{GalashinLamTrinhWilliams}. In this subsection, we discuss the Deodhar decomposition and explain the relationship between the Deodhar decomposition \cite{Deodhar,GalashinLamTrinhWilliams} and the weave decomposition of braid varieties defined in \cref{sec:weave_decomp}. See also~\cite{MarshRietsch,WebsterYakimov} for results on Deodhar decompositions and \cite{Speyer23} for a survey on Richardson varieties.

We let $G \coloneqq \GL(n, \C)$, let $B \coloneqq B_+ \subset G$ be the subgroup of upper triangular matrices, and let $B_-\subset G$ be the subgroup of lower triangular matrices. Recall from \cref{sec:braid_variety} that $G/B_+$ is the flag variety and recall our notation for complete flags.

\begin{definition}[Schubert cell]
    Let $w \in S_n$. The \emph{Schubert cell} associated with $w$ is $X^\circ_w \coloneqq B_- w B_+ / B_+$. The \emph{opposite Schubert cell} associated with $w$ is $X^{\circ w} \coloneqq B_+ w B_+/B_+$.
\end{definition}
\begin{definition}[Open Richardson variety]\label{dfn:richardson_variety}
    Let $u, w \in S_n$ such that $u \leq w$ in the Bruhat order, that is, the reduced word for $u$ is a reduced subword of $w$. The \emph{open Richardson variety} associated with $u$ and $w$ is
    \[
    R^\circ_{u,w} \coloneqq X^\circ_w \cap X^{\circ u} = \{F \in G/B_+ \mid B_+ \xrightarrow{w} F,\; B_- \xrightarrow{w_0u} F\}.
    \]
    More generally, for a positive braid $\beta \in \Br_n^+$ of length $r$, the braid Richardson variety associated with $w$ and $\beta=\sigma_{i_1}\cdots\sigma_{i_r}$ is 
    \[
    R^\circ_{u,\beta} = \big\{(F_1, \dots, F_{r+1}) \in (G/B_+)^r \mid F_k \xrightarrow{s_{i_k}} F_{k+1},\; F_1 = B_+, \; B_- \xrightarrow{w_0u} F_{r+1} \big\}.
    \]
\end{definition}

For the following proposition, the first part is \cite[Corollary 4.6]{CGGS2}, and the second part is essentially \cite[Proposition 2.8]{CGGSBS} by identifying $\beta$ as the double braid word with only negative indices.

\begin{proposition}\label{prop:braid_richardson}
    Let $u, w \in S_n$ such that $u \leq w$ in the Bruhat order. Let $\beta(w)$ and $\beta(u^{-1}w_0)$ be the positive braid lifts of $w$ and $u^{-1}w_0$, respectively. Then there is an isomorphism of algebraic varieties
    \[
    X(\beta(w) \beta(u^{-1}w_0)) \cong R^\circ_{u,w}.
    \]
    For a positive braid $\beta \in \Br_n^+$ with $\delta(\beta) = w_0$, there is an isomorphism of algebraic varieties
    \[
    X(\beta) \cong R^\circ_{w_0,\beta}. \eqno\qed
    \]
\end{proposition}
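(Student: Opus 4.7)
The plan is to use the flag description of the braid variety from \cref{prop:braid_variety} and compare it directly with the flag description of the (braid) open Richardson variety from \cref{dfn:richardson_variety}. Both are spaces of configurations of flags in $G/B_+$ satisfying strict relative-position conditions between consecutive entries, so the main task reduces to matching the endpoint conditions at the two ends of the flag chain.

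For the second statement, let $\beta = \sigma_{i_1}\cdots\sigma_{i_r}$ with $\delta(\beta)=w_0$. Using $w_0^2 = e$ in $S_n$ and identifying $B_-$ with the flag $w_0 B_+ \in G/B_+$ (valid since $w_0 B_+ w_0^{-1} = B_-$), the closing condition $B_- \xrightarrow{w_0 \cdot w_0} F_{r+1}$ in the definition of $R^\circ_{w_0,\beta}$ reduces to $F_{r+1} = w_0 B_+$. This coincides with the closing condition $F_{r+1} = \delta(\beta) B_+$ appearing in $X(\beta)$ under \cref{prop:braid_variety}, while the initial condition $F_1 = B_+$ and the intermediate strict relative-position conditions match verbatim. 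Hence $X(\beta) = R^\circ_{w_0,\beta}$ as subvarieties of $(G/B_+)^{r+1}$.

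For the first statement, I begin by verifying the Demazure-product identity $\delta(\beta(w)\beta(u^{-1}w_0)) = w_0$ for $u \le w$. In the Demazure monoid, $u \cdot u^{-1}w_0 = w_0$ is a reduced product because $\ell(u) + \ell(u^{-1}w_0) = \ell(u) + \ell(w_0) - \ell(u) = \ell(w_0)$, so $u \ast (u^{-1}w_0) = w_0$. Monotonicity of the Demazure product in its first argument then gives $w_0 = u \ast (u^{-1}w_0) \le w \ast (u^{-1}w_0)$, and combined with the trivial upper bound $w \ast (u^{-1}w_0) \le w_0$ we obtain equality. Writing $\beta(w)\beta(u^{-1}w_0) = \sigma_{i_1}\cdots\sigma_{i_r}$ with $r = \ell(w) + \ell(w_0) - \ell(u)$, \cref{prop:braid_variety} identifies $X(\beta(w)\beta(u^{-1}w_0))$ with the space of chains $(F_1,\ldots,F_{r+1})$ in $G/B_+$ satisfying $F_1 = B_+$, $F_{r+1} = w_0 B_+$, and $F_k \xrightarrow{s_{i_k}} F_{k+1}$.

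The candidate isomorphism $\Phi\colon X(\beta(w)\beta(u^{-1}w_0)) \to R^\circ_{u,w}$ is the projection $(F_1,\ldots,F_{r+1}) \mapsto F_{\ell(w)+1}$. Since $\beta(w)$ is reduced, the first $\ell(w) + 1$ entries form an open Bott--Samelson chain starting at $B_+$, placing $F_{\ell(w)+1}$ in the open Schubert cell $B_+ w B_+/B_+$, i.e.\ $B_+ \xrightarrow{w} F_{\ell(w)+1}$. Symmetrically, the last $\ell(u^{-1}w_0) + 1$ entries form an open Bott--Samelson chain along $\beta(u^{-1}w_0)$ ending at $w_0 B_+$, which after the identification $B_- = w_0 B_+$ yields $B_- \xrightarrow{w_0 u} F_{\ell(w)+1}$. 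Thus $\Phi$ lands in $R^\circ_{u,w}$, and its inverse extends any $F \in R^\circ_{u,w}$ to a full chain via the Bott--Samelson parametrizations of the two reduced segments. The main obstacle is to show that this extension is unique: for a reduced word, the open Bott--Samelson variety with both endpoints fixed inside the open Schubert cell must be a single point, equivalently, the open Bott--Samelson projects isomorphically onto the open Schubert cell. This is precisely what is established by the braid-matrix arguments of \cite[Corollary 4.6]{CGGS2} and \cite[Proposition 2.8]{CGGSBS}, to which the proof reduces.
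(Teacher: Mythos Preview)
Your argument is correct. The paper itself does not prove this proposition: it simply cites \cite[Corollary 4.6]{CGGS2} for the first part and \cite[Proposition 2.8]{CGGSBS} for the second, with the $\qed$ appearing immediately after the statement. You have supplied an actual proof by directly comparing the flag descriptions of $X(\beta,\pi)$ from \cref{prop:braid_variety} and of $R^\circ_{u,\beta}$ from \cref{dfn:richardson_variety}, which is the natural approach.

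Two minor remarks. First, your verification of $\delta(\beta(w)\beta(u^{-1}w_0)) = w_0$ via monotonicity of the Demazure product is correct and worth keeping, since the paper implicitly assumes this. Second, the ``main obstacle'' you identify---that for a reduced word the open Bott--Samelson variety projects isomorphically onto the open Schubert cell---is a classical fact about the Bruhat decomposition (each $B_+wB_+/B_+ \cong \mathbb{A}^{\ell(w)}$ via the Bott--Samelson parametrization), not something first established in the references you cite at the end. Invoking \cite{CGGS2,CGGSBS} there undersells your own argument: once you have the flag descriptions of both sides and the classical Bott--Samelson isomorphism, your proof is self-contained. The cited papers are where the result is recorded in the braid-variety literature, but the underlying mechanism is older and more elementary than you suggest.
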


Viewing the open Richardson variety $R^\circ_{u,w}$ as a subvariety in the Schubert cell $X^\circ_w$, the Deodhar decomposition on $R^\circ_{u,w}$ is then induced from the Schubert stratification on the Schubert cell $X^\circ_w$. Such decompositions generalize to all braid Richardson varieties $R^\circ_{u,\beta}$. We follow the conventions in \cite[Definition 4.1]{GalashinLamTrinhWilliams}, \cite[Section 4]{Speyer23} and \cite[Section 3.6]{GLSS}.

\begin{definition}[Distinguished sequence]
    Let $\beta = \sigma_{i_1}\cdots \sigma_{i_r} \in \Br_n^+$. A \emph{distinguished sequence} for $u \in S_n$ is a tuple $\mathfrak{v} = (v_0, \dots, v_{r})$ such that $v_0 = e$, $v_{r} = u$, and such that for each $j\in \{1,\dots,r\}$, exactly one of the following conditions holds:
    \begin{enumerate}
        \item $v_j = v_{j-1}$ and $v_j > v_j s_{i_j}$,
        \item $v_j = v_{j-1}s_{i_j}$ and $v_j < v_{j-1}$,
        \item $v_j = v_{j-1}s_{i_j}$ and $v_j > v_{j-1}$.
    \end{enumerate}
    A \emph{positive distinguished expression} for $u \in S_n$ is a distinguished sequence $(v_0, \dots, v_r)$ such that for any $j\in \{1,\ldots,r\}$, $v_j$ satisfies (1) or (3).
\end{definition}

\begin{example}
Let $\beta=\sigma_1^3$. The only distinguished sequences for $u=s_1$ are $(e,s_1,s_1,s_1)$ and $(e,s_1,e,s_1)$.
\end{example}

\begin{remark}
    Our definition differs from \cite[Definition 4.1]{GalashinLamTrinhWilliams}, \cite[Lemma 4.1]{Speyer23} or \cite[Definition 3.1 or 3.16]{GLSS} in Case (1). The difference comes from reading the braid from left to right or from right to left: a distinguished sequence $(v_1, \dots, v_r)$ for $u$ and $\beta$ in our definition corresponds to a distinguished sequence $(v_r^{-1}u, \dots, v_1^{-1}u)$ for $u$ and the reverse braid $\ateb$ (again identified as a double braid word with only negative indices).
\end{remark}

\begin{definition}[Deodhar pieces {\cite[Section 7]{GalashinLamTrinhWilliams}} {\cite[Section 4.1]{Speyer23} \cite[Definitions 3.1 and 3.16]{GLSS}}]\label{dfn:deodhar_piece}
    Let $\mathfrak{v} = (v_0, \dots, v_r)$ be a distinguished sequence for $u \in S_n$. Then, the \emph{Deodhar piece} associated with $(v_0, \dots, v_r)$ in $R^{\circ}_{u,\beta}$ is defined as
    \[
    D(v_0, \dots, v_r) \coloneqq \{ (F_1, \dots, F_{r+1}) \in (G/B_+)^{r+1} \mid F_1 = B_+,\; F_{i_j} \xrightarrow{v_j} F_1\; \forall j \in \{1,\dots,r\}\}.
    \]
\end{definition}

\begin{lemma}[Deodhar {\cite[Theorem 7.4]{GalashinLamTrinhWilliams} \cite[Lemma 4.5]{Speyer23}}]\label{lem:deodhar-decomposition}
    Let $\beta \in \Br_n^+$ and $(v_0, \dots, v_r)$ be a distinguished sequence for $u \in S_n$. Let $t$ be the number of indices $j \in \{2,\dots,r\}$ such that $v_j = v_{j-1}$, and $c$ be the number of indices $j \in \{2,\dots,r\}$ such that $v_j = v_{j-1}s_{i_j}$ and $v_j < v_{j-1}$. 
    
    \begin{enumerate}
    \item There is an algebraic isomorphism
    \[
        D(v_0, \dots, v_r) \cong (\C^\ast)^t \times \C^c.
    \]
    \item The natural map $D(v_0, \dots, v_r) \to R_{u,\beta}^\circ$ is injective.
    \end{enumerate}
\end{lemma}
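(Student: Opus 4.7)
My plan is to prove both parts simultaneously by induction on $r$, the length of $\beta$, using the recursive parametrization of the braid Richardson variety $R^\circ_{u,\beta}$ by braid matrix coordinates from \cref{sec:braid_variety}. The base case $r = 0$ is trivial: the only distinguished sequence for $u = e$ is $(e)$ itself, and $D(e) = \{B_+\}$ is a single point with $t = c = 0$.

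For the inductive step, I will fix a partial flag sequence $(F_1, \ldots, F_r) \in D(v_0, \ldots, v_{r-1})$ and analyze the additional constraint on $F_{r+1}$ imposed by $F_r \xrightarrow{s_{i_r}} F_{r+1}$ together with the Bruhat position condition coming from $v_r$. The set of $s_{i_r}$-neighbors of $F_r$ is a $\mathbb{P}^1$-fiber of the projection $\pi_{i_r}\colon G/B_+ \to G/P_{i_r}$ through $F_r$, minus $F_r$ itself, hence an $\mathbb{A}^1$. The classical Bruhat analysis of this fiber, together with the Deodhar parametrization (see \cite{Deodhar,MarshRietsch}), yields that, given $F_r$ in Bruhat cell $v_{r-1}$: in Case (1) the valid $F_{r+1}$ form a $\C^\ast$-family, in Case (2) they form a $\C$-family, and in Case (3) $F_{r+1}$ is uniquely determined. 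Combining these local contributions over $j \in \{1,\ldots,r\}$ then yields the isomorphism $D(v_0, \ldots, v_r) \cong (\C^\ast)^t \times \C^c$.

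The main obstacle will be the case analysis in the inductive step, in particular distinguishing Cases (2) and (3): both arise from multiplying by $s_{i_r}$ but differ with respect to the Bruhat order on $v_{r-1}$, which manifests as different contributions to the dimension count. Case (1) is the most transparent, coming from removing a single point from the open cell of the $\mathbb{P}^1$-fiber. Cases (2) and (3), however, require careful tracking of the braid matrix parametrization $B_{i_r}(z_r)$: the descent direction in Case (2) contributes a continuous $\C$-parameter (reflecting the cancellation in the Demazure product), while the ascent direction in Case (3) fixes the parameter via a canonical choice. Adapting the classical arguments of \cite{Deodhar,MarshRietsch,GalashinLamTrinhWilliams,Speyer23} to the braid matrix conventions of this paper (as used, e.g., in the proof of \cref{thm:augmentation_pig=braid variety}) will complete part (1).

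For part (2), the natural map $D(v_0, \ldots, v_r) \to R^\circ_{u,\beta}$ is the inclusion of the parametrized locus: the endpoint condition $v_r = u$ together with the recursive parametrization guarantees that the Richardson condition $B_- \xrightarrow{w_0 u} F_{r+1}$ is automatically satisfied (equivalently, $F_{r+1}$ lies in the correct opposite Schubert cell). Injectivity then follows because each $F_{k+1}$ is uniquely reconstructed from $F_k$ and the parameter $z_k$, so the full flag sequence $(F_1, \ldots, F_{r+1})$ is uniquely determined by the parameters in $(\C^\ast)^t \times \C^c$.
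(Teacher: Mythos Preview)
Your proposal is correct and follows essentially the same approach as the paper: both proceed by induction on the length of the distinguished sequence, parametrize the step from $F_r$ to $F_{r+1}$ via the braid matrix coordinate $z_r$, and perform the identical three-case analysis yielding factors $\C^\ast$, $\C$, and a point in Cases (1), (2), (3). Your $\mathbb{P}^1$-fiber language and the explicit remark on part~(2) add a bit of geometric context that the paper leaves implicit, but the argument is the same.
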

\begin{proof}
    We proceed by inducting on the length of the distinguished sequences. Denote the sequence of flags in $D(v_0, \dots, v_r)$ by $B_+, x_1B_+, \dots, x_rB_+$ and as $x_{j-1}B_+$ and $x_jB_+$ are in relative position $s_{i_j}$, we can write $x_jB_+ = B_{i_j}(z_j)x_{j-1}B_+$.
    
    In Case (1), $v_j = v_{j-1}$. Then if the flags satisfy $x_{j-1}B_+ \xrightarrow{v_{j-1}} B_+$, we know that $x_jB_+ \xrightarrow{v_j} B_+$ if and only if $z_j \in \C^\ast$. Therefore,
    \[
    D(v_0, \dots, v_{j}) \cong D(v_0, \dots, v_{j-1}) \times \C^\ast.
    \]
    In Case (2), $v_j = v_{j-1}s_{i_j}$ and $v_j < v_{j-1}$. Then if $x_{j-1}B_+ \xrightarrow{v_{j-1}} B_+$, we know that $x_jB_+ \xrightarrow{v_j} B_+$ for any $z_j \in \C$. Therefore,
    \[
    D(v_0, \dots, v_{j}) \cong D(v_0, \dots, v_{j-1}) \times \C.
    \]
    Finally, in Case (3), $v_j = v_{j-1}s_{i_j}$ and $v_j > v_{j-1}$. Then if $x_{j-1}B_+ \xrightarrow{v_{j-1}} B_+$, we know that $x_jB_+ \xrightarrow{v_j} B_+$ only if $z_j = 0$. Therefore,
    \[
    D(v_0, \dots, v_{j}) \cong D(v_0, \dots, v_{j-1}).
    \]
\end{proof}

    When $s_{i_1}\cdots s_{i_r}$ is a reduced word for $w \in S_n$, the Schubert cell associated with $w$ satisfies
    \[
    X^\circ_w = \{B_{i_j}(z_j) \cdots B_{i_1}(z_1) B_+ \in G/B_+ \mid (z_1, \dots, z_r) \in \C^r\}.
    \]
    
    Schubert cells form a decomposition of the flag variety, therefore the open Richardson variety $R^\circ_{u,w}$ can be decomposed into Deodhar pieces. Moreover, the Deodhar pieces that appear in the decomposition of $R^\circ_{u,w}$ are exactly the Deodhar pieces of a distinguished subsequence for $u \in S_n$; see \cite[Lemma 4.1]{Speyer23}. In general, the braid Richardson variety $R^\circ_{u,\beta}$ can also be decomposed into Deodhar pieces. We omit the proof of the following theorem since it follows from the identification with the weave decomposition in \cref{thm:deodhar=weave}.

\begin{theorem}[Deodhar decomposition {\cite[Theorem 7.4]{GalashinLamTrinhWilliams}}]\label{thm:deodhar}
    Let $u \in S_n$ and $\beta \in \Br_n^+$. Let $\mathfrak{v}$ be a positive distinguished expression of $u$ and $\beta$. Then the collection $(\mathfrak{v} = \mathfrak{v}_0, \dots, \mathfrak{v}_{\ell(\beta)})$ of distinguished sequences satisfies that
    \begin{enumerate}
        \item $R_{u,\beta}^\circ = \bigcup_{i=1}^{\ell(\beta)} D(\mathfrak{v}_i)$,
        \item $D(\mathfrak{v}_i) \subset R_{u,\beta}^\circ$ are pairwise disjoint for $i \in \{0, \dots, \ell(\beta)\}$, and
        \item $D(\mathfrak{v}) \cong (\C^\ast)^{\ell(\beta) - {n\choose 2}}$ is the unique piece of maximal dimension. \qed
    \end{enumerate}
\end{theorem}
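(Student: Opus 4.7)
The plan is to deduce the three claims directly from the weave decomposition of $X(\beta)$ in \cref{thm:weave_decomp} together with the forthcoming identification between distinguished sequences and right simplifying weaves provided by \cref{thm:intro_Deodhar=Weave}. Rather than reprove the decomposition from scratch as in \cite{GalashinLamTrinhWilliams}, I would transport it across a dictionary: Case (1) in the definition of a distinguished sequence corresponds to a trivalent vertex, Case (2) to a cup, and Case (3) to a trivial weave segment. The count in \cref{lem:deodhar-decomposition} then matches, piece-by-piece, the count of $\C^\ast$- and $\C$-factors in the weave injection $\phi_\w$ of \cref{thm:functor_w_to_c}, so that each Deodhar piece $D(\mathfrak{v}_i)$ is identified with $\mathfrak X(\w_{\mathfrak{v}_i})$ as a subvariety of $R^\circ_{u,\beta}$.

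The first step is a reduction to the case $u = w_0$, which is the setting of \cref{prop:braid_richardson} and \cref{thm:intro_Deodhar=Weave}. Given arbitrary $u \in S_n$, I would extend any flag tuple in $R^\circ_{u,\beta}$ by choosing a positive distinguished expression for $u^{-1}w_0$ along the lift $\beta(u^{-1}w_0)$; this refinement identifies $R^\circ_{u,\beta}$ with the braid Richardson variety $R^\circ_{w_0,\,\beta\cdot\beta(u^{-1}w_0)}$ whose extended braid has Demazure product $w_0$. Under this identification, distinguished sequences of $u$ along $\beta$ lift to distinguished sequences of $w_0$ along $\beta\cdot\beta(u^{-1}w_0)$ whose terminal segment is the fixed positive distinguished expression, and this lift preserves the $(t,c)$-counts from \cref{lem:deodhar-decomposition}. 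Thus the general theorem follows once it is established in the case $u = w_0$.

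With that reduction in place, I would invoke \cref{thm:intro_Deodhar=Weave}, which, for each distinguished sequence $\mathfrak{v}_i$, produces a right simplifying weave $\w_{\mathfrak{v}_i}\colon\beta\to\Delta$ such that $D(\mathfrak{v}_i) = \mathfrak X(\w_{\mathfrak{v}_i})$ under $R^\circ_{w_0,\beta}\cong X(\beta)$ from \cref{prop:braid_richardson}. Then \cref{thm:weave_decomp} supplies a decomposing tuple of simplifying weaves covering $X(\beta)$ by pairwise disjoint pieces, with a unique top-dimensional piece of dimension $\ell(\beta)-\binom{n}{2}$ coming from a weave without cups. Since a positive distinguished expression $\mathfrak{v}$ is precisely one whose associated weave $\w_\mathfrak{v}$ has no cups (only trivalent vertices and trivial strands), it corresponds to the unique maximal piece, proving (3); claims (1) and (2) likewise transport immediately from \cref{thm:weave_decomp}. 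The main obstacle is the bookkeeping required to match the two enumerations: one must verify that the collection of right simplifying weaves appearing in a decomposing tuple is exactly the image of the distinguished-sequences-to-weaves map from \cref{thm:intro_Deodhar=Weave}, and that this identification is compatible with the reduction to $u=w_0$ described above. All of these compatibility checks are local in the sliced-weave construction and in the inductive definition of distinguished sequences, so they reduce to an inspection of the three cases.
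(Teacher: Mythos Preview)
Your proposal is correct and matches the paper's approach: the paper omits a direct proof and instead defers to \cref{thm:deodhar=weave}, whose inductive argument identifies each Deodhar piece $D(\mathfrak{v})$ with the corresponding weave piece $\mathfrak{X}(\w_{\mathfrak{v}})$, so that the covering, disjointness, and maximal-piece claims transfer directly from \cref{thm:weave_decomp}. Your reduction to $u=w_0$ is unnecessary, since the induction in the proof of \cref{thm:deodhar=weave} already treats $X(\beta,\pi)$ and $R^\circ_{\pi,\beta}$ for arbitrary $\pi$.
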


    Now, we prove that the Deodhar decomposition for $R^\circ_{w_0, \beta}$ coincides with the weave decomposition of the braid variety $X(\beta)$ obtained from \cref{thm:weave_decomp}.

\begin{lemma}\label{lma:right_inductive_distinguished_sequences}
    Let $\pi \in S_n$ and $\beta \in \Br_n^+$ such that $\delta(\beta) = \pi$. Then there exists a bijective map $\psi$ from the set of distinguished sequences for $\pi \in S_n$ to the set of inductive equivalence classes of right simplifying algebraic weaves $\beta \to \delta(\beta)$.
\end{lemma}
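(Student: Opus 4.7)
The plan is to match the three possibilities at each step of the distinguished sequence definition with the three local moves in the inductive construction of a right simplifying weave from \cref{dfn:inductive_simplifying_weave}. Writing $\beta = \sigma_{i_1}\cdots\sigma_{i_r}$ and letting $\mathfrak v=(v_0,\ldots,v_r)$ be a distinguished sequence for $\pi$, I would define the sequence $(x_1,\ldots,x_r) \in \{A,B,C\}^r$ by assigning $x_j = A$ in case (3) (extension, \cref{fig:inductive1}), $x_j = B$ in case (1) (trivalent, \cref{fig:inductive2}), and $x_j = C$ in case (2) (cup, \cref{fig:inductive3}). Since by \cref{dfn:inductive_equiv} the inductive equivalence class of a right simplifying weave $\w$ is exactly the data of the tuple $p(\w)$, this defines $\psi(\mathfrak v)$ to be the class of any right simplifying weave realizing $(x_1,\ldots,x_r)$, once I show one exists.

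The key technical step is an inductive invariant: at each stage $j$ of the right simplifying construction, the intermediate braid $\beta_j'$ is a reduced expression for $v_j$, and hence $v_j = \delta(\beta_j')$. The base case is vacuous. For the inductive step, case (3) gives $v_{j-1}s_{i_j}>v_{j-1}$, so the trivial extension is legal and appending $\sigma_{i_j}$ preserves reducedness. Cases (1) and (2) both impose $v_{j-1}s_{i_j}<v_{j-1}$, so by the exchange property some reduced word for $v_{j-1}$ ends in $\sigma_{i_j}$; the definition of a right simplifying weave permits applying braid moves within $\w(\beta^{\leq j-1})$ to arrange this, and such braid moves only introduce hexavalent and tetravalent vertices, which do not change $p(\w)$ and hence do not alter the inductive equivalence class. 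The trivalent move then leaves $\beta_j' = \beta_{j-1}'$ as a reduced word for $v_j = v_{j-1}$, and the cup move removes the trailing $\sigma_{i_j}$, producing a reduced word for $v_j = v_{j-1}s_{i_j}$. Since $v_r = \pi = \delta(\beta)$ by the definition of a distinguished sequence, the weave terminates at a reduced word for $\pi$, as required of a right simplifying weave.

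For the inverse, given an inductive equivalence class of right simplifying weaves represented by $\w$ with $p(\w) = (x_1,\ldots,x_r)$, I would set $v_0 = e$ and recursively put $v_j = v_{j-1}s_{i_j}$ if $x_j \in \{A,C\}$ and $v_j = v_{j-1}$ if $x_j = B$. The invariant identifies $v_j$ with $\delta(\beta_j')$, so the legality of each weave move (extension requires $\delta(\beta_{j-1}'\sigma_{i_j}) = \delta(\beta_{j-1}')s_{i_j}$, while trivalent and cup both require $\delta(\beta_{j-1}'\sigma_{i_j}) = \delta(\beta_{j-1}')$) translates exactly into the three cases of a distinguished sequence, and $v_r = \delta(\beta_r') = \delta(\beta) = \pi$. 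The two constructions are manifestly inverse since each is built step-by-step from the same three-way dictionary between the distinguished sequence cases and the weave moves.

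The main obstacle is the reducedness invariant for $\beta_j'$, particularly in the cup case: without it, a cup could in principle leave the Demazure product unchanged rather than decreasing it by $s_{i_j}$, which would break the correspondence with case (2). Verifying that the right simplifying construction cooperates with the inductive equivalence relation so as to preserve reducedness—and that internal braid moves inside the weave never change $p(\w)$—is the crux of the argument, and is what upgrades the local three-way dictionary to an actual bijection between distinguished sequences and inductive equivalence classes of right simplifying weaves.
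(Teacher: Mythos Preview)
Your proposal is correct and follows essentially the same approach as the paper: both build the bijection from the same three-way dictionary matching distinguished-sequence cases (1)/(2)/(3) with trivalent/cup/extension moves, and both observe that the inverse is simply reading off the $p(\w)$-sequence. Your version is in fact slightly more explicit than the paper's, since you isolate and verify the invariant that $\beta_j'$ is a reduced word for $v_j$ (via the exchange property), whereas the paper asserts without elaboration that the necessary braid moves exist and that the construction terminates at a word equivalent to $\delta(\beta)$; the only detail you leave implicit that the paper states is the final vertical composition with hexavalent/tetravalent moves to reach the specific word $\delta(\beta)$, but this is harmless since inductive equivalence ignores those moves.
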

\begin{proof}
    Given a distinguished sequence $(v_0, \dots, v_{r})$ for $u \in S_n$ and $\beta \in \Br_n^+$, we construct a right simplifying algebraic weave $\w(v_0, \dots, v_{r}) \colon \beta \to \beta(u)$ inductively as follows:
    \begin{enumerate}
        \item Let $\w(\id) \colon \beta \to \beta$ be the trivial weave.
        \item If $v_j = v_{j-1}$ and $v_j > v_j s_{i_j}$, we consider the subweave of $\w(\sigma_{i_1} \cdots \sigma_{i_{j-1}})$ on the left of the strand $s_{i_j}$ and apply some braid moves such that its $(j-1)$-th strand is in $G_{i_j}$, and join the $(j-1)$-st and $j$-th strand by a trivalent vertex and get $\w(\sigma_{i_1} \cdots \sigma_{i_{j}})$; see \cref{fig:inductive2}.
        \item If $v_j = v_{j-1}s_{i_j}$ and $v_j < v_{j-1}$, we consider the subweave of $\w(\sigma_{i_1} \cdots \sigma_{i_{j-1}})$ on the left of the strand $s_{i_j}$ and apply some braid moves such that its $(j-1)$-st strand is in $G_{i_j}$, and join the $(j-1)$-st and $j$-th strand by a cup and get $\w(\sigma_{i_1} \cdots \sigma_{i_{j}})$; see \cref{fig:inductive3}.
        \item If $v_j = v_{j-1}s_{i_j}$ and $v_j > v_{j-1}$, then we perform a horizontal composition with $\sigma_{i_j}$. 
    \end{enumerate}
    At the end of this process, one obtains a weave $\w\colon \beta\rightarrow \beta'$ where $\beta'$ is an equivalent braid word for $\delta(\beta)$. We obtain a weave $\w\colon \beta\rightarrow \delta(\beta)$ by vertically composing it with a weave $\beta'\rightarrow \delta(\beta)$ that only contains hexavalent and tetravalent vertices.
    
    There is an obvious projection map $\pi$ from the set of inductive equivalence classes of right simplifying algebraic weaves to the set of distinguished sequences, and one can check that the map $(v_0,\ldots,v_{r}) \mapsto \w(v_0,\ldots,v_{r})$ defined above is an inverse of $\pi$, showing that it is bijective. Moreover, if $t$ is the number of indices $j\in \{1,\dots,r\}$ such that $v_j = v_{j-1}$, and $m$ is the number of indices $j\in \{1,\dots,r\}$ such that $v_j < v_{j-1}$, then the simplifying weave has $m$ cups and $t$ trivalent vertices. Moreover, since our construction goes from left to right of the Demazure product $\delta(\beta)$, the simplifying weaves are right inductive with respect to $\beta$.
\end{proof}

\begin{theorem}\label{thm:deodhar=weave}
    Let $\beta \in \Br_n^+$ such that $\delta(\beta) = w_0$. For every distinguished sequence $\mathfrak{v}$ of $w_0$ there exists a right simplifying weave $\mathfrak{w}_{\mathfrak{v}}$ such that the Deodhar decomposition of $R^\circ_{w_0,\beta}$ agrees with the weave decomposition of $X(\beta)$ given by this collection of right simplifying weaves $\mathfrak{w}_{\mathfrak{v}}$ under the isomorphism $R^\circ_{w_0,\beta} \cong X(\beta)$ from \cref{prop:braid_richardson}.

\end{theorem}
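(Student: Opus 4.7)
The plan is to use the bijection $\psi$ from \cref{lma:right_inductive_distinguished_sequences} between distinguished sequences of $w_0$ and inductive equivalence classes of right simplifying weaves $\beta \to w_0$. Given a distinguished sequence $\mathfrak{v} = (v_0, \ldots, v_r)$, I would set $\mathfrak{w}_\mathfrak{v}$ to be any representative of the class $\psi(\mathfrak{v})$, noting that \cref{lma:decomp_tuple_indep_equiv} makes the resulting weave decomposition of $X(\beta)$ independent of this choice. By construction, the three cases in the definition of a distinguished sequence correspond bijectively to the three local operations used to build $\mathfrak{w}_\mathfrak{v}$ via \cref{dfn:inductive_simplifying_weave}: Case 1 to a trivalent vertex, Case 2 to a cup, and Case 3 to a plain horizontal composition. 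Consequently, \cref{thm:functor_w_to_c} and \cref{lem:deodhar-decomposition} both produce isomorphisms to $(\C^\ast)^t \times \C^c$, where $t$ is the number of trivalent vertices (equivalently, the number of Case 1 steps) and $c$ is the number of cups (equivalently, the number of Case 2 steps), so the two decompositions already have matching combinatorial data.

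The main task is to prove that these pieces coincide under the isomorphism $X(\beta) \cong R^\circ_{w_0, \beta}$ of \cref{prop:braid_richardson}. I would proceed by induction on $r$, comparing the inductive construction of $\mathfrak{w}_\mathfrak{v}$ with the inductive flag description underlying the proof of \cref{lem:deodhar-decomposition}. For the inductive step at position $j$, the partial weave $\mathfrak{w}_\mathfrak{v}^{\leq j}$ imposes trivial monodromy constraints at every vertex, governed by \cref{lma:weave_monodromy_vertex}. In Case 1, part (3) of that lemma forces the $z$-variable at the newly added trivalent vertex to be a free parameter in $\C^\ast$, matching the Deodhar constraint $z_j \in \C^\ast$. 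In Case 2, part (4) of the lemma forces one of the two $z$-variables at the cup to vanish while leaving the other a free parameter in $\C$, matching the Deodhar constraint $z_j \in \C$. In Case 3 no new vertex is introduced, but the requirement that the top of $\mathfrak{w}_\mathfrak{v}$ simplify to $w_0$, together with the top-of-weave condition $B_{\beta_2}(z)w_0 \in B$ from \cref{dfn:weave_corr}, enforces the Deodhar condition $z_j = 0$.

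The main obstacle is bookkeeping: tracking the relationship between the $z$-variables across different layers of the weave, particularly through the hexavalent and tetravalent vertices introduced in the right inductive construction to align the rightmost strand of the partial simplified braid with the color of the next Artin generator. These are governed by parts (1), (2), and (5) of \cref{lma:weave_monodromy_vertex} and implement upper-triangular changes of variable that preserve the Bruhat position of the underlying flag. Once this compatibility is verified, each weave piece $\mathfrak{X}(\mathfrak{w}_\mathfrak{v})$ agrees with the Deodhar piece $D(\mathfrak{v})$ under $X(\beta) \cong R^\circ_{w_0, \beta}$; since these pieces cover $X(\beta)$ by a disjoint union indexed by distinguished sequences (by \cref{thm:weave_decomp} combined with the bijection from \cref{lma:right_inductive_distinguished_sequences}), the weave and Deodhar decompositions agree, and \cref{thm:deodhar} is recovered as a byproduct.
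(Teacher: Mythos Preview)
Your overall strategy---invoke \cref{lma:right_inductive_distinguished_sequences} and then show inductively that $D(v_0,\dots,v_r)$ and $\mathfrak{X}(\w_r)$ agree, splitting into three cases---is exactly the paper's. The execution differs in which language is used for the inductive step: you phrase the constraints in terms of $z$-variables and the monodromy relations of \cref{lma:weave_monodromy_vertex}, whereas the paper works throughout in the flag language of \cref{prop:braid_variety}. In the paper's argument, a point of $\mathfrak{X}(\w_r)$ is encoded by the sequence of flags along the top slice of the partial weave, and the three cases become statements about which new flag $x_rB_+$ is admissible: transverse to two given flags (a $\C^*$ of choices), transverse to one (a $\C$), or uniquely determined (no parameter). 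Since flags are unchanged by the upper-triangular gauge transformations coming from dashed segments and from braid moves, this bypasses the bookkeeping you rightly flag as the main obstacle; indeed, your own final paragraph gestures toward exactly this flag interpretation (``preserve the Bruhat position of the underlying flag'') as the resolution.

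Two points where your $z$-variable phrasing is imprecise. First, in Cases~1 and~2 the northeast incoming edge at the new vertex does not carry $z_j$ on the nose: the newly added vertical strand crosses the dashed rays from every earlier trivalent and cup, so its variable at the vertex is $z_j$ transformed by the virtual-vertex relation (part~(5) of \cref{lma:weave_monodromy_vertex}); compare $s_2=z_3-z_2^{-1}$ rather than $z_3$ in \cref{ex:cluster-hopf}. Second, your Case~3 invokes the condition ``$B_{\beta_2}(z)w_0\in B$'' from \cref{dfn:weave_corr}, but that is a condition at the \emph{bottom} of the full weave, not information available at stage $j$ of the induction. The paper's treatment of Case~3 is simply that appending a trivial strand gives $\mathfrak{X}(\w_j)\cong\mathfrak{X}(\w_{j-1})$ with no new parameter, matching the Deodhar side where the new flag is forced to equal the previous rightmost one.
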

\begin{proof}
    Since both the Deodhar decomposition of $R^\circ_{\pi,\beta}$ and weave decomposition of $X(\beta, \pi)$ are determined by induction on the length of the braid $\beta$, we can match up the decompositions by induction on the length of $\beta$.
    
    For the Deodhar decomposition, in the proof of \cref{lem:deodhar-decomposition} (see \cite[Lemma 4.5]{Speyer23}) it is shown that there are three cases:
    \begin{enumerate}
        \item $D(v_0, v_2, \dots, v_{r-1}, v_r) = D(v_0, v_2, \dots, v_{r-1}) \times \C^\ast$ if $v_j = v_{j-1}$ and $v_j > v_j s_{i_j}$, 
        \item $D(v_0, v_2, \dots, v_{r-1}, v_r) = D(v_0, v_2, \dots, v_{r-1}) \times \C$ if $v_j = v_{j-1}s_{i_j}$ and $v_j < v_{j-1}$, and
        \item $D(v_0, v_2, \dots, v_{r-1}, v_r) = D(v_0, v_2, \dots, v_{r-1})$ if $v_j = v_{j-1}s_{i_j}$ and $v_j > v_{j-1}$.
    \end{enumerate}
    For the weave decomposition of $X(\beta, \pi)$ by a collection of (right simplifying) weaves, it is shown in the proof of \cref{lma:tri_hexa_corresp} (see \cite[Theorem 5.9]{CGGS1}) that there are three cases:
    \begin{enumerate}
        \item $\mathfrak{X}(\w_r) = \mathfrak{X}(\w_{r-1}) \times \C^\ast$ if $\w_r$ is defined by adding a trivalent to $\w_{r-1}$,
        \item $\mathfrak{X}(\w_r) = \mathfrak{X}(\w_{r-1}) \times \C$ if $\w_r$ is defined by adding a cup to $\w_{r-1}$, and
        \item $\mathfrak{X}(\w_r) = \mathfrak{X}(\w_{r-1})$ if $\w_r$ is weave equivalent to $\w_{r-1}$.
    \end{enumerate}
    By \cref{lma:right_inductive_distinguished_sequences}, for a distinguished sequence, the right simplifying weave is constructed inductively so that the three cases appear exactly when the corresponding cases appear in the distinguished sequence. Consider a subsequence $(v_0, \dots, v_r)$ and let $\ell(r)$ be the length of a reduced word of $v_r$. Let $\beta_r$ be the subword of $\beta$ consisting of the first $r$ crossings and let $\w_{r} \colon \beta_{r} \to \beta(v_{r})$ be the associated weave constructed in \cref{lma:right_inductive_distinguished_sequences}. Let $B_+, x_1B_+, \dots, x_{r-1}B_+, yB_+$ be a sequence of flags in $D(v_0, \dots, v_{r-1})$. 
    
    We prove by induction that there is an isomorphism $D(v_0, \dots, v_{r-1}, v_r) \cong \mathfrak{X}(\w_{r})$ such that the corresponding sequence of flags on the top slice of the weave $\mathfrak{X}(\w_{r})$ is
    \[
    B_+,x_{1}B_+,\dots,x_{r-1}B_+,yB_+.
    \]
    To that end, suppose that $D(v_0, \dots, v_{p}) \cong \mathfrak{X}(\w_{p})$ for any $p\in \{1,\ldots,r-1\}$. In Case~(1), we know that $\ell(r) = \ell(r-1)$ and $x_{r-1}B_+ \neq yB_+$. Hence, the additional flag $x_rB_+$ in $D(v_0, \dots, v_{r-1}, v_r)$ can be any flag that is transverse to $x_{r-1}B_+$ and $yB_+$, and the additional flag $x_{r}B_+$ in $\mathfrak{X}(\w_{r})$ after adding a trivalent vertex can also be any flag that is transverse to $x_{r-1}B_+$ and $yB_+$, both of which are parametrized by $\C^\ast$. In Case~(2), we know that $\ell(r) = \ell(r-1) + 1$ and $x_{r-1}B_+ = yB_+$. Hence, the additional flag $x_rB_+$ in $D(v_0, \dots, v_{r-1}, v_r)$ can be any flag that is transverse to $x_{r-1}B_+$, and the additional flag $x_{r}B_+$ in $\mathfrak{X}(\w_{r})$ after adding a cup can also be any flag that is transverse to $x_{r-1}B_+$, both of which are parametrized by $\C$. In Case~(3), we know that $\ell(r) = \ell(r-1) - 1$, and the additional flag $x_rB_+$ in $D(v_0, \dots, v_{r-1}, v_r)$ must be $yB_+$, while no additional flag is introduced in $\mathfrak{X}(\w_{r})$. This shows $D(v_0,\dots,v_r) \cong \mathfrak{X}(\w_r)$ and hence completes the proof.
\end{proof}

\section{Comparing the ruling and weave decompositions}\label{sec:mcs_and_alg_weaves}
    The goal of this section is to prove our main theorem: The ruling decomposition of the augmentation variety described in \cref{sec:ruling_decomp} agrees with a corresponding weave decomposition of $X(\beta)$ described in \cref{sec:weave_decomp} via the isomorphism $\Aug(\La(\beta\Delta)) \cong X(\beta)$ in \cref{thm:-1=braid vty_new}. To compare the ruling and weave decompositions we define a category $\mathfrak{B}_n$ in \cref{sec:mcs_cat} whose set of objects is $\Br_n^+$ and morphisms are sequences of braids. In \cref{sec:monodromy_mcs} we bring the ruling and weave decompositions onto equal footing by showing that there is a commutative diagram
    \[
    \begin{tikzcd}[sep=scriptsize]
        \mathfrak{B}_n \ar[dr,swap,"\mathfrak M"] \ar[rr,"\mathfrak{A}"] && \mathfrak{W}_n \ar[dl,"\mathfrak{X}"] \\
        & \mathfrak{C}&
    \end{tikzcd},
    \]
    where $\mathfrak M$ is constructed via Morse complex sequences allowing us to express the weave decomposition of $X(\beta)$ solely in terms of Morse complex sequences. 
    
\subsection{A category of braids and the weave category}\label{sec:mcs_cat}
    The goal of this section is to define a category of braids $\mathfrak B_n$ that is equivalent to the weave category $\mathfrak W_n$. The main reason we introduce $\mathfrak B_n$ is to highlight the distinction between the functor $\mathfrak X$ constructed using trivial monodromy weaves and a functor $\mathfrak M$ constructed using Morse complex sequences.

\begin{definition}[Braid moves]
    Let $\beta, \beta' \in \Br_n^+$. We call the following moves \emph{braid moves}. We say that $\beta$ and $\beta'$ are related by a:
    \begin{enumerate}
        \item \emph{distant crossings move} if $\beta = \gamma_L\sigma_i\sigma_j\gamma_R$ and $\beta' = \gamma_L\sigma_j\sigma_i\gamma_R$ for $|i-j| > 1$; see \cref{fig:braid_distant}.
        \item \emph{hexavalent move} if $\beta = \gamma_L\sigma_i\sigma_{i+1}\sigma_i\gamma_R$ and $\beta' = \gamma_L\sigma_{i+1}\sigma_{i}\sigma_{i+1}\gamma_R$ for some $i$; see \cref{fig:braid_hexavalent}.
        \item \emph{trivalent move} if $\beta = \gamma_L\sigma_i^2\gamma_R$ and $\beta' = \gamma_L\sigma_{i}\gamma_R$ for some $i$; see \cref{fig:braid_trivalent}.
        \item \emph{cup move} if $\beta = \gamma_L\sigma_i^2\gamma_R$ and $\beta' = \gamma_L\gamma_R$ for some $i$ (respectively, $\beta$ and $\beta'$ are related by a \emph{cap move} if $\beta = \gamma_L\gamma_R$ and $\beta' = \gamma_L\sigma_i^2\gamma_R$ for some $i$); see \cref{fig:braid_cup}.
    \end{enumerate}
\end{definition}
\begin{figure}[!htb]
    \centering
    \begin{subfigure}{\textwidth}
        \centering
        \includegraphics{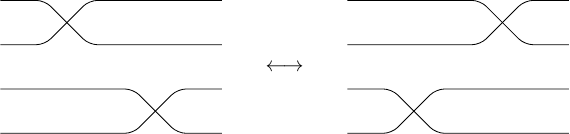}
        \caption{The distant crossings move $\sigma_i\sigma_j \leftrightarrow \sigma_j\sigma_i$ for $|i-j| > 1$.}\label{fig:braid_distant}
    \end{subfigure}
    \begin{subfigure}{\textwidth}
        \centering
        \includegraphics{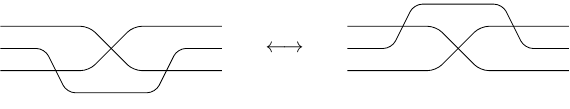}
        \caption{The hexavalent move $\sigma_i\sigma_{i+1}\sigma_i \leftrightarrow \sigma_{i+1}\sigma_i\sigma_{i+1}$.}\label{fig:braid_hexavalent}
    \end{subfigure}
    \begin{subfigure}{\textwidth}
        \centering
        \includegraphics{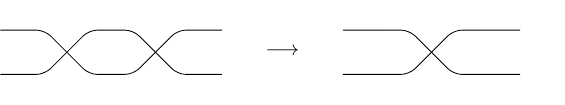}
        \caption{The trivalent move $\sigma_i^2 \to \sigma_i$.}\label{fig:braid_trivalent}
    \end{subfigure}
    \begin{subfigure}{\textwidth}
        \centering
        \includegraphics{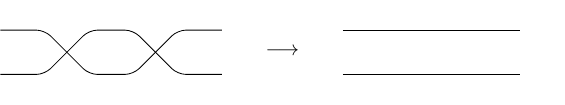}
        \caption{The cup move $\sigma_i^2 \to 1$.}\label{fig:braid_cup}
    \end{subfigure}
    \caption{Braid moves}
\end{figure}

\begin{definition}[Braid category]\label{dfn:braid_category}
    Let $n \in \Z_{\geq 1}$. The braid category $\mathfrak{B}_n$ is defined as follows:
    \begin{description}
        \item[Objects] $\Ob(\mathfrak{B}_n) = \Br_n^+$.
        \item[Morphisms] A morphism $\beta \to \beta'$ consists of a finite sequence of braids $(\beta \eqqcolon \beta_1,\ldots, \beta_q \coloneqq \beta')$ such that $\beta_{j-1}$ is related to $\beta_j$ via exactly one braid move.
        \item[Composition] The composition of two morphisms $\beta\to \beta'$ and $\beta'\to \beta''$ is given by concatenation of the corresponding sequences of braids and braid moves.
    \end{description}
\end{definition}

Recall the definition of the weave category $\mathfrak{W}_n$; see \cref{dfn:weave_category}.

\begin{theorem}\label{thm:ruling_to_weaves}
    Let $n\in \Z_{\geq 1}$. There is an equivalence $\mathfrak{A} \colon \mathfrak{B}_n \longrightarrow \mathfrak{W}_n$.
\end{theorem}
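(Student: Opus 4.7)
The plan is to define $\mathfrak{A}$ as the identity on objects, $\mathfrak{A}(\beta) = \beta$ for every $\beta \in \Br_n^+$, and on morphisms by associating each braid move to the corresponding elementary local model from \cref{dfn:sliced_weave}: a distant crossings move $\sigma_i\sigma_j \leftrightarrow \sigma_j\sigma_i$ (with $|i-j|>1$) goes to a tetravalent vertex (\cref{fig:demazure6}), a hexavalent move $\sigma_i\sigma_{i+1}\sigma_i \leftrightarrow \sigma_{i+1}\sigma_i\sigma_{i+1}$ goes to a hexavalent vertex (\cref{fig:demazure3}), a trivalent move $\sigma_i^2 \to \sigma_i$ goes to a trivalent vertex (\cref{fig:demazure2}) decorated with its canonical right-going dashed ray (\cref{fig:alg_weave1}), a cup move $\sigma_i^2 \to 1$ goes to a cup with its dashed ray (\cref{fig:alg_weave2}), and a cap move $1 \to \sigma_i^2$ goes to a cap with its dashed ray (\cref{fig:alg_weave3}). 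A morphism $(\beta_1,\ldots,\beta_q)$ in $\mathfrak{B}_n$ then maps to the vertical composition of the corresponding $q-1$ elementary algebraic weaves; by construction this is a sliced algebraic weave from $\beta_1$ to $\beta_q$, hence a morphism in $\mathfrak{W}_n$.

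Next I would verify that $\mathfrak{A}$ is a functor. An identity morphism in $\mathfrak{B}_n$ (the length-one sequence $(\beta)$) maps to the empty vertical composition, which is the trivial weave on $\beta$ and thereby the identity in $\mathfrak{W}_n$. Composition is preserved because the concatenation of two sequences of braid moves maps to the vertical composition of the corresponding weaves, which, by the associativity of vertical composition, equals the composite $\mathfrak{A}(\mathfrak{m}_2) \circ \mathfrak{A}(\mathfrak{m}_1)$. Essential surjectivity is immediate since $\mathfrak{A}$ is the identity on objects, so the heart of the proof is fully faithfulness. For this, I would exhibit an inverse on morphisms by slicing: every algebraic weave $\w \colon \beta_1 \to \beta_2$ is, by \cref{dfn:sliced_weave}, a vertical composition of the six elementary local models, and each non-trivial local model corresponds uniquely to exactly one of the five braid moves (the trivial local model maps to no braid move and is absorbed into the identity). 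Reading the slices from top to bottom produces a sequence $(\beta_1,\ldots,\beta_q)$, which is a morphism in $\mathfrak{B}_n$, and $\mathfrak{A}$ sends this sequence back to $\w$. Since the placement of the dashed rays at trivalent vertices, cups, and caps is canonical (always to the right, \cref{rmk:dashed_rays_right}), no additional data is lost or gained in this correspondence.

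The main obstacle I expect is not conceptual but a matter of bookkeeping: the definition of a sliced weave permits trivial slices (\cref{fig:demazure1}) and different horizontal placements of commuting non-overlapping slices, so the slice-by-slice decomposition of an algebraic weave is not literally unique. However, trivial slices represent identities in $\mathfrak{W}_n$, and the relative horizontal positions of two braid moves that occur at disjoint letters of the underlying braid are naturally recorded by the data of a braid move, because each braid move in $\mathfrak{B}_n$ specifies its location via the decomposition $\beta = \gamma_L \cdots \gamma_R$. Thus, after insertion/removal of trivial slices (which does not change the morphism in $\mathfrak{W}_n$) the correspondence is a bijection on morphism sets, giving full faithfulness and hence the desired equivalence.
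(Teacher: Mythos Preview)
Your proposal is correct and follows essentially the same approach as the paper: define $\mathfrak{A}$ as the identity on objects, send each braid move to its corresponding elementary algebraic weave slice, and construct the inverse on hom-sets by reading off the slices of a sliced algebraic weave from top to bottom. The paper's proof is terser and does not explicitly discuss the bookkeeping issue with trivial slices and relative slice ordering that you raise in your final paragraph, but your treatment of that point is compatible with the paper's implicit conventions and does not constitute a different argument.
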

\begin{proof}
    The functor $\mathfrak{A}$ is defined as the identity map on objects. To define it on morphisms, let $\beta,\beta' \in \Br_n^+$. Given a morphism $\beta \to \beta'$, i.e., a sequence of braids $(\beta_1,\ldots,\beta_q)$, such that adjacent braids are related by braid moves. We construct an algebraic weave $\beta \to \beta'$ as the vertical concatenation of the algebraic weaves $\w_\ell \colon \beta_{\ell} \longrightarrow \beta_{\ell+1}$ for all $\ell \in \{1,\ldots,q\}$ where each is a single braid move, depending on how $\beta_{\ell}$ is related to $\beta_{\ell+1}$. It is clear that composition is respected. For any $\beta,\beta'\in \Br_n^+$, we may construct the inverse to $\mathfrak{A}_{\beta,\beta'} \colon \mathfrak{B}_n(\beta,\beta') \to \mathfrak{W}_n(\beta,\beta')$ by vertically decomposing a simplifying algebraic weave $\beta \to \beta'$ into algebraic weaves $\w_\ell \colon \beta_\ell \to \beta_{\ell+1}$ for all $\ell \in \{1,\ldots,q\}$ and assigning to each $\w_\ell$ the corresponding braid move between the braids $\beta_\ell$ and $\beta_{\ell+1}$.
\end{proof}
\begin{remark}\label{rmk:ruling_caps}
    Let $\mathfrak{B}_n^{\cup} \subset \mathfrak{B}_n$ denote the wide subcategory generated by braid operations with no caps, and $\mathfrak{W}_n^{\cup} \subset \w_n$ the wide subcategory generated by simplifying algebraic weaves (i.e., algebraic weaves with no caps). Then it is clear that $\mathfrak{A}$ restricts to an equivalence of categories $\mathfrak{A} \colon \mathfrak{B}_n^\cup \xrightarrow{\cong} \w_n^\cup$.
\end{remark}

\subsection{Monodromy of Morse complex sequences}\label{sec:monodromy_mcs}
The next goal is to define a functor $\mathfrak{B}_n \to \mathfrak{C}$, where $\mathfrak{C}$ is the category of algebraic correspondences. This functor is defined on the set of objects as $\beta \mapsto X(\beta)$. To define the functor on morphisms, we need additional discussion regarding Morse complex sequences, first introduced in \cref{sec:mcs}. More precisely, we discuss how (A-form) Morse complex sequences change under braid operations. These can be equivalently described using Morse complex 2-families on Legendrian surfaces introduced by Rutherford--Sullivan \cite{rutherford2018generating}; see \cref{rmk:mcs_monodromy=mcf_axiom,rmk:mcs_monodromy_at_vertex,rmk:mcs_morphism_are_mcf}. We also define an MCS analog of framed algebraic weaves from \cref{ssec:framed-weave}, and discuss their monodromy varieties.

\begin{remark}
    Throughout this section, we consider Morse complex sequences on a front diagram coming from a braid with Maslov potential on every strand equal to $0$. A Legendrian $(-1)$-closure also has strands with Maslov potential equal to $1$, but both the A-form MCS and the SR-form MCS are completely determined by the handleslide marks on the braid.
\end{remark}

\subsubsection{Morse complex sequences and their monodromy varieties}
\begin{definition}[MCS braid moves]\label{dfn:mcs_braid_moves}
We call the following four moves \emph{MCS braid moves}. 
\begin{enumerate}
    \item An \emph{MCS distant crossings move} is the local modification of the front diagram of an MCS defined by \cref{fig:mcs_distant}.
    \item An \emph{MCS hexavalent move} is the local modification of the front diagram of an MCS defined by \cref{fig:mcs_hexavalent}.
    \item An \emph{MCS trivalent move} is the local modification of the front diagram of an MCS defined by \cref{fig:mcs_trivalent}.
    \item An \emph{MCS cup move} is the local modification of the front diagram of an MCS defined by \cref{fig:mcs_cup}.
\end{enumerate}
\end{definition}

\begin{figure}[!htb]
    \centering
    \begin{subfigure}{\textwidth}
        \centering
        \includegraphics{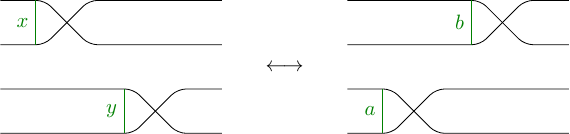}
        \caption{MCS distant crossings move.}\label{fig:mcs_distant}
    \end{subfigure}
    \begin{subfigure}{\textwidth}
        \centering
        \includegraphics{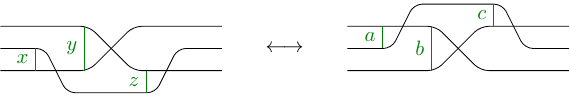}
        \caption{MCS hexavalent move.}\label{fig:mcs_hexavalent}
    \end{subfigure}
    \begin{subfigure}{\textwidth}
        \centering
        \includegraphics{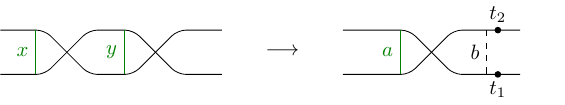}
        \caption{MCS trivalent move.}\label{fig:mcs_trivalent}
    \end{subfigure}
    \begin{subfigure}{\textwidth}
        \centering
        \includegraphics{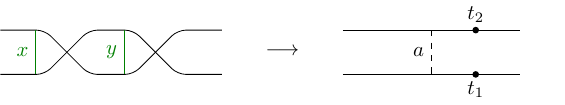}
        \caption{MCS cup move.}\label{fig:mcs_cup}
    \end{subfigure}
    \caption{MCS Braid moves}
    \label{fig:mcs_braid_moves}
\end{figure}

\begin{remark}
     We colloquially call the handleslide marks indicated with dashed lines on the right side of \cref{fig:mcs_trivalent,fig:mcs_cup} ``dashed handleslide marks'' and call the labeled dots ``marked points.'' 
\end{remark}

Given a Morse complex sequence $C$ as in \cref{dfn:mcs}, when all the strands in the front diagram have Maslov potential 0, the chain complexes $C_\ell$ are vector spaces, and the chain maps $\varphi_\ell$ are realized by the following matrices.

\begin{notation}\label{notn:matrices_mcs_rep}
    Let $\delta_{i,j}(z)$ denote the $n \times n$ matrix with $(\delta_{i,j}(z))_{i,j} = z$ and $(\delta_{i,j}(z))_{k,\ell} = 0$ for $(k,\ell) \neq (i,j)$. Define $S_{i,j}(z) \coloneqq I + \delta_{i,j}(z)$. Let $D_i(t) \coloneqq \diag(1,\ldots,1,t,1,\ldots,1)$ be the $n \times n$ diagonal matrix with $i$-th element equal to $t$. Let $P_i$ denote the $n \times n$ block matrix 
    \[
        P_i \coloneqq I_{i-1} \oplus \begin{pmatrix}0 & 1 \\ 1 & 0\end{pmatrix} \oplus I_{n-i-1}.
    \]
    Recall the definition of the braid matrix $B_i(z)$ from \eqref{eq:braid_matrix}:
    \[
        B_i(z) \coloneqq I_{i-1} \oplus \begin{pmatrix}0 & 1 \\ 1 & z\end{pmatrix} \oplus I_{n-i-1}.
    \]
\end{notation}

\begin{lemma}\label{lma:mcs_braid_matrix}
    We have $P_iS_{i,i+1}(z) = B_i(z)$.
    \qed
\end{lemma}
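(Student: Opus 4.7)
The statement is a direct block matrix computation, so the plan is simply to unpack the definitions and multiply. First I would write $S_{i,i+1}(z)$ explicitly as the block matrix $I_{i-1} \oplus \begin{pmatrix}1 & z \\ 0 & 1\end{pmatrix} \oplus I_{n-i-1}$, since adding $\delta_{i,i+1}(z)$ to the identity only changes the $(i,i+1)$-entry, which lies in the middle $2\times 2$ block indexed by rows and columns $i$ and $i+1$. Similarly, $P_i$ is supported on that same $2\times 2$ block.

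Next I would observe that the product $P_i S_{i,i+1}(z)$ decomposes as a direct sum because the nontrivial blocks of $P_i$ and $S_{i,i+1}(z)$ occupy identical positions, leaving the outer identity blocks untouched. So the computation reduces to the $2\times 2$ multiplication
\[
\begin{pmatrix}0 & 1 \\ 1 & 0\end{pmatrix}\begin{pmatrix}1 & z \\ 0 & 1\end{pmatrix} = \begin{pmatrix}0 & 1 \\ 1 & z\end{pmatrix},
\]
which matches the middle block of $B_i(z)$ as given in \eqref{eq:braid_matrix}. Reassembling the direct sum yields $P_i S_{i,i+1}(z) = I_{i-1} \oplus \begin{pmatrix}0 & 1 \\ 1 & z\end{pmatrix} \oplus I_{n-i-1} = B_i(z)$, completing the proof.

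There is no real obstacle here; the only thing to watch is the order of multiplication (so that the $z$ lands in the $(i+1,i+1)$ position rather than the $(i,i)$ position) and the convention that $S_{i,i+1}(z)$ places $z$ above the diagonal, consistent with \cref{notn:matrices_mcs_rep}.
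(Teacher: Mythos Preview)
Your proof is correct and is exactly the routine verification the paper has in mind; the lemma is stated with an immediate \qed and no argument, so your block-matrix computation is the intended (and only reasonable) justification.
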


\begin{definition}[Monodromy of a Morse complex sequence]\label{dfn:monodromy_mcs}
    The \emph{monodromy} of a Morse complex sequence $C$ is a product of matrices as follows:
    \begin{itemize}
        \item Associate to each handleslide mark between the $i$-th and $k$-th strands (with $i \leq k$) labeled by $a\in R$, the matrix $S_{i,k}(a)$.
        \item Associated to each crossing $\sigma_i$, the matrix $P_i$.
        \item Associated to each marked point $t$ on strand $i$, the matrix $D_i(t)$.
    \end{itemize}
    When reading $C$ from left to right, the monodromy $\mu(C)$ is defined as the product of these matrices in the reversed order.
\end{definition}
\begin{remark}\label{rmk:mcs_matrix_order}
    In \cref{dfn:monodromy_mcs}, the convention to read matrices in the opposite order is \emph{different} from the convention in \cite[Section 3d]{Henry}. The MCS moves depicted in \cref{fig:mcs_local3,fig:mcs_local5} differ from those depicted in \cite[Figures 13(d) and 13(e)]{HenryRutherford15} by a sign; see \cref{rmk:mcs_difference}.
\end{remark}
\begin{remark}\label{rmk:monodromy_matrix_composition}
    The monodromy of an MCS encodes the matrices of the quasi-isomorphisms in the MCS associated to a front diagram $D$ away from the cusps. Without loss of generality, we assume that the front diagrams $D \subset \{x_0 \leq x \leq x_m\}$ we consider always have their cusps contained in the slices $\{x_0 \leq x \leq x_1\}$ and $\{x_{m-1} \leq x \leq x_m\}$. If $C = (C_0,C_1,\ldots,C_{m-1},C_m)$ is an MCS for $D$, with chain maps $\varphi_{\ell} \colon C_\ell \to C_{\ell+1}$ for $\ell \in \{0,\ldots,m-1\}$, we have
    \[
    \mu(C) = [\varphi_{m-2}\circ \cdots \circ \varphi_1].
    \]
\end{remark}
\begin{definition}[Monodromy of an MCS (braid) move]
    If $C_1$ and $C_2$ are two Morse complex sequences, the \emph{monodromy} of an MCS (braid) move $C_1 \to C_2$ is defined to be the matrix $\mu(C_1)\mu(C_2)^{-1}$. An MCS (braid) move whose monodromy is the identity matrix is said to have \emph{trivial monodromy}.
\end{definition}
\begin{remark}\label{rmk:mcs_monodromy=mcf_axiom}
    In view of \cref{rmk:monodromy_matrix_composition}, the trivial monodromy condition for an MCS (braid) move implies that the composition of quasi-isomorphisms of the chain complexes from the left to the right in $C_1$ and $C_2$ are chain homotopic (in fact equal). This is the condition in the definition of Morse complex 2-families \cite[Definition 4.1(2) and Axiom 4.3]{rutherford2018generating}.
\end{remark}
\begin{lemma}\label{lma:monodromy_comp_prod}
    If $C_1 \to C_2 \to C_3$ is the composition of two MCS (braid) moves of Morse complex sequences $C_1$, $C_2$, and $C_3$. The monodromy of the composition $C_1 \to C_3$ is equal to the product of the monodromies of $C_1 \to C_2$ and $C_2\to C_3$.
    \qed
\end{lemma}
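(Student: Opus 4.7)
The plan is to verify the statement by directly unwinding the definitions. By the definition of the monodromy of an MCS (braid) move just given, the monodromy of an arbitrary MCS move $C \to C'$ is the ratio $\mu(C)\mu(C')^{-1}$, where $\mu$ denotes the matrix monodromy of a Morse complex sequence assigned in \cref{dfn:monodromy_mcs}. Applying this to the three moves in the statement, the monodromy of $C_1 \to C_2$ is $\mu(C_1)\mu(C_2)^{-1}$, the monodromy of $C_2 \to C_3$ is $\mu(C_2)\mu(C_3)^{-1}$, and the monodromy of the composition $C_1 \to C_3$ is $\mu(C_1)\mu(C_3)^{-1}$.

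The proof then reduces to a one-line telescoping identity: multiplying the monodromy of $C_1 \to C_2$ with the monodromy of $C_2 \to C_3$ on the right yields
\[
\mu(C_1)\mu(C_2)^{-1}\cdot \mu(C_2)\mu(C_3)^{-1} = \mu(C_1)\mu(C_3)^{-1},
\]
which is exactly the monodromy of $C_1 \to C_3$. There is no real obstacle here, as the statement is a formal consequence of the fact that $\mu$ takes values in an (associative) matrix group and the monodromy of a move is defined as a ratio of the endpoint values of $\mu$; any assignment of this form is automatically compatible with composition.

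Although the result is immediate, it is worth flagging why it deserves to be recorded as a lemma: it is precisely this multiplicativity that allows one to check the trivial monodromy condition for a long sequence of MCS braid moves, i.e., a morphism in the category $\mathfrak{B}_n$ of \cref{dfn:braid_category}, by verifying the trivial monodromy condition one braid move at a time. This is the MCS-side analogue of the local-vertex monodromy condition that cuts out $\mathfrak{X}(\w) \subset \mathbb V^{\w}$ for an algebraic weave $\w$ in \cref{dfn:weave_corr}, and it will be used repeatedly in \cref{sec:monodromy_mcs} to assemble the functor $\mathfrak M \colon \mathfrak{B}_n \to \mathfrak{C}$ and compare it with $\mathfrak{X} \circ \mathfrak{A}$.
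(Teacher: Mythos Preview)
Your proposal is correct and matches the paper's approach: the lemma is stated with a bare \qed and no proof, precisely because it is the immediate telescoping identity you wrote down. Your added commentary about why the lemma is recorded is accurate but not part of the proof itself.
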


\begin{lemma}[Monodromy of MCS moves]
    The monodromy of each of the MCS moves depicted in \cref{fig:mcs_local1,fig:mcs_local2,fig:mcs_local3,fig:mcs_local4,fig:mcs_local5,fig:mcs_local6,fig:mcs_local7,fig:mcs_local8,fig:mcs_local_extra1,fig:mcs_local_extra2,fig:mcs_local_extra3} and in \cref{fig:mcs_local_marked1,fig:mcs_local_marked2,fig:mcs_local_marked3,fig:mcs_local_marked4,fig:mcs_local_marked5} are given by the following matrices where $a, b,t$ and $x, y, z,s$ are variables (from left to right) on the left and right of each figure, respectively: 
    \begin{enumerate}[label=(18\Alph*)]
        \item $\mu_RS_{i,j}(b)S_{i,j}(a)\mu_L\left(\mu'_RS_{i,j}(x)\mu'_L\right)^{-1}$ for strands $i<j$,
        \item $\mu_RS_{i,k}(b)S_{j,\ell}(a)\mu_L\left(\mu'_RS_{j,\ell}(y)S_{i,k}(x)\mu'_L\right)^{-1}$ for strands $i<j<k<\ell$,
        \item $\mu_RS_{i,j}(b)S_{j,k}(a)\mu_L\left(\mu'_RS_{j,k}(z)S_{i,k}(y)S_{i,j}(x)\mu'_L\right)^{-1}$ for strands $i<j<k$,
        \item $\mu_RP_{i}S_{j,k}(a)\mu_L\left(\mu'_RS_{j,k}(x)P_{i}\mu'_L\right)^{-1}$ for strands $j<i<i+1<k$,
        \item $\mu_RS_{j,k}(b)S_{i,j}(a)\mu_L\left(\mu'_RS_{i,j}(z)S_{i,k}(y)S_{j,k}(x)\mu'_L\right)^{-1}$ for strands $i<j<k$,
        \item $\mu_RP_{i}S_{i+1,k}(a)\mu_L\left(\mu'_RS_{i,k}(x)P_{i}\mu'_L\right)^{-1}$ for strands $j<i<i+1<k$,
        \item $\mu_RS_{j,k}(b)S_{i,k}(a)\mu_L\left(\mu'_RS_{i,k}(y)S_{j,k}(x)\mu'_L\right)^{-1}$ for strands $i<j<k$,
        \item $\mu_RS_{i,j}(b)S_{i,k}(a)\mu_L\left(\mu'_RS_{i,k}(y)S_{i,j}(x)\mu'_L\right)^{-1}$ for strands $i<j<k$,
        \item $\mu_RP_{i}S_{i,k}(a)\mu_L\left(\mu'_RS_{i+1,k}(x)P_{i}\mu'_L\right)^{-1}$ for strands $j<i<i+1<k$,
        \item $\mu_RP_{i}S_{j,i+1}(a)\mu_L\left(\mu'_RS_{j,i}(x)P_{i}\mu'_L\right)^{-1}$ for strands $j<i<i+1<k$,
        \item $\mu_RP_{i}S_{j,i}(a)\mu_L\left(\mu'_RS_{j,i+1}(x)P_{i}\mu'_L\right)^{-1}$ for strands $j<i<i+1<k$,
    \end{enumerate}
    \begin{enumerate}[label=(19\Alph*)]
        \item $\mu_RS_{i,j}(a)D_{j}(t)\mu_L\left(\mu'_RD_{j}(s)S_{i,j}(x)\mu'_L\right)^{-1}$ for strands $i<j$,
        \item $\mu_RS_{i,j}(a)D_{i}(t)\mu_L\left(\mu'_RD_{i}(s)S_{i,j}(x)\mu'_L\right)^{-1}$ for strands $i<j$,
        \item $\mu_RD_{i}(s)D_{i}(t)\mu_L\left(\mu'_RD_{i}(r)\mu'_L\right)^{-1}$,
        \item $\mu_RP_iD_{i+1}(t)\mu_L\left(\mu'_RD_{i}(s)P_i\mu'_L\right)^{-1}$ if the crossings strands are $i$ and $i+1$,
        \item $\mu_RP_iD_{i}(t)\mu_L\left(\mu'_RD_{i+1}(s)P_i\mu'_L\right)^{-1}$ if the crossings strands are $i$ and $i+1$
    \end{enumerate}
    where $\mu_R$, $\mu'_R$, $\mu_L$, and $\mu'_L$ are the monodromies to the right and left in the Morse complex sequences outside of the local model.
    \qed
\end{lemma}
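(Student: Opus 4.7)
The proof is a direct computation from \cref{dfn:monodromy_mcs} applied to each move individually. For every MCS braid move, one translates the diagrammatic elements of the local model into matrices via \cref{notn:matrices_mcs_rep}: handleslides become $S_{i,j}(\cdot)$, crossings become $P_i$, and marked points become $D_i(\cdot)$. The local portion of the front is read from left to right, and the associated matrices are multiplied in the reverse of that order, as dictated by \cref{dfn:monodromy_mcs}. The monodromy of the move is then the product of the left-side monodromy with the inverse of the right-side monodromy.

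Because every MCS braid move is supported in a small horizontal strip, all matrices contributed by portions of the front outside this strip are identical before and after the move. Grouping these external contributions into $\mu_L$ (everything to the left of the strip) and $\mu_R$ (everything to the right), and writing $M_{\mathrm{loc},1}$ and $M_{\mathrm{loc},2}$ for the local monodromies on the left and right sides of the move respectively, the monodromy of the move takes the uniform shape
\[
\mu_R\,M_{\mathrm{loc},1}\,\mu_L \cdot \bigl(\mu_R\,M_{\mathrm{loc},2}\,\mu_L\bigr)^{-1}.
\]
In the lemma statement the external factors from the two sides of the move are denoted $\mu_L,\mu_R$ and $\mu_L',\mu_R'$; these agree because the external front is unchanged by the move.

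To illustrate the pattern, consider case (18A): reading left to right on the left-hand side of \cref{fig:mcs_local1} one encounters a handleslide labeled $a$ followed by one labeled $b$, both between strands $i<j$. The reverse-order rule yields $M_{\mathrm{loc},1}=S_{i,j}(b)S_{i,j}(a)$, and similarly $M_{\mathrm{loc},2}=S_{i,j}(x)$ on the right-hand side, reproducing the stated formula. Cases (18B)--(18K) and (19A)--(19E) are verified by the same procedure; the only bookkeeping is identifying which strands each local element meets (so that the correct $S_{i,j}$, $P_i$, or $D_i$ appears) and respecting the left-to-right convention for the variables $a,b,t$ on the left and $x,y,z,s$ on the right of each figure. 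Near a crossing one may optionally invoke \cref{lma:mcs_braid_matrix} to rewrite $P_iS_{i,i+1}(z)$ as $B_i(z)$.

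The one place where care is needed is in sign and ordering conventions. As flagged in \cref{rmk:mcs_difference,rmk:mcs_matrix_order}, our \cref{fig:mcs_local3,fig:mcs_local5} and the reverse-order multiplication rule in \cref{dfn:monodromy_mcs} differ from the conventions of \cite{HenryRutherford15}; both discrepancies are already absorbed into the definitions of $S_{i,j}(\cdot)$ and of $\mu(C)$, so no further sign adjustments appear in the sixteen formulas of the lemma. Once these conventions are fixed, each formula reduces to a mechanical matrix-product computation with no additional mathematical content, and I expect no essential obstacle beyond the tedium of enumerating the cases.
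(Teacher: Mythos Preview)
Your proposal is correct and matches the paper's approach: the lemma is stated with a \qed and no proof, since it is an immediate unwinding of \cref{dfn:monodromy_mcs} applied to each local model, exactly as you describe.
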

The following corollary is essentially equivalent to the matrix equations in the proof of \cite[Proposition 3.8]{Henry} encoding MCS moves.
\begin{corollary}\label{cor:mcs_matrix}
    The monodromy of each of the MCS moves depicted in \cref{fig:mcs_local1,fig:mcs_local2,fig:mcs_local3,fig:mcs_local4,fig:mcs_local5,fig:mcs_local6,fig:mcs_local7,fig:mcs_local8,fig:mcs_local_extra1,fig:mcs_local_extra2,fig:mcs_local_extra3} and \cref{fig:mcs_local_marked1,fig:mcs_local_marked2,fig:mcs_local_marked3,fig:mcs_local_marked4,fig:mcs_local_marked5} is equal to the identity. \qed
\end{corollary}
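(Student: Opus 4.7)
The plan is to prove the corollary by direct case-by-case matrix computation. Each MCS move produces an expression of the form $\mu_R M_1 \mu_L \,(\mu_R' M_2 \mu_L')^{-1}$, where $M_1$ and $M_2$ are the local products of elementary matrices on the two sides of the move. Because the ambient factors $\mu_R, \mu_L, \mu_R', \mu_L'$ all appear on both sides of every formula in the preceding lemma and can be absorbed, each verification reduces to a local identity $M_1 = M_2$ in $\GL(n,\C)$. The right-hand side variables $x, y, z, r, s$ appearing in Figures~\ref{fig:mcs_local1}--\ref{fig:mcs_local_marked5} are defined precisely so that this identity holds; establishing the corollary thus amounts to reading off these values from the appropriate matrix identity in each case.

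Before running the cases, I would assemble the following elementary identities in $\GL(n,\C)$: additivity $S_{i,j}(a)S_{i,j}(b)=S_{i,j}(a+b)$; commutation $S_{i,j}(a)S_{k,\ell}(b)=S_{k,\ell}(b)S_{i,j}(a)$ when $\{i,j\}\cap\{k,\ell\}=\varnothing$; the Chevalley relation $S_{i,j}(a)S_{j,k}(b)=S_{j,k}(b)S_{i,k}(ab)S_{i,j}(a)$ for $i<j<k$; the conjugation rule $P_i S_{j,k}(a) = S_{s_i(j), s_i(k)}(a)P_i$ whenever $|\{i,i+1\}\cap\{j,k\}|\leq 1$ (so that $s_i$ actually permutes the index pair); and the diagonal identities $D_i(t)S_{i,j}(a) = S_{i,j}(ta)D_i(t)$, $D_j(t)S_{i,j}(a) = S_{i,j}(t^{-1}a)D_j(t)$, $D_i(t)D_i(s) = D_i(ts)$, together with $D_{i+1}(t)P_i = P_i D_i(t)$ and $D_i(t)P_i = P_i D_{i+1}(t)$ for the crossings meeting marked points. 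Each of these is a one-line check.

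Then I match each MCS move to one of these identities: the moves in Figures~\ref{fig:mcs_local1} and \ref{fig:mcs_local2} use additivity and disjoint-index commutation; Figures~\ref{fig:mcs_local3} and \ref{fig:mcs_local5} are the two forms of the Chevalley relation, with signs determined by our conventions (cf.\@ \cref{rmk:mcs_difference,rmk:mcs_matrix_order}); Figures~\ref{fig:mcs_local4},~\ref{fig:mcs_local6}--\ref{fig:mcs_local_extra3} are instances of conjugating $S_{j,k}(a)$ by $P_i$, with the index pair either fixed or permuted by $s_i$ depending on whether the handleslide attaches to a crossing strand; and Figures~\ref{fig:mcs_local_marked1}--\ref{fig:mcs_local_marked5} are the diagonal interactions with handleslides, marked points, and crossings. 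The main obstacle is organizational rather than conceptual: one must carefully track the sign in the Chevalley cases (inherited from \cref{dfn:mcs}(4)(d)) and, in cases involving $P_i$, correctly bookkeep how strand indices are permuted across the crossing. These manipulations are in essence the same matrix equations that appear in the proof of \cite[Proposition~3.8]{Henry}, so the verification can be carried out in parallel with that argument.
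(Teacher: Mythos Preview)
Your proposal is correct and matches the paper's approach. The paper states the corollary with a bare \qed, remarking just before it that the claim is ``essentially equivalent to the matrix equations in the proof of \cite[Proposition~3.8]{Henry}''; your case-by-case reduction to the elementary identities among $S_{i,j}$, $P_i$, and $D_i$ is exactly that verification made explicit, and your observation that $\mu_L=\mu_L'$, $\mu_R=\mu_R'$ localizes the check to $M_1=M_2$ is the right first step.
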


\begin{lemma}[Monodromy of MCS braid moves]
    The \emph{monodromies} of each of the MCS braid moves, with labels as in \cref{fig:mcs_braid_moves}, are given by the following matrices :
    \begin{description}
        \item[MCS distant crossings move]
        For strands $i+1 < j$, we have
        \begin{align*}
            &\mu_RP_iS_{i,i+1}(y)P_{j}S_{j,j+1}(x)\mu_L\left(\mu'_RP_{j}S_{j,j+1}(b)P_iS_{i,i+1}(a)\mu'_L\right)^{-1} \\
            &\qquad = \mu_RB_i(y)B_{j}(x)\mu_L\left(\mu'_RB_{j}(b)B_i(a)\mu'_L\right)^{-1}.
        \end{align*}
        \item[MCS hexavalent move]
        \begin{align*}
            &\mu_RP_{i}S_{i,i+1}(z)P_{i+1}S_{i+1,i+2}(y)P_{i}S_{i,i+1}(x)\mu_L\left(\mu'_RP_{i+1}S_{i+1,i+2}(c)P_iS_{i,i+1}(b)P_{i+1}S_{i+1,i+2}(a)\mu'_L\right)^{-1}\\
            &\qquad = \mu_RB_{i}(z)B_{i+1}(y)B_{i}(x)\mu_L\left(\mu'_RB_{i+1}(c)B_i(b)B_{i+1}(a)\mu'_L\right)^{-1}.
        \end{align*}
        \item[MCS trivalent move]
        \begin{align*}
            &\mu_RP_iS_{i,i+1}(y)P_iS_{i,i+1}(x) \mu_L\left(\mu'_RD_{i}(t_1)D_{i+1}(t_2)S_{i,i+1}(b)P_iS_{i,i+1}(a)\mu'_L\right)^{-1} \\
            &\qquad = \mu_RB_i(y)B_i(x) \mu_L\left(\mu'_RD_{i}(t_1)D_{i+1}(t_2)S_{i,i+1}(b)B_i(a)\mu'_L\right)^{-1}.
        \end{align*}
        \item[MCS cup move]
        \begin{align*}
            &\mu_RP_iS_{i,i+1}(y)P_iS_{i,i+1}(x) \mu_L\left(\mu'_RD_{i}(t_1)D_{i+1}(t_2)S_{i,i+1}(a)\mu'_L\right)^{-1} \\
            &\qquad = \mu_RB_i(y)B_i(x)\mu_L\left(\mu'_RD_{i}(t_1)D_{i+1}(t_2)S_{i,i+1}(a)\mu'_L\right)^{-1}.
        \end{align*}
    \end{description}
    \qed
\end{lemma}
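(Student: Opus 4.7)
The statement is a direct unpacking of Definition~\ref{dfn:monodromy_mcs} together with the identity $P_i S_{i,i+1}(z) = B_i(z)$ from Lemma~\ref{lma:mcs_braid_matrix}, so the proof is a matrix bookkeeping exercise carried out one MCS braid move at a time. For each of the four moves I would label the MCS on the left of the move as $C_1$ and the one on the right as $C_2$, and isolate the local box in which the move takes place. Outside this box the two MCSs agree, so their monodromies factor as $\mu(C_1)=\mu_R\cdot M_1\cdot \mu_L$ and $\mu(C_2)=\mu_R'\cdot M_2\cdot \mu_L'$, where $M_1, M_2$ are the products of matrices assigned, via Definition~\ref{dfn:monodromy_mcs}, to the features appearing in the local box. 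The monodromy of the braid move is then $\mu(C_1)\mu(C_2)^{-1}$, giving the first equality in each line of the statement, and the second equality follows by rewriting $P_i S_{i,i+1}(z)$ as $B_i(z)$.

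Concretely, for the MCS distant crossings move (Figure~\ref{fig:mcs_distant}) the local features on $C_1$, read left to right, are a handleslide marked $x$ between strands $j, j+1$, the crossing $\sigma_j$, a handleslide marked $y$ between strands $i, i+1$, and the crossing $\sigma_i$; since the monodromy reads matrices in reverse order this yields $M_1 = P_i S_{i,i+1}(y) P_j S_{j,j+1}(x)$. The right-hand side of the move has local contribution $M_2 = P_j S_{j,j+1}(b) P_i S_{i,i+1}(a)$. Lemma~\ref{lma:mcs_braid_matrix} turns $M_1$ into $B_i(y) B_j(x)$ and $M_2$ into $B_j(b) B_i(a)$, which is exactly the asserted formula. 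The MCS hexavalent move (Figure~\ref{fig:mcs_hexavalent}) is handled identically but with three crossings and three handleslides on each side.

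For the MCS trivalent move (Figure~\ref{fig:mcs_trivalent}) and the MCS cup move (Figure~\ref{fig:mcs_cup}), the only additional ingredient is that the right-hand side contains two marked points on strands $i$ and $i+1$ whose monodromy contributions are $D_i(t_1)$ and $D_{i+1}(t_2)$ by Definition~\ref{dfn:monodromy_mcs}(3). Reading the features of the right-hand side of the trivalent move from left to right gives a handleslide $a$ between strands $i, i+1$, the crossing $\sigma_i$, a handleslide $b$ between strands $i, i+1$, and the two marked points; so the local contribution in reversed order is $D_i(t_1) D_{i+1}(t_2) S_{i,i+1}(b) P_i S_{i,i+1}(a)$, and the application of Lemma~\ref{lma:mcs_braid_matrix} to $P_i S_{i,i+1}(a)$ yields $B_i(a)$. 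The cup move is analogous except that only one handleslide appears in the local box on the right-hand side, and there is no crossing, giving $D_i(t_1) D_{i+1}(t_2) S_{i,i+1}(a)$. On the left-hand side of both moves the local contribution is simply $P_i S_{i,i+1}(y) P_i S_{i,i+1}(x) = B_i(y) B_i(x)$.

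The only subtlety, and the main source of potential error, is keeping the conventions consistent: matrices must be composed in the order opposite to the left-to-right reading of the front diagram (see Remark~\ref{rmk:mcs_matrix_order}), handleslide contributions must be associated to the correct pair of strands in the indexing of $S_{i,j}(a)$, and the marked points introduced in the trivalent and cup moves appear strictly to the right of the handleslide $b$ in the local box. Once these are correctly tracked, the four asserted formulas fall out directly, and nothing further is required beyond substituting $B_i(z) = P_i S_{i,i+1}(z)$ to obtain the braid matrix form.
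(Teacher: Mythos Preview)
Your proposal is correct and matches the paper's approach: the lemma is stated with a \qed and no written proof precisely because it is a direct unpacking of Definition~\ref{dfn:monodromy_mcs} combined with the identity $P_iS_{i,i+1}(z)=B_i(z)$ of Lemma~\ref{lma:mcs_braid_matrix}, exactly as you describe.
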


The trivial monodromy condition determines the following relations for MCS braid moves.

\begin{lemma}\label{lma:monodromy_trivalent_etc_id}
    The monodromy of each of the MCS distant crossings, hexavalent, trivalent and cup moves is equal to the identity if and only if each handleslide mark outside of the local modification are equal to the corresponding handleslide mark after the move, and additionally:
    \begin{description}
        \item[MCS distant crossings move] $a = y$ and $b = x$; see \cref{fig:mcs_distant_monoless}.
        \item[MCS hexavalent move] $a = z$, $b = y-xz$, and $c = x$; see \cref{fig:mcs_hexavalent_monoless1,fig:mcs_hexavalent_monoless2},
        \item[MCS trivalent move] $a = x+y^{-1}$, $b = -y$, $t_1 = -y^{-1}$, and $t_2 = y$; see \cref{fig:mcs_trivalent_monoless}.
        \item[MCS cup move] $a = x$, $y = 0$, $t_1 = 1$, and $t_2 = 1$; see \cref{fig:mcs_cup_monoless}. \qed
    \end{description}
\end{lemma}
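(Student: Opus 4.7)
The plan is to rewrite each MCS-braid-move monodromy in terms of the braid matrices $B_i(z)$ using the identity $P_iS_{i,i+1}(z)=B_i(z)$ from \cref{lma:mcs_braid_matrix}, cancel the outside factors $\mu_R,\mu_L,\mu'_R,\mu'_L$, which by hypothesis agree before and after the move, and then match the remaining local matrix equations against the braid-matrix identities recorded in \cref{lma:weave_monodromy_vertex}.

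For the distant-crossings and hexavalent moves, the trivial-monodromy condition becomes the braid relations
\[
B_i(y)B_j(x)=B_j(b)B_i(a) \quad (|i-j|\geq 2), \qquad B_i(z)B_{i+1}(y)B_i(x)=B_{i+1}(c)B_i(b)B_{i+1}(a),
\]
which are exactly \cref{lma:weave_monodromy_vertex}\eqref{cond:distant} and \cref{lma:weave_monodromy_vertex}\eqref{cond:hexavalent}. Solving for the unknowns on the right produces $(a,b)=(y,x)$ and $(a,b,c)=(z,y-xz,x)$, respectively, which are exactly the claimed values.

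The trivalent and cup cases are slightly more delicate because the right-hand sides involve a diagonal factor $D_i(t_1)D_{i+1}(t_2)=\chi_{i,i+1}(t_1,t_2)$ together with an elementary factor $S_{i,i+1}(b)$, and, in the trivalent case, a further braid factor $B_i(a)$. For the trivalent move the reduced equation is $B_i(y)B_i(x)=\chi_{i,i+1}(t_1,t_2)S_{i,i+1}(b)B_i(a)$. Applying \cref{lma:weave_monodromy_vertex}\eqref{cond:trivalent}, which asserts $B_i(y)B_i(x)=VB_i(x+y^{-1})$ with a unique upper-triangular $V$ whenever $y\neq 0$, and factoring $V$ as $\chi_{i,i+1}(-y^{-1},y)S_{i,i+1}(-y)$ by comparing entries in the active $2\times 2$ block, yields $t_1=-y^{-1}$, $t_2=y$, $b=-y$, $a=x+y^{-1}$; the condition $y\neq 0$ is forced, since otherwise $t_1$ would be undefined.

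For the cup move the reduced equation is $B_i(y)B_i(x)=\chi_{i,i+1}(t_1,t_2)S_{i,i+1}(a)$, whose right-hand side is upper triangular. Since the $(2,1)$-entry of $B_i(y)B_i(x)$ in the active $2\times 2$ block equals $y$, we are forced to take $y=0$; then \cref{lma:weave_monodromy_vertex}\eqref{cond:cup} gives $B_i(0)B_i(x)=S_{i,i+1}(x)$, so $t_1=t_2=1$ and $a=x$. The only care needed throughout is bookkeeping the signs and the relative order of the $D_i$, $D_{i+1}$, and $S_{i,i+1}$ factors coming from our conventions in \cref{dfn:monodromy_mcs}; once this is fixed, each case reduces to one of the four braid-matrix identities of \cref{lma:weave_monodromy_vertex}, and no substantial obstacle remains.
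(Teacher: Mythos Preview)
Your proof is correct and is precisely the computation the paper has in mind: the paper gives no proof at all, marking the lemma with a terminal \qed, so you are simply supplying the direct matrix verification that the authors leave to the reader. Your reduction via $P_iS_{i,i+1}(z)=B_i(z)$ and appeal to the braid-matrix identities in \cref{lma:weave_monodromy_vertex} is exactly the intended route; the factorization $V=\chi_{i,i+1}(-y^{-1},y)S_{i,i+1}(-y)$ in the trivalent case and the $(2,1)$-entry argument forcing $y=0$ in the cup case are the right observations.
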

\begin{remark}\label{rmk:mcs_monodromy_at_vertex}
    It is not obvious that in the MCS braid moves, the handleslide mark can be arranged in 1-parameter families with bifurcations and endpoints that satisfy the condition in Morse complex 2-families \cite[Axiom 4.2]{rutherford2018generating}. We explain the correspondence in \cref{sec:weaves_aug_sheaves}.
\end{remark}

\begin{figure}[!htb]
    \centering
    \begin{subfigure}{0.49\textwidth}
        \hspace*{0.7mm}
        \includegraphics{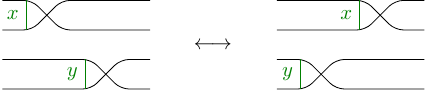}
        \caption{}\label{fig:mcs_distant_monoless}
    \end{subfigure}
    \begin{subfigure}{0.49\textwidth}
        \hspace*{1mm}
        \includegraphics{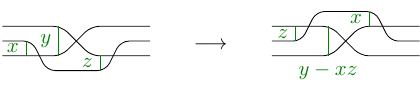}
        \caption{}\label{fig:mcs_hexavalent_monoless1}
    \end{subfigure}
    \begin{subfigure}{0.49\textwidth}
        \hspace*{1mm}
        \includegraphics{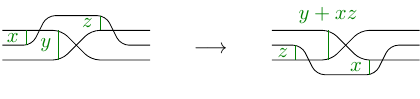}
        \caption{}\label{fig:mcs_hexavalent_monoless2}
    \end{subfigure}
    \begin{subfigure}{0.49\textwidth}
        \hspace*{1mm}
        \includegraphics{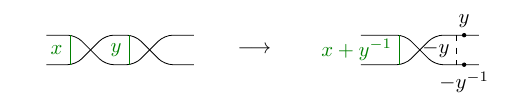}
        \caption{}\label{fig:mcs_trivalent_monoless}
    \end{subfigure}
    \begin{subfigure}{0.49\textwidth}
        \hspace*{1mm}
        \includegraphics{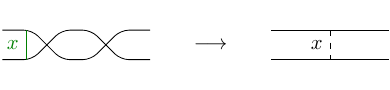}
        \caption{}\label{fig:mcs_cup_monoless}
    \end{subfigure}
    \caption{MCS moves with trivial monodromy.}
    \label{fig:mcs_monoless}
\end{figure}

Let $\mathfrak{m} \colon \beta \to \beta'$ be a morphism in the braid category, i.e., a sequence of braids $(\beta_1,\ldots,\beta_q)$ from $\beta$ to $\beta'$ and a sequence of braid moves. Let $p \in \{2,\ldots,q\}$. If $\beta_p$ is obtained from $\beta_{p-1}$ by a trivalent move or a cup move, then $\beta_{p-1} = \gamma_{p-1,L}\sigma_i^2\gamma_{p-1,R}$. Define $r(\beta_p) \coloneqq \ell(\gamma_{p-1,R})+1$. Otherwise, $r(\beta_p)\coloneqq 0$. Let $R \coloneqq \sum_{j=1}^q r(\beta_j)$. Assume there are $c$ cup moves, $h$ hexavalent moves, and $d$ distant crossings move in $\mathfrak{m}$, let $L \coloneqq \ell(\beta_1) + R + 3h + 2d - c$, and define
\begin{equation}\label{eq:total_sp_braids}
    \mathbb V^{\mathfrak{m}} \coloneqq \C^L \times \left(\C^{\binom n2} \times \left(\C^\ast\right)^n\right)^R.
\end{equation}
\begin{remark}
    The space $\mathbb V^{\mathfrak{m}}$ parametrizes the space of all $\C$-valued associated A-form MCS sequences for $\mathfrak{m}$.
\end{remark}

\begin{definition}[Formal A-form MCS]\label{dfn:formal_a-form_mcs}
    A (graded) \emph{formal A-form MCS} for a front diagram $D$ equipped with a Maslov potential is a graded A-form MCS where each handleslide mark is considered to be a formal variable valued in $R$.
\end{definition}
\begin{notation}
    For $\beta \in \Br_n^+$ with $\delta(\beta) = w_0$, we denote by $A(\beta)$ the formal A-form MCS associated to the $(-1)$-closure of $\beta\Delta$.
\end{notation}
\begin{definition}[Associated A-form MCS sequence]\label{dfn:assoc_a-form_mcs_sequence}
    Let $\mathfrak{m} \colon \beta \to \beta'$ be a morphism in $\mathfrak{B}_n$ with braid sequence $(\beta_1,\ldots,\beta_q)$. The \emph{associated A-form MCS sequence} is the sequence $A(\mathfrak{m}) \coloneqq (A(\beta_1),\ldots,A(\beta_q))$ of formal A-form MCSs such that for each $j\in \{2,\ldots,q\}$ exactly one of the following holds:
        \begin{enumerate}
            \item $A(\beta_{j-1})$ is related to $A(\beta_j)$ via an MCS distant crossings move.
            \item $A(\beta_{j-1})$ is related to $A(\beta_j)$ via an MCS hexavalent move.
            \item $A(\beta_{j-1})$ is related to $A(\beta_j)$ via an MCS trivalent move or an MCS cup move followed by some number of MCS moves that take the dashed handleslide marks and the marked points to the right of the rightmost crossing of $\beta_j$.
        \end{enumerate}
\end{definition}
\begin{definition}[Monodromy of a braid]
    Let $\mathfrak{m} \colon \beta \to \beta'$ be a morphism in $\mathfrak{B}_n$ given by a braid sequence $(\beta_1,\ldots,\beta_q)$. For any $\ell\in \{1,\ldots,q\}$ we define the \emph{monodromy} of $\beta_\ell = \sigma_{\ell_1} \cdots \sigma_{\ell_{r_\ell}}$, denoted by $\mu(\beta_\ell)$, to be the matrix
    \[
    \mu(\beta_\ell) \coloneqq P_{\ell_{r_\ell}}S_{\ell_{r_\ell},\ell_{r_\ell}+1}(z_{r_\ell}) \cdots P_{\ell_{1}}S_{\ell_{1},\ell_{1}+1}(z_{1}) = B_{\ell_{r_\ell}}(z_{r_\ell}) \cdots B_{\ell_1}(z_1).
    \]
\end{definition}

\begin{definition}[Variety of $\mathfrak{m}$]\label{dfn:variety_of_morphism}
    Let $\beta,\beta'\in \Br_n^+$ and suppose $\mathfrak{m} \colon \beta \to \beta'$ is a morphism in $\mathfrak{B}_n$. Let $\pi \in S_n$ be represented by a permutation matrix in $\GL(n,\C)$. We define $\mathfrak M(\mathfrak{m},\pi)$ to be the algebraic affine subvariety of $\mathbb V^{\mathfrak{m}}$ cut out by the following three conditions:
    \begin{enumerate}
        \item In $A(\mathfrak{m})$, the monodromy of each MCS distant crossings, hexavalent, trivalent, and cup moves relating $A(\beta_j)$ and $A(\beta_{j-1})$ for each $j$ equals the identity matrix.
        \item The monodromy of each MCS move required to move the dashed handleslide marks and the marked points to the right of the rightmost crossing of $\beta_j$ equals the identity matrix. 
        \item The matrix $\mu(\beta')\pi$ is upper triangular.
    \end{enumerate}
    We use the notation $\mathfrak M(\mathfrak{m}) \coloneqq \mathfrak M(\mathfrak{m},w_0)$.
\end{definition}
\begin{remark}\label{rmk:mcs_morphism_are_mcf}
    The attentive reader will notice that points in $\mathfrak M(\mathfrak{m})$ in fact define Morse complex 2-families on the Legendrian surface associated to $\mathfrak{m}$ \cite{rutherford2018generating}. We explain this relation in \cref{sec:weaves_aug_sheaves} and use it to show that ``representations are sheaves.''
\end{remark}

\begin{example}
    See \cref{fig:morphism_weave} for an example of an A-form MCS sequence with trivial monodromy associated to a morphism $\sigma_1\sigma_2\sigma_1^2\sigma_2 \to \Delta$ in $\mathfrak{B}_3$.

   \begin{figure}[!htb]
        \centering
        \includegraphics{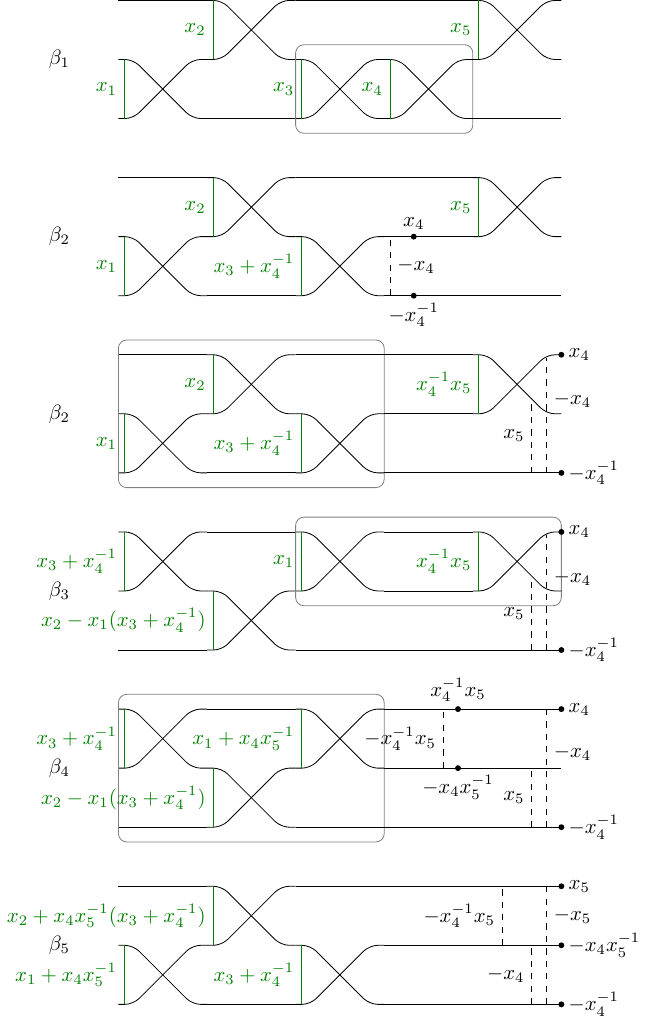}
        \caption{The associated A-form MCS sequence with trivial monodromy for the morphism $\sigma_1 \sigma_2 \sigma_1^2 \sigma_2 \rightarrow \Delta$ with braid sequence $(\beta_1=\sigma_1 \sigma_2 \sigma_1^2 \sigma_2,\beta_2 \ldots, \beta_5=\Delta)$.}
        \label{fig:morphism_weave}
    \end{figure}
\end{example}

\begin{lemma}\label{lma:functoriality}
    Let $\mathfrak{m} \colon \beta \to \beta'$ be a morphism in $\mathfrak{B}_n^\cup$ that has $c$ cup moves, $t$ trivalent moves, and no cap moves.
    \begin{enumerate}
        \item There is an injective map $\phi_{\mathfrak{m}} \colon \mathfrak M(\mathfrak{m}) \to X(\beta)$.
        \item There is an isomorphism $\mathfrak M(\mathfrak{m}) \cong \C^c \times (\C^\ast)^t \times X(\beta')$.
        \item If $\beta'=\Delta$, there is an injective composition
        \[
        \phi_{\mathfrak{m}} \colon \C^c \times (\C^\ast)^t \overset{\cong}{\longleftarrow} \mathfrak M(\mathfrak{m}) \longrightarrow X(\beta).
        \]
    \end{enumerate}
\end{lemma}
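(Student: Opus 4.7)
The plan is to induct on the number of braid moves in $\mathfrak{m}$, reducing each part to the case of a single MCS braid move and then invoking \cref{lma:monodromy_trivalent_etc_id} and \cref{lma:monodromy_comp_prod}. The base case of the trivial morphism is immediate: $\mathfrak{M}(\id_\beta) = X(\beta)$ with $c = t = 0$.

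For part (1), I would define $\phi_\mathfrak{m}$ by extracting the handleslide marks $(z_1,\ldots,z_{\ell(\beta)})$ of the formal A-form MCS $A(\beta_1) = A(\beta)$ from a point in $\mathfrak{M}(\mathfrak{m})$, and show the image lies in $X(\beta)$ as follows. Each MCS braid move in $\mathfrak{m}$ has trivial monodromy by condition (1) of \cref{dfn:variety_of_morphism}, and each subsequent MCS move pushing dashed handleslide marks and marked points to the right has trivial monodromy by condition (2) together with \cref{cor:mcs_matrix}. Composing via \cref{lma:monodromy_comp_prod}, the full monodromies of the starting and ending MCSs agree. The starting MCS $A(\beta)$ has full monodromy equal to $\mu(\beta)$, while the ending MCS is $A(\beta')$ together with accumulated dashed handleslide marks and marked points to the right of all crossings of $\beta'$; these contribute a factor $U\in B$, so the full monodromy is $U\mu(\beta')$. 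Hence $\mu(\beta) = U\mu(\beta')$, and condition (3) forces $\mu(\beta)w_0 = U\mu(\beta')w_0 \in B$, realizing $(z_1,\ldots,z_{\ell(\beta)}) \in X(\beta)$. Injectivity is automatic: the handleslides of $A(\beta)$ together with the relations of \cref{lma:monodromy_trivalent_etc_id} recursively determine all later handleslides and all auxiliary data.

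For part (2), I would construct the inverse isomorphism $\C^c \times (\C^\ast)^t \times X(\beta') \xrightarrow{\cong} \mathfrak{M}(\mathfrak{m})$ by reverse induction on the braid move sequence, reading from $A(\beta')$ leftward. Distant crossings and hexavalent moves impose invertible polynomial changes of coordinates between pre- and post-move handleslides by \cref{lma:monodromy_trivalent_etc_id}, introducing no new free parameters. At each trivalent move, the equations $a = x + y^{-1}$, $b = -y$, $t_1 = -y^{-1}$, $t_2 = y$ show that specifying the post-move handleslide $a$ together with a free parameter $y \in \C^\ast$ uniquely determines the pre-move handleslides and all introduced auxiliary data, contributing a factor of $\C^\ast$. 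At each cup move, the equations $y = 0$, $a = x$, $t_1 = t_2 = 1$ force the right pre-move handleslide to vanish and determine the dashed handleslide and marked points in terms of a single free parameter $x \in \C$, contributing a factor of $\C$. Composing across the sequence yields the claimed isomorphism.

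Part (3) then follows from parts (1) and (2) together with the observation that $X(\Delta)$ is a single point. Indeed, by \cite[Example 2.2]{CGGS1} we have $B_\Delta(w) = L(u)w_0$ with $L(u)$ lower unitriangular whose off-diagonal parameters $u_{ji}$ are algebraically independent polynomials in the $w_{ij}$, so $B_\Delta(w)w_0 = L(u)$ is upper triangular only when $L(u) = I$, forcing a unique value of $w$. The main technical challenge is the careful bookkeeping of the auxiliary MCS moves from \cref{fig:mcs_moves,fig:mcs_moves_marked_pts} invoked in condition (2) of \cref{dfn:variety_of_morphism}: these never produce additional constraints by \cref{cor:mcs_matrix}, but they do introduce auxiliary variables that are uniquely expressible in terms of the existing data, and verifying that all variable counts match between $\mathbb{V}^\mathfrak{m}$ and $\mathfrak{M}(\mathfrak{m})$ requires patience but no new ideas.
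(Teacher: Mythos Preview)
Your proposal is correct and follows essentially the same approach as the paper: project to the handleslide marks of $A(\beta)$ and use the cumulative trivial monodromy (via \cref{lma:monodromy_comp_prod}) plus upper-triangularity of the residual matrix $U$ for part (1), then reconstruct backwards from $A(\beta')$ introducing one free $\C^\ast$-parameter at each trivalent and one free $\C$-parameter at each cup for part (2). The paper phrases the backwards step slightly more concretely (creating canceling dashed handleslide marks before undoing each trivalent/cup, then pushing the residual mark to the right), and for part (3) simply asserts that $X(\Delta)$ is a point rather than deriving it from the $LU$-decomposition argument, but these are cosmetic differences.
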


\begin{proof}
    \begin{enumerate}
        \item There are obvious projection maps
        \[
        \C^{\ell(\beta)} \overset{\pi}{\longleftarrow} \mathbb V^{\mathfrak{m}} \overset{\pi'}{\longrightarrow} \C^{\ell(\beta')}
        \]
        defined by picking out the variables corresponding to the handleslide marks of $A(\beta)$ and $A(\beta')$ that correspond to crossings of $\beta$ and $\beta'$, respectively. Associate to $A(\beta)$ and $A(\beta')$ the corresponding monodromies $\mu(\beta)$ and $\mu(\beta')$, respectively.
        
        We define a map $\mathfrak M(\mathfrak{m}) \to X(\beta)$ by showing that the image of $\mathfrak M(\mathfrak{m}) \hookrightarrow \mathbb V^{\mathfrak{m}} \xrightarrow{\pi} \C^{\ell(\beta)}$ belongs to $X(\beta)\hookrightarrow \C^{\ell(\beta)}$. By \cref{lma:monodromy_comp_prod} and the assumption that the monodromy of each MCS (braid) move associated with $A(\mathfrak{m})$ is equal to the identity, it follows that the total monodromy of the composition is equal to the identity, so
        \[
        \mu(\beta)\left(V\mu(\beta')\right)^{-1} = I \Longleftrightarrow V^{-1}\mu(\beta) = \mu(\beta').
        \]
        Here $V$ is the monodromy associated with all the residual dashed handleslide marks and marked points that are located the right of the rightmost crossing of $\beta$; it is, in particular, upper triangular. Therefore the condition that $\mu(\beta')w_0$ is upper triangular implies that $V^{-1}\mu(\beta)w_0$ is upper triangular and, since $V^{-1}$ is upper triangular, it implies that $\mu(\beta)w_0$ is upper triangular. Thus the image of $\mathfrak M(\mathfrak{m})$ under the map $\pi$ belongs to $X(\beta)$, yielding the map
        \[
        \mathfrak M(\mathfrak{m}) \longrightarrow X(\beta).
        \]
        For injectivity we consider each MCS (braid) move of $A(\mathfrak{m})$ which, by assumption, have trivial monodromy. If $A(\beta_{j-1})$ is related to $A(\beta_j)$ by an MCS braid move, it follows from \cref{lma:monodromy_trivalent_etc_id} that the handleslide mark variables in $A(\beta_{j-1})$ uniquely determine the handleslide marks of $A(\beta_j)$ and the variables associated with the dashed handleslide marks and the marked points. For any other MCS move that involves moving dashed handleslide marks and marked points to the right in $A(\beta_j)$, it follows from \cref{cor:mcs_matrix} that the variables before the move uniquely determine the variables after the move. Thus, points in $X(\beta)$ uniquely determine points in $\mathfrak{M}(\mathfrak{m})$.
        \item Consider the sequence of braids $(\beta \eqqcolon \beta_1,\ldots,\beta_q \coloneqq \beta')$ and go through the MCS (braid) moves in the associated A-form MCS sequence in the reverse order. Similar to (1), there is a map $\mathfrak M(\mathfrak{m}) \to X(\beta')$. Picking a point in $X(\beta')$ is equivalent to picking values of the handleslide marks of $A(\beta_q)$ corresponding to crossings of $\beta_q$ subject to the condition that $\mu(\beta_q)w_0$ is upper triangular. 
        
        We proceed backwards along the MCS (braid) moves determined by $\mathfrak{m}$. It is clear from \cref{cor:mcs_matrix,lma:monodromy_trivalent_etc_id} that moving through every MCS (braid) move (except for the MCS trivalent and MCS cup moves) backwards, uniquely determines the variables.
        The only two cases left to consider are the MCS trivalent and cup moves. First, for the MCS trivalent move we choose $y\in \C^\ast$ arbitrarily, create canceling dashed handleslide marks and marked points, and move through the MCS trivalent move backwards as depicted in \cref{fig:mcs_trivalent_backwards}. Afterwards, move the newly created dashed handleslide mark and marked points all the way to the right of the rightmost crossing of $\beta_j$ (or until they merge with other handleslide marks).
        \begin{figure}[!htb]
            \centering
            \includegraphics{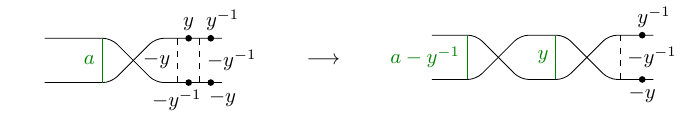}
            \caption{Choosing $y\in \C^\ast$ arbitrarily determines the handleslide marks before the MCS trivalent move.}
            \label{fig:mcs_trivalent_backwards}
        \end{figure}
        Thus, the handleslide marks after the backwards MCS trivalent move are determined by the condition that the monodromy is the identity; see \cref{lma:monodromy_trivalent_etc_id}.
        
        Similarly, for the MCS cup move, we choose $x\in \C$ arbitrarily, create canceling dashed handleslide marks, and move through the MCS cup move backwards, as depicted in \cref{fig:mcs_cup_backwards}. Afterwards, move the newly created dashed handleslide mark all the way to the right of the rightmost crossing of $\beta_j$ (or until they merge with other handleslide marks).
        \begin{figure}[!htb]
            \centering
            \includegraphics{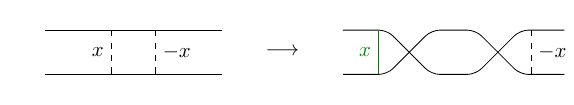}
            \caption{Choosing $x\in \C$ arbitrarily determines the handleslide marks before the MCS cup move.}
            \label{fig:mcs_cup_backwards}
        \end{figure}
        Thus, the handleslide marks after the backwards MCS cup move are determined by the condition that the monodromy is the identity; see \cref{lma:monodromy_trivalent_etc_id}. To summarize, every MCS trivalent move contributes a $\C^\ast$-factor and every MCS cup move contributes a $\C$-factor, finishing the proof. 
        \item For $\mathfrak m \colon \beta \to \Delta$, since $X(\Delta)$ may be identified with the origin in $\C^{\binom n2}$, the map $\phi_\mathfrak{m}$ is defined as the composition
        \[
        \C^c \times (\C^\ast)^t \longrightarrow \C^c \times (\C^\ast)^t \times \{0\} \longrightarrow X(\beta)
        \]
        and is injective by (1). \qedhere
    \end{enumerate}
\end{proof}

\begin{theorem}
    Let $n \in \Z_{\geq 1}$. There is a functor $\mathfrak M \colon \mathfrak{B}_n \to \mathfrak{C}$.
\end{theorem}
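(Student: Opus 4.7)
The plan is to set $\mathfrak M(\beta) \coloneqq X(\beta)$ on objects, and for a morphism $\mathfrak m\colon \beta \to \beta'$ in $\mathfrak B_n$ to assign the correspondence
\[
X(\beta) \longleftarrow \mathfrak M(\mathfrak m) \longrightarrow X(\beta'),
\]
where $\mathfrak M(\mathfrak m) = \mathfrak M(\mathfrak m, w_0)$ is the algebraic variety of \cref{dfn:variety_of_morphism} equipped with the top and bottom projection maps constructed in the proof of \cref{lma:functoriality}(1). The left map records the handleslide marks on the crossings of $\beta$ at the top of the associated A-form MCS sequence $A(\mathfrak m)$, and the right map records them at the bottom on $\beta'$. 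The argument in \cref{lma:functoriality}(1) already verifies that both maps land in the respective braid varieties: triangularity of $\mu(\beta')w_0$ required by \cref{dfn:variety_of_morphism}(3), together with the trivial monodromy conditions along $A(\mathfrak m)$, forces $\mu(\beta)w_0$ to be upper triangular as well.

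Next, I would verify that $\mathfrak M$ preserves identities and respects composition. The identity morphism $\mathrm{id}_\beta \in \mathfrak B_n(\beta,\beta)$ is represented by the trivial braid sequence $(\beta)$ with no intermediate braid moves, so conditions (1) and (2) of \cref{dfn:variety_of_morphism} are vacuous and only (3) remains, yielding $\mathfrak M(\mathrm{id}_\beta) = X(\beta)$ with both structure maps equal to the identity. For composition, given $\mathfrak m_1\colon \beta \to \beta'$ and $\mathfrak m_2\colon \beta' \to \beta''$ with braid sequences $(\beta_1,\ldots,\beta_q = \beta')$ and $(\beta' = \beta'_1,\ldots,\beta'_p)$, the composite $\mathfrak m_2 \circ \mathfrak m_1$ has the concatenated sequence with $\beta'$ appearing only once at the seam. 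The target is an isomorphism
\[
\mathfrak M(\mathfrak m_2 \circ \mathfrak m_1) \xrightarrow{\;\cong\;} \mathfrak M(\mathfrak m_1) \times_{X(\beta')} \mathfrak M(\mathfrak m_2)
\]
compatible with the structure maps. Since every $A(\beta_j)$ in the associated A-form MCS sequence is the standard formal A-form MCS (no residual dashed marks are carried across a braid step), $\mathbb V^{\mathfrak m_2 \circ \mathfrak m_1}$ is obtained from $\mathbb V^{\mathfrak m_1} \times \mathbb V^{\mathfrak m_2}$ by identifying the handleslide variables on the single shared copy of $A(\beta')$; the monodromy conditions (1) and (2) of \cref{dfn:variety_of_morphism} are local in the MCS sequence, so they split cleanly into the conditions for $\mathfrak m_1$ followed by those for $\mathfrak m_2$.

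The main obstacle I expect is reconciling the triangularity conditions on the two sides. The fiber product imposes triangularity of both $\mu(\beta')w_0$ and $\mu(\beta'')w_0$, whereas $\mathfrak M(\mathfrak m_2 \circ \mathfrak m_1)$ only imposes the latter. I plan to resolve this by invoking the argument of \cref{lma:functoriality}(1) applied to $\mathfrak m_2$: triangularity of $\mu(\beta'')w_0$ together with the trivial monodromies along $\mathfrak m_2$ already forces triangularity of $\mu(\beta')w_0$, so the extra equation in the fiber product is automatic. Hence the two varieties are cut out by the same equations in the same ambient space, and compatibility with the structure maps will follow because both sides recover the handleslide marks on the crossings of $\beta$ and $\beta''$ from the same coordinates.
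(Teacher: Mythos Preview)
Your proposal is correct and follows essentially the same approach as the paper. The only cosmetic difference is that the paper phrases the composition check as a pullback square over the ambient $\C^{\ell(\beta')}$ rather than over $X(\beta')$; since both structure maps factor through $X(\beta') \hookrightarrow \C^{\ell(\beta')}$, the two pullbacks coincide, and your explicit verification that triangularity of $\mu(\beta')w_0$ is automatic is precisely what justifies that factoring.
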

\begin{proof}
    On objects we define $\mathfrak M(\beta) \coloneqq X(\beta)$. On a morphism $\mathfrak{m} \colon \beta \to \beta'$, let $\mathfrak M(\mathfrak{m})$ be the affine algebraic subvariety of $\mathbb{V}^{\mathfrak{m}}$ defined in \cref{dfn:variety_of_morphism}. By \cref{lma:functoriality}, it follows that $\mathfrak M(\mathfrak{m})$ defines an algebraic correspondence 
    \[
    X(\beta) \longleftarrow \mathfrak M(\mathfrak{m}) \longrightarrow X(\beta').
    \]
    Given two morphisms $\mathfrak{m} \colon \beta \to \beta'$ and $\mathfrak{r}' \colon \beta' \to \beta''$, their composition $\mathfrak{m}' \circ \mathfrak{m}$ is defined to be the concatenation of the corresponding sequences of braids and braid moves. Since the monodromy conditions in $\mathfrak M(\mathfrak{m})$ and $\mathfrak M(\mathfrak{m}')$ are independent of each other, we see that there are natural projection maps
    \[
    \mathfrak M(\mathfrak{m}) \longleftarrow \mathfrak M(\mathfrak{m}'\circ \mathfrak{m}) \longrightarrow \mathfrak M(\mathfrak{m}').
    \]
    Recall from the proof of \cref{lma:functoriality}(1) that there are projection maps $\mathfrak M(\mathfrak{m}) \to \C^{\ell(\beta')}$ and $\mathfrak M(\mathfrak{m}') \to \C^{\ell(\beta')}$, where $\beta'$ is the last and first braid in the braid sequences associated with $\mathfrak{m}$ and $\mathfrak{m}'$, respectively. Therefore we have a commutative diagram
    \[
    \begin{tikzcd}[sep=scriptsize]
    \mathfrak M(\mathfrak{m}' \circ \mathfrak{m}) \rar \dar & \mathfrak M(\mathfrak{m}') \dar{\pi'} \\
    \mathfrak M(\mathfrak{m}) \rar{\pi'} & \C^{\ell(\beta')}
    \end{tikzcd},
    \]
    which is easily seen to be a pullback square.
\end{proof}
\begin{theorem}\label{thm:functors_from_mcs}
    Let $n\in \Z_{\geq 1}$. The following diagram commutes.
    \[
    \begin{tikzcd}[sep=scriptsize]
        \mathfrak{B}_n \ar[dr,swap,"\mathfrak M"] \ar[rr,"\mathfrak{A}"] && \mathfrak{W}_n \ar[dl,"\mathfrak{X}"] \\
        & \mathfrak{C}&
    \end{tikzcd}
    \]
\end{theorem}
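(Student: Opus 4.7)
The plan is to verify $\mathfrak{X} \circ \mathfrak{A} = \mathfrak{M}$ by checking agreement on objects and then on a generating set of morphisms. On objects the equality is immediate since $\mathfrak{X}(\mathfrak{A}(\beta)) = \mathfrak{X}(\beta) = X(\beta) = \mathfrak{M}(\beta)$. For morphisms, since $\mathfrak{B}_n$ is generated under composition by the four elementary braid moves (distant crossings, hexavalent, trivalent, and cup) together with the MCS moves that push dashed handleslide marks and marked points rightward past crossings, and since both $\mathfrak{X}\circ\mathfrak{A}$ and $\mathfrak{M}$ are functors, it suffices to verify the equality of correspondences on each such generator.

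Next, I would carry out a move-by-move comparison of the defining equations of $\mathfrak{X}(\mathfrak{A}(\mathfrak{m}))$ and $\mathfrak{M}(\mathfrak{m})$. For $\mathfrak{m}$ an elementary braid move the algebraic weave $\mathfrak{A}(\mathfrak{m})$ contains a single tetravalent, hexavalent, trivalent vertex, or cup, and its monodromy equations are listed in \cref{lma:weave_monodromy_vertex}. These must be matched with the trivial-monodromy equations for the MCS braid moves in \cref{lma:monodromy_trivalent_etc_id}. Specifically:
\begin{itemize}
    \item Distant crossings: $B_i(y)B_j(x) = B_j(b)B_i(a)$ forces $a=y$, $b=x$, matching the MCS conditions.
    \item Hexavalent: $B_i(z)B_{i+1}(y)B_i(x) = B_{i+1}(c)B_i(b)B_{i+1}(a)$ forces $a=z$, $b=y-xz$, $c=x$, again matching the MCS conditions.
    \item Trivalent: the weave condition $B_i(y)B_i(x) = VB_i(a)$ with $V = \bigl(\begin{smallmatrix}-y^{-1}&1\\0&y\end{smallmatrix}\bigr)$ corresponds, on the MCS side, to the local picture with dashed handleslide mark labelled $-y$ and marked points $t_1=-y^{-1}$, $t_2=y$; the product $D_i(-y^{-1})D_{i+1}(y)S_{i,i+1}(-y)$ is exactly $V$, and the residual equation $a = x + y^{-1}$ coincides on both sides.
    \item Cup: the weave condition $B_i(0)B_i(x) = V = S_{i,i+1}(x)$ matches the MCS cup relations $y=0$, $a=x$, $t_1=t_2=1$, whose dashed data $D_i(1)D_{i+1}(1)S_{i,i+1}(x)$ is precisely $S_{i,i+1}(x)$.
\end{itemize}

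For the virtual vertex conditions in item~(5) of \cref{lma:weave_monodromy_vertex}, namely $B_i(z)V = V'B_i(w)$, the corresponding MCS moves are those of \cref{fig:mcs_local_marked4,fig:mcs_local_marked5,fig:mcs_local_extra2,fig:mcs_local_extra3} that push handleslide marks and marked points across a crossing; their trivial-monodromy conditions, collected in \cref{cor:mcs_matrix}, give exactly the same equation on variables after identifying the MCS data along the dashed ray with the upper triangular matrix $V$. In both settings, the defining conditions are imposed independently at each local modification plus the global upper-triangularity of $\mu(\beta')w_0$ (respectively $B_{\beta'}(z)w_0$), and these coincide.

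Putting the pieces together, the identity maps on the underlying spaces of variables induce an isomorphism $\mathfrak{X}(\mathfrak{A}(\mathfrak{m})) \cong \mathfrak{M}(\mathfrak{m})$ of subvarieties of the respective ambient affine spaces, and this isomorphism commutes with the projections to $X(\beta)$ and $X(\beta')$ on both sides. Hence the two correspondences agree on generating morphisms, and functoriality extends the equality to all morphisms of $\mathfrak{B}_n$. The main bookkeeping obstacle will be matching variable conventions carefully, especially the sign conventions flagged in \cref{rmk:mcs_difference,rmk:mcs_matrix_order} and the fact that dashed rays run rightward (\cref{rmk:dashed_rays_right}); once these are pinned down, the equality of defining equations becomes a termwise verification from \cref{lma:weave_monodromy_vertex} and \cref{lma:monodromy_trivalent_etc_id,cor:mcs_matrix}.
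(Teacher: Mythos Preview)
Your proposal is correct and follows essentially the same approach as the paper: identify the ambient affine spaces $\mathbb{V}^{\mathfrak{m}}$ and $\mathbb{V}^{\mathfrak{A}(\mathfrak{m})}$, and then verify that the defining equations match by comparing \cref{lma:weave_monodromy_vertex} with \cref{lma:monodromy_trivalent_etc_id} (for the braid-move vertices) and with \cref{cor:mcs_matrix} (for the virtual vertices/pushing of dashed data). One small framing correction: the MCS moves that push dashed handleslide marks and marked points rightward are not themselves morphisms in $\mathfrak{B}_n$---they are part of the construction of $\mathfrak{M}(\mathfrak{m})$ (see \cref{dfn:assoc_a-form_mcs_sequence,dfn:variety_of_morphism})---so the paper simply compares the two subvarieties directly for an arbitrary $\mathfrak{m}$ rather than reducing to generators, but the content of the verification is exactly what you describe.
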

\begin{proof}
    It is clear that the diagram is commutative on objects. Given a morphism $\mathfrak{m} \colon \beta \to \beta'$ in $\mathfrak{B}_n$, we need to show $\mathfrak M(\mathfrak{m}) = \mathfrak{X}(\mathfrak{A}(\mathfrak{m}))$, where $\mathfrak{A}(\mathfrak{m})$ is the algebraic weave defined by $\mathfrak m$ (see \cref{thm:ruling_to_weaves}). By definition of $\mathfrak{A}$, the collection of all handleslide marks in each of the A-form MCSs in $(A(\beta_1),\ldots, A(\beta_q))$ correspond to all weave segment variables assigned to $\mathfrak{A}(\mathfrak{m})$ (see \cref{sec:weave_decomp}). Furthermore, all dashed handleslide marks and marked points created by MCS trivalent and MCS cup moves (see \cref{dfn:mcs_braid_moves}) correspond to all dashed segment variables associated to $\mathfrak{A}(\mathfrak{m})$. Therefore $\mathbb V^{\mathfrak{m}} = \mathbb V^{\mathfrak{A}(\mathfrak{m})}$. Comparing \cref{lma:monodromy_trivalent_etc_id} with \cref{lma:weave_monodromy_vertex} we see that the monodromy conditions associated to MCS distant crossings, hexavalent, trivalent, and cup moves agree. Comparing \cref{cor:mcs_matrix} with \cref{lma:weave_monodromy_vertex} it follows that the monodromy conditions induced by the other MCS moves involving moving dashed handleslide marks and marked points agree with the monodromy around virtual vertices in the algebraic weave $\mathfrak{A}(\mathfrak{m})$. Thus showing $\mathfrak M(\mathfrak{m}) = \mathfrak{X}(\mathfrak{A}(\mathfrak{m}))$.
\end{proof}

The following result is completely analogous to the existence of the weave decomposition \cref{thm:weave_decomp} of the braid variety $X(\beta)$. Its proof is almost identical to that of \cref{thm:weave_decomp}.
\begin{proposition}\label{prop:mcs_cat_decompos}
    Let $\beta \in \Br_n^+$ and let $\mathfrak{m}\colon \beta \to \Delta$ be a morphism in $\mathfrak{B}_n$. There exists a tuple of morphisms $(\mathfrak{m}=\mathfrak{m}_1,\mathfrak{m}_2,\dots,\mathfrak{m}_k)$ such that
    \begin{itemize}
        \item $X(\beta) = \bigcup_{i=1}^k \mathfrak{M}({\mathfrak{m}_i})$,
        \item $\mathfrak{M}({\mathfrak{m}_i}) \subset X(\beta)$ are pairwise disjoint for $i\in \{1,\ldots,k\}$, and
        \item $\mathfrak M(\mathfrak{m}) \cong (\C^\ast)^{\ell(\beta)-\binom n2}$ is the unique piece of maximal dimension.
    \end{itemize}
    Such a tuple $(\mathfrak{m}_1,\dots,\mathfrak{m}_k)$ is called a \emph{decomposing tuple} for $X(\beta)$.
    \qed
\end{proposition}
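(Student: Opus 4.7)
The plan is to transport the weave decomposition (\cref{thm:weave_decomp}) directly into the braid category via the equivalence $\mathfrak{A}\colon \mathfrak{B}_n \to \mathfrak{W}_n$ of \cref{thm:ruling_to_weaves} together with the commutative triangle $\mathfrak{M} = \mathfrak{X} \circ \mathfrak{A}$ established in \cref{thm:functors_from_mcs}. In particular there is essentially no new content to prove; the work amounts to checking that the equivalence respects the structural data (simplifying/Demazure) that underlies \cref{thm:weave_decomp}.

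First I would set $\w \coloneqq \mathfrak{A}(\mathfrak{m})\colon \beta \to \Delta$. Since $\mathfrak{m}$ lies in the subcategory $\mathfrak{B}_n^\cup$ of morphisms without cap moves (the presence of a cup move is allowed, but for the dimension count $\ell(\beta) - \binom{n}{2}$ to hold for $\mathfrak{M}(\mathfrak{m})$ via \cref{lma:functoriality}(2), one implicitly takes $\mathfrak{m}$ to have no cup moves either, so that $\w$ is Demazure), \cref{rmk:ruling_caps} tells us that $\w$ is a simplifying algebraic weave (Demazure under the stated assumption). Applying \cref{thm:weave_decomp} to $\w$ produces a decomposing tuple of simplifying weaves $(\w = \w_1, \w_2, \ldots, \w_k)$ that stratifies $X(\beta)$ with $\mathfrak{X}(\w)$ as its unique piece of maximal dimension.

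Next I would lift the tuple to $\mathfrak{B}_n$. Because $\mathfrak{A}$ is an equivalence (in particular, the components $\mathfrak{A}_{\beta,\Delta}\colon \mathfrak{B}_n(\beta,\Delta) \to \mathfrak{W}_n(\beta,\Delta)$ are bijections, by the explicit inverse given in the proof of \cref{thm:ruling_to_weaves}), for each $i$ there is a unique $\mathfrak{m}_i \in \mathfrak{B}_n(\beta, \Delta)$ with $\mathfrak{A}(\mathfrak{m}_i) = \w_i$, and we set $\mathfrak{m}_1 \coloneqq \mathfrak{m}$. The commutative triangle of \cref{thm:functors_from_mcs} then yields the identification
\[
\mathfrak{M}(\mathfrak{m}_i) \;=\; \mathfrak{X}(\mathfrak{A}(\mathfrak{m}_i)) \;=\; \mathfrak{X}(\w_i)
\]
of subvarieties of $X(\beta)$ for every $i \in \{1,\ldots,k\}$. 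Consequently, covering ($X(\beta) = \bigcup_i \mathfrak{M}(\mathfrak{m}_i)$), pairwise disjointness of the pieces, and the fact that $\mathfrak{M}(\mathfrak{m}) = \mathfrak{X}(\w) \cong (\C^\ast)^{\ell(\beta) - \binom{n}{2}}$ is the unique piece of maximal dimension all transfer verbatim from \cref{thm:weave_decomp}.

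Since the argument is a mechanical translation through the equivalence, there is no serious obstacle. The only bookkeeping to watch is the match between the trivialities of the monodromy conditions on the two sides of the triangle (already handled in the proof of \cref{thm:functors_from_mcs}) and the compatibility of simplifying/Demazure on the weave side with ``no cap''/``no cup'' on the braid side (handled by \cref{rmk:ruling_caps}); neither introduces any new difficulty.
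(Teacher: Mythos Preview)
Your proposal is correct and matches the paper's approach. The paper leaves the proof as ``almost identical to that of \cref{thm:weave_decomp}'' and, in the remark immediately following (\cref{rmk:decomposing_tuples_are_the_same}), confirms precisely your transport argument: $(\mathfrak{m}_1,\ldots,\mathfrak{m}_k)$ is a decomposing tuple if and only if $(\mathfrak{A}(\mathfrak{m}_1),\ldots,\mathfrak{A}(\mathfrak{m}_k))$ is, via the commutative triangle of \cref{thm:functors_from_mcs}.
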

\begin{remark}\label{rmk:decomposing_tuples_are_the_same}
    We note that \cref{thm:functors_from_mcs} implies that $(\mathfrak{m}_1,\ldots,\mathfrak{m}_k)$ is a decomposing tuple for $X(\beta)$ if and only if $(\mathfrak{A}(\mathfrak{m}_1),\ldots,\mathfrak{A}(\mathfrak{m}_k))$ is a decomposing tuple for $X(\beta)$ (see \cref{thm:weave_decomp}).
\end{remark}

\subsubsection{Framed Morse complex sequences and their monodromy varieties}
In \cref{thm:ruling_to_weaves} (and \cref{rmk:ruling_caps}) we showed that (a widening of) the braid category is equivalent to the weave category. In \cref{thm:functors_from_mcs} we show that for any morphism $\mathfrak{m} \colon \beta \to \beta'$ in $\mathfrak{B}_n$ and its corresponding algebraic weave $\w \coloneqq \mathfrak{A}(\mathfrak{m})$, their monodromy varieties $\mathfrak M(\mathfrak{m})$ and $\mathfrak{X}(\w)$ agree. Moreover, the parametrizations $\mathfrak M(\mathfrak{m}) \cong \C^c \times (\C^\ast)^t$ and $\mathfrak{X}(\w) \cong \C^c \times (\C^\ast)^t$ are identified via the identity map.

In this subsection, we consider the MCS analog of framed algebraic weaves, discussed in \cref{ssec:framed-weave}, and show that those yield other parametrizations $\mathfrak M(\mathfrak{m}) \cong \C^c \times (\C^\ast)^t$ and $\mathfrak{X}(\w) \cong \C^c \times (\C^\ast)^t$. Recall the definition of an A-form MCS in \cref{dfn:a_form_mcs}.

\begin{definition}[Framed A-form MCS]\label{dfn:framed_A-form_MCS}
     A \emph{framed A-form MCS} for a front diagram $D$ equipped with a Maslov potential is an A-form MCS without marked points on right cusps equipped with the following marked points: For each crossing between strands $i$ and $i+1$, there is a marked point with variable $-u^{-1}$ on strand $i$ and a marked point with variable $u$ on strand $i+1$, for some $u\in R^\ast$; Both marked points are located immediately to the right of the crossing, and there are no other marked points. 
    
\end{definition}

\begin{remark}
    The difference between a framed A-form MCS and an unframed one (\cref{dfn:a_form_mcs}) is the difference in the set of marked points, see \cref{fig:framed_mcs}.
\end{remark}

\begin{figure}[!htb]
    \centering
    \includegraphics{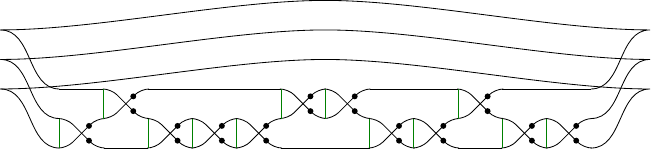}
    
    \vspace{2mm}
    
    \includegraphics{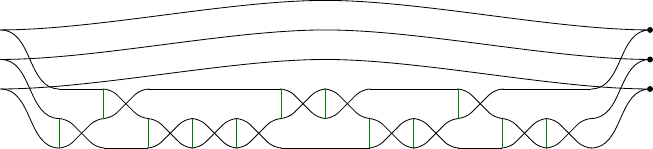}
    \caption{Top: A framed A-form MCS. Bottom: An unframed A-form MCS. All variables at handleslide marks and marked points are omitted.}
    \label{fig:framed_mcs}
\end{figure}

In analogy with \cref{sec:monodromy_mcs} we now define the monodromy of the associated framed A-form MCS sequence to a morphism $\mathfrak{m} \colon \beta \to \beta'$. To start, there are framed versions of the MCS braid moves defined in \cref{dfn:mcs_braid_moves}.

\begin{definition}[Framed MCS braid moves]
The \emph{framed MCS braid moves} are the following:
    \begin{enumerate}
        \item A \emph{framed MCS distant crossings move} is the local modification of the front diagram of a framed MCS defined by \cref{fig:mcs_distant_framed}.
        \item A \emph{framed MCS hexavalent move} is the local modification of the front diagram of a framed MCS defined by \cref{fig:mcs_hexavalent_framed}.
        \item A \emph{framed MCS trivalent move} is the local modification of the front diagram of a framed MCS defined by \cref{fig:mcs_trivalent_framed}.
        \item A \emph{framed MCS cup move} is the local modification of the front diagram of a framed MCS defined by \cref{fig:mcs_cup_framed}.
    \end{enumerate}
\end{definition}

\begin{figure}[!htb]
    \centering
    \begin{subfigure}{\textwidth}
        \centering
        \includegraphics{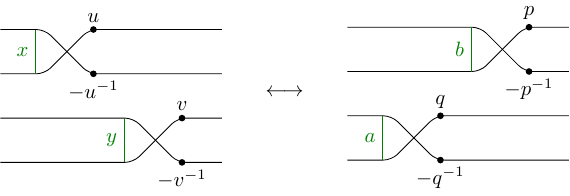}
        \caption{Framed MCS distant crossings move}\label{fig:mcs_distant_framed}
    \end{subfigure}
    \begin{subfigure}{\textwidth}
        \centering
        \includegraphics{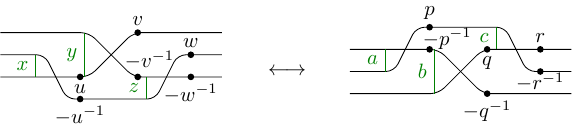}
        \caption{Framed MCS hexavalent move}\label{fig:mcs_hexavalent_framed}
    \end{subfigure}
    \begin{subfigure}{\textwidth}
        \centering
        \includegraphics{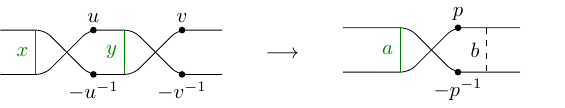}
        \caption{Framed MCS trivalent move}\label{fig:mcs_trivalent_framed}
    \end{subfigure}
    \begin{subfigure}{\textwidth}
        \centering
        \includegraphics{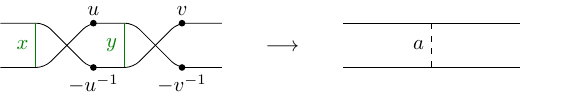}
        \caption{Framed MCS cup move}\label{fig:mcs_cup_framed}
    \end{subfigure}
    \caption{Framed MCS braid moves.}
    \label{fig:mcs_moves_framed}
\end{figure}

\begin{definition}[Associated framed A-form MCS sequence]
    Let $\mathfrak{m} \colon \beta \to \beta'$ be a morphism in $\mathfrak{B}_n$ with braid sequence $(\beta_1,\ldots,\beta_q)$. The \emph{associated framed A-form MCS sequence} is the sequence $A(\mathfrak{m}) \coloneqq (A(\beta_1),\ldots,A(\beta_q))$ of formal framed A-form MCSs such that for each $j\in \{2,\ldots,q\}$, exactly one of the following holds:
        \begin{enumerate}
             \item $A(\beta_{j-1})$ is related to $A(\beta_j)$ via a framed MCS distant crossings move or a framed MCS hexavalent move.
            \item $A(\beta_{j-1})$ is related to $A(\beta_j)$ via a framed MCS trivalent move or a framed MCS cup move followed by some number of MCS moves that take the dashed handleslide marks (but not the marked points) to the right of the rightmost crossing of $\beta_j$.
        \end{enumerate}
\end{definition}

Recall the definitions of the matrices $S_{i,j}(z)$, $D_i(t)$, $P_i$, and $B_i(z)$ from \cref{notn:matrices_mcs_rep} as well as the definition of $\chi_i(u)$ from \cref{not:xi(u)}. Also, recall the definition of the non-negative integers $L$ and $R$ defined in \eqref{eq:total_sp_braids}. Let $\mathfrak{m} \colon \beta \to \beta'$ be a morphism in $\mathfrak{B}_n$ and define
\[
    \mathbb V^{\mathfrak m}_\textit{fr} \coloneqq (\C \times \C^\ast)^{L} \times \left(\mathbb C^{\binom{n}2}\right)^{R}.
\]

For any positive braid $\beta$ and a framed A-form MCS associated to $\La(\beta\Delta)$, the monodromy of $\beta = \sigma_{\ell_1} \cdots \sigma_{\ell_{r}}$ is defined by
\begin{align*}
    \mu(\beta) &\coloneqq \chi_{\ell_r}(u_r)P_{\ell_{r}}S_{\ell_{r},\ell_{r}+1}(z_{r}) \cdots \chi_{\ell_1}(u_1)P_{\ell_{1}}S_{\ell_{1},\ell_{1}+1}(z_{1}) \\
    &= \chi_{\ell_r}(u_r)B_{\ell_{r}}(z_{r}) \cdots \chi_{\ell_1}(u_1)B_{\ell_1}(z_1).
\end{align*}

\begin{definition}[Framed variety of $\mathfrak{m}$]
    Let $\beta,\beta'\in \Br_n^+$ and suppose $\mathfrak{m} \colon \beta \to \beta'$ is a morphism in $\mathfrak{B}_n$. Let $\pi \in S_n$ be represented by a permutation matrix in $\GL(n,\C)$. We define $\mathfrak M_\textit{fr}(\mathfrak{m},\pi)$ to be the algebraic affine subvariety of $\mathbb V_\textit{fr}^{\mathfrak{m}}$ cut out by the following three conditions:
    \begin{enumerate}
        \item In $A(\mathfrak{m})$, the monodromy of each framed MCS distant crossings, hexavalent, trivalent, and cup moves relating $A(\beta_j)$ and $A(\beta_{j-1})$ for each $j$, equals the identity matrix; see \cref{fig:mcs_moves_framed_monodromyless}.
        \item The monodromy of each framed MCS move required to move the dashed handleslide marks to the right of the rightmost crossing of $\beta_j$, is equal to the identity matrix. 
        \item The matrix $\mu(\beta')\pi$ is upper triangular.
    \end{enumerate}
    We use the notation $\mathfrak M_\textit{fr}(\mathfrak{m}) \coloneqq \mathfrak M_\textit{fr}(\mathfrak{m},w_0)$.
\end{definition}

\begin{figure}[!htb]
    \centering
    \begin{subfigure}{\textwidth}
        \centering
        \includegraphics{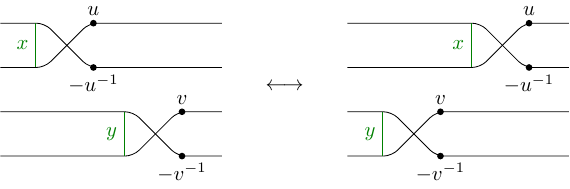}
        \caption{}\label{fig:mcs_distant_framed_monoless}
    \end{subfigure}
    \begin{subfigure}{\textwidth}
        \centering
        \includegraphics{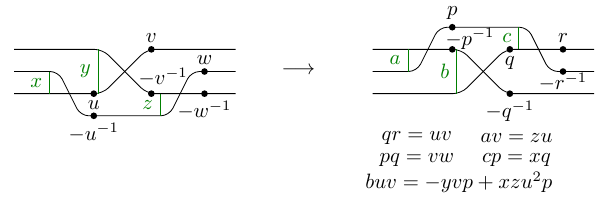}
        \caption{}\label{fig:mcs_hexavalent_framed_monoless1}
    \end{subfigure}
    \begin{subfigure}{\textwidth}
        \centering
        \includegraphics{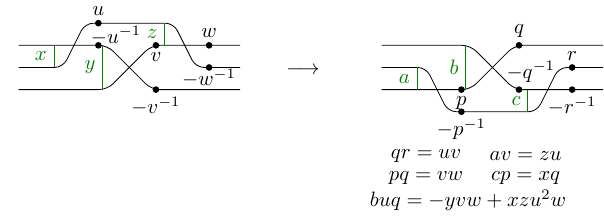}
        \caption{}\label{fig:mcs_hexavalent_framed_monoless2}
    \end{subfigure}
    \begin{subfigure}{\textwidth}
        \centering
        \hspace*{1cm}
        \includegraphics{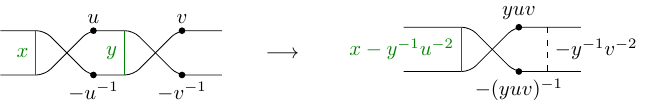}
        \caption{}\label{fig:mcs_trivalent_framed_monoless}
    \end{subfigure}
    \begin{subfigure}{\textwidth}
        \centering
        \includegraphics{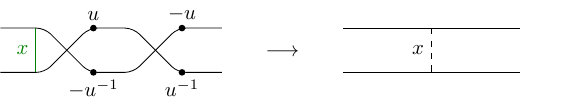}
        \caption{}\label{fig:mcs_cup_framed_monoless}
    \end{subfigure}
    \caption{Framed MCS braid moves with trivial monodromy.} 
    
    \label{fig:mcs_moves_framed_monodromyless}
\end{figure}

The proof of the following lemma is similar to the proof of \cref{lem:framed-weave=weave}.

\begin{lemma}\label{lem:framed_vs_unframed}
    Let $\mathfrak{m} \colon \beta \to \beta'$ be a morphism in $\mathfrak{B}_n$ that involves $c$ cup moves and $t$ trivalent moves. There is an isomorphism  $\mathfrak M_\textit{fr}(\mathfrak{m}) \cong \mathfrak M(\mathfrak{m})$ that fits into the commutative diagram
    \[\begin{tikzcd}
        X_\textit{fr}(\beta') \ar[d, "\cong"] & \mathfrak M_\textit{fr}(\mathfrak{m}) \ar[l,"\pi"] \ar[r] \ar[d, "\cong"] & 
        X(\beta) \ar[d, "="] \\
        X(\beta') & \mathfrak M(\mathfrak{m}) \ar[l, "\pi"] \ar[r] & X(\beta).
    \end{tikzcd}\]
    \qed
\end{lemma}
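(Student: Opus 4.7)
The plan is to mimic the proof of \cref{lem:framed-weave=weave}, using the dictionary established in \cref{thm:functors_from_mcs} between MCS sequences and algebraic weaves. The key algebraic input is that, by iteratively applying the commutation relations in \cref{lma:move_marked_points}, any framed monodromy product
\[
\chi_{\ell_r}(u_r)B_{\ell_r}(z_r)\cdots \chi_{\ell_1}(u_1)B_{\ell_1}(z_1)
\]
can be rewritten in the form $\diag(f_1,\ldots,f_n)\,B_{\ell_r}(g_r z_r)\cdots B_{\ell_1}(g_1 z_1)$, where the $f_i$ and $g_i$ are Laurent monomials in the $u$-variables. This single identity is the engine that converts framed data into unframed data.

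First, I would verify locally, for each framed MCS braid move in \cref{fig:mcs_moves_framed_monodromyless} and for each framed MCS move transporting dashed handleslide marks to the right, that the trivial framed monodromy condition is equivalent, after pushing all $\chi$-matrices past the $B$-matrices using \cref{lma:move_marked_points}, to the corresponding unframed monodromy condition of \cref{lma:monodromy_trivalent_etc_id} and \cref{cor:mcs_matrix}, applied to the rescaled $z$-variables $g_i z_i$. This is the direct MCS counterpart of the local calculation performed in \cref{lem:framed-weave=weave}.

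Next, I would define the map $\mathfrak M_\textit{fr}(\mathfrak{m})\to \mathfrak M(\mathfrak{m})$ as the map that sends a framed configuration to its unframed shadow obtained by the above rescaling. The $u$-variables on every framed weave segment of $A(\mathfrak{m})$ are uniquely determined by their values at the top of the sequence, via propagation through the monodromy conditions in \cref{fig:mcs_moves_framed_monodromyless}. Consequently the map is bijective: its inverse takes an unframed configuration, normalizes all top $u$-variables to $1$, and solves for the framed $z$-variables through the same rescaling. Since the condition that $\mu(\beta')\pi$ is upper triangular is preserved (upper triangularity is unchanged by left multiplication with a diagonal matrix), the bijection is an algebraic isomorphism.

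Finally, the commutativity of the diagram is checked on each side. The right square commutes by construction, since the projections $\mathfrak M_\textit{fr}(\mathfrak{m})\to X(\beta)$ and $\mathfrak M(\mathfrak{m})\to X(\beta)$ both read off the $z$-variables of $A(\beta_1)$ after setting the top $u$-variables equal to $1$; and the left square commutes via the identification $X_\textit{fr}(\beta')\cong X(\beta')$ of \cref{lem:framed-flag=flag}. I expect the main obstacle to be the bookkeeping of the monomials $g_i$ through the trivalent and cup moves, where fresh marked points and dashed handleslide marks are introduced in the middle of the sequence and subsequently propagated to the right boundary; but this is routine once the local calculation of the first step is in place, and is precisely the MCS translation of the analogous bookkeeping carried out for framed algebraic weaves in \cref{lem:framed-weave=weave}.
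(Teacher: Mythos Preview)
Your proposal is correct and follows exactly the approach the paper intends: the paper states that the proof is similar to that of \cref{lem:framed-weave=weave}, and your argument---pushing all $\chi$-matrices past the $B$-matrices via \cref{lma:move_marked_points} to reduce the framed monodromy conditions to the unframed ones with monomially rescaled $z$-variables---is precisely that proof transported to the MCS setting.
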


    A consequence of \cref{lem:framed_vs_unframed} is the framed version of \cref{thm:functors_from_mcs}. The proof is completely analogous, and is omitted.

\begin{theorem}\label{thm:frame_functors_from_mcs}
    Let $n \in \Z_{\geq 1}$. For any $\beta \in \Br_n^+$, there is a functor $\mathfrak M_\textit{fr}: \mathfrak{B}_n \to \mathfrak{C}$ such that $\mathfrak M_\textit{fr}(\beta) = \mathfrak M(\beta) = X(\beta)$ and a functor $\mathfrak X_\textit{fr}: \mathfrak{W}_n \to \mathfrak{C}$ such that $\mathfrak X_\textit{fr}(\beta) = \mathfrak X(\beta) = X(\beta)$. In addition, the following diagram commutes:
    \[\begin{tikzcd}
    \mathfrak B_n \ar[dr, "\mathfrak M_\textit{fr} \;\;\;" below] \ar[rr, "\mathfrak A"] & & \mathfrak W_n \ar[dl, "\mathfrak X_\textit{fr}"] \\
    & \mathfrak{C} &
    \end{tikzcd}\]
    \qed
\end{theorem}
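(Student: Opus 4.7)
The plan is to mimic the proof of \cref{thm:functors_from_mcs} in the framed setting, using \cref{lem:framed_vs_unframed} (and its weave analog \cref{lem:framed-weave=weave}) as the key input to reduce everything to the unframed case already handled.

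First I would define $\mathfrak M_\textit{fr}$ and $\mathfrak X_\textit{fr}$ on objects by $\beta \mapsto X(\beta)$, and on morphisms by sending $\mathfrak m \colon \beta\to\beta'$ (respectively a framed algebraic weave $\w\colon\beta\to\beta'$) to the correspondence
\[
X(\beta)\longleftarrow \mathfrak M_\textit{fr}(\mathfrak m) \longrightarrow X(\beta'),\qquad X(\beta) \longleftarrow \mathfrak X_\textit{fr}(\w) \longrightarrow X(\beta')
\]
given by the two projection maps from the framed monodromy variety cut out inside $\mathbb V^{\mathfrak m}_\textit{fr}$ or $\mathbb V^{\w}_\textit{fr}$. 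The left projection in each case records the $z$-variables at the top of the sequence/weave (with $u$-variables at the top set to $1$), and the right projection records those at the bottom; injectivity/well-definedness on each side follows exactly as in \cref{lma:functoriality}, since the framed monodromy conditions at each local model (\cref{fig:mcs_moves_framed_monodromyless}) again uniquely propagate $z$- and $u$-variables in either direction.

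Next I would check functoriality: given composable morphisms $\mathfrak m\colon\beta\to\beta'$ and $\mathfrak m'\colon\beta'\to\beta''$, the monodromy conditions used to define $\mathfrak M_\textit{fr}(\mathfrak m)$ and $\mathfrak M_\textit{fr}(\mathfrak m')$ involve disjoint sets of weave/dashed segments and thus are independent. The shared interface is the $z$-variables assigned to the framed A-form MCS at the braid $\beta'$ (after setting $u$-variables to $1$), giving a pullback square
\[
\begin{tikzcd}[sep=scriptsize]
\mathfrak M_\textit{fr}(\mathfrak m'\circ\mathfrak m) \rar \dar & \mathfrak M_\textit{fr}(\mathfrak m') \dar \\
\mathfrak M_\textit{fr}(\mathfrak m) \rar & X_\textit{fr}(\beta') \cong X(\beta')
\end{tikzcd}
\]
exactly as in the proof of \cref{thm:functors_from_mcs}. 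The same argument works for $\mathfrak X_\textit{fr}$ on vertical composition of framed algebraic weaves.

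For commutativity of the diagram, I would observe that for any morphism $\mathfrak m\colon\beta\to\beta'$ in $\mathfrak B_n$, the functor $\mathfrak A$ matches up weave segments of $\mathfrak A(\mathfrak m)$ with handleslide marks of the associated framed A-form MCS sequence, and dashed segments with dashed handleslide marks produced by framed MCS trivalent/cup moves. Hence $\mathbb V_\textit{fr}^{\mathfrak m} = \mathbb V_\textit{fr}^{\mathfrak A(\mathfrak m)}$, and a direct comparison of the framed MCS braid move relations in \cref{fig:mcs_moves_framed_monodromyless} with the framed weave trivial-monodromy relations (i.e., the framed analog of \cref{lma:weave_monodromy_vertex}) shows that the defining equations agree. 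Therefore $\mathfrak M_\textit{fr}(\mathfrak m) = \mathfrak X_\textit{fr}(\mathfrak A(\mathfrak m))$ as subvarieties of the same ambient space, and the two correspondences coincide.

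The only mildly nontrivial point, and the step I expect to be the main obstacle, is the bookkeeping that turns the identity $\mathfrak M(\mathfrak m) = \mathfrak X(\mathfrak A(\mathfrak m))$ of \cref{thm:functors_from_mcs} into its framed counterpart while keeping the diagram of \cref{lem:framed_vs_unframed} compatible. Concretely, one must check that the isomorphisms $\mathfrak M_\textit{fr}(\mathfrak m)\cong \mathfrak M(\mathfrak m)$ and $\mathfrak X_\textit{fr}(\mathfrak A(\mathfrak m))\cong \mathfrak X(\mathfrak A(\mathfrak m))$ from \cref{lem:framed_vs_unframed} and \cref{lem:framed-weave=weave} are induced by the same identification of framed and unframed variables, namely the rewriting $\chi_{i_r}(u_r)B_{i_r}(z_r)\cdots \chi_{i_1}(u_1)B_{i_1}(z_1) = \diag(f_1,\ldots,f_n)B_{i_r}(g_rz_r)\cdots B_{i_1}(g_1z_1)$ via \cref{lma:move_marked_points}. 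Once this is verified on a single local framed MCS braid move (and correspondingly on a single weave vertex), it extends to arbitrary $\mathfrak m$ by the already established functoriality, and commutativity of the framed triangle follows from commutativity of the unframed one in \cref{thm:functors_from_mcs}.
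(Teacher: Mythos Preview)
Your proposal is correct and follows exactly the approach the paper indicates: the paper states that the proof is completely analogous to that of \cref{thm:functors_from_mcs} and omits it, and your write-up carries out precisely that analogous argument, using \cref{lem:framed_vs_unframed} and \cref{lem:framed-weave=weave} to reduce to the unframed case.
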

\subsection{Category of normal rulings and decompositions of braid varieties}\label{sec:normal_rulings_cat}
    We show how to associate to a normal ruling $\rho$ of $\La(\beta\Delta)$ a certain set of morphisms in the braid category defined in \cref{dfn:braid_category}. Let $\beta = \sigma_{i_1} \cdots \sigma_{i_r} \in \Br_n^+$ with $\delta(\beta) = w_0$ and let $\La(\beta\Delta)$ denote the $(-1)$-closure of $\beta\Delta$.
    \begin{definition}[Top right crossings and direct connection]
        Let $\mathfrak{m} \colon \beta \to \beta'$ be a morphism in $\mathfrak{B}_n$ represented by the sequence $(\beta\eqqcolon\beta_1,\ldots,\beta_q\coloneqq\beta')$. Suppose $k\in \{1,\ldots,q\}$ and  that
        \[
        \beta_k = \gamma_Lx_1x_2\gamma_R \longrightarrow \beta_{k+1}
        \]
        is either a trivalent or cup move, where $x_1=x_2=\sigma_i$ for some $i$.
        \begin{enumerate}
            \item The letter $x_2$ is called the \emph{top right crossing} of the trivalent or cup move.
            \item The top right crossing $x_2$ is \emph{directly connected to the top of $\mathfrak{m}$} if $x_2$ is not involved in any braid move among the sequence of braids $(\beta_1,\ldots,\beta_{k-1})$.
        \end{enumerate}
    \end{definition}
    
    \begin{definition}[Ruling category]\label{dfn:ruling_category}
       Let $n\in \Z_{\geq 1}$. The \emph{ruling category} $\mathfrak{R}_n$ is defined to be the wide subcategory of $\mathfrak{B}_n^\cup$ only consisting of those morphisms $\mathfrak{r} = (\beta_1,\ldots,\beta_q) \colon \beta \to \beta'$ with $\mathfrak{r}$ such that there exists a normal ruling $\rho$ of $\La(\beta\Delta)$ such that:
       \begin{enumerate}
           \item For any trivalent move, the top right crossing is directly connected to a crossing in $\beta_1$ corresponding to a switch of $\rho$.
           \item For any cup move, the top right crossing is directly connected to $\beta_1$ corresponding to a departure of $\rho$.
       \end{enumerate}
       In this case we say that $\rho$ is the underlying normal ruling for the morphism $\mathfrak{r}$.
    \end{definition}
    Recall that for $\beta\in\Br_n^+$ with $\delta(\beta) = w_0$, there exists a unique normal ruling with maximal number of switches; see \cref{prop:max_ruling}.

    \begin{proposition}\label{prop:normal_ruling_inductive}
        Let $\beta \in \Br_n^+$ be such that $\delta(\beta) = w_0$, and suppose that $\rho$ is any normal ruling for $\La(\beta\Delta)$. There exists a morphism $\mathfrak{r}_\rho \colon \beta \to \Delta$ in $\mathfrak{R}_n$ whose underlying normal ruling is $\rho$. In fact, $\mathfrak{A}(\mathfrak{r}_\rho)$ is a right simplifying weave and, for the maximal ruling, $\mathfrak{A}(\mathfrak{r}_\rho)$ is a right inductive Demazure weave.
    \end{proposition}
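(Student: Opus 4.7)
The plan is to construct $\mathfrak{r}_\rho$ inductively in parallel with a right simplifying weave $\w_\rho \colon \beta \to \Delta$ in the sense of \cref{dfn:inductive_simplifying_weave}, using the ruling $\rho$ to dictate whether a trivalent or cup move is applied at each step. Writing $\beta = \sigma_{i_1}\cdots\sigma_{i_r}$, I would build for each $k \in \{0,1,\ldots,r\}$ a morphism $\mathfrak{r}^{(k)}\colon \beta^{\leq k} \to \beta_k'$ in $\mathfrak{B}_n^\cup$ and a corresponding partial right simplifying weave $\w(\beta^{\leq k})$, maintaining the invariants that $\mathfrak{A}(\mathfrak{r}^{(k)}) = \w(\beta^{\leq k})$ and that the braid moves introduced so far satisfy conditions (1) and (2) of \cref{dfn:ruling_category} with respect to $\rho$.

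At step $k+1$, let $c_{k+1}$ be the crossing of $\La(\beta\Delta)$ corresponding to the letter $\sigma_{i_{k+1}}$ of $\beta$. If $\delta(\beta_k'\sigma_{i_{k+1}}) = \delta(\beta_k')s_{i_{k+1}}$, I extend $\mathfrak{r}^{(k)}$ by trivial horizontal composition with $\sigma_{i_{k+1}}$, introducing no braid move. Otherwise, I first apply a sequence of distant-crossing and hexavalent moves to $\beta_k'$ so that its rightmost letter becomes $\sigma_{i_{k+1}}$, mirroring the recipe of \cref{dfn:inductive_simplifying_weave}, then horizontally compose with $\sigma_{i_{k+1}}$ so that the braid ends in $\sigma_{i_{k+1}}^2$. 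If $c_{k+1}$ is a switch of $\rho$, I apply a trivalent move; if $c_{k+1}$ is a departure of $\rho$, I apply a cup move. In either case the newly added $\sigma_{i_{k+1}}$, which is the top right crossing of the introduced move, was not involved in any braid move of $\mathfrak{r}^{(k)}$, hence is directly connected to the corresponding crossing of $\beta_1 = \beta$, and so the conditions of \cref{dfn:ruling_category} are preserved at this step.

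The structural input that makes this recipe well-defined is that whenever $\delta$ fails to grow at $c_{k+1}$, the ruling $\rho$ classifies $c_{k+1}$ as either a switch or a departure. To verify this, one tracks the nesting configuration of the ruling disks of $\rho$ from left to right through $\beta$: using \cref{prop:max_ruling} (every crossing of $\Delta$ is a departure in any normal ruling) and the correspondence between the Demazure product and how strands are paired by $\rho$, one argues that the interlacing pattern at a Demazure-stagnating crossing cannot already have occurred in $\beta$, so the class is not a return. Granting this, iterating the construction yields the desired $\mathfrak{r}_\rho = \mathfrak{r}^{(r)} \colon \beta \to \Delta$ in $\mathfrak{R}_n$ with $\mathfrak{A}(\mathfrak{r}_\rho) = \w_\rho$ a right simplifying weave. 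For the maximal ruling $\rho_0$, \cref{prop:max_ruling} further guarantees the absence of departures in $\beta$, so no cup move is ever invoked and $\w_{\rho_0}$ consists only of trivalent (and auxiliary hexavalent and tetravalent) vertices, making it a right inductive Demazure weave.

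The main obstacle will be establishing the structural claim about the absence of returns at Demazure-stagnating positions, which requires a careful analysis of the behavior of ruling disks on the specific $(-1)$-closure geometry; this should be tractable using the upside-down correspondence of \cref{lma:rulings_upside_down}, which interchanges returns and departures, combined with the observation that all $\Delta$-crossings are forced to be departures.
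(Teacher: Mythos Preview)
Your approach is essentially the same as the paper's: build the morphism inductively, letting $\rho$ dictate trivalent versus cup at each step, and invoke \cref{prop:max_ruling} for the final Demazure claim. The paper frames the induction slightly differently---processing the leftmost non-return crossing of the current braid at each step rather than testing Demazure growth crossing by crossing---and, like you, leaves the structural compatibility largely implicit here; the full verification (that the ruling classification matches the Demazure-growth dichotomy) is carried out in the subsequent \cref{lma:rulings_right_induct_weaves} via a direct induction on the nested/interlaced configurations of ruling disks, which is more straightforward than the upside-down route you sketch.
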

    \begin{proof}
        Let $\beta_1=\beta$. We define the morphism $\mathfrak{r}_\rho \colon \beta \to \Delta$ inductively as follows: Given $\beta_\ell\in\Br_n^+$, let $x^L_\ell$ denote the leftmost crossing of the braid $\beta_\ell$ that is not a return for the underlying normal ruling $\rho$. Then there exist $\beta_\ell^L,\beta_\ell^R\in\Br_n^+$ such that $\beta_\ell = \beta_\ell^Lx^L_\ell\beta_\ell^R$ and there exists $i_\ell\in\{1,\ldots,n\}$ such that $x^L_\ell=\sigma_{i_\ell}$. Since $x^L_\ell$ is not a return in $\rho$, (if needed) we perform hexavalent and distant crossings moves on the $\beta_\ell^L$ portion of $\beta_\ell$ to get $\beta_\ell \to \cdots \to \beta'_{\ell}$ such that $\beta'_{\ell} = \gamma_{\ell}\sigma_{i_\ell}x_{\ell}^L\beta_\ell^R$ for some $\gamma_\ell\in\Br_n^+$. If $x^L_\ell$ is a switch in $\rho$, then perform a trivalent move $(\sigma_{i_\ell})^2 \to \sigma_{i_\ell}$ and let $\beta_{\ell+1}=\gamma_{\ell}\sigma_{i_\ell}\beta_\ell^R$. If $x^L_\ell$ is a departure in $\rho$, then perform a cup move $(\sigma_{i_\ell})^2 \to 1$ and let $\beta_{\ell+1}=\gamma_{\ell}\beta_\ell^R$. Repeat this until $\beta_{\ell+1}=\Delta$. This defines the morphism $\mathfrak{r}_\rho \colon \beta \to \Delta$.

        By construction, we see that $\mathfrak{A}(\mathfrak{r}_\rho)$ is a right simplifying weave (as defined in \cref{dfn:inductive_simplifying_weave}). For the maximal normal ruling of $\Lambda(\beta\Delta)$, by \cref{prop:max_ruling}, none of the crossings in the braid $\beta$ are departures. Thus, from \cref{dfn:ruling_category}, it follows immediately that $\mathfrak{A}(\mathfrak{r}_\rho)$ is a right inductive Demazure weave.
    \end{proof}
    \begin{definition}[Right inductive morphism]\label{def:right_inductive_morphism}
        Let $\beta \in \Br_n^+$ and suppose that $\rho$ is a normal ruling for $\La(\beta\Delta)$. A morphism $\mathfrak{r}_\rho \colon \beta \to \Delta$ defined as in \cref{prop:normal_ruling_inductive} is called a \emph{right inductive morphism} corresponding to $\rho$.
    \end{definition}
    \begin{lemma}\label{lma:rulings_right_induct_weaves}
        Let $\beta\in \Br_n^+$ be such that $\delta(\beta) = w_0$. The assignment $\rho \mapsto \mathfrak{A}(\mathfrak{r}_\rho)$ defines a bijection from the set of normal rulings of $\La(\beta\Delta)$ to the set of inductive equivalence classes of right simplifying weaves $\beta \to \Delta$.
    \end{lemma}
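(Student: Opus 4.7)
The plan is to establish well-definedness, injectivity, and surjectivity separately, leveraging the fact (\cref{lma:decomp_tuple_indep_equiv} and \cref{dfn:inductive_equiv}) that inductive equivalence of right simplifying weaves is controlled by the sequence of trivalent vertices and cups, not by the interspersed hexavalent and tetravalent vertices.

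\textbf{Well-definedness.} First I would show that the assignment $\rho \mapsto \mathfrak{A}(\mathfrak{r}_\rho)$ does not depend on the choices made in the construction of \cref{prop:normal_ruling_inductive}. Given $\rho$, at each stage of the inductive construction, only the choice of how to move the leftmost non-return crossing into the appropriate position via hexavalent and distant crossings moves is free; the decision to apply a trivalent or cup move (and on which strand) is determined by whether the leftmost non-return crossing is a switch or a departure of $\rho$. Under $\mathfrak{A}$, the free choices translate into different sequences of hexavalent and tetravalent vertices, while the sequence of trivalent vertices and cups (with their colors and relative order) is uniquely determined by the corresponding sequence of switches and departures in $\rho$. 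Hence the projection map $p$ from \cref{dfn:inductive_equiv} takes the same value on any two weaves arising from $\rho$, i.e., they are inductive equivalent.

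\textbf{Injectivity.} Next I would argue that if $\rho \ne \rho'$ are two distinct normal rulings of $\La(\beta\Delta)$, then $\mathfrak{A}(\mathfrak{r}_\rho) \not\approx \mathfrak{A}(\mathfrak{r}_{\rho'})$. Using the equivalence $\mathfrak{A}\colon \mathfrak{B}_n^\cup \to \mathfrak{W}_n^\cup$ from \cref{thm:ruling_to_weaves} and \cref{rmk:ruling_caps}, we can recover $\mathfrak{r}_\rho$ from $\mathfrak{A}(\mathfrak{r}_\rho)$. By construction, the $k$-th trivalent vertex (respectively, cup) of $\mathfrak{A}(\mathfrak{r}_\rho)$ is directly connected to the crossing of $\beta$ which is the $k$-th switch (respectively, departure) of $\rho$ encountered when sweeping from left to right. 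Hence the inductive equivalence class of $\mathfrak{A}(\mathfrak{r}_\rho)$ determines the set of crossings of $\beta$ which are switches and the set which are departures; the remaining crossings must be returns. Since a normal ruling of $\La(\beta\Delta)$ is fully determined by which crossings of $\beta$ are switches versus departures versus returns (the crossings of $\Delta$ being forced to be departures by \cref{prop:max_ruling}), it follows that $\rho$ is recoverable.

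\textbf{Surjectivity.} The main obstacle is to show that every right simplifying weave $\w\colon \beta \to \Delta$ is inductive equivalent to $\mathfrak{A}(\mathfrak{r}_\rho)$ for some normal ruling $\rho$. Given such $\w$, decompose it according to \cref{dfn:inductive_simplifying_weave} into a sequence of local pieces, each of which either extends a strand, introduces a trivalent vertex, or introduces a cup. Using the inverse functor $\mathfrak{A}^{-1}$, this decomposition reads off a candidate assignment: the crossing of $\beta$ lying directly above each trivalent vertex is declared a switch, the one above each cup a departure, and every other crossing a return. I would then verify that this defines a valid normal ruling by induction on the number of crossings of $\beta$, mirroring the inductive structure of the right simplifying weave. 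At each step, the combinatorics of how the weave merges two adjacent strands (via a trivalent vertex or a cup) is precisely the combinatorics required for the associated ruling disks to have nested/disjoint versus interlaced configurations of \cref{fig:ruling}; since $\delta(\beta) = w_0$ all crossings are $0$-graded by \cref{dfn:grading_crossing}, and the normality condition at switches is automatic from the fact that the trivalent vertex only involves two adjacent strands at that slice, so the companion strands cannot interfere. Putting these together yields a bona fide normal ruling $\rho$, and comparing the construction of $\mathfrak{r}_\rho$ from \cref{prop:normal_ruling_inductive} with the given weave $\w$ shows $p(\mathfrak{A}(\mathfrak{r}_\rho)) = p(\w)$, hence $\mathfrak{A}(\mathfrak{r}_\rho) \approx \w$.
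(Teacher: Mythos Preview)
Your overall strategy matches the paper's: both construct the inverse map $\w \mapsto \rho_\w$ by declaring the crossing directly above each trivalent vertex a switch, above each cup a departure, and every other crossing a return, and then check that this is a valid normal ruling. Your well-definedness and injectivity arguments are fine and essentially what the paper does implicitly when it observes that $\w \mapsto \rho_\w$ descends to inductive equivalence classes and is inverse to $\rho \mapsto \mathfrak{A}(\mathfrak r_\rho)$.

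The gap is in your surjectivity step, specifically in the verification that the candidate assignment is \emph{normal}. Your stated reason, ``the normality condition at switches is automatic from the fact that the trivalent vertex only involves two adjacent strands at that slice, so the companion strands cannot interfere,'' is not the correct mechanism. Normality at a switch means the two ruling disks incident to that crossing are nested or disjoint just to its left; this is a global condition on the ruling disks up to that point, not something local adjacency in the weave can guarantee. What actually makes this work, and what the paper spells out, is the Demazure-product dichotomy built into \cref{dfn:inductive_simplifying_weave}: at the step producing a trivalent vertex or cup one has $\delta(\beta'_{k-1}\sigma_{i_k}) = \delta(\beta'_{k-1})$, so $\beta'_{k-1}$ is braid-equivalent to a word with $\sigma_{i_k}$ on the right, hence the two strands meeting at $c_k$ have already crossed in $\beta'_{k-1}$ and the incident ruling disks are nested there. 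Conversely, at a ``vertical edge'' step one has $\delta(\beta'_{k-1}\sigma_{i_k}) = \delta(\beta'_{k-1})s_{i_k}$, so those strands have \emph{not} yet crossed and the disks are interlaced, forcing $c_k$ to be a return. You also need to treat the departure and return cases, which your sketch omits; the paper handles all three by the same Demazure argument. Without this link between the Demazure condition and the intersection pattern of ruling strands, the claim that $\rho_\w$ is normal is unsupported.
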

  
    \begin{proof}  
    Suppose $\beta = \sigma_{i_1} \cdots \sigma_{i_r}$ and denote the crossing of $\La(\beta\Delta)$ corresponding to $\sigma_{i_k}$ by $c_k$.  Let $\w\colon \beta\to\Delta$ be a right simplifying weave. We define a normal ruling $\rho_\w$ of $\La(\beta\Delta)$ as follows to define the map $\w \mapsto \rho_\w$ from right simplifying weaves to normal rulings of $\La(\beta\Delta)$. Denote the topmost weave segments of $\w$ by $w_1,\ldots,w_r$, where $w_k$ corresponds to $\sigma_{i_k}$ (and thus to the crossing $c_k$ of $\La(\beta\Delta)$). Since $\w$ is a right simplifying weave, for every trivalent or cup, the northeast strand is $w_k$ for some $k\in \{1,\ldots,r\}$. 
    If $k\in \{1,\ldots,r\}$ such that $w_k$ is the northeast strand in a trivalent vertex, we declare $c_k$ to be a switch, if $w_k$ is the right strand in a cup, we declare $c_k$ to be a departure, and for all other $w_k$, we declare $c_k$ to be a return. We show that $\rho_\w$ is a normal ruling.
    
    Given a ruling of $\La(\beta\Delta)$ note that all crossings to the left of $\beta$ must be departures, all crossings to the right of $\Delta$ must be returns, and all crossings in $\Delta$ must be departures. Thus, each ruling of $\La(\beta\Delta)$ specifies a unique sequence $(x_1,\ldots,x_r)$ where $x_i \in \{\text{s},\text{d},\text{r}\}$ is a label specifying a switch, departure, or return. We say that a crossing in a ruling of $\La(\beta\Delta)$ is a \emph{normal} switch, departure, or return, respectively, if the two ruling disks incident to the crossing are of the form of the rightmost diagram in each of \cref{fig:ruling_switches,fig:ruling_deps,fig:ruling_rets}, respectively. To show that the ruling $\rho_\w$ is normal, it suffices to show that every crossing $c_k$ is normal. 
    
    Enumerate the strands in any braid by enumerating the strands from bottom to top on the left of the braid. This labeling of the strands of $\beta$ gives an enumeration of the strands in $\La(\beta\Delta)$. We prove that every crossing $c_k$ is normal by strong induction on $k$. Assume that the claim is true up to and including the crossing $c_{k-1}$ of $\La(\beta\Delta)$ and assume $\sigma_{i_k}$ introduces an intersection between the $i$-th and $j$-th strands in $\beta$. Recall, from \cref{dfn:inductive_simplifying_weave}, that right simplifying weaves are constructed by inductively defining braids $\beta_j'$ and weaves $\w(\beta^{\leq j}):\beta^{\leq j}\to\beta_j'$ (where $\beta_j'$ is not necessarily $\delta(\beta^{\leq j})$).
    
    As a result of the definition, we also have that $\beta_0'=1$, $\beta_1'=\sigma_{i_1}$, and $\beta_r'=\Delta$. 
    There are three cases for how $\w(\beta^{\leq k})$ was obtained from $\w(\beta^{\leq k-1})$ in $\w$.
    \begin{description}
        \item[Vertical edge] If $\w(\beta^{\leq k})$ is obtained from $\w(\beta^{\leq k-1})$ by taking the horizontal composition with the trivial weave on $\sigma_{i_k}$, then, by the definition of $\rho_\w$ above, we know $c_k$ is a return. From \cref{dfn:inductive_simplifying_weave}, we know that $\delta(\beta_{k-1}'\sigma_{i_k})=\delta(\beta_{k-1}')s_{i_k}$ in this case. Thus $\beta_{k-1}'$ is not equivalent to a braid with $\sigma_{i_k}$ appearing on the right. Hence, the $i$-th and $j$-th strands do not intersect in $\beta_{k-1}'$ and so the ruling disks incident to $c_k$ are interlaced to the left of $c_k$ (as in the rightmost diagram in \cref{fig:ruling_rets}). This implies that the return at $c_k$ is normal.
                
        \item[Trivalent] In this case, $\w(\beta^{\leq k})$ is obtained from $\w(\beta^{\leq k-1})$ by adding a trivalent vertex, which, by the definition of $\rho_\w$ above, means that $c_k$ must be a switch. From \cref{dfn:inductive_simplifying_weave}, we know that $\delta(\beta_{k-1}'\sigma_{i_{k}}) = \delta(\beta_{k}')$ in this case. Thus, the braid $\beta_{k-1}'$ is equivalent to a braid with $\sigma_{i_k}$ appearing on the right,  implying that the $i$-th and $j$-th strands intersect in $\beta_{k-1}' = \beta_k'$. Consequently, the ruling disks incident to $c_k$ are nested (as in the rightmost diagram in \cref{fig:ruling_switches}) just before the crossing $c_k$, meaning that $c_k$ is a normal switch.
        
        \item[Cup] In this case, $\w(\beta^{\leq k})$ is obtained from $\w(\beta^{\leq k-1})$ by adding a cup, which, by the definition of $\rho_\w$ above, means that $c_k$ must be a departure. From \cref{dfn:inductive_simplifying_weave}, we know that $\beta_k'\sigma_{i_k}=\beta_{k-1}'$ in this case. Since $\beta_{k-1}'$ is equivalent to a braid with $\sigma_{i_k}$ on the right, the $i$-th and $j$-th strands intersect in $\beta_{k-1}'$. Moreover, they do not intersect in $\beta_k'$, as the cup introduced at this step removes the $\sigma_{i_k}$ appearing on the right of the braid equivalent to $\beta_{k-1}'$. Consequently, the ruling disks incident to $c_k$ are nested (as in the rightmost diagram in \cref{fig:ruling_deps}) just before the crossing $c_k$, meaning that $c_k$ is a normal departure.
    \end{description}
    Thus all crossings $c_k$ are normal and so $\rho_\w$ is a normal ruling of $\La(\beta\Delta)$.
    
    The above map $\w \mapsto \rho_\w$ now descends to a well-defined map on the set of inductive equivalence classes, since the normal ruling $\rho_\w$, by construction, only depends on the sequence of trivalent vertices and cups in $\w$ (and is therefore invariant under right inductive weave equivalence). We now see, by construction, that $\w \mapsto \rho_\w$ defines an inverse to $\rho \mapsto \mathfrak{A}(\mathfrak{r}_\rho)$ defined in \cref{def:right_inductive_morphism}.
 \end{proof}

    \begin{corollary}\label{cor:rulings_decomposing_tuple}
       Let $\beta\in \Br_n^+$ such that $\delta(\beta) = w_0$, and let $(\rho_1,\ldots,\rho_k)$ be a tuple of all normal rulings of $\La(\beta\Delta)$. There exists a tuple $(\mathfrak{r}_{R,1},\ldots,\mathfrak{r}_{R,k})$ of right inductive morphisms $\mathfrak{r}_{R,i} \colon \beta \to \Delta$ in $\mathfrak{R}_n$ with underlying normal ruling $\rho_i$ that is a decomposing tuple for $X(\beta)$ in the sense of \cref{prop:mcs_cat_decompos}.
    \end{corollary}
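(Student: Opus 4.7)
The plan is to bootstrap from the Deodhar decomposition, which is already known to be a decomposition of $X(\beta) \cong R^\circ_{w_0,\beta}$ via right simplifying weaves. First, for each normal ruling $\rho_i$ of $\La(\beta\Delta)$ I would invoke \cref{prop:normal_ruling_inductive} to produce a right inductive morphism $\mathfrak{r}_{R,i} \colon \beta \to \Delta$ in $\mathfrak{R}_n$ with underlying ruling $\rho_i$; setting $\w_i \coloneqq \mathfrak{A}(\mathfrak{r}_{R,i})$ gives a right simplifying weave. By \cref{rmk:decomposing_tuples_are_the_same}, it then suffices to prove that $(\w_1,\ldots,\w_k)$ is a decomposing tuple for $X(\beta)$ in the sense of \cref{thm:weave_decomp}, and then transport the statement back through the equivalence $\mathfrak{A}$.

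Next, I would chain two bijections. \cref{lma:rulings_right_induct_weaves} shows that $\rho \mapsto \mathfrak{A}(\mathfrak{r}_\rho)$ is a bijection from normal rulings of $\La(\beta\Delta)$ to inductive equivalence classes of right simplifying weaves $\beta \to \Delta$, while \cref{lma:right_inductive_distinguished_sequences} shows that distinguished sequences of $w_0$ are in bijection with the same set of inductive equivalence classes. Composing these yields a canonical bijection between normal rulings and distinguished sequences. By \cref{thm:deodhar=weave} combined with \cref{prop:braid_richardson}, the Deodhar decomposition of $X(\beta) \cong R^\circ_{w_0,\beta}$ is realized by a collection $\{\w_\mathfrak{v}\}$ of right simplifying weaves indexed by distinguished sequences, and since \cref{thm:deodhar} and \cref{lem:deodhar-decomposition} assemble these into an honest decomposition of $X(\beta)$ by pairwise disjoint pieces $(\C^\ast)^t \times \C^c$, the family $\{\w_\mathfrak{v}\}$ is itself a decomposing tuple for $X(\beta)$.

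By construction each $\w_i$ is inductive equivalent to exactly one $\w_\mathfrak{v}$ from the Deodhar family, so \cref{lma:decomp_tuple_indep_equiv} allows me to swap in $(\w_1,\ldots,\w_k)$ for $\{\w_\mathfrak{v}\}$ without changing the decomposition of $X(\beta)$. To finish the verification that $(\w_1,\ldots,\w_k)$ satisfies all three conditions of \cref{thm:weave_decomp}, the unique maximal-dimensional piece must be identified: by the last sentence of \cref{prop:normal_ruling_inductive}, the weave $\mathfrak{A}(\mathfrak{r}_{\rho_0})$ associated with the maximally switched ruling $\rho_0$ (provided by \cref{prop:max_ruling}) is a right inductive Demazure weave with no cups and exactly $\ell(\beta) - \binom{n}{2}$ trivalent vertices, giving the piece $(\C^\ast)^{\ell(\beta) - \binom{n}{2}}$.

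The main obstacle is ensuring that the bijection between normal rulings and distinguished sequences obtained by composing \cref{lma:rulings_right_induct_weaves} and \cref{lma:right_inductive_distinguished_sequences} is truly compatible with the two independent inductive constructions of right simplifying weaves — one processes letters of $\beta$ from left to right using the ruling data (switch/departure/return), the other processes letters using the distinguished sequence data (trivalent/cup/vertical edge). I expect this compatibility to be essentially automatic because both inductions branch into the same three cases at each step, but it should be checked by a short case analysis showing that at each letter of $\beta$ the two procedures produce the same local move, hence the resulting weaves lie in the same inductive equivalence class.
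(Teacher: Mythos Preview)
Your proof is correct but takes a longer route than the paper's. The paper's argument is shorter: after reducing via \cref{rmk:decomposing_tuples_are_the_same} to showing that $(\mathfrak{A}(\mathfrak{r}_{R,1}),\ldots,\mathfrak{A}(\mathfrak{r}_{R,k}))$ is a decomposing tuple, and using \cref{lma:rulings_right_induct_weaves} to identify normal rulings with inductive equivalence classes of right simplifying weaves $\beta\to\Delta$, the paper directly invokes \cite[Theorem~5.35]{CGGS1} for the fact that \emph{any} full set of representatives of these inductive equivalence classes is a decomposing tuple, and then \cref{lma:decomp_tuple_indep_equiv} for independence of the choice. You instead rederive this last fact by passing through the Deodhar decomposition (\cref{thm:deodhar}, \cref{thm:deodhar=weave}) and the bijection with distinguished sequences (\cref{lma:right_inductive_distinguished_sequences}). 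This buys you a proof that is more self-contained within the paper, at the cost of extra length.

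Your final paragraph about compatibility is an unnecessary worry. You are not matching two independent constructions: both \cref{lma:rulings_right_induct_weaves} and \cref{lma:right_inductive_distinguished_sequences} land in the \emph{same} set of inductive equivalence classes of right simplifying weaves, so your $\w_i$ and the corresponding Deodhar weave $\w_{\mathfrak{v}}$ are automatically representatives of the same class by construction of the composed bijection. No separate case analysis is required.
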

    \begin{proof}
       By \cref{rmk:decomposing_tuples_are_the_same} it suffices to prove that there exists a tuple $(\mathfrak{r}_{1},\ldots,\mathfrak{r}_{k})$ of right inductive morphisms such that the tuple $(\mathfrak{A}(\mathfrak{r}_{1}),\ldots,\mathfrak{A}(\mathfrak{r}_{k}))$ is a decomposing tuple for $X(\beta)$. By \cref{lma:rulings_right_induct_weaves} $\rho \mapsto \mathfrak{A}(\mathfrak{r}_\rho)$ defines a bijection between normal rulings of $\La(\beta\Delta)$ and inductive equivalence classes of right simplifying weaves $\beta\to \Delta$. By \cite[Theorem 5.35]{CGGS1}, any tuple of representatives of inductive equivalence classes of right simplifying weaves $\beta\to \Delta$ is a decomposing tuple, and \cref{lma:decomp_tuple_indep_equiv} shows that the decomposition is independent of the choice of representatives, therefore finishing the proof.
    \end{proof}

    \begin{remark}\label{rmk:non_unique_weave}
        The collection of right simplifying weaves and right inductive morphisms constructed in \cref{lma:rulings_right_induct_weaves,cor:rulings_decomposing_tuple} is only unique up to inductive equivalence. Namely, we do not specify a sequence of hexavalent or distant crossing moves before adding a trivalent or cup move. We expect that up to weave equivalences in the sense of \cite[Section 4.2]{CGGS1}, the collection of right simplifying weaves corresponding to the normal rulings is unique, see \cref{qst:inductive_equiv_implies_weave_equiv}.
    \end{remark}
    
    \begin{example}\label{ex:right_inductive_mcs}
        Consider $\beta = (\sigma_1^2\sigma_2^2)^2 \in \Br_3^+$, and consider the normal ruling $\rho$ of $\La(\beta\Delta)$ as depicted in \cref{fig:normal_rulings_ex}.
        \begin{figure}[!htb]
            \centering
            \includegraphics{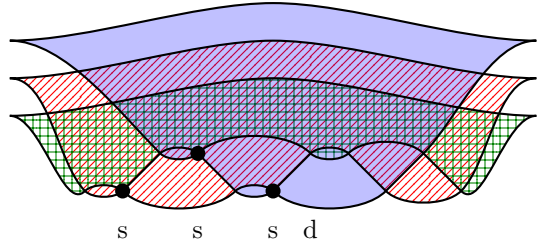}
            \caption{A normal ruling $\rho$ of the $(-1)$-closure of $(\sigma_1^2\sigma_2^2)^2\Delta$. The crossings marked with points represent the switches.}
            \label{fig:normal_rulings_ex}
        \end{figure}
        A right inductive morphism $\mathfrak{r} \colon (\sigma_1^2\sigma_2^2)^2 \to \Delta$ in the ruling category associated with the normal ruling $\rho$ is depicted in \cref{fig:ex_right_inductive_mcs}. A right simplifying weave $\mathfrak{A}(\mathfrak{r}_\rho)$ is depicted in \cref{fig:ex_right_inductive_weave}.
        \begin{figure}[!htb]
            \centering
            \includegraphics{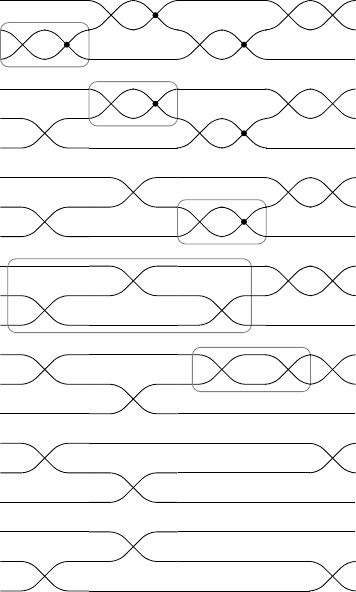}
            \caption{A right inductive morphism $\mathfrak{r}_\rho \colon \beta \to \Delta$ in the ruling category associated to the normal ruling $\rho$ of $\La(\beta\Delta)$ shown on the top braid by the crossings with marked points representing the switches. In each step, the gray rectangle indicates which MCS braid move is performed.}
            \label{fig:ex_right_inductive_mcs}
        \end{figure}
        \begin{figure}[!htb]
            \centering
            \includegraphics{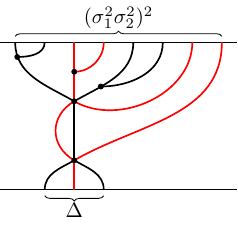}
            \caption{The weave $\mathfrak{A}(\mathfrak{r}_\rho)$ is a right simplifying weave on $\beta$.}
            \label{fig:ex_right_inductive_weave}
        \end{figure}
    \end{example}
        
    For a morphism $\mathfrak{r} \colon \beta \to \Delta$ in $\mathfrak{R}_n$ with $c$ cup moves, $t$ trivalent moves and underlying normal ruling $\rho$, we now define a map
    \[
    \psi_{\mathfrak{r}} \colon \C^{c} \times (\C^\ast)^{t} \longrightarrow \widehat{\MCS}{}^\rho(\La(\beta\Delta)).
    \]
    The idea is to perform the MCS (braid) moves in $A(\mathfrak{r})$ in reverse order as in the proof of \cref{lma:functoriality}(2), but for the backwards MCS trivalent move (\cref{fig:mcs_trivalent_backwards}) we do not move dashed handleslide marks to the right to leave behind an A-form MCS. Instead, keeping the dashed handleslide marks next to the crossings after trivalent moves leaves behind an SR-form MCS with respect to $\rho$; see \cref{fig:mcs_backwards_sr}.
    
    \begin{definition}[MCS-SR trivalent move]
        An \emph{MCS-SR trivalent move} is the local modification of the front diagram of an MCS defined by \cref{fig:mcs-sr_trivalent}.
    \end{definition}

    \begin{figure}[!htb]
        \centering
        \includegraphics{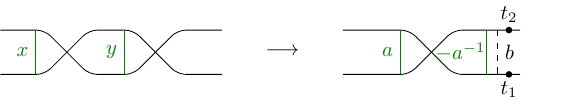}
        \caption{The MCS-SR trivalent move.}
        \label{fig:mcs-sr_trivalent}
    \end{figure}

    \begin{definition}[Associated SR-form MCS sequence]
        Let $\mathfrak{r} \colon \beta \to \beta'$ be a morphism in $\mathfrak{R}_n$ with braid sequence $(\beta_1,\ldots,\beta_q)$. The \emph{associated SR-form MCS sequence} is the sequence $SR(\mathfrak{r}) \coloneqq (SR(\beta_1),\ldots,SR(\beta_q))$ of formal SR-form MCSs such that for each $j\in \{2,\ldots,q\}$ exactly one of the following holds:
        \begin{enumerate}
            \item $SR(\beta_{j-1})$ is related to $SR(\beta_j)$ via an MCS distant crossings move or a MCS hexavalent move.
            \item $SR(\beta_{j-1})$ is related to $SR(\beta_j)$ via an MCS-SR trivalent move or a MCS cup move followed by some number of MCS moves that takes the (newly created) dashed handleslide marks and the marked points to the right of the rightmost crossing of $\beta_j$.
        \end{enumerate}
    \end{definition}

    \begin{lemma}
        The monodromy of each of the MCS-SR trivalent move is equal to the identity if and only if each handleslide mark outside of the local modification are equal to the corresponding handleslide mark after the move, and $a = x+y^{-1}$, $b = (x+y^{-1})^{-1}-y$, $t_1 = -y^{-1}$, and $t_2 = y$; see \cref{fig:mcs-sr_trivalent_monoless}. \qed
    \end{lemma}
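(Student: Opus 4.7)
The plan is to compute the monodromy of the MCS-SR trivalent move directly from the matrix formalism of \cref{dfn:monodromy_mcs}, following the same strategy as the proof of \cref{lma:monodromy_trivalent_etc_id}. The MCS-SR trivalent move differs from the MCS trivalent move (\cref{fig:mcs_trivalent}) only in that, to produce an SR-form output rather than an A-form output, the dashed handleslide is left immediately adjacent to the new crossing instead of being transported past the marked points. Concretely, this places an additional factor corresponding to the SR-form handleslide $-a^{-1}$ next to the new crossing, which is precisely the source of the discrepancy between the value $b = -y$ of the MCS trivalent move and the value $b = a^{-1}-y$ claimed here.

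First I would read off from \cref{fig:mcs-sr_trivalent} the ordered sequence of matrices contributing to each side of the move, together with the ambient monodromy factors $\mu_L, \mu_L', \mu_R, \mu_R'$ coming from the handleslide marks outside of the local modification. The left side contributes $\mu_R B_i(y) B_i(x) \mu_L$, exactly as on the left of the MCS trivalent move, which in the rows and columns indexed by strands $i, i+1$ acts as the $2 \times 2$ block $\bigl(\begin{smallmatrix} 1 & x \\ y & 1+xy \end{smallmatrix}\bigr)$. The right side is a product involving $B_i(a)$, $D_i(t_1)D_{i+1}(t_2)$, $S_{i,i+1}(b)$, and the additional SR-form handleslide $S_{i,i+1}(-a^{-1})$, arranged in the order dictated by \cref{fig:mcs-sr_trivalent}.

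Next I would equate the two monodromies and solve the resulting $2 \times 2$ system entry-by-entry. The bottom row immediately gives $t_2 = y$ and $a = x + y^{-1}$; substituting into the top row then forces $t_1 = -y^{-1}$ (as in the MCS trivalent move) and $b = (x+y^{-1})^{-1} - y$. The external factors $\mu_L, \mu_L', \mu_R, \mu_R'$ cancel against each other exactly when the corresponding handleslide marks agree before and after the move, yielding the ``if and only if'' quantifier in the statement. The converse direction is verified by direct substitution, as the conditions uniquely determine the entries of the matrix product.

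The proof is essentially a bookkeeping calculation; the only subtlety lies in correctly interpreting \cref{fig:mcs-sr_trivalent} so that the SR-form handleslide $-a^{-1}$ is placed in the correct position within the matrix product. This placement is what distinguishes the MCS-SR trivalent move from the MCS trivalent move of \cref{lma:monodromy_trivalent_etc_id} and produces the shifted value of $b$.
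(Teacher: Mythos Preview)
Your proposal is correct and is exactly the computation the paper has in mind: the lemma is stated with a terminal \qed and no proof, just as for \cref{lma:monodromy_trivalent_etc_id}, so the intended argument is precisely the direct $2\times 2$ matrix calculation you outline. Your identification of the extra SR-form handleslide $S_{i,i+1}(-a^{-1})$ as the sole difference from the MCS trivalent move, and your derivation $b = a^{-1} - y$ from it, are both correct.
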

    
    \begin{figure}[!htb]
        \centering
        \includegraphics{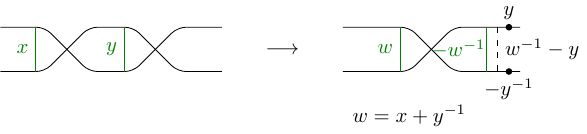}
        \caption{MCS-SR trivalent move with trivial monodromy.}
        \label{fig:mcs-sr_trivalent_monoless}
    \end{figure}

    \begin{figure}[!htb]
        \centering
        \hspace*{6mm}\includegraphics{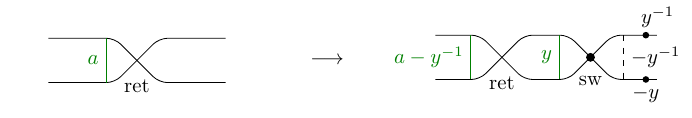}
        \caption{A backwards MCS-SR trivalent move with trivial monodromy used to define the map $\psi_\mathfrak{r}$.}
        \label{fig:mcs_backwards_sr}
    \end{figure}
    
    \begin{notation}
    Let $\mathfrak{r} \colon \beta \to \Delta$ be a morphism in $\mathfrak{R}_n$ with $c$ cups, $t$ trivalent moves and underlying ruling $\rho$. Recall the notation used in \eqref{eq:total_sp_braids}. We define
    \[
    \mathfrak{M}^\rho(\mathfrak{r}) \subset \mathbb{V}^\mathfrak{r} \coloneqq \C^{L} \times \left(\C^{\binom n2} \times (\C^\ast)^n \right)^R
    \]
    to be the algebraic subvariety of all $\C$-valued associated SR-form MCS sequences $SR(\mathfrak{r})$ with trivial monodromy.
    \end{notation}

    \begin{lemma}\label{lma:inj_map_sr-form}
        Let $\mathfrak{r} \colon \beta \to \Delta$ be a morphism in $\mathfrak{R}_n$ with $c$ cup moves, $t$ trivalent moves and underlying normal ruling $\rho$. There is an injective composition
        \[
        \psi_{\mathfrak{r}} \colon \C^{c} \times (\C^\ast)^{t} \overset{\cong}{\longleftarrow} \mathfrak{M}^\rho(\mathfrak{r}) \longrightarrow \widehat{\MCS}{}^\rho(\La(\beta\Delta)).
        \]
    \end{lemma}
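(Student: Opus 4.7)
The strategy is to adapt the proof of \cref{lma:functoriality}(2) to the SR-form setting, substituting the MCS-SR trivalent move for the MCS trivalent move so that the topmost slice of the reconstructed sequence is an SR-form MCS for $\rho$ rather than an A-form MCS.

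Traverse the braid sequence $(\beta = \beta_1, \ldots, \beta_q = \Delta)$ of $\mathfrak{r}$ from the bottom up. The SR-form MCS $SR(\Delta)$ is uniquely determined: every crossing of the $\Delta$-portion of $\La(\Delta\Delta)$ is a departure of $\rho$ and hence carries no handleslide mark, while the condition that $\mu(\Delta)w_0$ be upper triangular is automatic. Now proceed upward one reverse MCS braid move at a time. By \cref{lma:monodromy_trivalent_etc_id}, the handleslide data before a reverse MCS distant crossings or MCS hexavalent move is uniquely determined by the data after it. A reverse MCS-SR trivalent move (\cref{fig:mcs_backwards_sr,fig:mcs-sr_trivalent_monoless}) contributes one free parameter $y \in \C^\ast$, and a reverse MCS cup move contributes one free parameter $x \in \C$. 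Pushing the newly created dashed handleslide marks and marked points to the right of the rightmost crossing of the current braid uses only MCS moves with trivial monodromy, which introduce no further parameters by \cref{cor:mcs_matrix}. This yields the isomorphism $\mathfrak{M}^\rho(\mathfrak{r}) \cong \C^c \times (\C^\ast)^t$.

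By \cref{dfn:ruling_category}, the trivalent (resp.\@ cup) moves of $\mathfrak{r}$ are directly connected to switches (resp.\@ departures) of $\rho$, so at the top slice of the reconstruction the handleslide marks introduced by the reverse trivalent and cup moves land precisely at the switches and departures of $\rho$, with the signs and coefficients prescribed by \cref{dfn:sr-form_mcs}. Thus the topmost MCS is a formal SR-form MCS on $\La(\beta\Delta)$ with respect to $\rho$, and post-composition with the quotient $\MCS^\rho(\La(\beta\Delta)) \twoheadrightarrow \widehat{\MCS}{}^\rho(\La(\beta\Delta))$ defines the map $\psi_\mathfrak{r}$.

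The main obstacle is injectivity, since a priori two different parameter choices could yield SR-form MCSs that are equivalent via MCS moves lying outside our distinguished reconstruction sequence. To settle this, compose $\psi_\mathfrak{r}$ with the inverse of the parametrization $\eta_\rho$ of \cref{lma:sr-form_equiv_factors}. The $y$-parameters of the reverse trivalent moves agree (up to the sign conventions of \cref{dfn:sr-form_mcs}) with the handleslide coefficients at the switches of $\rho$ that constitute the $(\C^\ast)^{s(\rho)}$-factor of $\eta_\rho$, and the $x$-parameters of the reverse cup moves encode, via the upside-down reflection of \cref{lma:rulings_upside_down} and \cref{cor:sr_bijection_upside-down} implicit in the proof of \cref{lma:sr-form_equiv_factors}, the $\C^{r(\rho)-\binom{n}{2}}$-factor. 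Since these coordinates separate points of $\widehat{\MCS}{}^\rho(\La(\beta\Delta))$, distinct parameter tuples give rise to distinct equivalence classes, yielding injectivity of $\psi_\mathfrak{r}$.
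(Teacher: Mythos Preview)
Your construction of the isomorphism $\mathfrak{M}^\rho(\mathfrak{r}) \cong \C^c \times (\C^\ast)^t$ via backward MCS-SR moves, and of the map to $\widehat{\MCS}{}^\rho(\La(\beta\Delta))$, matches the paper's brief proof, which simply declares the argument ``completely analogous to the proof of \cref{lma:functoriality}, except that the definition of $\psi_{\mathfrak{r}}$ uses the backwards MCS-SR trivalent move.''

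Your injectivity argument, however, contains a genuine error. You assert that the $y$-parameters of the reverse trivalent moves ``agree (up to the sign conventions of \cref{dfn:sr-form_mcs})'' with the switch coefficients constituting the $(\C^\ast)^{s(\rho)}$-factor of $\eta_\rho$. This is false: already in \cref{ex:hopf} the map $f_{\mathfrak{r}_\rho} = \eta_\rho^{-1} \circ \psi_{\mathfrak{r}_\rho}$ sends $(y_1, y_2)$ to $(y_2, -y_1 y_2^{-2})$, a nontrivial rational change of coordinates rather than a sign adjustment. The point is that each reverse trivalent or cup move creates dashed handleslide marks and marked points that must then be pushed to the right through the remaining crossings; this pushing alters the handleslide coefficients at later switches, so the $y$-parameter chosen at one step does not survive unchanged as a switch coefficient at the top. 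Your claim that the $x$-parameters of cups encode the $\C^{r(\rho)-\binom{n}{2}}$-factor via the upside-down reflection is likewise unsupported: cups correspond to departures of $\rho$, which carry no free coefficient in an SR-form MCS, so there is no direct coordinate to read off at the top slice.

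The paper's intended injectivity argument is the direct analogue of \cref{lma:functoriality}(1): the \emph{forward} trivial-monodromy MCS-SR braid moves are deterministic, so the top-slice SR-form MCS uniquely determines the entire point of $\mathfrak{M}^\rho(\mathfrak{r})$. That the resulting map to the quotient $\widehat{\MCS}{}^\rho$ (rather than $\MCS^\rho$) is still injective is what the explicit forward inverse in \cref{lma:chart_to_mcs_inverse} ultimately secures.
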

    \begin{proof}
        The proof is completely analogous to the proof of \cref{lma:functoriality}, except that the definition of $\psi_{\mathfrak{r}}$ uses the backwards MCS-SR trivalent move (see \cref{fig:mcs_backwards_sr}) as opposed to the backwards MCS trivalent move use in the definition of $\phi_{\mathfrak{m}}$ in \cref{lma:functoriality}.
    \end{proof}

    \begin{lemma}\label{lma:SR-to-A-braid-category}
    Let $\mathfrak{r} \colon \beta \to \Delta$ be a morphism in $\mathfrak{R}_n$ with underlying ruling $\rho$. Then there is a bijection 
    \[\mathfrak{M}^\rho(\mathfrak{r}) \overset{\cong}{\longrightarrow} \mathfrak{M}(\mathfrak{r}).\]
    \end{lemma}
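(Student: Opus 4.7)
The plan is to construct the bijection by applying the A-to-SR form algorithm of Henry--Rutherford (see \cref{rmk:A_SR_algorithm}) locally at each braid $\beta_j$ in the braid sequence $(\beta_1,\ldots,\beta_q)$ of $\mathfrak{r}$, and then verify that this local conversion is compatible with the trivial monodromy conditions across MCS braid moves.

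First, I would observe that the underlying normal ruling $\rho$ of $\Lambda(\beta\Delta) = \Lambda(\beta_1\Delta)$ propagates through the sequence: because $\mathfrak{r}$ is a morphism in $\mathfrak{R}_n$, each MCS braid move in $\mathfrak{r}$ is compatible with $\rho$ in the sense that trivalent moves occur at switches of $\rho$ and cup moves occur at departures of $\rho$. Consequently, for each $j\in\{1,\ldots,q\}$ there is an induced normal ruling $\rho_j$ of the appropriate nearly plat diagram associated with $\beta_j$ (obtained from $\rho$ by the ruling propagation through returns, switches, and departures). In particular, $\rho_1=\rho$ and all the switches/returns/departures of each $\rho_j$ are inherited from $\rho$.

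Next, I would define the forward map $\Phi\colon \mathfrak{M}(\mathfrak{r})\to \mathfrak{M}^\rho(\mathfrak{r})$ as follows. A point of $\mathfrak{M}(\mathfrak{r})$ is the associated A-form MCS sequence $A(\mathfrak{r})=(A(\beta_1),\ldots,A(\beta_q))$ with values assigned to all handleslide marks, dashed handleslide marks, and marked points, satisfying all the trivial monodromy conditions in \cref{dfn:variety_of_morphism}. Apply the A-to-SR form algorithm from \cref{rmk:A_SR_algorithm} to each $A(\beta_j)$ using the ruling $\rho_j$, producing an SR-form MCS $SR(\beta_j)$. The collection $(SR(\beta_1),\ldots,SR(\beta_q))$ equipped with the values coming from the algorithm yields the associated SR-form MCS sequence $SR(\mathfrak{r})$. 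The inverse map $\Psi\colon \mathfrak{M}^\rho(\mathfrak{r})\to \mathfrak{M}(\mathfrak{r})$ is defined analogously using the SR-to-A form algorithm, also described in \cref{rmk:A_SR_algorithm}. Since these two algorithms are mutually inverse bijections at each individual braid (\cref{thm:one-to-one_A_SR}), $\Phi$ and $\Psi$ are mutually inverse set-theoretically.

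The main step, and the main obstacle, is to verify that $\Phi$ and $\Psi$ land in the correct varieties, i.e., that the trivial monodromy condition for MCS braid moves in $A(\mathfrak{r})$ corresponds exactly to the trivial monodromy condition for the MCS braid moves (with the MCS trivalent move replaced by an MCS-SR trivalent move) in $SR(\mathfrak{r})$. Concretely, I would check this on each of the four types of MCS braid moves: For distant crossings and hexavalent moves, A-form and SR-form differ by handleslide marks that sit entirely between switches/returns/departures, and the trivial monodromy conditions of \cref{lma:monodromy_trivalent_etc_id}(1),(2) are preserved by sliding handleslides via \cref{cor:mcs_matrix}. For the cup move, A-form and SR-form agree at a departure (by \cref{dfn:sr-form_mcs}, a departure carries only a single handleslide with coefficient $0$, equivalent to no handleslide), so the trivial monodromy condition is identical. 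The essential check is at a trivalent move: here the A-form performs a backwards MCS trivalent move (\cref{fig:mcs_trivalent_backwards}) and then pushes the created dashed handleslide to the right, whereas the SR-form performs a backwards MCS-SR trivalent move (\cref{fig:mcs_backwards_sr}), leaving additional handleslides near the switch as prescribed by \cref{dfn:sr-form_mcs}. Comparing \cref{fig:mcs_trivalent_monoless} with \cref{fig:mcs-sr_trivalent_monoless} and tracking the sliding of handleslides through subsequent MCS moves (which has trivial monodromy by \cref{cor:mcs_matrix}), the two trivial monodromy conditions are equivalent and determined by the same free parameter $y\in\C^\ast$. Composing these local compatibilities inductively along the braid sequence then shows that $\Phi$ is a well-defined bijection $\mathfrak{M}(\mathfrak{r})\cong \mathfrak{M}^\rho(\mathfrak{r})$.
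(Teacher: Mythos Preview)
Your proposal is correct and follows the same underlying idea as the paper: use the SR-to-A (or A-to-SR) algorithm of \cref{rmk:A_SR_algorithm} at each $\beta_j$ to define the map, and then argue that trivial monodromy is preserved.

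The paper's proof is considerably more economical, however. Rather than checking compatibility case-by-case across each type of MCS braid move, the paper observes a single global fact: the SR-to-A algorithm is composed entirely of MCS moves from \cref{fig:mcs_moves,fig:mcs_moves_marked_pts}, each of which has trivial monodromy by \cref{cor:mcs_matrix}. Combined with multiplicativity of monodromy (\cref{lma:monodromy_comp_prod}), this means $\mu(SR(\beta_j)) = \mu(A(\beta_j))$ for every $j$. Since the trivial monodromy condition for an MCS braid move between $\beta_{j-1}$ and $\beta_j$ is nothing more than the equality $\mu(C_{j-1}) = \mu(C_j)$ of the monodromy matrices, it is automatically preserved under the conversion. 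Your separate treatment of distant crossing, hexavalent, cup, and trivalent moves---and in particular the explicit comparison of \cref{fig:mcs_trivalent_monoless} with \cref{fig:mcs-sr_trivalent_monoless}---is therefore unnecessary: all four cases are handled uniformly by the observation that both sides of the conversion have the same monodromy matrix.
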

    \begin{proof}
        The map $\mathfrak{M}^\rho(\mathfrak{r}) \to \mathfrak{M}(\mathfrak{r})$ is defined by sending $SR(\mathfrak{r})$ to $A(\mathfrak{r})$ via the SR-to-A-form algorithm described in \cref{thm:one-to-one_A_SR,rmk:A_SR_algorithm}. It is a consequence of \cref{lma:monodromy_comp_prod,lma:mcs_braid_matrix} that the trivial monodromy condition is preserved.
    \end{proof}

    \begin{remark}\label{rmk:ruling_map_factor_SR_to_A}
        We can also define the map $\psi_\mathfrak{r}$ by first using the associated sequences of A-form MCS and then using the A-to-SR-form algorithm. In fact, $\psi_\mathfrak{r}$ factors as
        \[
            \mathfrak{M}(\mathfrak{r}) \longrightarrow \widehat{\MCS}{}^A(\La(\beta\Delta)) \overset{\cong}{\longrightarrow} \bigsqcup\nolimits_{\rho \in \mathfrak{R}(\Lambda(\beta\Delta))}\widehat{\MCS}{}^\rho(\La(\beta\Delta)),
        \]
        where the first map is defined similarly to $\psi_{\mathfrak{r}}$ except that we \emph{do} move the dashed handleslide marks to the right after performing a backwards MCS trivalent move.
    \end{remark}
    
    \begin{proposition}\label{lma:chart_to_mcs_inverse}
        The map $\psi_\mathfrak{r}$ is bijective for any morphism $\mathfrak{r}\colon \beta\to\Delta$ in $\mathfrak{R}_n$.
    \end{proposition}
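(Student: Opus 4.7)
Since $\psi_\mathfrak{r}$ is already injective by \cref{lma:inj_map_sr-form}, the plan is to establish surjectivity. My approach has two steps: first, match the dimensions of the source and target, and second, invoke a disjoint-union argument comparing the decompositions of $X(\beta)$ and $\Aug(\La(\beta\Delta))$.

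For the dimension count, let $t$ and $c$ denote the numbers of trivalent and cup moves in $\mathfrak{r}$, and let $\rho$ be its underlying ruling. The definition of $\mathfrak{R}_n$ ensures $t \leq s(\rho)$ and $c \leq d(\rho) - \binom{n}{2}$. Combining the identity $\ell(\beta) - t - 2c = \binom{n}{2}$ (valid because $\mathfrak{A}(\mathfrak{r})\colon \beta \to \Delta$ is right simplifying, so trivalent and cup moves are the only length-changing operations) with $\ell(\beta) = s(\rho) + r(\rho) + d(\rho) - \binom{n}{2}$ forces the inequalities to be equalities: $t = s(\rho)$, $c = d(\rho) - \binom{n}{2}$, and as a consequence $r(\rho) = d(\rho)$. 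Thus $c = r(\rho) - \binom{n}{2}$, which by \cref{lma:sr-form_equiv_factors} matches the dimension of the target $\widehat{\MCS}{}^\rho(\La(\beta\Delta))$.

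Next I would compare the decompositions. By \cref{cor:rulings_decomposing_tuple}, taking right inductive morphisms $\mathfrak{r}_{\rho_i}$ for every normal ruling of $\La(\beta\Delta)$ yields a decomposing tuple, so $X(\beta) = \bigsqcup_i \phi_{\mathfrak{r}_{\rho_i}}(\C^{c_i} \times (\C^\ast)^{t_i})$. Under the isomorphism $X(\beta) \cong \Aug(\La(\beta\Delta))$ of \cref{thm:-1=braid vty_new}, \cref{lma:SR-to-A-braid-category} together with \cref{rmk:ruling_map_factor_SR_to_A} identifies $\phi_{\mathfrak{r}_{\rho_i}}$ (via the isomorphism $\mathfrak{M}(\mathfrak{r}_{\rho_i}) \cong \mathfrak{M}^{\rho_i}(\mathfrak{r}_{\rho_i})$) with $\psi_{\mathfrak{r}_{\rho_i}}$ composed with the inclusion $\widehat{\MCS}{}^{\rho_i} \hookrightarrow \widehat{\MCS}{}^A \cong \Aug(\La(\beta\Delta))$. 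Since $\Aug(\La(\beta\Delta)) = \bigsqcup_i \widehat{\MCS}{}^{\rho_i}$ by \cref{thm:HenryRutherford_dg-homotopy}, and each piece on the left is contained in the matching piece on the right, set-theoretic disjointness forces equality piece by piece; thus $\psi_{\mathfrak{r}_{\rho_i}}$ is surjective for each $\rho_i$. For a general $\mathfrak{r} \in \mathfrak{R}_n$ with underlying ruling $\rho$, the weaves $\mathfrak{A}(\mathfrak{r})$ and $\mathfrak{A}(\mathfrak{r}_\rho)$ are inductively equivalent (they share the same sequence of trivalent and cup moves prescribed by $\rho$), so by \cref{lma:decomp_tuple_indep_equiv} the images of $\psi_\mathfrak{r}$ and $\psi_{\mathfrak{r}_\rho}$ agree, giving surjectivity of $\psi_\mathfrak{r}$ in general.

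The main obstacle will be rigorously justifying that the isomorphism $X(\beta) \cong \Aug(\La(\beta\Delta))$ sends the weave-decomposition piece $\phi_{\mathfrak{r}_\rho}(\C^c \times (\C^\ast)^t)$ into $\widehat{\MCS}{}^\rho$ rather than some other $\widehat{\MCS}{}^{\rho'}$. This is precisely where \cref{rmk:ruling_map_factor_SR_to_A} plays the key role: the factorization of $\psi_\mathfrak{r}$ through $\widehat{\MCS}{}^A$ together with the A-to-SR algorithm of \cite{HenryRutherford15} ensures the SR-form MCSs produced are with respect to the underlying ruling $\rho$, so that the containment $\phi_{\mathfrak{r}_\rho}(\C^c \times (\C^\ast)^t) \subseteq \widehat{\MCS}{}^\rho$ indeed holds as required.
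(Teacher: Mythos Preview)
Your approach is genuinely different from the paper's. The paper constructs an explicit inverse to $\psi_\mathfrak{r}$ by running the associated SR-form MCS sequence \emph{forward}: starting from any $[C]\in\widehat{\MCS}{}^\rho(\La(\beta\Delta))$, it applies the MCS-SR moves (hexavalent, distant, trivalent, cup) dictated by $\mathfrak{r}$ and checks directly that each move, under the trivial monodromy constraint, preserves SR-form and is uniquely determined by its input. The only delicate step is the forward MCS-SR trivalent move, where one must verify that the new dashed handleslides and marked points interact correctly with the switch handleslides $z,-z^{-1}$ so that SR-form is maintained. This yields a concrete, computable inverse (cf.\ \cref{ex:hopf}).

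Your global partition argument is elegant and essentially previews the content of \cref{thm:hr_and_weave_decomps}: you show $\alpha(\operatorname{Im}\phi_{\mathfrak{r}_{\rho_i}})\subseteq\widehat{\MCS}{}^{\rho_i}$ and then use that both sides exhaust $\Aug(\La(\beta\Delta))$ as disjoint unions to force equality. In effect you are proving the proposition and the main comparison theorem simultaneously rather than sequentially. This works cleanly for the \emph{right inductive} morphisms $\mathfrak{r}_{\rho_i}$ in a decomposing tuple. The dimension count in your first paragraph is correct but superfluous once the partition argument is in place.

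The gap is in your last step, extending to an arbitrary $\mathfrak{r}\in\mathfrak{R}_n$. The assertion that $\mathfrak{A}(\mathfrak{r})$ and $\mathfrak{A}(\mathfrak{r}_\rho)$ are inductively equivalent does not follow from \cref{dfn:ruling_category}: that definition only requires that the top-right crossing of each trivalent or cup be directly connected to a switch or departure of some ruling $\rho$, but it does not force $\mathfrak{A}(\mathfrak{r})$ to be a right simplifying weave in the sense of \cref{dfn:inductive_simplifying_weave}, nor does it pin down the order of trivalent and cup moves as the map $p$ of \cref{dfn:inductive_equiv} would require. Moreover, \cref{lma:decomp_tuple_indep_equiv} concerns the images of $\phi_\w$ in $X(\beta)$, not of $\psi_\mathfrak{r}$ in $\widehat{\MCS}{}^\rho$; transporting that conclusion to $\psi$ again needs the commutativity $\alpha\circ\phi_\mathfrak{r}=\nu\circ\psi_\mathfrak{r}$ for the specific $\mathfrak{r}$, which you have not established beyond the right inductive case. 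The paper's direct construction avoids this issue entirely, since the forward inverse is built move-by-move and requires only that the switches and departures not be involved in prior braid moves, exactly the $\mathfrak{R}_n$ condition.
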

    \begin{proof}
        Consider the sequence of braids $(\beta \eqqcolon\beta_1, \dots, \beta_q \coloneqq \Delta)$. Define the inverse map of $\psi_\mathfrak{r}$ by performing the MCS-SR moves in the forwards direction from $SR(\beta)$ to get the associated SR-form MCS sequence $SR(\mathfrak{r})$. By \cref{cor:mcs_matrix} and \cref{lma:monodromy_trivalent_etc_id}, this defines a well-defined inverse to $\psi_{\mathfrak{r}}$ for all MCS braid moves except for an MCS-SR trivalent move, since every other MCS braid move with trivial monodromy uniquely determines the handleslide marks after the moves. Moreover, by \cref{dfn:ruling_category}, the switches and departures are not involved in any braid operations, so an SR-form MCS is sent canonically to an SR-form MCS.

        For the MCS-SR trivalent move $\beta_j \to \beta_{j+1}$, we need to verify that the inverse map is well-defined (1)~when performing the move from an SR-form MCS and (2)~when moving dashed handleslide marks to the right and adjusting the green handleslide marks to an SR-form MCS. 
        
        For step (1), from an SR-form MCS $SR(\beta_j)$ with handleslide marks $x$ at the return and $y, -y^{-1}$ at the switch (where $y$ is a green handleslide and $-y^{-1}$ is a dashed handleslide), we get the unique MCS $SR(\beta_{j+1})$ with a green handleslide mark $x + y^{-1}$, a dashed handleslide mark $-y$, and marked points $y, -y^{-1}$, as well as the dashed handleslide mark $-y^{-1}$ which persists. By \cref{lma:mcs_braid_matrix}, the extra dashed handleslide mark $-y$ cancels out with $-y^{-1}$ after passing through the marked points; i.e.~no new dashed handleslide marks appear.
        
        For step (2), we only need to move the newly created marked points to the right and adjust the green handleslide marks to SR-form again. By \cref{lma:mcs_braid_matrix}, moving marked points changes handleslide marks by scaling. For handleslide marks that are not switches, this does not impose any extra non-vanishing conditions to take inverses. For the handleslide marks $z$ and $-z^{-1}$ at the switches, moving the marked point $t$ from the top (bottom) left strand to the bottom (top) right strand changes the handleslide marks to $tz$ and $-t^{-1}z^{-1}$ ($t^{-1}z$ and $-tz^{-1}$), which remains an SR-form MCS. Thus, the inverse map is also well-defined for an MCS-SR trivalent move and thus the inverse of $\psi_\mathfrak{r}$ is globally well defined.
    \end{proof}
    Recall from \cref{lem:sr_form_bijection_equiv,lma:sr-form_equiv_factors} that there is an isomorphism
    \begin{equation}\label{eq:sr_form_iso}
        \eta_\rho \colon \C^{r(\rho)-\binom n2} \times (\C^\ast)^{s(\rho)} \overset{\cong}{\longrightarrow} \widehat{\MCS}{}^\rho(\La(\beta\Delta)).
    \end{equation}
    \begin{remark}\label{rmk:iso_rainbow}
         If $\beta = \Delta\gamma$ for some $\gamma \in \Br_n^+$, the isomorphism \eqref{eq:sr_form_iso} is given by associating to $(z,x) \in \C^{r(\rho)-\binom n2} \times (\C^\ast)^{s(\rho)}$ the SR-form MCS with handleslide marks labeled by $z_i$ near returns of $\rho$ corresponding to crossings of $\gamma$ and handleslide marks labeled by $x_i$ and $-x_i^{-1}$, respectively, at switches. This defines an SR-form MCS after adding marked points with appropriate values, depending on $x_i$; see \cref{lem:a-form_bijection_equiv}. Furthermore, observe that the inverse to the isomorphism $\eta_\rho$ in the case $\beta = \Delta\gamma$ is defined by picking out the handleslide marks of the returns of $\rho$ corresponding to crossings of $\gamma$, and the switches of $\rho$.
    \end{remark}
    
    \begin{notation}\label{notn:rational_coord_change}
        Suppose $\mathfrak{r}\colon \beta \to \Delta$ is a morphism in $\mathfrak{R}_n$ with $c$ cup moves and $t$ trivalent moves. We denote by $f_{\mathfrak{r}}$ the composition
        \[
         \C^{c} \times (\C^\ast)^{t} \overset{\cong}{\longrightarrow} \mathfrak{M}^\rho(\mathfrak{r}) \longrightarrow \widehat{\MCS}{}^\rho(\La(\beta\Delta)) \overset{\eta_\rho^{-1}}{\longrightarrow} \C^{r(\rho)-\binom n2} \times (\C^\ast)^{s(\rho)},
        \]
        where the composition of the first two maps is $\psi_{\mathfrak{r}}$ and is obtained from \cref{lma:inj_map_sr-form}. The map $\eta_\rho$ is as in \eqref{eq:sr_form_iso}. Namely, we end up with the commutative diagram
        \[
        \begin{tikzcd}[sep=scriptsize]
            \C^{c} \times (\C^\ast)^{t} \dar[swap]{f_{\mathfrak{r}}} \rar{\eta_{\mathfrak{r}}}[swap]{\cong} \drar[swap,near start]{\psi_{\mathfrak{r}}} & \mathfrak{M}^\rho(\mathfrak{r}) \dar \\ 
            \C^{r(\rho)-\binom n2} \times (\C^\ast)^{s(\rho)} \rar{\eta_\rho}[swap]{\cong} & \widehat{\MCS}{}^\rho(\La(\beta\Delta))
        \end{tikzcd}.
        \]
    \end{notation}

    Note that the bijection
    \[
    f_\mathfrak{r} \colon \C^{c} \times (\C^\ast)^{t}\longrightarrow \C^{r(\rho)-\binom n2} \times (\C^\ast)^{s(\rho)}
    \]
    is explicitly computable in the case where $\beta = \Delta\gamma$, since $\eta_\rho$ has an explicit description in this case. We now illustrate this in a simple example.
    \begin{example}\label{ex:hopf}
        We consider the braid $\beta = \sigma_1^3 \in \Br_2^+$ and a morphism $\mathfrak{r}_\rho \colon \sigma_1^3 \to \sigma_1$, $(\beta_1=\sigma_1^3, \beta_2=\sigma_1^2, \beta_3=\sigma_1)$, defined by \cref{fig:mcs_ex_hopf}. The underlying normal ruling of $\mathfrak{r}$ is the maximal normal ruling described by the leftmost diagram in \cref{fig:mcs_ex_hopf}; the switched crossings are indicated by dots. Note that $\mathfrak{A}(\mathfrak{r}_\rho)$ is a right inductive weave $\sigma_1^3 \to \sigma_1$.

         \begin{figure}[!htb]
            \centering
            \hspace*{6mm}\includegraphics{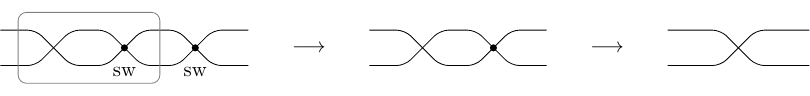}
            \caption{The morphism $\mathfrak{r}_\rho$.}\label{fig:mcs_ex_hopf}
        \end{figure}

        \begin{figure}[!htb]
            \centering
            \includegraphics{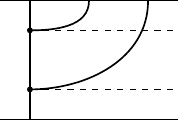}
            \caption{The right inductive weave $\mathfrak{A}(\mathfrak{r}_\rho)$.}\label{fig:mcs_ex_hopf_right}
        \end{figure}
        Let us now compute the map $\psi_{\mathfrak{r}_\rho} \colon (\C^\ast)^2 \to \widehat{\MCS}{}^\rho(\La(\sigma_1^4))$. First, the condition that $\mu(\sigma_1)w_0$ is upper triangular in \cref{dfn:variety_of_morphism}(3) means that the handleslide mark of $\beta_3 = \sigma_1$ is equal to zero. Performing the MCS trivalent moves shown in \cref{fig:mcs_ex_hopf} in the reverse order using the local model shown in \cref{fig:mcs_backwards_sr}, we pick $(y_1,y_2) \in (\C^\ast)^2$ and assign $y_1$ to the first MCS trivalent move and $y_2$ to the second MCS trivalent move. The full sequence of backwards moves (including movement of marked points) is depicted in \cref{fig:psi_to_sr-form}.

        \begin{figure}[!htb]
            \centering
            \includegraphics{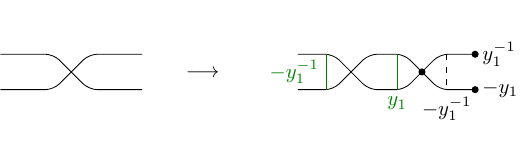}
            \includegraphics{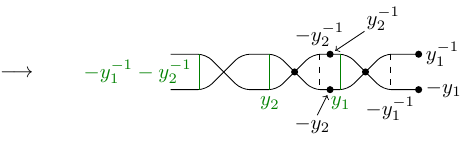}
            \includegraphics{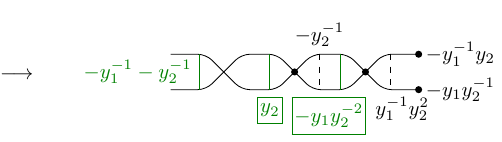}
            \caption{The function $\psi_{\mathfrak{r}_\rho}$ is defined by sending $(y_1,y_2) \in (\C^\ast)^2$ to the equivalence class of the SR-form MCS associated to the normal ruling $\rho$ depicted at the bottom.}
            \label{fig:psi_to_sr-form}
        \end{figure}

        The map $\eta_\rho^{-1} \colon \widehat{\MCS}{}^\rho(\La(\sigma_1^4)) \to (\C^\ast)^2$ is given by picking out the handleslide mark at each return except for the leftmost one, and the handleslide mark to the left of every switch. By \cref{fig:psi_to_sr-form} we therefore see that the map
        \[
            f_{\mathfrak{r}_\rho} \colon (\C^\ast)^2 \longrightarrow (\C^\ast)^2
        \]
        is given by
        \[
            f_{\mathfrak{r}_\rho}(y_1,y_2) = (y_2,-y_1y_2^{-2}).
        \]

        \begin{figure}[!htb]
            \centering
            \includegraphics{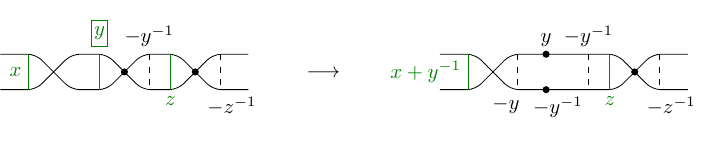}
            \includegraphics{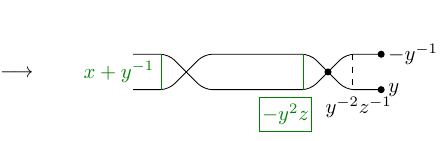}
            \includegraphics{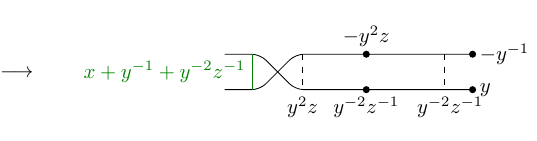}
            \caption{The inverse to $\psi_{\mathfrak{r}_\rho}$ is defined by sending an equivalence class of a SR-form MCS to $(-y^2z,y) \in (\C^\ast)^2$.}
            \label{fig:inverse_rational_coord}
        \end{figure}
        
        Let us also illustrate how to compute the inverse $f_{\mathfrak{r}_\rho}^{-1}$. By the definition of $f_{\mathfrak{r}_\rho}$ in \cref{notn:rational_coord_change}, $f_{\mathfrak{r}_\rho}^{-1}$ is defined by performing MCS trivalent moves with trivial monodromy in the forwards direction determined by $A(\mathfrak{r}_\rho)$. Picking out the handleslide marks at each MCS trivalent move in the reverse order as indicated by \cref{fig:inverse_rational_coord} gives that the map
        \[
        f_{\mathfrak{r}_\rho}^{-1} \colon (\C^\ast)^2 \longrightarrow (\C^\ast)^2
        \]
        is given by
        \[
        f_{\mathfrak{r}_\rho}^{-1}(y,z) = (-y^2z,y),
        \]
    \end{example}

    Recall from \cref{thm:-1=braid vty_new} that there is an isomorphism
    \[
    \alpha \colon X(\beta) \overset{\cong}{\longrightarrow} \Aug(\La(\beta\Delta))
    \]
    such that the braid matrix variables get mapped to the corresponding degree $0$ generators of $\A(\La(\beta\Delta))$. Also recall from \cref{thm:HenryRutherford_dg-homotopy} that given a normal ruling $\rho$ for $\La(\beta\Delta)$ there is an injective map
    \[
    \phi_\rho \colon \C^{r(\rho)-\binom n2} \times (\C^\ast)^{s(\rho)} \hooklongrightarrow \Aug(\La(\beta\Delta)),
    \]
    which factors through the isomorphism $\eta_\rho$ in \eqref{eq:sr_form_iso}.
    
    \begin{theorem}\label{thm:corr_decomps}
        Let $\mathfrak{r}\colon \beta \to \Delta$ be a morphism in $\mathfrak{R}_n$ with $c$ cup moves and $t$ trivalent moves with underlying normal ruling $\rho$. The following diagram commutes:
        \[
        \begin{tikzcd}[sep=scriptsize]
            \C^{c} \times (\C^\ast)^{t} \rar{\phi_{\mathfrak{r}}} \dar[swap]{f_\mathfrak{r}} & X(\beta) \dar{\alpha}[swap]{\cong} \\
            \C^{r(\rho)-\binom n2} \times (\C^\ast)^{s(\rho)} \rar{\phi_{\rho}} & \Aug(\La(\beta\Delta))
        \end{tikzcd}.
        \]
    \end{theorem}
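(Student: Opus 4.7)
The plan is to use A-form Morse complex sequences as a common intermediary and show that both compositions in the diagram agree with a third composition passing through $\widehat{\MCS}^A(\La(\beta\Delta))$. Specifically, let $\Psi \colon \mathfrak M(\mathfrak{r}) \to \widehat{\MCS}^A(\La(\beta\Delta))$ denote the map sending an associated A-form MCS sequence with trivial monodromy to the equivalence class of its top A-form MCS (the one associated to the braid $\beta$), and let $\iota \colon \widehat{\MCS}^A(\La(\beta\Delta)) \overset{\cong}{\to} \Aug(\La(\beta\Delta))$ be the isomorphism from \cref{thm:hr_Amcs_aug_equiv}, described explicitly by $\epsilon(a) = -x_a$ in \cref{rmk:aform_aug_iso}. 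I will show the equalities
\[
\alpha \circ \phi_\mathfrak{r} \;=\; \iota \circ \Psi \circ \eta_\mathfrak{r} \;=\; \phi_\rho \circ f_\mathfrak{r}.
\]

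For the left-hand equality, I would unpack the factorization of $\phi_\mathfrak{r}$ from \cref{lma:functoriality}(1) as the composition of $\eta_\mathfrak{r}$ with the projection $\pi \colon \mathfrak M(\mathfrak{r}) \to X(\beta)$ that reads off the handleslide marks of the top A-form MCS at crossings of $\beta$ as braid matrix variables. By \cref{thm:-1=braid vty_new}, $\alpha$ identifies these braid matrix variables with the corresponding degree $0$ generators of $\A(\La(\beta\Delta))$, and the resulting augmentation values coincide (up to the consistent sign convention) with those produced by $\iota$ on the same degree $0$ generators. For the remaining degree $0$ generators corresponding to crossings of $\Delta$, both descriptions assign values determined by the same system of equations (the upper triangularity conditions defining $X(\beta)$ on one side, and the monodromy conditions on the A-form MCS at the top on the other), so the two augmentations agree in $\Aug(\La(\beta\Delta))$.

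For the right-hand equality, I would apply \cref{rmk:ruling_map_factor_SR_to_A}, which states that $\psi_\mathfrak{r} = \eta_\rho \circ f_\mathfrak{r}$ factors as
\[
\mathfrak M(\mathfrak{r}) \overset{\Psi}{\longrightarrow} \widehat{\MCS}^A(\La(\beta\Delta)) \overset{\cong}{\longrightarrow} \bigsqcup_{\rho'} \widehat{\MCS}^{\rho'}(\La(\beta\Delta)) \twoheadrightarrow \widehat{\MCS}^\rho(\La(\beta\Delta)),
\]
where the middle arrow is the A-to-SR-form equivalence of \cref{thm:HR_decomp_equiv}. On the other hand, $\phi_\rho$ is by definition (\cref{thm:HenryRutherford_dg-homotopy}) the composition of $\eta_\rho$ with the inclusion $\widehat{\MCS}^\rho \hookrightarrow \Aug(\La(\beta\Delta))$, and by \cref{thm:HR_decomp_equiv} this inclusion factors as $\widehat{\MCS}^\rho \hookrightarrow \bigsqcup_{\rho'} \widehat{\MCS}^{\rho'} \overset{\cong}{\to} \widehat{\MCS}^A \overset{\iota}{\to} \Aug$. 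Substituting these factorizations into $\phi_\rho \circ f_\mathfrak{r} = \phi_\rho \circ \eta_\rho^{-1} \circ \psi_\mathfrak{r}$ and collapsing the A-to-SR isomorphism with its inverse immediately yields $\iota \circ \Psi \circ \eta_\mathfrak{r}$.

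The primary obstacle is the Step~1 identification $\alpha \circ \pi = \iota \circ \Psi$ on $\mathfrak M(\mathfrak{r})$. While conceptually this is a statement that ``A-form MCS handleslide marks are augmentation values,'' making it precise requires matching three different sign/marker conventions: the $\epsilon(a) = -x_a$ formula of \cref{rmk:aform_aug_iso}, the characterization of $\alpha$ in \cref{thm:-1=braid vty_new} via braid matrix variables, and the monodromy conditions cutting out $\mathfrak M(\mathfrak{r})$. In particular one must verify that the values assigned to the degree $0$ generators corresponding to $\Delta$-crossings — which are not parameters of $X(\beta)$ but are determined implicitly — agree under both constructions, by comparing the upper triangularity equations with the MCS monodromy conditions at the top layer. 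Once this bookkeeping is settled, the rest of the argument is a straightforward diagram chase through the factorizations provided by \cref{lma:functoriality}, \cref{rmk:ruling_map_factor_SR_to_A}, and \cref{thm:HR_decomp_equiv}.
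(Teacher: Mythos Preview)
Your approach is correct and essentially the same as the paper's. Both arguments factor the square through equivalence classes of Morse complex sequences and reduce the commutativity to a compatibility statement between $\alpha$ and the MCS-to-augmentation identification. The paper chooses $\widehat{\MCS}^\rho(\La(\beta\Delta))$ as the intermediary: it asserts factorizations $\phi_{\mathfrak r}=\mu\circ\psi_{\mathfrak r}$ and $\phi_\rho=\nu\circ\eta_\rho$, observes that the definition of $f_{\mathfrak r}$ makes one triangle commute, and then claims the triangle $\alpha\circ\mu=\nu$ commutes ``by the definition of the maps $\mu$ and $\nu$.'' You instead route through $\widehat{\MCS}^A(\La(\beta\Delta))$ via the map $\Psi$ and the A-to-SR bijection; since $\widehat{\MCS}^A\cong\bigsqcup_\rho\widehat{\MCS}^\rho$ (\cref{thm:HR_decomp_equiv}), the two intermediaries are equivalent and your Step~2 is exactly the paper's use of the definition of $f_{\mathfrak r}$ combined with \cref{rmk:ruling_map_factor_SR_to_A}.

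Your ``primary obstacle'' in Step~1 is precisely what the paper compresses into the phrase ``by the definition of $\mu$ and $\nu$.'' You are right to flag it: for general $\beta$ (as opposed to $\beta=\Delta\gamma$), \cref{thm:-1=braid vty_new} does not describe $\alpha$ explicitly on generators, so one cannot simply match variable-by-variable. The clean way to close this is to work at the level of equivalence classes and invoke the commutative square in the proof of \cref{thm:-1=braid vty_new}, which shows that $\alpha$ is the composition $X(\beta)\cong\V(\La_{\mathrm{pig}}(\beta\Delta))\hookrightarrow\V(\La(\beta\Delta))\twoheadrightarrow\Aug(\La(\beta\Delta))$; since $\iota\circ\Psi$ is by construction the composition of $\pi$ with any section of $\V(\La(\beta\Delta))\to X(\beta)$ followed by the same quotient, and all such sections have the same image in $\Aug$, the two agree. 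This is the content the paper leaves implicit.
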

    \begin{proof}
        By construction, the maps $\phi_{\mathfrak{r}}$ and $\phi_\rho$ factor through the maps $\psi_{\mathfrak{r}}$ and $\eta_\rho$ in \cref{lma:inj_map_sr-form} and \eqref{eq:sr_form_iso}, respectively, meaning we have two commutative diagrams
        \begin{equation}\label{eq:inj_maps_factor}
            \begin{tikzcd}[sep=scriptsize]
                \C^c \times (\C^\ast)^t \rar{\psi_\mathfrak{r}} \drar[swap]{\phi_\mathfrak{r}} & \widehat{\MCS}{}^\rho(\La(\beta\Delta)) \dar{\mu}\\
                & X(\beta)
            \end{tikzcd}
            \quad
            \begin{tikzcd}[sep=scriptsize]
                \C^{r(\rho)-\binom n2} \times (\C^\ast)^{s(\rho)} \rar{\eta_\rho}[swap]{\cong} \drar[swap]{\phi_\rho} & \widehat{\MCS}{}^\rho(\La(\beta\Delta)) \dar{\nu} \\
                & \Aug(\La(\beta\Delta))
            \end{tikzcd}.
        \end{equation}
        These diagrams together with the definition of $f_\mathfrak{r}$ (\cref{notn:rational_coord_change}) yield the following diagram
        \[
        \begin{tikzcd}[sep=scriptsize]
            \C^c \times (\C^\ast)^t \dar{f_\mathfrak{r}} \drar{\psi_\mathfrak{r}} \ar[rrd,out=0,in=155,"\phi_\mathfrak{r}"] & & \\
            \C^{r(\rho)-\binom n2} \times (\C^\ast)^{s(\rho)} \rar{\cong}[swap]{\eta_\rho} \ar[rrd,out=335,in=180,swap,"\phi_\rho"] & \widehat{\MCS}{}^\rho(\La(\beta\Delta)) \rar{\mu} \drar{\nu} & X(\beta) \dar{\alpha} \\
            & & \Aug(\La(\beta\Delta))
        \end{tikzcd},
        \]
        where the upper right and lower left triangles commute by \eqref{eq:inj_maps_factor}. The left triangle commutes by the definition of $f_\mathfrak{r}$. By the definition of the maps $\mu$ and $\nu$ in \eqref{eq:inj_maps_factor}, we see that the right triangle commutes. Hence the outer square commutes, finishing the proof.
    \end{proof}
    \begin{theorem}\label{thm:hr_and_weave_decomps}
        Let $\beta \in \Br_n^+$ such that $\delta(\beta) = w_0$. For every normal ruling $\rho$ of $\La(\beta\Delta)$, there exists a right simplifying weave $\mathfrak{w}_{\rho}$ such that the ruling decomposition of $\Aug(\La(\beta\Delta))$ coincides with the weave decomposition of $X(\beta)$ given by this collection of right simplifying weaves $\mathfrak{w}_{\rho}$ under the isomorphism $\alpha$.
    \end{theorem}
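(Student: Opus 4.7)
The plan is to synthesize previously established results: \cref{prop:normal_ruling_inductive} (which provides a right inductive morphism for every normal ruling), \cref{cor:rulings_decomposing_tuple} (which ensures these produce a weave decomposition), and \cref{thm:corr_decomps} (which identifies a single piece with its ruling counterpart). The substantive content is already extracted into these statements, so the proof amounts to assembling them.

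First, for each $\rho \in \mathfrak{R}(\La(\beta\Delta))$ I invoke \cref{prop:normal_ruling_inductive} to choose a right inductive morphism $\mathfrak{r}_\rho \colon \beta \to \Delta$ in $\mathfrak{R}_n$ whose underlying ruling is $\rho$, and set $\w_\rho \coloneqq \mathfrak{A}(\mathfrak{r}_\rho)$, which is a right simplifying weave by the same proposition. Applying \cref{cor:rulings_decomposing_tuple} together with \cref{rmk:decomposing_tuples_are_the_same}, the collection $(\w_\rho)_\rho$ is a decomposing tuple for $X(\beta)$ in the sense of \cref{thm:weave_decomp}, so
\[
X(\beta) = \bigsqcup_\rho \im(\phi_{\w_\rho}).
\]
On the augmentation side, \cref{thm:HenryRutherford_dg-homotopy} provides the ruling decomposition $\Aug(\La(\beta\Delta)) = \bigsqcup_\rho \im(\phi_\rho)$.

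The final step is to match these stratifications piece by piece under the isomorphism $\alpha$ from \cref{thm:-1=braid vty_new}. For each fixed $\rho$, \cref{thm:functors_from_mcs} gives $\mathfrak M(\mathfrak{r}_\rho) = \mathfrak{X}(\w_\rho)$ as subvarieties of $X(\beta)$, so in particular $\im(\phi_{\mathfrak{r}_\rho}) = \im(\phi_{\w_\rho})$. By \cref{thm:corr_decomps}, there is a commutative square
\[
\begin{tikzcd}[sep=scriptsize]
    \C^{c(\w_\rho)} \times (\C^\ast)^{t(\w_\rho)} \rar{\phi_{\mathfrak{r}_\rho}} \dar[swap]{f_{\mathfrak{r}_\rho}} & X(\beta) \dar{\alpha}[swap]{\cong} \\
    \C^{r(\rho)-\binom n2} \times (\C^\ast)^{s(\rho)} \rar{\phi_\rho} & \Aug(\La(\beta\Delta))
\end{tikzcd},
\]
in which $f_{\mathfrak{r}_\rho}$ is a bijection (\cref{lma:chart_to_mcs_inverse} combined with the isomorphism $\eta_\rho$ from \cref{lma:sr-form_equiv_factors}). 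Commutativity therefore forces $\alpha(\im(\phi_{\w_\rho})) = \alpha(\im(\phi_{\mathfrak{r}_\rho})) = \im(\phi_\rho)$, giving the desired piecewise coincidence of decompositions.

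I do not expect a genuine obstacle: the principal work has already been carried out upstream. The only subtlety is that the choice of $\mathfrak{r}_\rho$ (and hence of $\w_\rho$) is not canonical, as flagged in \cref{rmk:non_unique_weave}; however \cref{lma:decomp_tuple_indep_equiv} together with the bijection of \cref{lma:rulings_right_induct_weaves} ensures that different choices produce the same piece $\im(\phi_{\w_\rho})$ in $X(\beta)$, so the resulting decomposition is well defined and the statement is independent of these choices.
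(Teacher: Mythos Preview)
Your proof is correct and follows essentially the same route as the paper: invoke \cref{cor:rulings_decomposing_tuple} to obtain a decomposing tuple of right inductive morphisms indexed by the normal rulings, pass to weaves via \cref{thm:functors_from_mcs}, and then match each piece with the corresponding ruling stratum using the commutative square of \cref{thm:corr_decomps}. Your version is slightly more explicit about the bijectivity of $f_{\mathfrak{r}_\rho}$ and about independence of choices, but the structure of the argument is the same.
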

    \begin{proof}
        Let $(\rho_1,\ldots,\rho_k)$ be a tuple of all normal rulings of $\La(\beta\Delta)$. By \cref{cor:rulings_decomposing_tuple}, there is a tuple $(\mathfrak{r}_{R,1},\ldots,\mathfrak{r}_{R,k})$ of right inductive morphisms $\mathfrak{r}_{R,i} \colon \beta \to \Delta$ in $\mathfrak{R}_n$ with underlying normal ruling $\rho_i$, that is a decomposing tuple of $X(\beta)$ in the sense of \cref{prop:mcs_cat_decompos}. By \cref{thm:functors_from_mcs}, the decomposition of $X(\beta)$ induced by $(\mathfrak{r}_{R,1},\ldots,\mathfrak{r}_{R,k})$ corresponds to the weave decomposition of $X(\beta)$ induced by the corresponding tuple of right simplifying weaves $(\mathfrak{A}(\mathfrak{r}_{R,1}),\ldots,\mathfrak{A}(\mathfrak{r}_{R,k}))$. For each $i\in \{1,\ldots,k\}$, the two injective maps $\phi_{\mathfrak{r}_i}$ and $\phi_{\rho_i}$ are the associated pieces of the weave decomposition and ruling decomposition, respectively. By \cref{thm:corr_decomps}, $f_{\mathfrak{r}_{R,i}}$ is bijective and so it follows that $\im \phi_{R,i} = \im \phi_{\rho_i}$ for each $i\in \{1,\ldots,k\}$.
    \end{proof}

We now compare the ruling decomposition of $\Aug(\La(\beta\Delta))$ with the Deodhar decomposition of the braid Richardson variety $R^\circ_{w_0,\beta}$ defined in \cref{dfn:richardson_variety}.

\begin{theorem}\label{thm:deodhar=hr_decomp}
    Let $\beta \in \Br_n^+$ such that $\delta(\beta) = w_0$. For every normal ruling $\rho$ of $\La(\beta\Delta)$ there exists a distinguished sequence $\mathfrak{v}_{\rho}$ of $w_0$ such that the ruling decomposition of $\Aug(\La(\beta\Delta))$ coincides with the Deodhar decomposition of $R^\circ_{w_0,\beta}$ determined by the distinguished sequences $\mathfrak{v}_{\rho}$ under the isomorphisms from \cref{thm:-1=braid vty_new,prop:braid_richardson}.
\end{theorem}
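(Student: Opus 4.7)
The plan is to deduce this theorem as a direct consequence of the two comparison theorems already established (\cref{thm:hr_and_weave_decomps,thm:deodhar=weave}) via the bijection in \cref{lma:right_inductive_distinguished_sequences}, using the weave decomposition of $X(\beta)$ as an intermediary. The key observation is that both the ruling decomposition and the Deodhar decomposition have already been matched to weave decompositions given by collections of right simplifying weaves, so it suffices to show that the \emph{same} collection of right simplifying weaves can be used to realize both matchings simultaneously.

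More concretely, here is how I would proceed. Given a normal ruling $\rho$ of $\La(\beta\Delta)$, \cref{prop:normal_ruling_inductive} produces a right inductive morphism $\mathfrak{r}_\rho \colon \beta \to \Delta$ in $\mathfrak{R}_n$ whose image $\mathfrak{w}_\rho \coloneqq \mathfrak{A}(\mathfrak{r}_\rho)$ is a right simplifying weave, and by \cref{lma:rulings_right_induct_weaves} these representatives form a decomposing tuple for $X(\beta)$ whose induced weave decomposition agrees with the ruling decomposition of $\Aug(\La(\beta\Delta))$ (this is exactly \cref{thm:hr_and_weave_decomps}). On the other hand, by \cref{lma:right_inductive_distinguished_sequences} the inductive equivalence class of $\mathfrak{w}_\rho$ corresponds under a bijection to a unique distinguished sequence $\mathfrak{v}_\rho$ of $w_0$, and by \cref{lma:decomp_tuple_indep_equiv} the weave decomposition of $X(\beta)$ depends only on this inductive equivalence class. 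Thus the distinguished sequences $\{\mathfrak{v}_\rho\}_\rho$ obtained in this way yield, by \cref{thm:deodhar=weave}, a Deodhar decomposition of $R^\circ_{w_0,\beta}$ that coincides with the same weave decomposition of $X(\beta)$.

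Combining these two identifications via the isomorphisms from \cref{thm:-1=braid vty_new,prop:braid_richardson}, we obtain a commutative diagram of decompositions:
\[
\begin{tikzcd}[sep=small]
\Aug(\La(\beta\Delta)) \ar[d,"\cong"',"\text{\cref{thm:-1=braid vty_new}}"] & \bigsqcup_\rho \im \phi_\rho \ar[l,hook'] \ar[d,"="] \\
X(\beta) \ar[d,"="] & \bigsqcup_\rho \im \phi_{\mathfrak{w}_\rho} \ar[l,hook'] \ar[d,"="] \\
R^\circ_{w_0,\beta} \ar[u,"\cong","\text{\cref{prop:braid_richardson}}"'] & \bigsqcup_\rho D(\mathfrak{v}_\rho) \ar[l,hook']
\end{tikzcd}.
\]
The upper square commutes by \cref{thm:hr_and_weave_decomps}, the lower by \cref{thm:deodhar=weave}, and piece-by-piece the images match via the bijection $\rho \mapsto \mathfrak{w}_\rho \mapsto \mathfrak{v}_\rho$.

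There is essentially no obstacle here, since the theorem is a formal consequence of the two nontrivial matchings already proved. The only point requiring a brief verification is that a single choice of right simplifying weave representative $\mathfrak{w}_\rho$ (rather than two different choices, one for each matching) can be used on both sides; this is handled by \cref{lma:decomp_tuple_indep_equiv}, which guarantees that the weave decomposition of $X(\beta)$ depends only on the inductive equivalence class of the decomposing tuple, and by the observation that the bijection of \cref{lma:right_inductive_distinguished_sequences} is stated precisely at the level of inductive equivalence classes.
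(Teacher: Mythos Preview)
Your proposal is correct and follows essentially the same route as the paper's proof: both pass through the weave decomposition via \cref{prop:normal_ruling_inductive} and \cref{lma:right_inductive_distinguished_sequences}, then combine \cref{thm:hr_and_weave_decomps} and \cref{thm:deodhar=weave}. Your version is somewhat more explicit in invoking \cref{lma:decomp_tuple_indep_equiv} to justify that a single representative $\mathfrak{w}_\rho$ works for both matchings, but this is a refinement of presentation rather than a different argument.
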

\begin{proof}
    As described in \cref{prop:normal_ruling_inductive}, the normal rulings of $\La(\beta\Delta)$ determine a decomposing tuple of right simplifying weaves $\beta\to \Delta$, which, by \cref{lma:right_inductive_distinguished_sequences} and the proof of \cref{thm:deodhar=weave}, corresponds to a tuple of distinguished sequences that decomposes $R^\circ_{w_0,\beta}$. The result then follows from combining \cref{thm:deodhar=weave,thm:hr_and_weave_decomps}.
\end{proof}

\subsection{Cluster variables via Morse complex sequences}\label{sec:cluster_via_mcs}
    In this section we explain how to compute the cluster variables associated to right inductive weaves (see \cref{sec:cluster_variables}) solely in terms of Morse complex sequences.
    \begin{definition}[$s_t$ variables]\label{dfn:s_variable_framed_mcs}
        Let $\mathfrak{m} \colon \beta \to \beta'$ be a morphism in $\mathfrak{B}_n$ represented by $(\beta_1,\ldots,\beta_q)$. We associate to each framed trivalent move $t$ a variable $s_t$ that is a Laurent polynomial defined as follows: The morphism $\mathfrak{m}$ determines an associated framed A-form MCS with trivial monodromy so that $t$ corresponds to a framed MCS trivalent move. The variable $s_t$ is defined as the variable $y$ in the local model for the framed MCS braid move with trivial monodromy as in \cref{fig:mcs_trivalent_framed_monoless} after setting all the variables at marked points in $A(\beta_1)$ equal to one.
    \end{definition}
    \begin{remark}
        It is a direct consequence of \cref{thm:frame_functors_from_mcs} that the $s_t$-variables associated to $\mathfrak{m} \colon \beta \to \beta'$ defined in \cref{dfn:s_variable_framed_mcs} coincide with the $s_v$-variables of the associated weave $\mathfrak{A}(\mathfrak{m})$ defined in \cref{dfn:s-variable_weave}.
    \end{remark}
    
    \begin{example}\label{ex:s-variables_hopf_computation}
        Consider the morphism $\mathfrak{m} \colon \sigma_1^3 \to \sigma_1$ defined by the sequence of braid moves $(\sigma_1\sigma_1)\sigma_1 \to \sigma_1 \sigma_1 \to \sigma_1$. We observe that $\mathfrak{A}(\mathfrak{m})$ is a right inductive weave for $\sigma_1^3$. \Cref{fig:ex_s-variables_hopf} shows the sequence of the two framed MCS trivalent moves with trivial monodromy induced by $\mathfrak{m}$. After setting $u_1 = u_2 = u_3 = 1$, the $s_t$-variables are $s_1 = z_2$ and $s_2 = z_3-z_2^{-1}$. This coincides with the $s$-variables of the corresponding weave computed in \cref{ex:cluster-hopf}.
    \end{example}
    
        \begin{figure}[!htb]
            \centering
            \includegraphics{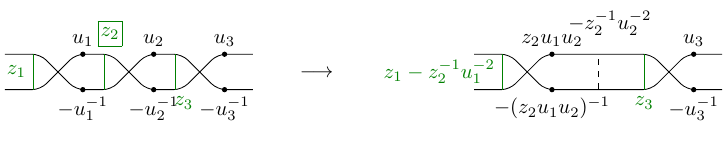}
            \includegraphics{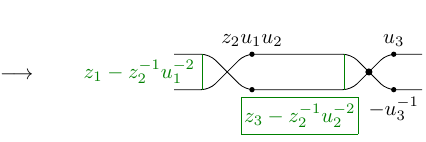}
            \includegraphics{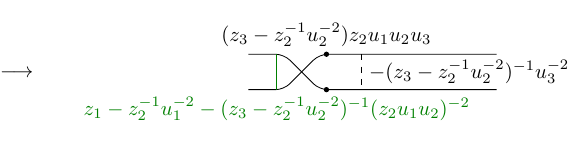}
            \caption{The sequence of framed A-form MCSs induced by $\mathfrak{m}$ in \cref{ex:s-variables_hopf_computation}. The labels which are used to find the $s$-variables associated to the framed MCS trivalent moves are indicated in boxes.}
            \label{fig:ex_s-variables_hopf}
        \end{figure}

    We now show that the $s$-variables are related to the A-to-SR-form algorithm used in the proof of \cref{thm:one-to-one_A_SR}; see also \cref{rmk:A_SR_algorithm}. The starting point of this comparison is a framed version of the formal A-form MCS; see \cref{dfn:formal_a-form_mcs,dfn:framed_A-form_MCS}.
    \begin{definition}[Formal framed A-form MCS]
        A (graded) \emph{formal framed A-form MCS} for a front diagram $D$ equipped with a Maslov potential is a graded framed A-form MCS where each handleslide mark is considered to be a formal variable valued in $R$, and where each $u$-variable associated to marked points immediately to the right of each crossing is equal to one. 
    \end{definition}
    \begin{remark}
        A marked point with variable equal to one is equivalent to the non-existence of that marked point. For the purpose of the formal framed A-form MCS, it is therefore enough to consider a marked point with variable $-1$ on the lower strand immediately to the right of each crossing; see \cref{fig:formal_framed_ex} for an example of the formal framed A-form MCS of the $(-1)$-closure of $\sigma_1\sigma_2\sigma_1^3\sigma_1(\sigma_2\sigma_1^2)^2 \in \Br_3^+$.
    \end{remark}
    \begin{figure}[!htb]
        \centering
        \includegraphics{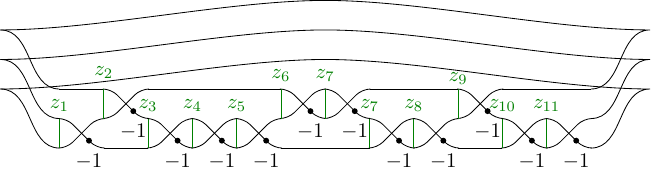}
        \caption{The formal framed A-form MCS of the $(-1)$-closure of $\sigma_1\sigma_2\sigma_1^3\sigma_1(\sigma_2\sigma_1^2)^2$.}
        \label{fig:formal_framed_ex}
    \end{figure}
    Recall from \cref{prop:max_ruling} that for any $\beta \in \Br_n^+$ with $\delta(\beta) = w_0$, the $(-1)$-closure of $\beta\Delta$ admits a unique normal ruling with the maximal number of switches.
    \begin{definition}[Formal framed SR-form MCS]
        Let $\beta \in \Br_n^+$ be such that $\delta(\beta) = w_0$, and let $\rho_0$ denote the maximally switched normal ruling of $\La(\beta\Delta)$. The \emph{formal framed SR-form MCS} of $\rho_0$ is the result of performing the A-to-SR-form algorithm to the formal framed A-form MCS of $\La(\beta\Delta)$.
    \end{definition}
    \begin{lemma}\label{lma:s-variables_sr-form}
        Let $\beta \in \Br_n^+$ such that $\delta(\beta) = w_0$. The $s$-variables associated with a right inductive morphism $\mathfrak{r}_\rho \colon \beta \to \Delta$ are equal to the handleslide marks to the left of each switch of the formal framed SR-form MCS of $\La(\beta\Delta)$.
    \end{lemma}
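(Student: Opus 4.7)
The plan is to induct on the length of the braid $\beta$, processing the crossings one by one from left to right along the right inductive morphism $\mathfrak{r}_{\rho_0}$. At each stage we compare two inductive algorithms performed in parallel on the same formal framed A-form MCS for $\La(\beta\Delta)$: the first is the computation of the $s_t$-variables of \cref{dfn:s_variable_framed_mcs} via successive framed MCS braid moves with trivial monodromy determined by $\mathfrak{r}_{\rho_0}$, and the second is the A-to-SR-form algorithm of \cref{rmk:A_SR_algorithm} as applied with respect to $\rho_0$. The inductive hypothesis will be that after processing the first $k$ crossings, the handleslide mark lying immediately to the left of the $j$-th switch in the partial formal framed SR-form MCS coincides with $s_{t_j}$, the variable assigned to the corresponding framed MCS trivalent move $t_j$.

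The base case is immediate: at the leftmost crossing of $\beta$ that is a switch, the framed MCS trivalent move with trivial monodromy (\cref{fig:mcs_trivalent_framed_monoless}) forces $y$ to equal the $z$-variable of the northeast incoming framed weave segment (cf.\@ \cref{thm:cluster_inductive}(1)), which, after setting the $u$-variables at the top equal to $1$, agrees with the single handleslide mark to the left of the switch in the formal framed A-form MCS; the A-to-SR-form algorithm leaves this handleslide mark in place. For the inductive step I would analyze the three cases determined by the type of crossing of $\rho_0$. At a return, the A-to-SR-form algorithm pushes handleslide marks through the crossing in a way that, by \cref{cor:mcs_matrix}, matches the effect of the MCS distant crossings and hexavalent moves arranged by $\mathfrak{r}_{\rho_0}$; no $s_t$-variable is introduced. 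At a switch, the A-to-SR-form algorithm keeps a handleslide mark between the crossing strands to the left of the crossing, and adds two new handleslide marks (labeled by inverse formal variables) to the right; on the other side, the framed MCS trivalent move with trivial monodromy introduces a variable $y$ on the left incoming edge which, reading off \cref{fig:mcs_trivalent_framed_monoless}, is exactly that same left handleslide mark after normalizing by the $u$-variables at the top.

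The crux of the argument, which is the main technical step, is tracking how subsequent framed MCS braid moves (hexavalent, distant crossings, and the normalization of marked points in the MCS-SR trivalent move) propagate the handleslide marks from one switch to the next; in the framed A-form MCS setting this is captured by the relations in \cref{lma:monodromy_trivalent_etc_id}, and in the SR-form setting by the recipe in \cref{rmk:A_SR_algorithm}. Both procedures amount to the same sequence of matrix identities among $B_i(z)$, $\chi_i(u)$, $S_{i,j}$, and $D_i(t)$, as already verified in the proof of \cref{thm:frame_functors_from_mcs} and \cref{lma:mcs_braid_matrix}. Consequently, the modifications applied to the handleslide mark assigned to the edge $e_{ne}$ of each trivalent vertex agree, on the nose, with the modifications applied by the A-to-SR-form algorithm to the handleslide mark to the left of the corresponding switch.

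The main obstacle I expect is purely bookkeeping: tracking the scalars introduced by the marked points $-y^{-1}$ and $y$ that accompany each framed MCS trivalent move (see \cref{fig:mcs_trivalent_framed_monoless}) and verifying that when these are pushed to the right through the subsequent braid using \cref{lem:framed-weave=weave} (i.e., the identities in \cref{lma:move_marked_points}), they produce exactly the scaling of handleslide marks that the A-to-SR-form algorithm produces when it rewrites the SR-form handleslide marks $r \mapsto -r^{-1}$ at subsequent switches. Once this correspondence is verified at the level of a single switch, the inductive step closes and the lemma follows. As a consequence, combined with \cref{thm:cluster_inductive}, this yields the algorithm promised in \cref{thm:cluster_variables_mcs_intro} for computing the cluster variables from the formal framed SR-form MCS of $\rho_0$ together with the Lusztig-cycle combinatorics of $\mathfrak{r}_{\rho_0}$.
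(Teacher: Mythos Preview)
Your proposal is correct and follows essentially the same approach as the paper: both proceed left-to-right through the crossings of $\beta$, comparing the A-to-SR-form algorithm (\cref{rmk:A_SR_algorithm}) with the sequence of framed MCS trivalent moves dictated by $\mathfrak{r}_{\rho_0}$, and both identify the key bookkeeping issue as checking that the dashed handleslide mark $-y^{-1}$ (suitably scaled by the marked points) created at each trivalent move coincides with the handleslide mark $z^{-1}$ pushed to the right in the A-to-SR-form algorithm, so that subsequent handleslide marks are modified identically. One small remark: you mention ``three cases'' for the type of crossing of $\rho_0$, but by \cref{prop:max_ruling} the maximally switched ruling has no departures among the crossings of $\beta$, so only returns and switches occur.
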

    \begin{proof}
        This follows by comparing the A-to-SR-form algorithm described in \cref{dfn:formal_a-form_mcs,dfn:framed_A-form_MCS} with the definition of a right inductive morphism in the proof of \cref{prop:normal_ruling_inductive}.

      Roughly, the A-to-SR-form algorithm consists of creating canceling handleslide marks to the right of each switch; see \cref{fig:a-to-sr-form_alg}. Then the dashed handleslide mark with label $z^{-1}$ is moved as far right in the braid as possible, via MCS moves. By definition of the algorithm, creation and movement of handleslide marks are done starting with the leftmost switch of $\rho_0$, and proceeds to the right in $\beta$. The movement of the dashed handleslide mark affects all handleslide marks to the right of the current switch, but none of the handleslide marks to the left of it.

        \begin{figure}[!htb]
            \centering
            \includegraphics{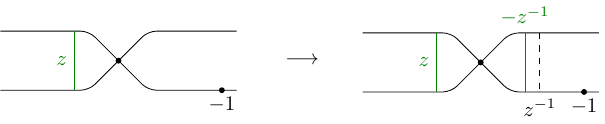}
            \caption{Creation of canceling handleslide marks in the A-to-SR-form algorithm}
            \label{fig:a-to-sr-form_alg}
        \end{figure}
        
          According to the proof of \cref{prop:normal_ruling_inductive}, a right inductive morphism $\mathfrak{r}_\rho$ performs (framed) MCS trivalent moves with trivial monodromy at the leftmost switch and proceeds right to left through $\beta$ (while performing intermediary hexavalent and distant crossing moves to the left of the switch so that the trivalent moves can be performed). In the framed MCS trivalent move depicted in \cref{fig:mcs_trivalent_framed_monoless}, the new dashed handleslide mark created is $-y^{-1}v^{-2}$, which is exactly the dashed handleslide mark created in the A-to-SR-form algorithm after it has moved past the marked point labeled by $-1$ and after setting all the variables at marked points in the first framed A-form MCS of $\mathfrak{r}_\rho$ equal to one (which implies $v = 1$). This means that when moving the dashed handleslide mark with label $-y^{-1}v^{-2}$ to the right in the braid, it possibly affects all handleslide marks to the right of the current switch in exactly the same way as how they are affected by the corresponding A-to-SR-form algorithm.
        
        Now, by definition, the $s$-variable at the trivalent move is $y$, which coincides with the handleslide mark at the corresponding switch of $\rho_0$, finishing the proof.
    \end{proof}
    
    \begin{example}\label{ex:s-variables_mcs}
        We consider the $(-1)$-closure of $\beta\Delta$ where $\beta = (\sigma_1^2\sigma_2^2)^2$. The formal framed A-form MCS and formal framed SR-form MCS of $\La(\beta\Delta)$ are depicted in \cref{fig:ex_framed_sr-form1} and \cref{fig:ex_framed_sr-form2}, respectively. 
        \begin{figure}[!htb]
        \centering
        \begin{subfigure}{\textwidth}
            \centering
            \includegraphics{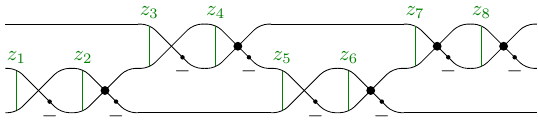}
            \caption{}\label{fig:ex_framed_sr-form1}
        \end{subfigure}
        \begin{subfigure}{\textwidth}
            \centering
            \includegraphics{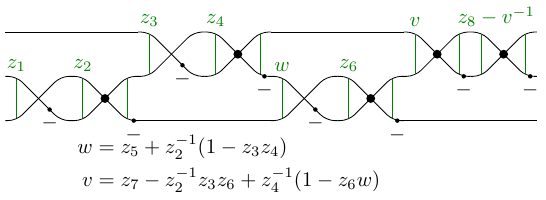}
            \caption{}\label{fig:ex_framed_sr-form2}
        \end{subfigure}
        \caption{The formal framed A-form MCS and formal framed SR-form MCS of $\La(\beta\Delta)$ where $\beta = (\sigma_1^2\sigma_2^2)^2$. Each marked point is labeled by $-1$.}
        \label{fig:ex_framed_sr-form}
    \end{figure}
    The crossings marked with $\bullet$ are the switches of the maximally switched normal ruling $\rho_0$, and if the handleslide mark immediately to the left of a switch is equipped with the variable $x$, the handleslide mark immediately to the right of it is equipped with the variable $-x^{-1}$. By \cref{lma:s-variables_sr-form}, we find the $s$-variables $s_1 = z_2$, $s_2 = z_4$, $s_3 = z_6$, $s_4 = v$, and $s_5 = z_8-v^{-1}$, where
        \[
        v = z_7-z_2^{-1}z_3z_6+z_4^{-1}(1-z_6(z_5+z_2^{-1}(1-z_3z_4))).
        \]
    \end{example}

        We now recast the definition of Lusztig cycles and \textsf Y-trees of algebraic weaves entirely in terms of morphisms in the braid category $\mathfrak{B}_n$ (see \cref{dfn:braid_category}).

    \begin{definition}[Letter set]
        Let $\mathfrak{m} \colon \beta \to \beta'$ be a morphism in $\mathfrak{B}_n$. The \emph{letter set} of $\mathfrak{m}$ is defined as the set
        \[
        L(\mathfrak{m}) \coloneqq \{(k,m) \mid k\in \{1,\ldots,q\}, \; m \in \{1,\ldots,\ell(\beta_k)\}\}.
        \]
        Each point $(k,m)$ in $L(\mathfrak{m})$ corresponds to the $m$-th letter in the braid word for $\beta_k$.
    \end{definition}

    \begin{definition}[Position of braid moves]
        Let $\mathfrak{m} \colon \beta \to \beta'$. We say that a trivalent move \emph{occurs at position} $(k,m) \in L(\mathfrak{m})$ if it is the move
        \[
            \beta_k = \sigma_{i_1} \cdots \sigma_{i_m} \sigma_{i_{m+1}} \cdots \sigma_{i_{\ell(\beta_k)}} \longrightarrow \sigma_{i_1} \cdots \sigma_{i_m}\sigma_{i_{m+2}}\cdots \sigma_{i_{\ell(\beta_{k})}} = \beta_{k+1},
        \]
        where $i_m = i_{m+1}$. In this case we say that $\sigma_{i_m}$ and $\sigma_{i_{m+1}}$ in $\beta_k$ are the two \emph{input letters}, and that $\sigma_{i_m}$ in $\beta_{k+1}$ is the \emph{output letter}. Note that the output letter of the trivalent move at position $(k,m)$ is at position $(k+1,m) \in L(\mathfrak{m})$. We define the position of the other braid moves similarly.
    \end{definition}

    \begin{definition}[Lusztig cycle]\label{dfn:lusztig_cycle_mcs}
        Let $\mathfrak{m} \colon \beta \to \beta'$ be a morphism in $\mathfrak{B}_n$. For each trivalent move $t$ occurring at position $(r,s) \in L(\mathfrak{m})$, the \emph{Lusztig cycle} $\gamma_t$ associated to $t$ is a map $\gamma_t \colon L(\mathfrak{m}) \to \mathbb N$ such that the following holds.
        \begin{itemize}
            \item $\gamma_t(k,m) = 0$ if $k\leq r$ and
            \[
            \gamma_t(r+1,m) = \begin{cases}1, & m = s, \\ 0, & m \neq s.\end{cases}
            \]
            \item For any trivalent move occurring at position $(r',s')$ with $r' > r$, we have
            \[
            \gamma_t(r'+1,m) = \begin{cases}
                \gamma_t(r',m), & m < s', \\
                \min(\gamma_t(r',s'),\gamma_t(r',s'+1)), & m = s', \\
                \gamma_t(r',m+1), & m > s'.
            \end{cases}
            \]
            \item For any hexavalent move occurring at position $(r',s')$ with $r' > r$, we have
            \[
            \gamma_t(r'+1,m) = \begin{cases}
                \gamma_t(r',m), & m < s', \\
                \gamma_t(r',s'+1) + \gamma_t(r',s'+2) - \min(\gamma_t(r',s'),\gamma_t(r',s'+2)), & m = s', \\
                \min(\gamma_t(r',s'),\gamma_t(r',s'+2)), & m = s'+1,\\
                \gamma_t(r',s') + \gamma_t(r',s'+1) - \min(\gamma_t(r',s'),\gamma_t(r',s'+2)), & m = s'+2,\\
                \gamma_t(r',m), & m > s'+2.
            \end{cases}
            \]
            \item For any distant crossings move occurring at position $(r',s')$ with $r' > r$, we have
            \[
            \gamma_t(r'+1,m) = \begin{cases}
                \gamma_t(r',m), & m < s', \\
                \gamma_t(r',m+1), & m = s', \\
                \gamma_t(r',m-1), & m = s'+1, \\
                \gamma_t(r',m), & m > s'+1.
            \end{cases}
            \]
            \item For any cup move occurring at position $(r',s')$ with $r' > r$, we have
            \[
            \gamma_t(r'+1,m) = \begin{cases}
                \gamma_t(r',m), & m < s', \\
                \gamma_t(r',m+2), & m \geq s'.
            \end{cases}
            \]
        \end{itemize}
    \end{definition}
    \begin{lemma}\label{lma:compare_lusztig_cycles}
        Let $\mathfrak{m} \colon \beta \to \beta'$ be a morphism in $\mathfrak{B}_n$. Under the functor $\mathfrak{A}$ in \cref{thm:ruling_to_weaves}, we have a one-to-one correspondence between Lusztig cycles $\gamma_t$ of $\mathfrak{m}$ and $\gamma_v$ of $\mathfrak{A}(\mathfrak{m})$.
    \end{lemma}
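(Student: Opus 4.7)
The plan is to exhibit a bijection between positions in the letter set $L(\mathfrak{m})$ and weave segments (edges) of $\mathfrak{A}(\mathfrak{m})$, and then verify that under this bijection the recursive definitions of $\gamma_t$ in \cref{dfn:lusztig_cycle_mcs} and of $\gamma_v$ in \cref{dfn:lusztig_cycle} coincide. Concretely, under the functor $\mathfrak{A}$, each braid $\beta_k$ in the sequence defining $\mathfrak{m} = (\beta_1,\ldots,\beta_q)$ becomes a horizontal slice of $\mathfrak{A}(\mathfrak{m})$, and the $m$-th letter of $\beta_k$ becomes the weave segment immediately above (between the slices for $\beta_{k-1}$ and $\beta_k$). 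Each braid move at position $(k,m)$ becomes a vertex (trivalent, hexavalent, tetravalent, or cup) of $\mathfrak{A}(\mathfrak{m})$ in the precise sense specified in the proof of \cref{thm:ruling_to_weaves}. In particular, every trivalent move $t$ at position $(r,s)$ corresponds to a trivalent vertex $v_t \in V(\mathfrak{A}(\mathfrak{m}))$, and this defines the bijection $t \leftrightarrow v_t$.

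Next, I would verify that the two recursive rules coincide. The initialization clauses match by inspection: for a trivalent move $t$ occurring at position $(r,s)$, the cycle $\gamma_t$ vanishes on all letters at or above level $r$, and equals $1$ at the single letter of $\beta_{r+1}$ that is the output of $t$; this is exactly the condition that the Lusztig cycle $\gamma_{v_t}$ vanishes on edges above $v_t$ and equals $1$ on the single outgoing edge $e_{v_t,o}$. For the propagation downward, I would check each of the four kinds of braid moves occurring below $(r,s)$:
\begin{itemize}
\item At a trivalent vertex, the formula $\gamma_t(r'+1, s') = \min(\gamma_t(r',s'), \gamma_t(r', s'+1))$ matches the weave rule $\gamma_v(e_o) = \min(\gamma_v(e_l), \gamma_v(e_r))$, since the two input letters at positions $(r',s'), (r',s'+1)$ are exactly the two strands entering the trivalent vertex $v_{t'}$, and the output letter is the outgoing strand.
\item At a hexavalent vertex, the three formulas in \cref{dfn:lusztig_cycle_mcs} mirror the three formulas in \cref{dfn:lusztig_cycle} once one identifies the incoming edges $e_1, e_2, e_3$ with positions $(r',s'), (r',s'+1), (r',s'+2)$ and similarly for the outgoing edges.
\item At a distant crossings move, the Lusztig cycle is unaffected except for swapping the two involved strands, which also matches the weave rule where the tetravalent vertex (distant crossing in the weave) carries the cycle values across unchanged.
\item At a cup, the two letters that are killed correspond to the two edges entering the cup, while other letters are unchanged, matching the weave propagation.
\end{itemize}

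The main obstacle is bookkeeping the hexavalent case, because the three symmetric but distinct propagation formulas must be matched up with the identification of input/output edges in the local model (see \cref{fig:braid_hexavalent} and \cref{fig:weave-vertex2}); in particular, one must be careful about the labeling of positions $(r',s'), (r',s'+1), (r',s'+2)$ versus the labels $e_1, e_2, e_3, e_1', e_2', e_3'$ of the six edges meeting the hexavalent vertex, and verify that the formulas agree on the nose rather than up to some relabeling. Once this matching is established, induction on the number of braid moves below position $(r,s)$ yields $\gamma_t(k,m) = \gamma_{v_t}(e_{(k,m)})$ for all $(k,m) \in L(\mathfrak{m})$, where $e_{(k,m)}$ is the weave segment corresponding to $(k,m)$, completing the proof of the claimed bijection.
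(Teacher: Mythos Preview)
Your proposal is correct and follows essentially the same approach as the paper: the paper's proof simply notes that trivalent moves correspond to trivalent vertices under $\mathfrak{A}$ and then says the result follows by comparing \cref{dfn:lusztig_cycle} and \cref{dfn:lusztig_cycle_mcs}. Your write-up is exactly this comparison spelled out case by case, which is what the paper leaves implicit.
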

    \begin{proof}
        Firstly, by definition, it is immediate that each trivalent move corresponds to a trivalent vertex in $\mathfrak{A}(\mathfrak{m})$. The result then follows by comparing \cref{dfn:lusztig_cycle,dfn:lusztig_cycle_mcs}.
    \end{proof}

    \begin{definition}[Cover of a trivalent]
        Let $\beta \in \Br_n^+$ such that $\delta(\beta) = w_0$ and let $\mathfrak{r} \colon \beta \to \Delta$ be a right inductive morphism (see \cref{def:right_inductive_morphism}). If $t$ and $t'$ are two trivalent moves of $\mathfrak{r}$ occurring at positions $(r,s)$, and $(r',s')$, respectively, we say that $t'$ \emph{covers} $t$ if $\gamma_{t'}(r,s) \neq 0$.
    \end{definition}

    In analogy with \cref{thm:cluster_inductive}(2), we make the following definition.
    
    \begin{definition}[Cluster variable of a trivalent]\label{dfn:cluster_var_mcs}
        Let $\beta \in \Br_n^+$ such that $\delta(\beta) = w_0$ and let $\mathfrak{r} \colon \beta \to \Delta$ be a right inductive morphism. For any trivalent move $t$ in $\mathfrak{r}$, the \emph{cluster variable} $A_t(\mathfrak{r})$ is defined by the inductive formula
        \[
        A_t = \begin{cases}
        s_t \cdot \prod_{t' \text{ covers }t}A_{t'}^{\gamma_{t'}(r,s)}, &\text{if there is a trivalent }t'\text{ which covers }t,\\
        s_t,&\text{otherwise}
        \end{cases}
        \]
        where $(r,s)$ denotes the position of the trivalent move $t$.
    \end{definition}
    \begin{definition}[$u$-variable associated to a trivalent]
        Suppose that $\mathfrak{r} \colon \beta \to \Delta$ is a morphism in $\mathfrak{R}_n$ represented by the sequence $(\beta_1,\ldots,\beta_q)$. For any trivalent move $t$ in $\mathfrak{r}$, the \emph{$u$-variable associated to $t$} is defined to be the variable $yuv$ associated to the marked point on the upper strand in the local model for the framed MCS trivalent move as depicted in \cref{fig:mcs_trivalent_framed_monoless}, after setting all the variables at marked points in $A(\beta_1)$ equal to one.
    \end{definition}
    An equivalent way of computing the cluster variables is given by the following lemma.
    \begin{lemma}[{\cite[Theorem 5.19(3)]{CGGLSS}}]\label{lma:cluster_u-variable}
          Let $\beta \in \Br_n^+$ such that $\delta(\beta) = w_0$ and let $\mathfrak{r} \colon \beta \to \Delta$ be a right inductive morphism represented by the sequence $(\beta_1,\ldots,\beta_q)$. For any trivalent move $t$ in $\mathfrak{r}$, the cluster variable $A_t(\mathfrak{r})$ associated to $t$ is given by the $u$-variable associated to $t$, after setting all the variables at marked points in $A(\beta_1)$ equal to one.
    \end{lemma}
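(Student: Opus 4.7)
The plan is to transport the statement to the framed algebraic weave side via the functor $\mathfrak{A}$ and then invoke the corresponding result for right inductive weaves from~\cite[Theorem 5.19(3)]{CGGLSS}. By \cref{prop:normal_ruling_inductive}, the image $\mathfrak{A}(\mathfrak{r}) \colon \beta \to \Delta$ of a right inductive morphism is a right inductive weave in which every trivalent move $t$ in $\mathfrak{r}$ corresponds to a trivalent vertex $v_t$ in $\mathfrak{A}(\mathfrak{r})$. The framed version of \cref{thm:functors_from_mcs}, namely \cref{thm:frame_functors_from_mcs}, gives an isomorphism $\mathfrak M_\textit{fr}(\mathfrak{r}) \cong \mathfrak X_\textit{fr}(\mathfrak{A}(\mathfrak{r}))$ that identifies the $z$-variables at handleslide marks of the framed MCS with the $z$-variables of the associated framed weave segments, and identifies the $u$-variables at marked points of the framed MCS with the $u$-variables of the associated framed weave segments.

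Next I would verify the key local matching: the $u$-variable $yuv$ appearing at the top marked point of a framed MCS trivalent move in \cref{fig:mcs_trivalent_framed_monoless} coincides with the $u$-variable assigned by the framed trivial-monodromy conditions to the outgoing (north) edge of the trivalent vertex $v_t$ in $\mathfrak{A}(\mathfrak{r})$. This reduces to comparing the local framed monodromy relation at an MCS trivalent move (the framed analogue of \cref{lma:monodromy_trivalent_etc_id}) with the framed monodromy relation at a trivalent weave vertex (the framed analogue of \cref{lma:weave_monodromy_vertex}); once the two local identities are placed side by side, the match of outgoing $u$-variables is a direct computation.

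By \cref{lma:compare_lusztig_cycles}, the Lusztig cycles $\gamma_t$ in $\mathfrak{r}$ agree with the cycles $\gamma_{v_t}$ in $\mathfrak{A}(\mathfrak{r})$, and the covering relation between trivalents is preserved. Together with \cref{lma:s-variables_sr-form}, which identifies the $s$-variables on both sides, \cref{dfn:cluster_var_mcs} is exactly the MCS transcription of the inductive formula in \cref{thm:cluster_inductive}, so $A_t(\mathfrak{r})$ is computed by the same formula as $A_{v_t}(\mathfrak{A}(\mathfrak{r}))$. Invoking \cite[Theorem 5.19(3)]{CGGLSS} — which asserts that for a right inductive weave the cluster variable $A_{v_t}$ equals the $u$-variable on the outgoing edge of $v_t$ after setting all top $u$-variables equal to $1$ — and transporting this equality back via the isomorphism $\mathfrak M_\textit{fr}(\mathfrak{r}) \cong \mathfrak X_\textit{fr}(\mathfrak{A}(\mathfrak{r}))$ yields $A_t(\mathfrak{r}) = yuv$ after setting all marked-point variables in $A(\beta_1)$ equal to $1$. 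The main obstacle I anticipate is the local bookkeeping at the trivalent move: one must be careful to ensure that the product $yuv$ read off from \cref{fig:mcs_trivalent_framed_monoless} coincides precisely (and not up to sign or inversion) with the framed $u$-variable assigned to the outgoing weave edge of $v_t$. Once this local matching is nailed down, the rest of the argument is formal via \cref{thm:frame_functors_from_mcs} and the identification of Lusztig-cycle data from \cref{lma:compare_lusztig_cycles}.
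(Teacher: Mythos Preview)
The paper does not give its own proof of this lemma: it is stated as a direct citation of \cite[Theorem 5.19(3)]{CGGLSS}, with the translation to the MCS setting treated as implicit given the dictionary already established (in particular \cref{thm:frame_functors_from_mcs} and \cref{lma:compare_lusztig_cycles}). Your proposal is essentially correct and simply makes this implicit translation explicit.

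One minor correction: to match the $s$-variables on the two sides you should cite the remark immediately following \cref{dfn:s_variable_framed_mcs} (which says the $s_t$-variables coincide with the $s_v$-variables of $\mathfrak{A}(\mathfrak{m})$ as a consequence of \cref{thm:frame_functors_from_mcs}) rather than \cref{lma:s-variables_sr-form}, which is about the relation between $s$-variables and SR-form handleslide marks and is not what you need here. With that adjustment, your argument goes through.
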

    \begin{theorem}\label{thm:handleslide_clusters}
        Let $\beta \in \Br_n^+$ such that $\delta(\beta) = w_0$ and let $\mathfrak{r} \colon \beta \to \Delta$ be a right inductive morphism. Under the functor $\mathfrak{A}$ in \cref{thm:ruling_to_weaves}, we have
        \[
        A_t(\mathfrak{r}) = A_v(\mathfrak{A}(\mathfrak{r})),
        \]
        where $v$ is the trivalent vertex in a right inductive weave $\mathfrak{A}(\mathfrak{r})$ corresponding to the trivalent move $t$.
    \end{theorem}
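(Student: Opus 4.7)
The plan is to show that the two inductive definitions of cluster variables (Definition \ref{dfn:cluster_var_mcs} for morphisms in $\mathfrak{R}_n$ and Theorem \ref{thm:cluster_inductive} for right inductive weaves) are literally the same formula once one identifies their ingredients via $\mathfrak{A}$. There are three ingredients to match: the $s$-variables, the Lusztig cycles, and the covering relation. For the $s$-variables, one observes that under the functor $\mathfrak{A}$ each trivalent move $t$ of $\mathfrak{r}$ corresponds to a unique trivalent vertex $v$ of $\mathfrak{A}(\mathfrak{r})$, and by the remark following Definition~\ref{dfn:s_variable_framed_mcs} (which is a direct consequence of Theorem~\ref{thm:frame_functors_from_mcs}) the $s_t$ variable of Definition~\ref{dfn:s_variable_framed_mcs} equals the $s_v$ variable of Definition~\ref{dfn:s-variable_weave}. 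For the Lusztig cycles, this is precisely Lemma~\ref{lma:compare_lusztig_cycles}: $\gamma_t$ and $\gamma_{\mathfrak{A}(t)}$ are identified under the bijection between letters of the braid sequence and weave segments between consecutive slices.

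Next I would verify that the covering relation and the exponents appearing in the two inductive formulas agree. If $t$ is a trivalent move at position $(r,s)$ then the northwest incoming edge $e_{nw}$ of the corresponding trivalent vertex $v = \mathfrak{A}(t)$ in $\mathfrak{A}(\mathfrak{r})$ is exactly the weave segment between slices $r$ and $r+1$ at horizontal coordinate $s$; this is immediate from the construction of $\mathfrak{A}$. Hence for any other trivalent move $t'$ at position $(r',s')$ with $r' > r$, the quantity $\gamma_{t'}(r,s)$ equals $\gamma_{v'}(e_{nw})$ where $v' = \mathfrak{A}(t')$, and consequently $t'$ covers $t$ if and only if $v'$ covers $v$, with matching exponents in the formulas.

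With these three identifications in hand, I would finish by induction. Order the trivalent moves of $\mathfrak{r}$ (equivalently, the trivalent vertices of $\mathfrak{A}(\mathfrak{r})$) from top to bottom. For the topmost trivalent(s), no other trivalent covers them, so both formulas give $A_t = s_t = s_v = A_v$. For the inductive step, suppose the equality has been established for all trivalents strictly above $t$; then
\[
A_t(\mathfrak{r}) = s_t \cdot \prod_{t' \text{ covers } t} A_{t'}(\mathfrak{r})^{\gamma_{t'}(r,s)} = s_v \cdot \prod_{v' \text{ covers } v} A_{v'}(\mathfrak{A}(\mathfrak{r}))^{\gamma_{v'}(e_{nw})} = A_v(\mathfrak{A}(\mathfrak{r})),
\]
where the middle equality uses the three identifications above together with the inductive hypothesis applied to each $A_{t'}$. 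No real obstacle is expected here since the MCS-side definitions were set up to mirror the weave-side ones via the functor $\mathfrak{A}$; the only slightly delicate point is bookkeeping the position-versus-edge translation to ensure that $\gamma_{t'}(r,s)$ and $\gamma_{v'}(e_{nw})$ really are the same integer, which is handled by carefully tracing how $\mathfrak{A}$ sends the $(r,s)$-th letter to the corresponding weave segment.
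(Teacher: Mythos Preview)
Your proposal is correct and takes essentially the same approach as the paper: the paper's proof is the one-line ``This follows from Lemma~\ref{lma:compare_lusztig_cycles} and by comparing Definition~\ref{dfn:cluster_var_mcs} with Theorem~\ref{thm:cluster_inductive},'' and you have simply spelled out that comparison in full. One small slip: when you write ``for any other trivalent move $t'$ at position $(r',s')$ with $r' > r$,'' the inequality should be $r' < r$, since $\gamma_{t'}$ vanishes at and above its own level and a covering $t'$ must lie \emph{above} $t$; this does not affect the argument.
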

    \begin{proof}
        This follows from \cref{lma:compare_lusztig_cycles} and by comparing \cref{dfn:cluster_var_mcs} with \cref{thm:cluster_inductive}.
    \end{proof}

    \begin{example}
        Consider the morphism $\mathfrak{m} \colon \sigma_1^3 \to \sigma_1$ defined by the sequence of braid moves $(\sigma_1\sigma_1)\sigma_1 \to \sigma_1\sigma_1 \to \sigma_1$; see \cref{fig:ex_s-variables_hopf}. In \cref{ex:s-variables_hopf_computation} we found the $s_t$-variables to be $s_1 = {z}_2$ and $s_2 = {z}_3-{z}_2^{-1}$. The first trivalent move $t'$ of $\mathfrak{m}$ occurs at $(1,1)$, and the second trivalent move $t$ occurs at $(2,1)$. The only Lusztig cycle that covers another is given by
        \[
        \gamma_{t'}(k,m) = \begin{cases}
            1, & (k,m) = (2,1), \\
            0, & \text{otherwise},
        \end{cases}
        \]
        and hence we see that $t'$ covers $t$. By definition we have the cluster variables $A_1 = s_1 = {z}_2$ and $A_2 = s_2 A_1 = 1 - {z}_2{z}_3$. This computation coincides with the one in \cref{ex:cluster-hopf}.

        As an alternative computation, using \cref{lma:cluster_u-variable}, via \cref{fig:ex_s-variables_hopf} we see that the $u$-variable associated to the first trivalent move is ${z}_2u_1u_2$, so $A_1 = {z}_2$, and the $u$-variable associated to the second trivalent move is $({z}_3-{z}_2^{-1}u_2^{-2}){z}_2u_1u_2u_3$, so $A_2 = ({z}_3-{z}_2^{-1}){z}_2 = 1-{z}_2{z}_3$.
    \end{example}
    Finally, we combine \cref{thm:handleslide_clusters} with \cref{lma:s-variables_sr-form} to describe how to compute cluster variables using the formal framed SR-form MCS and the combinatorics of the Lusztig cycles:
    \begin{theorem}\label{thm:cluster_variables_mcs}
        Let $\beta \in \Br_n^+$, and let $\rho_0$ be the maximally switched normal ruling of $\La(\beta\Delta)$. There exists an algorithm to compute the cluster variables associated with a right inductive morphism $\mathfrak{r}_{\rho_0} \colon \beta \to \Delta$ from the formal framed SR-form MCS associated with $\rho_0$, together with the combinatorics of the Lusztig cycles determined by $\mathfrak{r}_{\rho_0}$.
        \qed
    \end{theorem}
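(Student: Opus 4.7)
The plan is to assemble an explicit algorithm by chaining together the ingredients already developed in this section, and then invoke \cref{thm:handleslide_clusters} to verify that the output agrees with the cluster variables of the maximal cluster torus defined via the right inductive weave $\mathfrak A(\mathfrak r_{\rho_0})$.

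The algorithm proceeds in four steps. First, starting from the formal framed A-form MCS of $\La(\beta\Delta)$, run the A-to-SR-form algorithm of \cite{HenryRutherford15} with respect to the maximally switched normal ruling $\rho_0$ to produce the formal framed SR-form MCS; this is a purely combinatorial procedure on handleslide marks. Second, read off the handleslide mark $s_t$ immediately to the left of each switch; by \cref{lma:s-variables_sr-form}, these are precisely the $s_t$-variables associated to the trivalent moves of $\mathfrak r_{\rho_0}$ (equivalently, the $s_v$-variables of the trivalent vertices of $\mathfrak A(\mathfrak r_{\rho_0})$). Third, using the combinatorics of \cref{dfn:lusztig_cycle_mcs}, compute the Lusztig cycles $\gamma_t \colon L(\mathfrak r_{\rho_0}) \to \mathbb N$ for each trivalent move $t$ of $\mathfrak r_{\rho_0}$, and record the covering relation between pairs of trivalent moves. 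Fourth, apply the inductive formula of \cref{dfn:cluster_var_mcs} to build the cluster variable $A_t$ attached to each trivalent move.

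To prove correctness of this algorithm, first note that \cref{lma:s-variables_sr-form} identifies the output of step 2 with the $s_t$-variables of the right inductive morphism, and \cref{lma:compare_lusztig_cycles} identifies the Lusztig cycle data of step 3 with the Lusztig cycle data of the weave $\mathfrak A(\mathfrak r_{\rho_0})$. Then \cref{thm:handleslide_clusters} (whose proof is a direct comparison of \cref{dfn:cluster_var_mcs} and \cref{thm:cluster_inductive}(2)) yields $A_t(\mathfrak r_{\rho_0}) = A_v(\mathfrak A(\mathfrak r_{\rho_0}))$, so the output of step 4 is exactly the desired cluster variable.

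I do not anticipate any real obstacle here: by the time \cref{thm:cluster_variables_mcs} is stated, every ingredient (the SR-form reduction, the identification of $s$-variables with SR handleslide marks, the MCS version of Lusztig cycles, and the inductive cluster formula) has already been established. The role of this theorem is essentially to package those ingredients into an algorithm, so the ``proof'' amounts to a paragraph pointing to \cref{lma:s-variables_sr-form}, \cref{lma:compare_lusztig_cycles}, \cref{dfn:cluster_var_mcs}, and \cref{thm:handleslide_clusters}, together with a worked example (such as \cref{ex:s-variables_mcs}) illustrating the four steps on a non-trivial braid.
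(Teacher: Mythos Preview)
Your proposal is correct and matches the paper's approach exactly: the theorem is stated with a \qed and is introduced as the combination of \cref{lma:s-variables_sr-form} and \cref{thm:handleslide_clusters}, which is precisely the chain of ingredients you assemble.
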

    \begin{example}
        Let us consider $\beta = (\sigma_1^2\sigma_2^2)^2$ and compute the cluster variables associated with a right inductive morphism $\mathfrak{r}_\rho \colon \beta \to \Delta$ corresponding to the maximally switched normal ruling $\rho_0$ of $\La(\beta\Delta)$.

        In \cref{ex:s-variables_mcs} we saw that the $s$-variables are given by $s_1 = z_2$, $s_2 = z_4$, $s_3 = z_6$, $s_4 = v$, and $s_5 = z_8-v^{-1}$, where
        \[
        v = z_7-z_2^{-1}z_3z_6+z_4^{-1}(1-z_6(z_5+z_2^{-1}(1-z_3z_4))).
        \]
        By \cref{dfn:cluster_var_mcs}, we now have that the cluster variables $A_1,\ldots,A_5$ are determined inductively via the $s$-variable and the combinatorics of the Lusztig cycles.

        Since there is a one-to-one correspondence between switches in $\rho_0$ and MCS trivalent moves in $\mathfrak{r}_\rho$, we have five MCS trivalent moves $t_1,\ldots,t_5$. By \cref{fig:lusztig_cycle_comb}, we see that $t_1$ and $t_2$ both cover $t_4$, and $t_4$ covers $t_5$, and these are the only Lusztig cycles that cover. 
        \begin{figure}[!htb]
            \centering
            \includegraphics{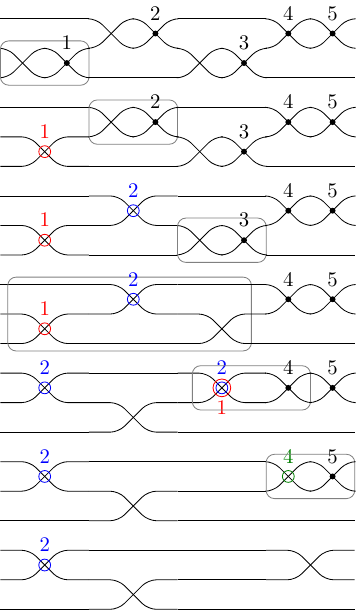}
            \caption{The Lusztig cycles in a right inductive morphism of the maximally switched normal ruling of $\La((\sigma_1^2\sigma_2^2)^2\Delta)$ for the three trivalents which cover another trivalent. Each Lusztig cycle corresponds to one of the colors of sequences of encircled crossings.}
            \label{fig:lusztig_cycle_comb}
        \end{figure}
        Therefore we compute the cluster variables as follows:
        \begin{align*}
            A_1 &= s_1 = z_2 \\
            A_2 &= s_2 = z_4 \\
            A_3 &= s_3 = z_6 \\
            A_4 &= s_4A_1A_2 = vz_2z_4 = z_2z_4z_7+z_2-z_2z_5z_6-z_6 \\
            A_5 &= s_5A_4 = (z_8-v^{-1})vz_2z_4 \\
            &= -z_2z_4+z_2z_4z_7z_8+z_2z_8-z_2z_5z_6z_8-z_6z_8.
        \end{align*}
    \end{example}

\subsection{Trivial monodromy algebraic weaves, augmentations, and sheaves}\label{sec:weaves_aug_sheaves}
    Given an algebraic weave $\w \colon \beta \to \beta'$, we call a point in $\mathfrak{X}(\w)$ a \emph{trivial monodromy algebraic weave}. From \cref{rmk:ruling_caps}, recall that the braid category $\mathfrak{B}_n$ involves caps. The functor $\mathfrak{A} \colon \mathfrak{B}_n \to \mathfrak{W}_n$ (see \cref{thm:ruling_to_weaves}) is an equivalence of categories.

    \begin{notation}
    By construction, the underlying Legendrian links $\La(\beta_{j-1}\Delta)$ and $\La(\beta_j\Delta)$, involved in the MCS braid moves \cref{dfn:mcs_braid_moves}, are related by an immersed exact Lagrangian cobordism determined by the MCS braid moves or, equivalently, the associated weave; see e.g.\@ \cite{CasalsZaslow20,pan2021functorial}. Denote the Legendrian lift of this immersed exact Lagrangian cobordism by $\La(\mathfrak{m})$. 
    \end{notation}
    A \emph{Morse complex 2-family} (MC2F) is a two-dimensional version of a Morse complex sequence introduced by Rutherford--Sullivan \cite[Definition 4.1]{rutherford2018generating}; see \cite[Definition 2.7]{pan2023augmented} for the generalization over $\Z$ with homology coefficients. 
    
    \begin{proposition}\label{prop:monodromyless_MCS_sequences_MC2F}
        Let $n \in \Z_{\geq 1}$ and let $\mathfrak{m} \colon \beta\to \beta'$ be a morphism in the braid category $\mathfrak{B}_n$. Given any point in $\mathfrak M(\mathfrak{m})$, we may construct an MC2F on the front projection of $\La(\mathfrak{m})$.
    \end{proposition}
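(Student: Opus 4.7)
The plan is to build the MC2F by interpreting the data of a point in $\mathfrak M(\mathfrak m)$ as the restriction to a one-parameter family of ``time slices'' of a two-dimensional family of handleslides on the front projection of $\La(\mathfrak m)$. Recall that $\La(\mathfrak m)$ is the Legendrian lift of the immersed exact Lagrangian cobordism traced out by the braid sequence $(\beta_1,\dots,\beta_q)$: its front projection sits inside $[0,1]\times\R^2$ and is foliated by the front diagrams of the $\La(\beta_j\Delta)$, with the standard local models (swallowtail, cusp-crossing, triple point, etc.) occurring at the layers where a braid move takes place. A point of $\mathfrak M(\mathfrak m)$ provides, in each layer, an A-form MCS together with dashed handleslide marks and marked points produced by trivalent and cup moves, such that all of the monodromy relations from \cref{cor:mcs_matrix,lma:monodromy_trivalent_etc_id} are satisfied.

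First, I would describe the underlying ``handleslide set'' of the MC2F. Each handleslide mark in any $A(\beta_j)$ persists under MCS moves (\cref{fig:mcs_moves,fig:mcs_moves_marked_pts}) until it is created or destroyed at the one of the layers where a braid move occurs. Concatenating these persistent one-parameter families as we move through the layers produces a finite collection of smooth arcs and closed curves in the base of the front projection of $\La(\mathfrak m)$, each labelled by an element of $\C$ (or $\C^\ast$ in the dashed/marked-point case). The birth, death, and trivalent endpoints of these arcs occur precisely at the positions of cup moves, trivalent moves, and the local bifurcations described by the MCS moves; these are exactly the endpoints and Y-vertices permitted by \cite[Axiom 4.2]{rutherford2018generating}. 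This is the content of \cref{rmk:mcs_monodromy_at_vertex}.

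Second, I would verify the MC2F axioms locally. Away from the layers where a braid move occurs, the compatibility axiom between consecutive MCSs \cite[Axiom 4.3]{rutherford2018generating} reduces to the ordinary MCS move compatibility of \cref{cor:mcs_matrix}; this is what \cref{rmk:mcs_monodromy=mcf_axiom} records. At a distant crossings or hexavalent move, the trivial monodromy conditions of \cref{lma:monodromy_trivalent_etc_id} are precisely the MC2F axiom at a double point or triple point of the front. At an MCS trivalent or MCS cup move, the output handleslide marks, together with the dashed handleslides and marked points produced, together with the ancillary MCS moves needed to sweep the dashed marks off to the right, yield exactly the local models of an MC2F at a Reidemeister I (cusp birth) or trivalent vertex on the Legendrian surface. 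Here the crucial point is that \cref{lma:monodromy_trivalent_etc_id} encodes the pair of equations that the MC2F axiom demands at these vertices, and the subsequent trivial monodromy conditions from \cref{cor:mcs_matrix} ensure that the arcs of handleslides produced can be propagated consistently across the remainder of the slice.

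The main obstacle, as flagged in \cref{rmk:mcs_monodromy_at_vertex}, is the bookkeeping at the trivalent and cup vertices: one must check that the handleslide arcs, the dashed handleslide arcs, and the marked-point strata produced by an MCS trivalent or cup move can be arranged as a single two-dimensional family on $\La(\mathfrak m)$ with the correct Y-type bifurcation and endpoint structure required by \cite[Axiom 4.2]{rutherford2018generating}. This is essentially a case-by-case local verification at each of the four braid moves, where the trivial monodromy relations from \cref{lma:monodromy_trivalent_etc_id} match up on the nose with the local models in \cite[Section 4]{rutherford2018generating}; the dashed handleslide marks and marked points, which encode the non-trivial microlocal monodromy along the Legendrian, correspond to the base-point variables appearing in the extended MC2F of \cite[Section 2]{pan2023augmented}. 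Once these local matches are checked, the global MC2F is assembled by gluing across the layers of the front of $\La(\mathfrak m)$.
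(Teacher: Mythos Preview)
Your overall strategy---trace the handleslide marks through the layers to build the handleslide set, then verify the MC2F axioms locally at each braid move---is exactly the paper's approach. Two concrete ingredients are missing, however.

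First, and most importantly, at an MCS trivalent move the front projection of $\La(\mathfrak m)$ has a $D_4^-$ singularity, which is \emph{not} one of the generic front singularities appearing in Rutherford--Sullivan's MC2F axioms. There is no ``local model of an MC2F at a trivalent vertex'' to invoke directly. The paper resolves this by perturbing the front near each trivalent vertex into three swallowtail singularities (following \cite[Figure 33]{CasalsZaslow20}), and then checks that the trivial-monodromy trivalent move decomposes into a sequence of MCS moves passing through three swallowtail points and three pentavalent vertices in $H_0$, matching \cite[Axiom 4.2(1) and (3)]{rutherford2018generating}. Without this perturbation step the local verification you describe cannot be completed.

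Second, the paper also specifies the super-handleslide set $H_{-1}$: these are the points where a handleslide mark with coefficient zero is created or destroyed (the MCS move in \cref{fig:mcs_local13}), and they account for the trivalent singularities of $H_0$ required by \cite[Axiom 4.2(2)]{rutherford2018generating}. Your proposal does not identify this set, so the handleslide arcs you build have unexplained endpoints. Similarly, the paper matches the specific MCS moves \cref{fig:mcs_local3,fig:mcs_local5} to pentavalent vertices in $H_0$ and the marked-point moves to the homology-curve relations of \cite[Sections 2.3--2.4]{pan2023augmented}; your ``case-by-case local verification'' is correct in spirit but needs these precise identifications to go through.
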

    \begin{proof}
        Consider the front diagram of the Legendrian surface $\Lambda(\mathfrak{m})$. The handleslide set $H_0$ of the MC2F in the front diagram is the trace of the handleslide marks on the Legendrian associated to each MCS braid move and the super-handleslide set $H_{-1}$ of the MC2F is the set of points where handleslide marks become implicit as in \cref{lma:monodromy_comp_prod,fig:mcs_local13}. The comparison now follows from comparing our computation with \cite[Proposition 4.1]{rutherford2018generating} or \cite[Proposition 7.5]{PanRutherford23}. Then for an MCS equivalence in \cref{lma:monodromy_comp_prod}, \cref{fig:mcs_local3,fig:mcs_local5} correspond to the pentavalent vertex of $H_0$ in \cite[Axiom 4.2(1)]{rutherford2018generating}, \cref{fig:mcs_local13} corresponds to the trivalent singularity of $H_0$ at the super-handleslide $H_{-1}$ in \cite[Axiom 4.2(2)]{rutherford2018generating}, and \cref{fig:mcs_local_marked1,fig:mcs_local_marked2,fig:mcs_local_marked3,fig:mcs_local_marked4,fig:mcs_local_marked5,fig:mcs_local_marked6} correspond to the relation between handleslide marks and homology curves in \cite[Sections 2.3 and 2.4]{pan2023augmented}. For an MCS hexavalent move \cref{lma:monodromy_trivalent_etc_id}, \cref{fig:mcs_hexavalent_monoless1,fig:mcs_hexavalent_monoless2} correspond to a sequence of MCS moves with a pentavalent vertex in $H_0$ in \cite[Axiom 4.2(1)]{rutherford2018generating}; see \cref{fig:Monodromy_MCF_hexa}. For an MCS trivalent move, we can perturb the front diagram at the trivalent vertex, as in \cite[Figure 33]{CasalsZaslow20}, and obtain three swallowtail singularities. Then, \cref{fig:mcs_trivalent_monoless} corresponds to a sequence of MCS moves with three pentavalent vertices and three swallowtail points in $H_0$ \cite[Axiom 4.2(1) and (3)]{rutherford2018generating}; see \cref{fig:Monodromy_MCF_tri}. Finally, the trivial monodromy condition corresponds to \cite[Axiom 4.3 and Proposition 4.1]{rutherford2018generating}; see the concrete computations of handleslide marks in \cref{fig:Monodromy_MCF_hexa,fig:Monodromy_MCF_tri} via the MCS equivalence relations. 
    \end{proof}

    \begin{remark}
        Pan--Rutherford \cite{pan2023augmented} encoded the homology of the Legendrian surface by a collection of curves and the combinatorial spin structure by a second collection of curves ending at swallowtail singularities (see \cite[Definition 2.5]{pan2023augmented}). We implicitly combine the combinatorial spin structures with the curves for homology classes, following the convention in Casals--Ng \cite[Section 3.5]{CasalsNg}. More precisely, we add a pair of marked points for each trivalent vertex in the weave with formal variables $t$ and $-t^{-1}$, which agrees with the marked points given by a saddle cobordism in \cite[Section 3.5]{CasalsNg}. In the language of \cite{pan2023augmented}, the homology curves are defined by the trace of the pair of marked points with formal variables $t$ and $-t^{-1}$, and with endpoints on the boundary of the weave. Instead, the combinatorial spin structure curves are defined by the trace of the bottom marked point labeled $-t^{-1}$ starting from the trivalent vertex and ending on the boundary of the weave, oriented towards the marked point labeled $-t^{-1}$. Since a trivalent vertex can be perturbed into three swallowtails (see \cref{fig:Monodromy_MCF_tri}), our spin structure curves also start at swallowtails in the front and therefore agree with the spin structure curves in~\cite[Definition 2.5]{pan2023augmented}. The same choices of spin structure curves on weaves have also appeared in the work of Casals--Weng \cite[Section 4.5]{CasalsWeng} with the name of sign curves. These sign curves correspond to choices of spin structures as explained in \cite[Appendix B.2]{CasalsLi}.
    \end{remark}

    \begin{figure}[!htb]
            \centering
            \includegraphics{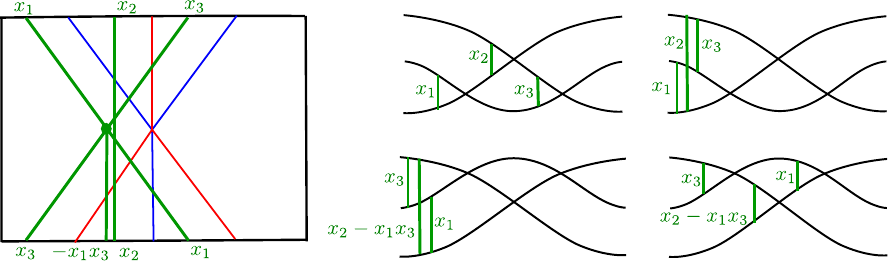}
            \caption{The MC2F associated to the MCS hexavalent move with trivial monodromy, where the thick green lines depict the handleslide sets in \cite[Definition 4.1]{rutherford2018generating} and the green dot depicts the pentavalent vertex in the handleslide set of the MC2F \cite[Axiom 4.2(1)]{rutherford2018generating}.}
            \label{fig:Monodromy_MCF_hexa}
    \end{figure}
    \begin{figure}[!htb]
            \centering
            \includegraphics[width=0.6\textwidth]{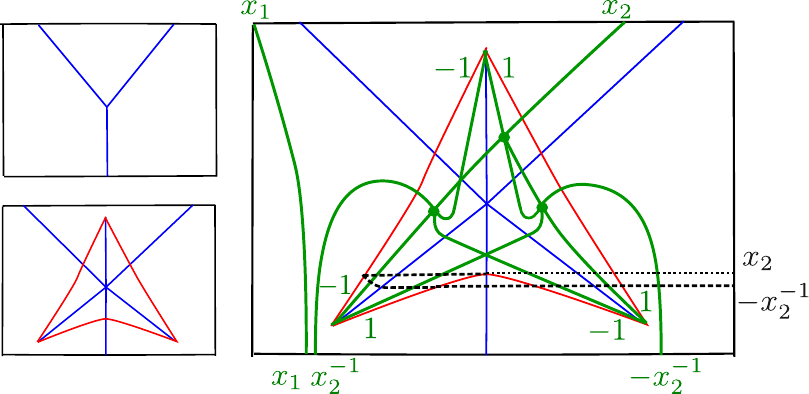}

            \vspace{2mm}
    
            \includegraphics{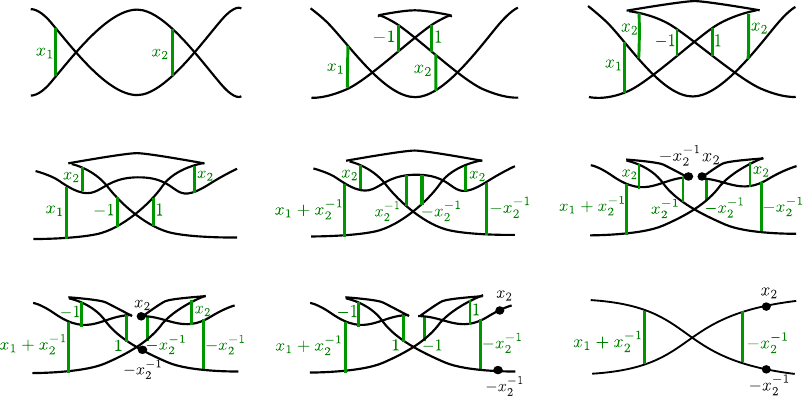}
            \caption{The MC2F associated to the MCS trivalent move with trivial monodromy, where the thick green lines depict the handleslide sets in \cite[Definition 4.1]{rutherford2018generating}, the three green dots depict the pentavalent vertices in the handleslide set \cite[Axiom 4.2(1)]{rutherford2018generating}, the cusp points of the thick green lines depict the swallowtail points in the handleslide set \cite[Axiom 4.2(3)]{rutherford2018generating}, and the dashed lines depict the marked points at the saddle cobordism.}
            \label{fig:Monodromy_MCF_tri}
        \end{figure}
    
    \begin{corollary}\label{cor:algebra-weave-to-MC2F}
        There is a map from trivial monodromy algebraic weaves to the set of Morse complex 2-families on the (front of the Legendrian lift of the) underlying immersed exact Lagrangian cobordism induced by the algebraic weave.
    \end{corollary}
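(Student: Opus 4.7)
The plan is to assemble this as a direct consequence of \cref{prop:monodromyless_MCS_sequences_MC2F} together with the categorical equivalence $\mathfrak{A}\colon \mathfrak{B}_n \to \mathfrak{W}_n$ established in \cref{thm:ruling_to_weaves} and the identification $\mathfrak{M}(\mathfrak{m}) = \mathfrak{X}(\mathfrak{A}(\mathfrak{m}))$ from \cref{thm:functors_from_mcs}. First I would take a trivial monodromy algebraic weave, which by definition is a point of $\mathfrak{X}(\w)$ for some algebraic weave $\w\colon \beta\to \beta'$. Since $\mathfrak{A}$ is an equivalence of categories between the braid category and the weave category, I choose a morphism $\mathfrak{m}\colon \beta\to\beta'$ in $\mathfrak{B}_n$ with $\mathfrak{A}(\mathfrak{m}) = \w$; such a morphism is given explicitly by vertically decomposing $\w$ into single-braid-move weaves and reading off the associated braid operations.

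Next, \cref{thm:functors_from_mcs} gives the equality $\mathfrak{M}(\mathfrak{m}) = \mathfrak{X}(\w)$ on the nose (not merely up to isomorphism), under which our trivial monodromy algebraic weave corresponds to a point $p \in \mathfrak{M}(\mathfrak{m})$. I then invoke \cref{prop:monodromyless_MCS_sequences_MC2F} to produce from $p$ a Morse complex $2$-family on the front projection of $\Lambda(\mathfrak{m})$. To complete the argument I need to check that $\Lambda(\mathfrak{m})$ agrees (up to Legendrian isotopy) with the Legendrian lift of the immersed exact Lagrangian cobordism encoded by the algebraic weave $\w$. This identification is built into the construction of $\mathfrak{A}$: each braid move in $\mathfrak{m}$ produces exactly the local Lagrangian cobordism piece (saddle, cap, Reidemeister disk, etc.) associated to the corresponding vertex of $\w$ in the weave-cobordism dictionary of \cite{CasalsZaslow20,pan2021functorial}, so stacking them vertically reproduces the Legendrian surface $L(\w)$.

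The last step is to verify that the construction is well-defined, i.e.\ independent of the choice of preimage $\mathfrak{m}$ under $\mathfrak{A}$. Two such choices differ only by rearrangements of braid moves corresponding to the same combinatorial data on $\w$, and in each case the resulting handleslide and super-handleslide sets $H_0, H_{-1}$ traced out on the front of $\Lambda(\mathfrak{m})$ coincide, because the local data at every vertex (trivalent, hexavalent, distant-crossing, cup, cap) are dictated by $\w$ alone via the local models spelled out in the proof of \cref{prop:monodromyless_MCS_sequences_MC2F} (namely \cref{fig:Monodromy_MCF_hexa,fig:Monodromy_MCF_tri}). Since the bulk of the combinatorial work --- matching MCS moves with the axioms of an MC2F from \cite[Axioms 4.2 and 4.3]{rutherford2018generating} --- is already carried out in \cref{prop:monodromyless_MCS_sequences_MC2F}, the corollary follows with no further calculation.

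The only potential obstacle in making this rigorous is bookkeeping around the trivalent vertex: one must perturb the trivalent into three swallowtails and confirm that the traced handleslide curves close up into a valid $H_0$ with the correct pentavalent and swallowtail singularities, and that the marked points inserted during MCS trivalent moves give the spin structure / homology curves compatible with the saddle cobordism conventions of \cite{CasalsNg,CasalsWeng,CasalsLi}. This is a purely local check, already implicit in the proof of \cref{prop:monodromyless_MCS_sequences_MC2F}, so no new ideas are required.
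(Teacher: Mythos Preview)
Your proposal is correct and follows essentially the same route as the paper: the paper's proof is the single line ``This follows from \cref{prop:monodromyless_MCS_sequences_MC2F} and the equivalence of categories $\mathfrak{B}_n \cong \mathfrak{W}_n$,'' and your argument unpacks exactly this, additionally making explicit the identification $\mathfrak{M}(\mathfrak{m}) = \mathfrak{X}(\mathfrak{A}(\mathfrak{m}))$ from \cref{thm:functors_from_mcs} and the well-definedness check. The extra discussion of trivalent perturbation and spin structure curves is already absorbed into \cref{prop:monodromyless_MCS_sequences_MC2F} and its surrounding remarks, so no new work is needed.
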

    \begin{proof}
        This follows from \cref{prop:monodromyless_MCS_sequences_MC2F} and the equivalence of categories $\mathfrak{B}_n \cong \mathfrak{W}_n$.
    \end{proof}
    
    In terms of concrete presentations, Rutherford--Sullivan \cite{rutherford2018generating} showed that an MC2F is equivalent to an augmentation of the Chekanov--Eliashberg dga (over $\Z/2\Z$). For weaves, microlocal sheaves are related to moduli spaces of flags \cite{CasalsZaslow20} and moduli space of colorings in the case of 2-weaves \cite{sackel2024differential}. As such, the moduli space of trivial monodromy algebraic weaves (and therefore Morse complex 2-families) sits halfway between augmentations of the Chekanov--Eliashberg dga and simple microlocal sheaves on $\D^2 \times \R$, further elucidating the explicit maps defined by Rutherford--Sullivan \cite[Theorem 1.1]{rutherford2021sheaves}. 

    \begin{theorem}\label{thm:sheaves_mc2f}
        Let $\mathfrak{m} \colon \beta \to\beta'$ be a morphism in $\mathfrak{B}_n$. There is a bijection from the equivalence classes of simple sheaves on $\D^2 \times \R$ with singular support on $\Lambda(\mathfrak{m})$ to the equivalence classes of MC2Fs on the front projection of $\Lambda(\mathfrak{m})$.
    \end{theorem}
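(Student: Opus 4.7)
The plan is to assemble the required bijection from pieces already in place. By \cref{thm:functors_from_mcs} we have $\mathfrak{M}(\mathfrak{m}) = \mathfrak{X}(\mathfrak{A}(\mathfrak{m}))$, and by \cref{prop:sheaf-on-weave} the latter is canonically isomorphic to the framed moduli space $\mathcal{M}_1^{\textit{fr}}(L(\mathfrak{A}(\mathfrak{m})),T)$ of simple microlocal rank $1$ sheaves on $\D^2\times\R$ with singular support on the Legendrian weave surface, which by construction coincides with $\Lambda(\mathfrak{m})$. On the other hand, \cref{cor:algebra-weave-to-MC2F} supplies a map from $\mathfrak{M}(\mathfrak{m})$ to the set of MC2Fs on the front projection of $\Lambda(\mathfrak{m})$. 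Composing yields a candidate map
\[
\Phi\colon \mathcal{M}_1^{\textit{fr}}(\Lambda(\mathfrak{m}),T)\longrightarrow \{\text{MC2Fs on }\Lambda(\mathfrak{m})\}/\!\sim.
\]

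The next step is to produce an inverse. Given an MC2F $(H_0,H_{-1})$ on the front of $\Lambda(\mathfrak{m})$, I would choose generic horizontal slices of $\Lambda(\mathfrak{m})$, one between each consecutive pair of critical events (cusps, crossings, handleslide bifurcations, and swallowtails). Restricting $H_0$ to each slice returns an MCS in the sense of \cref{dfn:mcs}, and the axioms defining an MC2F (see \cite[Axioms 4.2 and 4.3]{rutherford2018generating}) force consecutive slices to be related by precisely the MCS braid moves of \cref{dfn:mcs_braid_moves} together with the handleslide-reordering MCS moves of \cref{fig:mcs_moves,fig:mcs_moves_marked_pts}. Crucially, the defining ``composition'' axiom of an MC2F translates into the trivial monodromy conditions in \cref{dfn:variety_of_morphism}, so the sequence of slice MCSs defines a point in $\mathfrak{M}(\mathfrak{m})$. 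Via the identification $\mathfrak{M}(\mathfrak{m})\cong\mathcal{M}_1^{\textit{fr}}(\Lambda(\mathfrak{m}),T)$, this assigns a simple sheaf to the MC2F.

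It remains to check that the two constructions are mutually inverse and descend to the stated equivalence classes. In one direction the slicing construction is tautologically left inverse to the trace construction of \cref{prop:monodromyless_MCS_sequences_MC2F}: restricting the traced handleslide set to any horizontal slice recovers the handleslide marks of the MCS at that slice. In the other direction, any two choices of generic slicings of the same MC2F yield sequences of MCSs related by MCS moves in the sense of \cref{defn:MCSmoves}, and hence determine the same point in $\mathfrak{M}(\mathfrak{m})$; the same applies to MC2F equivalences, which locally move handleslide marks through the singular locus of the Legendrian surface (vertices and virtual vertices in the weave) and these are precisely the MCS moves absorbed by the trivial monodromy conditions.

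The main obstacle will be the exhaustive local verification that each type of bifurcation or singularity appearing in an MC2F corresponds exactly to one of the MCS moves or MCS braid moves tabulated above: pentavalent vertices of $H_0$ correspond to \cref{fig:mcs_local3,fig:mcs_local5} (cf.\@ \cref{fig:Monodromy_MCF_hexa,fig:Monodromy_MCF_tri}), trivalent singularities at points of $H_{-1}$ correspond to \cref{fig:mcs_local13}, swallowtail points correspond to handleslide-cusp interactions, and crossings, cusps, and the handleslide--homology curve interactions correspond to the moves in \cref{fig:mcs_moves_marked_pts}. Most of this correspondence is already implicit in the proof of \cref{prop:monodromyless_MCS_sequences_MC2F}, and the remaining content is a case check that MC2F equivalence relations are generated by these local moves, for which we can invoke the analysis of \cite[Section 4]{rutherford2018generating} (and \cite[Section 2]{pan2023augmented} to extend to $\Z$-coefficients through the marked points encoding the combinatorial spin structure).
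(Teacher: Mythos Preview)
Your approach is essentially the same as the paper's: both directions go through the chain $\mathcal{M}_1^{\textit{fr}}(\Lambda(\mathfrak{m}),T)\cong\mathfrak{X}(\mathfrak{A}(\mathfrak{m}))=\mathfrak{M}(\mathfrak{m})$ together with \cref{cor:algebra-weave-to-MC2F}, and the inverse is built by slicing an MC2F to obtain an MCS sequence. The one step the paper makes explicit that you leave implicit is that the sliced MCSs need not be in A-form, so one must use that MCS equivalences on each slice commute with the MCS braid moves (preserving the trivial monodromy condition) to convert the sequence into an A-form MCS sequence, which is what actually lives in $\mathfrak{M}(\mathfrak{m})$.
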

    \begin{proof}
        Following \cref{prop:sheaf-on-weave,cor:algebra-weave-to-MC2F}, for any simple sheaf with singular support on $\Lambda(\mathfrak{m})$ there is an associated Morse complex 2-family on $\Lambda(\mathfrak{m})$ defined by the A-form MCS sequence on $\mathfrak{m}$. 
        
        Given any Morse complex 2-family on $\Lambda(\mathfrak{m})$, we have an MCS sequence on $\mathfrak{m}$ where the MCSs for the braids in the braid sequence $(\beta \eqqcolon \beta_1, \dots, \beta_q \coloneqq \beta')$ are given by restricting the Morse complex 2-family to each braid $\beta_1, \dots, \beta_q$. The MCSs for the braids in the braid sequence $(\beta \eqqcolon \beta_1, \dots, \beta_q \coloneqq \beta')$ are related by the MCS braid moves \cref{lma:monodromy_comp_prod,lma:mcs_braid_matrix,lma:monodromy_trivalent_etc_id}. By \cref{lma:monodromy_comp_prod,lma:mcs_braid_matrix}, MCS equivalences on braids $\beta_j$ and $\beta_{j+1}$ commute with the MCS braid moves between the braids $\beta_j \to \beta_{j+1}$, so we can apply MCS equivalences on each braid without changing the trivial monodromy condition for the MCS braid moves $\beta_j \to \beta_{j+1}$. Thus, the MCS sequence on the braid sequence $(\beta_1, \dots, \beta_q)$ is equivalent to an A-form MCS sequence on $\mathfrak{m}$ and, by \cref{prop:sheaf-on-weave,cor:algebra-weave-to-MC2F}, is equivalent to a simple sheaf with singular support on $\Lambda(\mathfrak{m})$.
    \end{proof}

    Finally, following similar arguments as in \cref{sec:normal_rulings_cat}, we also compare the weave decomposition and ruling decomposition on the moduli space of sheaves.
    \begin{theorem}\label{thm:ruling_sheaf=weave_sheaf}
    Let $D$ be a front diagram of the $(-1)$-closure of $\beta\Delta$ for some $\beta \in \Br_n^+$ with $\delta(\beta) = w_0$. Then, for any graded normal ruling $\rho$, there is a commutative diagram
    \[\begin{tikzcd}[sep=scriptsize]
        \mathcal M_1^\textit{fr}(D, T)^\rho \ar[r,"\phi_\rho'"] \ar[d,"\cong" left] & \mathcal M_1^\textit{fr}(D, T) \ar[d,equal] \\
        \mathcal M_1^\textit{fr}(L(\w_\rho), T) \ar[r] & \mathcal M_1^\textit{fr}(D, T).
    \end{tikzcd}\]
    \end{theorem}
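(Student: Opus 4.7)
The plan is to construct the left vertical isomorphism explicitly using the right simplifying weave $\w_\rho = \mathfrak A(\mathfrak r_\rho) \colon \beta \to \Delta$ associated with $\rho$ via \cref{prop:normal_ruling_inductive}, and then verify commutativity from naturality of the constructions. Recall that $\w_\rho$ is built inductively so that its trivalent vertices encode the switches of $\rho$ and its cups encode the departures of $\rho$. By \cref{prop:sheaf-on-weave} there is an isomorphism $\mathcal M_1^{\textit{fr}}(L(\w_\rho), T) \cong \mathfrak X(\w_\rho)$, and the algebraic weave correspondence from \cref{thm:functor_w_to_c} yields an embedding $\mathfrak X(\w_\rho) \hookrightarrow X(\beta)$; composing with the isomorphism $X(\beta) \cong \mathcal M_1^{\textit{fr}}(D, T)$ from \cref{prop:sheaves_braid_var} defines the bottom arrow of the diagram. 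Our task is to identify the image of this composite with the ruling piece $\mathcal M_1^{\textit{fr}}(D, T)^\rho$ constructed in \cref{thm:ruling_sheaf_decomp}.

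The main step is to show this identification by induction along the length of $\mathfrak r_\rho$, following its construction letter by letter. Concretely, I would show that at each stage the sheaves with singular support on the partial Legendrian weave surface correspond precisely to sheaves with a compatible partial filtration in the sense of Shende--Treumann--Zaslow. The four types of local moves in $\mathfrak r_\rho$ correspond to four local checks:
\begin{enumerate}
    \item Distant crossings and hexavalent moves preserve the filtration data trivially, since the STZ construction and the weave flag description are both invariant under these moves (this is implicit in \cref{lma:weave_monodromy_vertex} and in \cite[Proposition 6.5]{CasalsLi}).
    \item Trivalent moves at a switch of $\rho$ correspond to a local change of the filtration where two ruling disks become nested; the local model from \cref{prop:sheaf-on-weave} contributes a $\C^\ast$-parameter at the trivalent vertex, which matches the $\C^\ast$-parameter in the STZ local model at a switch.
    \item Cup moves at a departure of $\rho$ correspond to a local closing of a ruling disk; the local model from \cref{prop:sheaf-on-weave} contributes a $\C$-parameter at the cup, which matches the $\C$-parameter in the STZ local model at a departure.
\end{enumerate}
Matching parameter counts with the dimension formulas of \cref{thm:ruling_sheaf_decomp} ensures these pieces actually agree, not just fit inside one another.

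The main obstacle is performing the local matching at trivalent and cup vertices, which requires making explicit the natural transformation between the Casals--Zaslow flag description of sheaves on $L(\w_\rho)$ and the Shende--Treumann--Zaslow filtration along ruling disks. Specifically, one needs to check that, at each trivalent vertex, the sheet-switching data (recorded by the flag transverse to the hexavalent incidence) coincides with the transition function at an STZ switch, and similarly at each cup with the STZ filtration data at a departure. This is a purely local computation in $\D^2 \times \R$, using the Bruhat decomposition of complete flags, and once performed, naturality of restriction to the boundary $\partial \D^2 \times \R$ forces both sides of the square to agree as subspaces of $\mathcal M_1^{\textit{fr}}(D, T)$, which yields the commutative diagram.
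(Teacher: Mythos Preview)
Your overall strategy---induction along the slices of $\w_\rho$, checking the four types of local moves---is close to the paper's. But there are two genuine problems.

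First, your item (3) contains a factual error: in the STZ filtration model (and in the SR-form MCS, \cref{thm:sr_bijection}), \emph{departures} contribute no parameter; it is the \emph{returns} that carry the $\C$-parameters. The cups in $\w_\rho$ do correspond to departures of $\rho$, so the $\C$-factor attached to a cup in $\mathfrak X(\w_\rho)$ is \emph{not} literally the STZ $\C$-parameter ``at a departure''. The two parametrizations $\C^c \times (\C^\ast)^t$ and $\C^{r(\rho)-\binom n2}\times(\C^\ast)^{s(\rho)}$ are related by the non-trivial bijection $f_{\mathfrak r_\rho}$ of \cref{notn:rational_coord_change}, not by matching local models cup-to-departure. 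So your proposed local identification in (3) cannot work as stated.

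Second, and more seriously, the step ``matching parameter counts with the dimension formulas ensures these pieces actually agree'' is not a valid argument: two locally closed subvarieties of the same dimension inside $\mathcal M_1^{\textit{fr}}(D, T)$ need not coincide. The paper avoids this by proving only the \emph{inclusion} $\mathcal M_1^{\textit{fr}}(L(\w_\rho), T) \hookrightarrow \mathcal M_1^{\textit{fr}}(D, T)^\rho$: for $\mathcal F_\w$ one defines the STZ filtration $R_i\mathcal F_\w(U)=\mathcal F_\w(U)\cap\C^i$ directly from the framing, and then checks by induction over the slices (using \cref{thm:ruling_sheaf_decomp} to know that the restricted filtration on each slice gives \emph{some} normal ruling, and the local weave moves to see it is the correct one $\rho_j$). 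Equality then follows from the global fact that both $\{\mathcal M_1^{\textit{fr}}(L(\w_\rho), T)\}_\rho$ and $\{\mathcal M_1^{\textit{fr}}(D, T)^\rho\}_\rho$ are disjoint decompositions of the same space $\mathcal M_1^{\textit{fr}}(D, T)$, so the inclusions are forced to be bijections. This disjoint-union argument is the missing idea in your proposal.
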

    
    \begin{proof}
    We follow a similar argument as in \cref{thm:ruling_sheaf_decomp}; see also \cite[Proposition 6.30]{STZ_ConstrSheaves}. Since we have fixed the framing for all the moduli spaces of sheaves by one base point per strand of the braid, we may assume that the flag on the left of all the crossings is the standard flag, i.e., $\{0\} \subset \C \subset \C^2 \subset \dots \subset \C^n$. Consider the weave $\w = \w_\rho$ associated to the morphism in the ruling category in \cref{dfn:ruling_category}. For any simple sheaf $\mathcal F_\w \in \mathcal M_1^\textit{fr}(L(\w), T)$, we can also define a filtration $R_\bullet$ such that
    \[
    R_i\mathcal F_\w(U) = \mathcal F_\w(U) \cap \C^i, \quad 0\leq i \leq n.
    \]
    We show that the restriction of $\mathcal F_\w \in \mathcal M_1^\textit{fr}(L(\w), T)$ to the boundary of the disk defines a sheaf in $\mathcal M_1^\textit{fr}(D, T)^\rho$. By \cref{lma:rulings_right_induct_weaves}, we know the weave $\w$ determines a unique normal ruling $\rho_j$ on each slice of the weave $\beta_j$. For the bottom slice $\Delta$, we know $\mathcal F_\w$ restricts to the unique sheaf with singular support on $\Delta$, and the associated ruling is the unique ruling with no switches. Suppose the restriction of $\mathcal F_\w$ to the slice $\beta_j$ determines the normal ruling $\rho_j$ associated to the (right inductive) weave, we show that its restriction to the slice $\beta_{j+1}$ also determines the ruling $\rho_{j+1}$ associated to the (right inductive) weave.

    The main observation we use is that, by \cref{thm:ruling_sheaf_decomp}, the restriction of the filtration $R_\bullet \mathcal F_\w$ to any slice $\beta_j$ must define some normal ruling on the slice $\beta_j$. When $\beta_j \to \beta_{j+1}$ is related by a distant crossing or hexavalent vertex, we know that no switches are involved. The filtration $R_\bullet\mathcal F_\w$ extends from $\beta_j$ to $\beta_{j+1}$ canonically. When $\beta_j \to \beta_{j+1}$ is related by a cup move, the flags on $\beta_j$ are $\dots, F_k, \dots$ and the flags on $\beta_{j+1}$ are $\dots, F_k, F_{k+1}, F_{k+2}, \dots$, where $F_k = F_{k+2}$. Thus, the sheaves $R_\bullet\mathcal F_\w$ must contain the same set of subspaces in $F_k$ and $F_{k+2}$. Since we know the left crossing has to be a return for any normal ruling of $\beta_{j+1}$, the filtration extends from $\beta_j$ to $\beta_{j+1}$ without introducing any switches. When $\beta_j \to \beta_{j+1}$ is related by a trivalent move, the filtration has to extend from $\beta_j$ to $\beta_{j+1}$ by creating one single switch at the right crossing, because the left crossing has to be a return for any normal ruling of $\beta_{j+1}$. This shows that the restriction functor of sheaves to the boundary of the disk factors as
    \[
    \mathcal M_1^\textit{fr}(L(\w_\rho), T) \hooklongrightarrow \mathcal M_1^\textit{fr}(D, T)^\rho \hooklongrightarrow \mathcal M_1^\textit{fr}(D, T).
    \]
    Since $\mathcal M_1^\textit{fr}(D, T)$ can be decomposed into a disjoint union of $\mathcal M_1^\textit{fr}(D, T)^\rho$ and, respectively, a disjoint union of $\mathcal M_1^\textit{fr}(L(\w_\rho), T)$, it follows that the first map has to be bijective.
    \end{proof}

    As a consequence of \cref{thm:corr_decomps,thm:ruling_sheaf=weave_sheaf}, the ruling decomposition on the moduli space of sheaves and the ruling decomposition on the augmentation variety agree under the isomorphism $\Aug(\La(\beta\Delta)) \cong X(\beta) \cong \mathcal M_1^\textit{fr}(\La(\beta\Delta),T)$ from \cref{thm:-1=braid vty_new,prop:sheaves_braid_var}.
    
    \begin{theorem}\label{thm:ruling_sheaf=ruling_aug}
    Let $D$ be a front diagram of the $(-1)$-closure of $\beta\Delta$ for some $\beta \in \Br_n^+$ with $\delta(\beta) = w_0$. Then, for any graded normal ruling $\rho$, there is a commutative diagram
    \[\begin{tikzcd}[sep=scriptsize]
        \mathcal M_1^\textit{fr}(D, T)^\rho \ar[r,"\phi_\rho'"] \ar[d,"\cong" left] & \mathcal M_1^\textit{fr}(D, T) \ar[d,"\cong"] \\
        \widehat{\MCS}{}^\rho(D) \ar[r,hook,"\phi_\rho"] & \Aug(D).
    \end{tikzcd}\]
    \qed
    \end{theorem}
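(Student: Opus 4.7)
The plan is to stitch together three comparison results already established in the paper, using the braid variety $X(\beta)$ (equivalently, the moduli of trivial monodromy algebraic weaves) as the common intermediate object that bridges the sheaf side and the augmentation side.

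First, I would fix a right inductive morphism $\mathfrak{r}_\rho \colon \beta \to \Delta$ in $\mathfrak{R}_n$ whose underlying normal ruling is $\rho$, as produced by \cref{prop:normal_ruling_inductive}, and set $\w_\rho \coloneqq \mathfrak{A}(\mathfrak{r}_\rho)$. Combining \cref{prop:sheaf-on-weave} with \cref{thm:functors_from_mcs} yields a chain of identifications
\[
\mathcal M_1^\textit{fr}(L(\w_\rho), T) \;\cong\; \mathfrak{X}(\w_\rho) \;=\; \mathfrak M(\mathfrak{r}_\rho),
\]
and \cref{thm:ruling_sheaf=weave_sheaf} then gives a commutative square identifying $\mathcal M_1^\textit{fr}(D,T)^\rho$ with $\mathcal M_1^\textit{fr}(L(\w_\rho),T)$ as subschemes of $\mathcal M_1^\textit{fr}(D,T)$. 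Under the isomorphism $X(\beta) \cong \mathcal M_1^\textit{fr}(D,T)$ of \cref{prop:sheaves_braid_var}, the inclusion on the top row is therefore identified with the weave correspondence $\mathfrak M(\mathfrak{r}_\rho) \hookrightarrow X(\beta)$, which, after the parametrization $\C^c \times (\C^\ast)^t \overset{\cong}{\to} \mathfrak M(\mathfrak{r}_\rho)$ of \cref{lma:functoriality}, is exactly the map $\phi_{\mathfrak{r}_\rho}$.

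Next, \cref{thm:corr_decomps} provides the commutative square
\[
\begin{tikzcd}[sep=small]
\C^c \times (\C^\ast)^t \rar{\phi_{\mathfrak r_\rho}} \dar[swap]{f_{\mathfrak r_\rho}} & X(\beta) \dar{\alpha}[swap]{\cong} \\
\C^{r(\rho)-\binom{n}{2}} \times (\C^\ast)^{s(\rho)} \rar{\phi_\rho} & \Aug(D)
\end{tikzcd}
\]
whose left vertical arrow is a bijection and which, after quotienting by the isomorphism $\eta_\rho$ of \eqref{eq:sr_form_iso}, identifies the image of $\phi_{\mathfrak{r}_\rho}$ in $X(\beta)$ with $\widehat{\MCS}{}^\rho(D)$ in $\Aug(D)$. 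Concatenating this with the identification from the previous paragraph produces the desired commutative square: the top row of the target diagram becomes $\phi_{\mathfrak{r}_\rho}$ after the vertical sheaf-to-braid-variety isomorphism, and \cref{thm:corr_decomps} then converts $\phi_{\mathfrak r_\rho}$ into $\phi_\rho$ via the vertical augmentation-side isomorphism $\alpha$.

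The main obstacle I anticipate is verifying that the sheaf-theoretic identification $\mathfrak{X}(\w_\rho) \cong \mathcal M_1^\textit{fr}(L(\w_\rho), T)$ is not merely an abstract isomorphism of pieces, but is genuinely compatible with the two different ways of embedding them into the ambient moduli $\mathcal M_1^\textit{fr}(D,T) \cong X(\beta)$. Concretely, this amounts to checking that the flag-valued restriction functor used in the proof of \cref{thm:ruling_sheaf=weave_sheaf} agrees on the nose with the map $\mathfrak{X}(\w_\rho) \to X(\beta)$ coming from the weave correspondence, once both are passed through \cref{prop:sheaves_braid_var,prop:sheaf-on-weave}. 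This compatibility should follow by tracing flags through each slice of the weave and comparing with the monodromy description of $\mathfrak X(\w)$ in \cref{dfn:weave_corr}; the remainder of the argument is then a formal diagram chase.
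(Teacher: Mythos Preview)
Your proposal is correct and follows essentially the same route as the paper: the theorem is stated there with a \qed and justified in the preceding sentence as an immediate consequence of \cref{thm:corr_decomps} and \cref{thm:ruling_sheaf=weave_sheaf}, using the braid variety as the bridge via \cref{thm:-1=braid vty_new,prop:sheaves_braid_var}. You have simply unpacked the diagram chase in more detail, and the compatibility check you flag (that the restriction-of-sheaves map agrees with the weave correspondence under \cref{prop:sheaf-on-weave,prop:sheaves_braid_var}) is exactly the content implicit in the paper's one-line justification.
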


\section{Ruling and weave decompositions via cycle deletion}\label{sec:weave_ruling_decomp_cycle_deletion}
    One would hope to construct right simplifying weaves by applying cycle deletions to the right inductive Demazure weave. In this section, we explain why this strategy does not work in general. Recall the discussion on Lusztig cycles from \cref{dfn:lusztig_cycle}. 
\subsection{Cycle deletion in weaves}\label{sec:cycle_del_weaves}
    We enlarge the category of cycles in weaves following \cite[Section 2.4]{CasalsZaslow20}. They correspond to 1-cycles in the Legendrian surfaces associated to the weaves.

\begin{definition}[Geometric $1$-cycles]
    Let $\beta_1,\beta_2 \in \Br_n^+$ for any $n\geq 1$ and let $\w \colon \beta_1 \to \beta_2$ be a weave. A \emph{geometric $1$-cycle} of $\w$ is a graph $\Gamma$ with only vertices of valency $\leq 3$ together with an inclusion of graphs $\iota \colon \Gamma \to \w$ such that the following holds:
    \begin{enumerate}
        \item Every univalent vertex of $\Gamma$ is mapped by $\iota$ to a trivalent vertex of $\w$; see \cref{fig:geom_cycle1}.
        \item Every bivalent vertex $v$ of $\Gamma$ satisfies one of the following:
        \begin{enumerate}
            \item $v$ is mapped by $\iota$ to a hexavalent vertex of $\w$ such that the two adjacent edges to $v$ are mapped via $\iota$ to an edge of $G_i$ and $G_{i+1}$, respectively, for some $i$; see \cref{fig:geom_cycle2}.
            \item $v$ is mapped by $\iota$ to a tetravalent vertex of $\w$ such that the two adjacent edges to $v$ are mapped via $\iota$ to an edge of $G_i$ for some $i$; see \cref{fig:geom_cycle4}.
        \end{enumerate}
        \item Every trivalent vertex $v$ of $\Gamma$ is mapped to a hexavalent vertex of $\w$ via $\iota$, such that every edge adjacent to $v$ is mapped via $\iota$ to an edge of $G_i$ for some $i$; see \cref{fig:geom_cycle3}.
    \end{enumerate}
\end{definition}
\begin{figure}[!htb]
    \centering
    \begin{subfigure}{0.49\textwidth}
        \centering
        \includegraphics{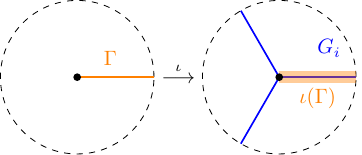}
        \caption{}\label{fig:geom_cycle1}
    \end{subfigure}
    \begin{subfigure}{0.49\textwidth}
        \centering
        \includegraphics{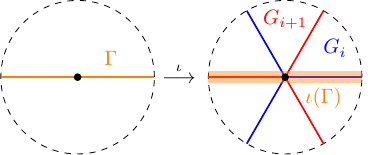}
        \caption{}\label{fig:geom_cycle2}
    \end{subfigure}
    \begin{subfigure}{0.49\textwidth}
        \centering
        \includegraphics{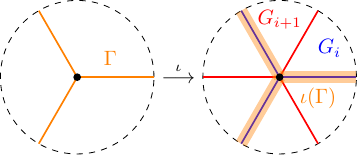}
        \caption{}\label{fig:geom_cycle3}
    \end{subfigure}
     \begin{subfigure}{0.49\textwidth}
        \centering
        \includegraphics{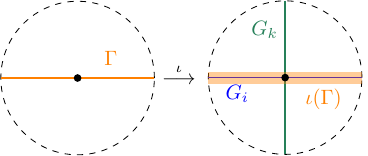}
        \caption{}\label{fig:geom_cycle4}
    \end{subfigure}
    \caption{Local models for geometric $1$-cycles in weaves.}
\end{figure}
\begin{definition}[\textsf I- and \textsf Y-cycles {\cite[Section 2.4]{CasalsZaslow20}}]
    Let $\w$ be a weave.
    \begin{enumerate}
        \item An \emph{\textsf I-cycle} is a geometric $1$-cycle $\iota \colon \Gamma \to \w$ where $\Gamma$ only consists of two univalent vertices and some number of bivalent vertices.
        \item A \emph{short \textsf I-cycle} is an \textsf I-cycle with no bivalent vertices.
        \item A \emph{long \textsf I-cycle} is an \textsf I-cycle with at least one bivalent vertex.
        \item A \emph{\textsf Y-cycle} is a geometric $1$-cycle $\iota \colon \Gamma \to \w$ where $\Gamma$ consists of at least one trivalent vertex.
        \item A \emph{\textsf Y-tree} is a \textsf Y-cycle $\iota \colon \Gamma \to \w$ such that $\Gamma$ is a tree and $\iota$ is injective.
    \end{enumerate}
\end{definition}
\begin{lemma}[{\cite[Section 2.4]{CasalsZaslow20}}]
    Let $\w$ be a weave. Then any \textsf I-cycle and any \textsf Y-cycle in $\w$ defines a cycle in $H_1(L(\w);\Z)$.
    \qed
\end{lemma}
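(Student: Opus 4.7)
The plan is to lift the graph map $\iota \colon \Gamma \to \w$ to a $1$-chain $\widetilde\Gamma$ in $L(\w)$ and then verify the cycle condition locally at every vertex of $\Gamma$. Recall that the Legendrian surface $L(\w)$ projects onto $\D^2$ as a smooth surface that is generically an $n$-sheeted cover away from $\w$; along an edge of $G_i$ the two sheets labeled $i$ and $i+1$ are smoothly joined; over a trivalent vertex of $G_i$ these two sheets cap off into a smooth disk (a $D_4^-$ saddle in the front projection); over a tetravalent vertex of $G_i \cap G_j$ with $|i-j|\geq 2$, the two pairs of sheets $\{i,i+1\}$ and $\{j,j+1\}$ cross independently; and over a hexavalent vertex of $G_i \cap G_{i+1}$, the three sheets $i,i+1,i+2$ undergo a local Reidemeister III reconnection.

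For each edge $e$ of $\Gamma$, the image $\iota(e)$ lies on some $G_i$-edge of $\w$, and I would lift $e$ to the arc $\widetilde e \subset L(\w)$ obtained by taking the unique smooth arc over $\iota(e)$ that interpolates between the sheets $i$ and $i+1$; summing over the edges of $\Gamma$ yields a $1$-chain $\widetilde\Gamma \coloneqq \sum_{e \in E(\Gamma)} \widetilde e \in C_1(L(\w);\Z)$ whose boundary is concentrated over the vertices of $\iota(\Gamma)$. To show that $\widetilde \Gamma$ is a cycle, I would inspect the local contribution to $\partial \widetilde\Gamma$ at each vertex of $\Gamma$ separately: at a univalent vertex of $\Gamma$ mapped to a trivalent vertex of $\w$, the two sheet-endpoints of the adjacent $\widetilde e$ are identified by the capping disk and contribute zero; at a bivalent vertex of $\Gamma$ mapped to a tetravalent vertex, both incident edges lie in the same $G_i$, and their lifts meet with opposite boundary orientations on each of the two sheets; at a bivalent vertex of $\Gamma$ mapped to a hexavalent vertex with incident edges in $G_i$ and $G_{i+1}$, the Reidemeister III local model matches the sheet endpoints of the two lifts.

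The main check, and the main subtlety, occurs at a trivalent vertex of $\Gamma$ mapped to a hexavalent vertex $v$ of $\w$: here three edges of $\Gamma$, all colored $G_i$, converge at $v$, and their three lifts each consist of the pair of sheets $\{i, i+1\}$ restricted to the corresponding $G_i$-edge incident to $v$. One must verify that the total boundary contribution over $v$ vanishes in $C_0(L(\w);\Z)$; this is automatic modulo $2$ but, with $\Z$-coefficients, requires a compatible choice of orientations on the sheets and the edges of $\Gamma$ so that the three local signs cancel. Fixing this orientation convention using the explicit hexavalent local model of \cite{CasalsZaslow20} and checking the sign cancellation is where I would concentrate the technical work; once this is done, assembling all of the local checks yields $\partial \widetilde\Gamma = 0$, so $[\widetilde\Gamma] \in H_1(L(\w); \Z)$ is a well-defined homology class, simultaneously handling both \textsf{I}-cycles (no trivalent vertices in $\Gamma$) and \textsf{Y}-cycles.
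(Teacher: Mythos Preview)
The paper does not give its own proof of this lemma: it simply cites \cite[Section 2.4]{CasalsZaslow20} and ends with \qed. So there is no in-paper argument to compare against, and the question is whether your sketch stands on its own.

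Your overall strategy---lift $\Gamma$ to a $1$-chain in $L(\w)$ and check $\partial\widetilde\Gamma=0$ vertex by vertex---is the right one, and your identification of the trivalent-of-$\Gamma$/hexavalent-of-$\w$ case as the nontrivial sign check is correct. There is, however, one genuine imprecision in your construction of the lift. You describe $\widetilde e$ as ``the unique smooth arc over $\iota(e)$ that interpolates between the sheets $i$ and $i+1$.'' No such arc exists: the projection $L(\w)\to\D^2$ is an $n$-fold branched cover branched only over the trivalent vertices, so over the interior of a $G_i$-edge there are $n$ honestly distinct sheets, and sheets $i$ and $i+1$ are not joined there (they only cross in the front projection). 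The correct lift of $e$ is the $1$-chain given by the \emph{difference} of the two sheet-lifts at heights $i$ and $i+1$, oriented oppositely; equivalently, the preimage of $\iota(e)$ restricted to those two sheets. Your later phrase ``the two sheet-endpoints of the adjacent $\widetilde e$'' at a univalent vertex suggests you already have this two-sheet picture in mind, so this is mainly a matter of writing the definition of $\widetilde e$ correctly.

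Once $\widetilde e$ is defined as that two-sheet difference, your local checks go through as you describe: at a trivalent vertex of $\w$ the branch point identifies the two endpoints; at a tetravalent or bivalent-at-hexavalent vertex the sheet labels match across the local model; and at a trivalent vertex of $\Gamma$ inside a hexavalent vertex of $\w$, the three incident $G_i$-edges lift to differences of the three pairs among sheets $i,i+1,i+2$, and with a consistent orientation these cancel as $(a-b)+(b-c)+(c-a)=0$. That last cancellation is the content of the sign check you flag.
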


The following definition of cycle deletions is also found in \cite[Section 4.9.1]{CasalsWeng}. Geometrically, cycle deletion corresponds to locally modifying the Legendrian surface associated to the weave by replacing a cylinder around a 1-cycle (which bounds a Lagrangian disk) by two disks \cite[Lemma 4.27]{CasalsWeng}.
\begin{definition}[Deletion of a geometric $1$-cycle]
    Let $\w$ be a weave, and let $\iota \colon \Gamma \to \w$ be a geometric $1$-cycle that is injective on the set of vertices. \emph{Deletion of $\Gamma$} is an operation that modifies the weave $\w$ locally near every trivalent, tetravalent and hexavalent vertex, as depicted in \cref{fig:local_models_geom_cycle_del_weave}.
\end{definition}
\begin{figure}[!htb]
    \centering
    \begin{subfigure}{0.49\textwidth}
        \centering
        \includegraphics{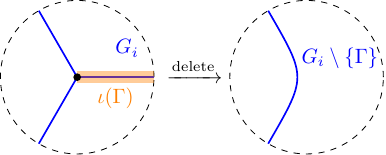}
        \caption{}\label{fig:geom_cycle_del1}
    \end{subfigure}
     \begin{subfigure}{0.49\textwidth}
        \centering
        \includegraphics{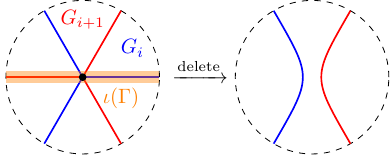}
        \caption{}\label{fig:geom_cycle_del2}
    \end{subfigure}
     \begin{subfigure}{0.49\textwidth}
        \centering
        \includegraphics{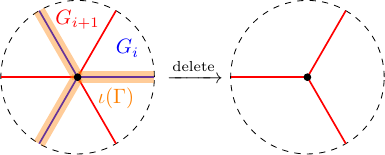}
        \caption{}\label{fig:geom_cycle_del3}
    \end{subfigure}
    \begin{subfigure}{0.49\textwidth}
        \centering
        \includegraphics{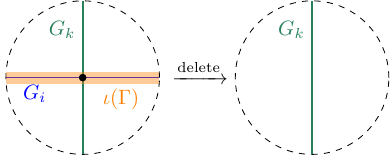}
        \caption{}\label{fig:geom_cycle_del4}
    \end{subfigure}
    \caption{Defining local models for geometric $1$-cycle deletion of a weave.}\label{fig:local_models_geom_cycle_del_weave}
\end{figure}
\subsection{Cycle deletion in the braid category}\label{sec:lusztig_cycles_braid_cat}
    We recall the definition of Lusztig cycles in the braid category in \cref{dfn:lusztig_cycle_mcs} and define \textsf Y-trees in this setting.
    \begin{definition}[\textsf Y-tree of an MCS sequence]\label{dfn:geometric_cycle_mcs}
        Let $\mathfrak{m} \colon \beta \to \beta'$ be a morphism in $\mathfrak{B}_n$ represented by the sequence of braids $(\beta_1,\ldots,\beta_q)$. A \emph{\textsf Y-tree} of $\mathfrak{m}$ is a sequence $(x^1,\ldots,x^q)$ where 
        \[
        x^i = (x^i_1,\ldots,x^i_{r_i})
        \]
        is a subword of $\beta_i$ for every $i\in \{1,\ldots,q\}$, such that
        \begin{enumerate}
            \item $x^{i+1}$ and $x^i$ are related via one of the local modifications depicted in \cref{fig:local_models_geom_cycle_del_mcs}, depending on how $\beta_{i+1}$ and $\beta_i$ are related in $\mathfrak{m}$, and
            \item $x^1$ and $x^q$ are both the empty subword.
        \end{enumerate}
    \end{definition}
    \begin{figure}[!htb]
    \centering
    \begin{subfigure}{0.49\textwidth}
        \centering
        \includegraphics[scale=0.9]{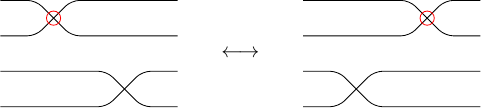}
        \caption{}\label{fig:mcs_cycle_dist1}
    \end{subfigure}
    \begin{subfigure}{0.49\textwidth}
        \centering
        \includegraphics[scale=0.9]{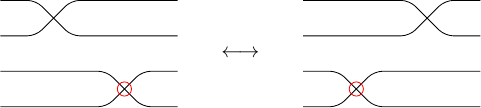}
        \caption{}\label{fig:mcs_cycle_dist2}
    \end{subfigure}
    \begin{subfigure}{0.49\textwidth}
        \centering
        \includegraphics[scale=0.9]{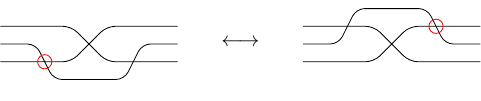}
        \caption{}\label{fig:mcs_cycle_hexa1}
    \end{subfigure}
    \begin{subfigure}{0.49\textwidth}
        \centering
        \includegraphics[scale=0.9]{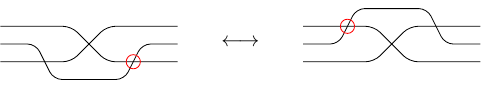}
        \caption{}\label{fig:mcs_cycle_hexa2}
    \end{subfigure}
    \begin{subfigure}{0.49\textwidth}
        \centering
        \includegraphics[scale=0.9]{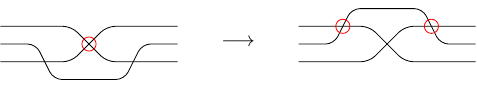}
        \caption{}\label{fig:mcs_cycle_hexa3}
    \end{subfigure}
    \begin{subfigure}{0.49\textwidth}
        \centering
        \includegraphics[scale=0.9]{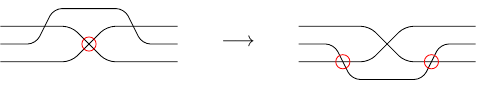}
        \caption{}\label{fig:mcs_cycle_hexa4}
    \end{subfigure}
    \begin{subfigure}{0.49\textwidth}
        \centering
        \includegraphics[scale=0.9]{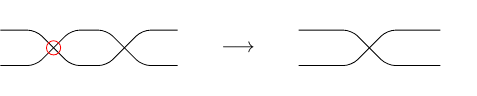}
        \caption{}\label{fig:mcs_cycle_tri1}
    \end{subfigure}
    \begin{subfigure}{0.49\textwidth}
        \centering
        \includegraphics[scale=0.9]{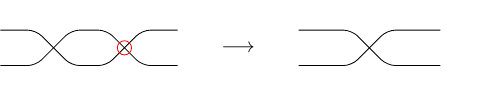}
        \caption{}\label{fig:mcs_cycle_tri2}
    \end{subfigure}
    \begin{subfigure}{0.49\textwidth}
        \centering
        \includegraphics[scale=0.9]{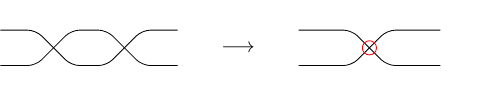}
        \caption{}\label{fig:mcs_cycle_tri3}
    \end{subfigure}
    \caption{Defining local models for a \textsf Y-tree in an MCS sequence. A red circle indicates an intersection point that is present in one of the subwords $x^i$ (and the intersection points without a red circle are not part of the same subword, but are possibly part of other ones).}\label{fig:local_models_geom_cycle_del_mcs}
\end{figure}

    \begin{definition}[Cycle deletion]\label{defn:cycle_deletion}
        Let $\mathfrak{m}\colon \beta \to \beta'$ be a morphism in $\mathfrak{B}_n$ with underlying braid sequence $(\beta_1,\ldots,\beta_q)$ and let $x \coloneqq (x^1,\ldots,x^q)$ be a \textsf Y-tree of $\mathfrak{m}$. The operation of \emph{deleting the \textsf Y-tree $x$} from $\mathfrak{m}$ yields another morphism $\mathfrak{m}' \colon \beta \to \beta'$ with underlying braid sequence $(\beta'_1,\ldots,\beta'_q)$ such that $\beta_i'$ is obtained from $\beta_i$ by smoothing every crossing of $\beta_i$ that belongs to the subword $x^i$ for every $i\in \{1,\ldots,q\}$, as depicted in \cref{fig:mcs_cycle_del}.
    \end{definition}
    \begin{figure}[!htb]
        \centering
        \includegraphics{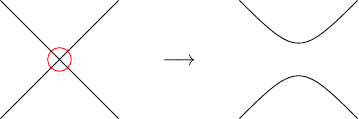}
        \caption{Smoothing of crossings defines the operation of cycle deletion for a sequence of MCSs.}
        \label{fig:mcs_cycle_del}
    \end{figure}
    It is clear from the definitions above that the transitive closure of cycle deletion of morphisms in $\mathfrak{B}_n$ defines a partial order. Therefore, we upgrade $\mathfrak{B}_n$ to a $2$-category with horizontal composition defined in the obvious way. Namely, recall that the composition of $1$-morphisms in $\mathfrak{B}_n$ is defined as concatenation of their braid sequences and corresponding braid moves. 
    
    Let $x \colon \mathfrak{m} \Rightarrow \mathfrak{m}_\circ$ and $x' \colon \mathfrak{m}' \Rightarrow \mathfrak{m}'_\circ$ be two $2$-morphisms. By \cref{dfn:geometric_cycle_mcs}, each \textsf Y-tree involved in the $2$-morphisms $x$ and $x'$ correspond to non-relative cycles in $H_1(L(\mathfrak{A}(\mathfrak{m}))$ (and therefore stay in a compact subset). This guarantees that we get a well-defined $2$-morphism $\mathfrak{m}'\circ \mathfrak{m} \Rightarrow \mathfrak{m}'_\circ \circ \mathfrak{m}_\circ$; it is defined by deleting the \textsf Y-trees corresponding to $x$ and $x'$ in any order (while keeping the respective order of the cycle deletions within $x$ and $x'$ fixed).
    
    \begin{proposition}\label{prop:2-functor_mcs_weave}
        Let $n\in \Z_{\geq 1}$. There is a $2$-functor $\mathfrak{A} \colon \mathfrak{B}_n \to \mathfrak{W}_n$ whose underlying $1$-functor is the one in \cref{thm:ruling_to_weaves}.
    \end{proposition}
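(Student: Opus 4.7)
The plan is to upgrade the $1$-functor $\mathfrak{A}$ from \cref{thm:ruling_to_weaves} by specifying its action on $2$-morphisms and then verifying compatibility with the two composition structures. Since cycle deletion in $\mathfrak{W}_n$ is already defined in \cref{sec:cycle_del_weaves} via the local moves in \cref{fig:local_models_geom_cycle_del_weave}, and cycle deletion of \textsf Y-trees in $\mathfrak{B}_n$ is defined via the local moves in \cref{fig:local_models_geom_cycle_del_mcs}, our task is essentially to match these up on the nose.

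First I would define $\mathfrak{A}$ on \textsf Y-trees. Given a $1$-morphism $\mathfrak{m}\colon\beta\to\beta'$ in $\mathfrak{B}_n$ and a \textsf Y-tree $x=(x^1,\ldots,x^q)$, \cref{lma:compare_lusztig_cycles} already provides a bijection between Lusztig cycles of $\mathfrak{m}$ and those of $\mathfrak{A}(\mathfrak{m})$. Extending the same local comparison from Lusztig cycles to arbitrary geometric $1$-cycles, I would check case-by-case, using the correspondence between braid moves and the local models of weaves summarized in the proof of \cref{thm:ruling_to_weaves}, that each \textsf Y-tree $x$ in $\mathfrak{m}$ determines an injection $\iota\colon\Gamma\hookrightarrow\mathfrak{A}(\mathfrak{m})$ whose image is a \textsf Y-tree of the weave. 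Concretely, a circled crossing in \cref{fig:mcs_cycle_dist1,fig:mcs_cycle_dist2,fig:mcs_cycle_hexa1,fig:mcs_cycle_hexa2,fig:mcs_cycle_hexa3,fig:mcs_cycle_hexa4,fig:mcs_cycle_tri1,fig:mcs_cycle_tri2,fig:mcs_cycle_tri3} corresponds to an edge of the underlying weave at that position, and the combinatorial constraints in \cref{dfn:geometric_cycle_mcs} translate precisely into the univalent/bivalent/trivalent vertex conditions in \cref{fig:geom_cycle1,fig:geom_cycle2,fig:geom_cycle3,fig:geom_cycle4}. Write $\mathfrak{A}(x)$ for the resulting \textsf Y-tree of $\mathfrak{A}(\mathfrak{m})$.

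Next I would define $\mathfrak{A}$ on the $2$-morphism $x\colon\mathfrak{m}\Rightarrow\mathfrak{m}_\circ$ produced by cycle deletion along $x$ to be the $2$-morphism $\mathfrak{A}(\mathfrak{m})\Rightarrow\mathfrak{A}(\mathfrak{m}_\circ)$ obtained by deleting $\mathfrak{A}(x)$. The key step is to verify that $\mathfrak{A}(\mathfrak{m}_\circ)$ is indeed the weave obtained from $\mathfrak{A}(\mathfrak{m})$ by this deletion. This reduces to checking each pair of local models: the smoothing operation on circled crossings in \cref{fig:mcs_cycle_del} at a trivalent, hexavalent, or tetravalent position of $\mathfrak{m}$ produces exactly the local replacement shown in \cref{fig:geom_cycle_del1,fig:geom_cycle_del2,fig:geom_cycle_del3,fig:geom_cycle_del4} for the corresponding vertex of $\mathfrak{A}(\mathfrak{m})$. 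This is a routine but careful diagrammatic comparison and is the step I expect to be the main obstacle, since one has to keep track of which braid operations immediately before and after the cycle position conspire with the smoothings to produce the correct weave-level local picture (e.g.\@ the braid-move decorations around a hexavalent move give rise to a hexavalent-deletion pattern, while a trivalent move gives the I/Y-splitting in \cref{fig:geom_cycle_del1,fig:geom_cycle_del3}).

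Finally I would check functoriality. Identity $2$-morphisms are sent to identities by construction. For vertical composition $\mathfrak{m}\Rightarrow\mathfrak{m}_\circ\Rightarrow\mathfrak{m}_{\circ\circ}$ of two cycle deletions along \textsf Y-trees $x$ and $x'$, the fact that $\mathfrak{A}$ is defined locally at each circled crossing means that the deletion data is sent to a composable pair of weave-level deletions whose composition is the weave deletion associated to the composed $2$-morphism. Horizontal composition is similarly preserved, because cycle deletion acts locally within the compact region supporting the \textsf Y-tree and therefore commutes with concatenation of braid sequences (and of weaves); compatibility of horizontal and vertical composition (the interchange law) follows from the same locality observation, noting that disjoint supports allow the two deletions to be performed in either order with the same outcome. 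Together with the already-established $1$-functoriality from \cref{thm:ruling_to_weaves}, this shows that $\mathfrak{A}$ is a $2$-functor.
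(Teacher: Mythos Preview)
Your proposal is correct and follows essentially the same approach as the paper: match \textsf Y-trees of $\mathfrak{m}$ with \textsf Y-trees of $\mathfrak{A}(\mathfrak{m})$ via the local models, and observe that the smoothing operations in \cref{fig:mcs_cycle_del} correspond exactly to the weave-level deletions in \cref{fig:local_models_geom_cycle_del_weave}. The paper's proof is considerably terser---it simply asserts the local-model comparison and does not spell out the functoriality checks (identities, vertical and horizontal composition, interchange) that you go through explicitly; your more detailed treatment is a welcome expansion, but the underlying argument is the same.
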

    \begin{proof}
        It is clear from the definition that a \textsf Y-tree of $\mathfrak{m} \colon \beta \to \beta'$ yields a \textsf Y-tree of the corresponding algebraic weave $\mathfrak{A}(\mathfrak{m})$, and by comparing the local models depicted in \cref{fig:local_models_geom_cycle_del_mcs,fig:local_models_geom_cycle_del_weave}, we see that cycle deletion in $\mathfrak{m}$ corresponds precisely to deletion of the corresponding \textsf Y-tree in $\mathfrak{A}(\mathfrak{m})$.
    \end{proof}

\subsection{Decompositions via cycle deletion}\label{sec:decompositions_cycle_del}
    For a positive braid $\beta \in \Br_n^+$, recall the definition of the weave decomposition of the braid variety $X(\beta)$ from \cref{thm:weave_decomp}. Sometimes a decomposable tuple of weaves $(\w_1,\ldots,\w_k)$ is completely determined by the maximal piece $\w_1$ together with a sequence of Lusztig cycle deletions.
    
    \begin{definition}[Cycle deletable]
        An algebraic weave $\w \colon \beta \to \beta'$ is called \emph{cycle deletable} if there exists a \textsf{Y}-tree $\gamma$ in $\w$ such that $\w\setminus\gamma \colon \beta \to \beta'$ is an algebraic weave.
    \end{definition}
    
    \begin{definition}[Cycle decomposable]\label{dfn:cycle_decomposable}
       Let $n\in \Z_{\geq 1}$. A braid $\beta\in \Br_n^+$ is called \emph{cycle decomposable} if there exists a cycle deletable right inductive weave $\w \colon \beta \to \Delta$ and a decomposing tuple $(\w=\w_1,\ldots,\w_k)$ for $X(\beta)$ such that $\w_i = \w_{i-1} \setminus \gamma_{i-1}$ for $i\in \{2,\ldots,k\}$.
    \end{definition}
    
    The upshot of a braid $\beta$ being cycle decomposable is that there is a distinguished decomposition of the braid variety $X(\beta)$ generated by a (cycle deletable) weave $\w \colon \beta \to \Delta$ with underlying braid sequence $(\beta_1, \ldots, \beta_q)$ via repeated cycle deletions, which, via \cref{prop:2-functor_mcs_weave}, corresponds to certain crossing smoothings of the braids $\beta_i$.
    
    \begin{proposition}\label{prop:torus_n2_decomp}
        For any $k \geq 1$, the braid $\sigma_1^{k+1} \in \Br_2^+$ is cycle decomposable.
    \end{proposition}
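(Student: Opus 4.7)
The plan is to construct an explicit right inductive weave $\w_1 \colon \sigma_1^{k+1} \to \Delta = \sigma_1$ with $k$ trivalent vertices and then exhibit a sequence of cycle deletions that produces a decomposing tuple for $X(\sigma_1^{k+1})$.

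First, I would build $\w_1$ directly from \cref{dfn:inductive_weave}. Because we are on two strands, neither hexavalent nor tetravalent vertices can occur, so $\w_1$ consists solely of $k$ trivalent vertices $v_1, \dots, v_k$ arranged in a vertical chain, each joining the rightmost two strands at its level. The key structural feature to record is that every pair of consecutive trivalent vertices $(v_i, v_{i+1})$ is joined by a single weave edge, giving a geometric $1$-cycle $\gamma_i$ connecting them (interpretable as a short I-cycle, which under \cref{prop:2-functor_mcs_weave} and the local picture of \cref{fig:geom_cycle_del1} can be deleted in this two-strand setting).

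Second, I would argue by induction on $k$ that successively deleting the cycles $\gamma_1, \gamma_2, \dots$ starting from the topmost pair produces right simplifying weaves $\w_2, \dots, \w_k$, each containing one more cup and one fewer trivalent than the previous. The base case $k=1$ is immediate: $\w_1$ has one trivalent, no cycles to delete, and $\mathfrak X(\w_1) \cong \C^* \cong X(\sigma_1^2)$. For the inductive step, I would use that deleting the topmost cycle replaces the pair $(v_1, v_2)$ locally with a cup stacked above an inductive weave on $\sigma_1^k$, allowing the inductive hypothesis to supply the remaining cycle deletions.

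Third, I would invoke \cref{lma:rulings_right_induct_weaves} together with \cref{cor:rulings_decomposing_tuple} to identify $(\w_1, \w_2, \dots, \w_k)$ with the collection of right simplifying weaves indexed by the normal rulings of $\Lambda(\sigma_1^{k+2})$. In particular, after $i-1$ deletions the weave $\w_i$ corresponds to the normal ruling in which the topmost $k-i+1$ possible switches are realized as switches and the remaining ones as departures. Combined with \cref{thm:weave_decomp}, this yields the decomposing tuple required by \cref{dfn:cycle_decomposable}.

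The main obstacle is verifying carefully that each cycle deletion in this two-strand setting produces a \emph{right simplifying} weave (not merely a simplifying one) and that the local modification at the pair $(v_i,v_{i+1})$ matches the model of \cref{fig:geom_cycle_del1} in a way compatible with the chain structure of $\w_1$. Once these local pictures are nailed down, the statement reduces to the combinatorial observation that the normal rulings of $\Lambda(\sigma_1^{k+2})$ on two strands are naturally ordered by the position of the topmost non-switch crossing, which matches the order in which the cycles $\gamma_1, \gamma_2, \dots$ are deleted.
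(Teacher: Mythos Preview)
Your approach has a genuine gap. Deleting a short \textsf I-cycle between consecutive trivalent vertices $v_i$ and $v_{i+1}$ removes \emph{both} trivalent vertices (at the top endpoint the two incoming edges become a cup; at the bottom endpoint the remaining incoming edge passes straight to the output), so each deletion yields a weave with one more cup and \emph{two} fewer trivalents, not one. Consequently your linear chain $\w_1,\w_2,\dots$ terminates after at most $\lfloor k/2\rfloor$ deletions, giving at most $\lfloor k/2\rfloor+1$ weaves. For $k=3$ this is only two pieces, $(\C^*)^3$ and $\C\times\C^*$, whose $\mathbb F_q$-point count is $(q-1)^3+q(q-1)=(q-1)(q^2-q+1)$; but $|X(\sigma_1^4)|=(q-1)(q^2+1)$, so your chain cannot be a decomposing tuple. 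Relatedly, the normal rulings of $\Lambda(\sigma_1^{k+2})$ are not linearly ordered by ``position of the topmost non-switch'': via \cref{lma:rulings_right_induct_weaves} they are in bijection with right simplifying weaves $\sigma_1^{k+1}\to\sigma_1$, i.e.\ with tilings of a length-$k$ strip by tiles of size $1$ (trivalent) and $2$ (cup followed by new strand), and there are $F_{k+1}$ of these.

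The paper's argument is different: rather than a single chain, it observes that from the right inductive weave (the chain of $k$ trivalents) one can reach \emph{every} right simplifying weave by some sequence of short \textsf I-cycle deletions---deleting the cycle between $v_i$ and $v_{i+1}$ replaces the trivalents at positions $i+1,i+2$ by a cup, and iterating over disjoint pairs produces precisely the $\{T,\,CV\}$-tilings above. The collection of all weaves so obtained is then exactly the set of right simplifying weaves, which is a decomposing tuple by \cref{cor:rulings_decomposing_tuple}. In other words, the decomposing tuple is generated by the tree of all possible deletion sequences, not by one linear chain; the definition of ``cycle decomposable'' should be read accordingly.
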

    
    \begin{proof}
        The maximal ruling of $\La(\sigma_1^{k+1}\Delta) = \La(\sigma_1^{k+2})$ is such that the $k$ crossings of $\La(\sigma^{k+2}_1)$ corresponding to the middle $k$ letters of $\sigma_1^{k+2}$ are switches. A right inductive morphism $\mathfrak{r}_\rho \colon \sigma_1^{k+1} \to \Delta$ whose underlying normal ruling is the maximal one is given by the sequence of braids
        \[
        (\beta_1,\ldots,\beta_k) \coloneqq (\sigma_1^{k+1},\sigma_1^k,\ldots,\sigma_1),
        \]
        and $k-1$ trivalent moves given by
        \[
        \beta_j = (\sigma_1)^2\sigma_1^{k-j} \longrightarrow \sigma_1\sigma_1^{k-j} = \beta_{j-1}, \qquad j\in \{2,\dots,k\}.
        \]
        There are precisely $k-2$ \textsf Y-trees of the morphism $\mathfrak{m}$ denoted by $x_1,\ldots,x_{k-2}$ where $x_j = ((x_j)^1,\ldots,(x_j)^k)$ and
        \[
        (x_j)^\ell = \begin{cases}
            (), & \ell \neq j, \\
            (1), & \ell = j,
        \end{cases}
        \]
        where $(1)$ denotes the subword of $\beta_j = \sigma_1^{k+2-j}$ being the leftmost letter. We see that all possible cycle deletions in the morphism $\mathfrak{m}$ yields all possible right inductive algebraic simplifying weaves $\sigma_1^{k+1} \to \Delta$ which yields a decomposing tuple of simplifying weaves.
    \end{proof}
    
    \begin{question}\label{ques:ytree2}
        For any braid $\beta\in \Br_n^+$, does there exist an inductive weave $\w(\beta)$ such that all its mutable Lusztig cycles are cycle deletable \textsf Y-trees?
    \end{question}
    \begin{remark}
        See~\cite[Definition 3.7 and 3.8]{CasalsWeng} for the definition of a mutable $I$ and $Y$-cyle. Casals and Weng~\cite[Section 3.3]{CasalsWeng} answer this question affirmatively for any braid $\beta = \Delta\gamma$ where $\gamma \in \Br_n^+$. 
    \end{remark}

    An affirmative answer to \cref{ques:ytree2} would imply that any braid $\beta$ has a cycle deletable inductive weave. However, note that after deleting a \textsf Y-tree from the cycle deletable inductive weave it is not guaranteed that we obtain a cycle deletable algebraic weave, which means that an affirmative answer to \cref{ques:ytree2} would not guarantee that $\beta$ is cycle decomposable.

    \begin{question}\label{qst:sufficient_cond_decomp}
        Is there a sufficient condition on $\beta \in \Br_n^+$ to guarantee that $\beta$ is cycle decomposable?
    \end{question}
    
    \begin{remark}
        We expect one can show that for any braid $\beta = \Delta\gamma$ where $\gamma \in \Br_n^+$, there exists an inductive weave such that its mutable Lusztig cycles are cycle deletable, and the weaves after arbitrary cycle deletions are still cycle deletable using \cite[Section 3.3]{CasalsWeng}. However, it is not clear how to show that they always give a decomposition tuple.
    \end{remark}

    If $\beta$ is cycle decomposable, repeated cycle deletion yields a weave decomposition of $X(\beta)$. However, it may not agree with the ruling decomposition on $\Aug(\La(\beta\Delta))$ under the correspondence in \cref{thm:hr_and_weave_decomps}, since cycle deletion may not preserve the property of a simplifying weave being right inductive. We demonstrate this in the following example.

\begin{example}\label{ex:deletion_neq_ruling}
    Let $\beta= \Delta(\sigma_2\sigma_1\sigma_2)^2 \in \Br_3^+$. The Legendrian link $\La(\beta\Delta)$ is smoothly the $(3, 3)$-torus link and its maximal normal ruling produces, via \cref{prop:normal_ruling_inductive}, the right inductive weave $\w$ depicted in \cref{fig:ex_cycle_del_mut_ruling}. 
    \begin{figure}[!htb]
        \centering
        \includegraphics{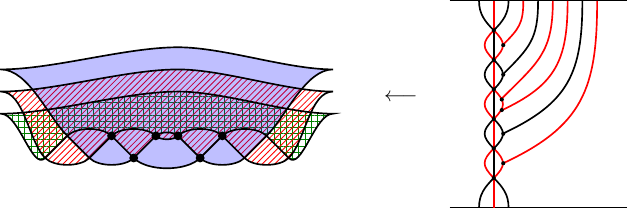}
        \caption{A right inductive weave $\w$ produced from the maximal normal ruling of $\La(\beta\Delta)$ via \cref{prop:normal_ruling_inductive}.}
        \label{fig:ex_cycle_del_mut_ruling}
    \end{figure}

    \begin{figure}[!htb]
        \centering
        \begin{subfigure}{.32\textwidth}
            \centering
            \includegraphics{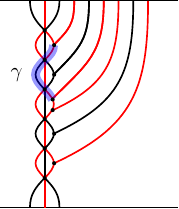}
            \caption{The weave $\w$.}\label{fig:ex_cycle_del_mut1}
        \end{subfigure}
        \begin{subfigure}{.32\textwidth}
            \centering
            \includegraphics{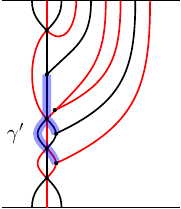}
            \caption{The weave $\w'$.}\label{fig:ex_cycle_del_mut2}
        \end{subfigure}
        \begin{subfigure}{.32\textwidth}
            \centering
            \includegraphics{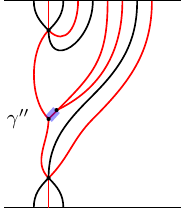}
            \caption{The weave $\w''$.}\label{fig:ex_cycle_del_mut3}
        \end{subfigure}
        \caption{Repeated cycle deletion in a right inductive weave for $\beta = \Delta(\sigma_2\sigma_1\sigma_2)^2$.}
        \label{fig:ex_cycle_del_mut}
    \end{figure}
    Deleting the Lusztig cycle $\gamma$ from $\w$ (see \cref{fig:ex_cycle_del_mut1}) produces a Legendrian weave $\w'$ with a new Lusztig cycle labeled by $\gamma'$; see \cref{fig:ex_cycle_del_mut2}. Deleting $\gamma'$ from $\w'$ produces a third weave $\w''$ with cycle $\gamma''$; see \cref{fig:ex_cycle_del_mut3}. We note that the weave $\w''$ is a simplifying weave but \emph{not} a right inductive one. Therefore $\w''$ does not correspond to a stratum in the ruling decomposition of $\Aug(\La(\beta\Delta))$. Therefore the weave decomposition of $X(\beta)$ obtained from cycle deletion cannot agree with the ruling decomposition of $\Aug(\La(\beta\Delta))$ under the correspondence in \cref{thm:hr_and_weave_decomps} (or the Deodhar decomposition of $R_{w_0,\beta}^\circ$ in \cref{thm:deodhar=hr_decomp}). Furthermore, observe that mutating $\w''$ at $\gamma''$ \emph{does} yield a right simplifying weave, and so it corresponds to a stratum in the ruling decomposition of $\Aug(\La(\beta\Delta))$, see \cref{fig:ex_cycle_del_mut_result}.

    \begin{figure}[!htb]
        \centering
        \includegraphics{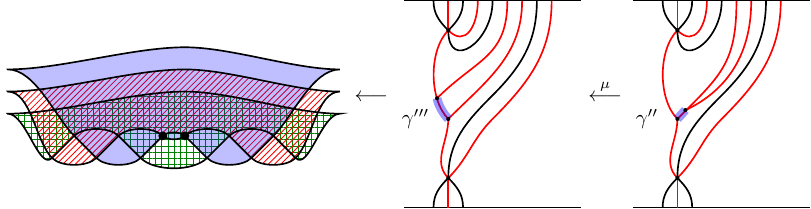}
        \caption{Mutation of $\w''$ at the Lusztig cycle $\gamma''$ produces a right simplifying weave corresponding to a normal ruling of $\La(\beta\Delta)$ with two switches.}
        \label{fig:ex_cycle_del_mut_result}
    \end{figure}
\end{example}

\begin{example}\label{ex:deletion_neq_ruling_codim1}
    Let $\beta = \sigma_2\sigma_1^2\sigma_2^2\sigma_1\sigma_2 \in \Br_3^+$ and let $\w$ be a right inductive weave corresponding to the maximal normal ruling via \cref{prop:normal_ruling_inductive}. Deleting the Lusztig cycle $\gamma$, depicted in \cref{fig:ex_cycle_del_mut_codim1}, from $\w$ immediately produces a weave $\w'$ that does not correspond to a stratum in the ruling decomposition of $\Aug(\Lambda(\beta\Delta))$. Therefore, even the codimension 1 piece in the weave decomposition of $X(\beta)$ does not agree with the ruling decomposition of $\Aug(\Lambda(\beta\Delta))$ under \cref{thm:hr_and_weave_decomps} (or equivalently the Deodhar decomposition of $R_{w_0,\beta}^\circ$ in \cref{thm:deodhar=hr_decomp}).

    \begin{figure}[!htb]
        \centering
        \includegraphics{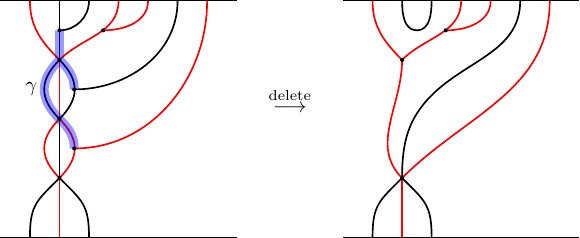}
        \caption{Cycle deletion for a right inductive weave $\w$ for $\beta = \sigma_2\sigma_1^2\sigma_2^2\sigma_1\sigma_2$.}\label{fig:ex_cycle_del_mut_codim1}
    \end{figure}
\end{example}

\begin{remark}
    Note that \cref{ex:deletion_neq_ruling_codim1} is in contrast with the case of double Bott--Samelson cells where $\beta = \Delta\gamma$ for some $\gamma \in \Br_n^+$ \cite{ShenWeng,GaoShenWeng}. For double Bott--Samelson cells (and in fact for braids that arise from grid plabic graphs \cite[Definition 2.1]{CasalsWeng}), Casals--Weng showed that the zero loci of the cluster variables in the initial seed are given by cycle deletions of a right inductive weave \cite[Proposition 4.40 and 4.50]{CasalsWeng}. For braid varieties, Galashin--Lam--Sherman-Bennet--Speyer showed that the zero loci of the cluster variables in the initial seed are given by the Deodhar hypersurfaces corresponding to the (almost positive) distinguished sequences \cite[Proposition 7.10]{GLSS}. Since the cluster structures in \cite{CasalsWeng}, \cite{CGGLSS}, and \cite{GLSS} agree (see \cite[Theorem 1.1]{CGGSBS} for the comparison of the second and third ones in general, and \cite[Theorem 1.2]{CasalsLi} for the comparison between the first and second ones in the Bott--Samelson case), it follows that the codimension 1 pieces in the weave decomposition obtained by cycle deletions of a right inductive weave correspond to the codimension 1 pieces in the Deodhar decomposition or the ruling decomposition.
\end{remark}

    The ruling decomposition and corresponding weave decompositions are not preserved under braid equivalences.

\begin{example}
    Let $\beta'=\Delta(\sigma_1\sigma_2^2)^2 \in \Br_3^+$ and note that it is braid equivalent to $\beta$ in \cref{ex:deletion_neq_ruling} (they differ by two moves $\sigma_1\sigma_2\sigma_1 \leftrightarrow \sigma_2\sigma_1\sigma_2$). Note that this means that
    \[
    \Aug(\La(\beta\Delta)) \cong X(\beta) \cong X(\beta') \cong \Aug(\La(\beta'\Delta)).
    \]
    The braid $\beta'$ admits a right inductive weave depicted in \cref{fig:ex_cycle_del_mut_ruling2}. This right inductive weave is in fact cycle decomposable and the decomposing tuple of $X(\beta')$ obtained via cycle deletion consists of right simplifying weaves. Hence the resulting weave decomposition of $X(\beta')$ does correspond to the ruling decomposition of $\Aug(\La(\beta'\Delta)$ via \cref{thm:hr_and_weave_decomps} (or, equivalently, the Deodhar decomposition of $R_{w_0,\beta'}^\circ$ in \cref{thm:deodhar=hr_decomp}).

    \begin{figure}[!htb]
        \centering
        \includegraphics{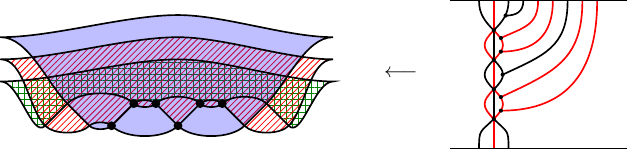}
        \caption{A right inductive weave produced from the maximal normal ruling of $\La(\beta'\Delta)$ via \cref{prop:normal_ruling_inductive}.} 
        \label{fig:ex_cycle_del_mut_ruling2}
    \end{figure}
\end{example}

\bibliographystyle{alpha}
\bibliography{biblio}
\end{document}